\documentclass[12pt]{article}
\usepackage{amssymb}
\usepackage{amsfonts}
\usepackage{amsmath}
\usepackage{eurosym}
\usepackage{makeidx}
\usepackage{amssymb}
\usepackage{times}
\usepackage{anysize}

\marginsize{2cm}{2cm}{1cm}{1cm}
\newtheorem{theorem}{Theorem}[section]

\newtheorem{condition}[theorem]{Condition}

\newtheorem{corollary}[theorem]{Corollary}

\newtheorem{definition}[theorem]{Definition}
\newtheorem{example}[theorem]{Example}

\newtheorem{lemma}[theorem]{Lemma}

\newtheorem{proposition}[theorem]{Proposition}
\newtheorem{remark}[theorem]{Remark}

\newenvironment{proof}[1][Proof]{\noindent\textbf{#1.} }{\ \rule{0.5em}{0.5em}}
\input{tcilatex}
\begin{document}

\title{Abstract Theory of Bogoliubov Linearizations with Application to
Nonlinear Thermodynamic Formalism}
\author{J.-B. Bru \and W. de Siqueira Pedra \and A.O. Lopes}
\date{\today }
\maketitle

\begin{abstract}
Bogoliubov's 1947 approximation, originally developed in the microscopic
theory of superfluidity, laid the foundation for solving previously
intractable quantum models and later became part of \textquotedblleft
quantum mathematics\textquotedblright . Regarding mathematically rigorous
results, one of its most advanced forms -- the only one that handles quantum
equilibrium states -- was published in the Memoirs of the AMS in 2013.
Building on key results from convex analysis, the present work significantly
extends it to obtain a general mathematical theory that enables nonlinear
variational problems on convex compact spaces to be fully studied via a
linearization process, referred to here as the \textquotedblleft Bogoliubov
linearization\textquotedblright . This problem is particularly timely, given
the current development of quantum algorithms and computers, which are
inherently linear machines. A deep connection with the optimal transport is
also proven. As a paradigmatic example of application, the approach proposed
here is applied to the nonlinear thermodynamic formalism -- an emerging
field that can have important impacts on various fields of mathematics, such
as ergodic transport, the fractals and multifractal formalism, discrete-time
linear dynamics, $C^{\ast }$-algebras, etc. Notably, even in the case of
finite alphabets the obtained results go beyond the scope of the existing
literature in nonlinear thermodynamic formalism.\bigskip

\noindent \textbf{Keywords: }Bogoliubov approximation, Ruelle operator,
nonlinear thermodynamic formalism, optimal transportation, non-cooperative
equilibria. \bigskip

\noindent \textbf{2020 AMS Subject Classification: }58E30, 37D35, 46N10,
46N55.
\end{abstract}

%TCIMACRO{\TeXButton{\tableofcontents}{\tableofcontents}}%
%BeginExpansion
\tableofcontents%
%EndExpansion

\section{Introduction}

\subsection{Abstract theory of Bogoliubov linearizations}

Nonlinear variational problems appear in such a wide variety of mathematical
contexts that it would be difficult to provide an exhaustive list. Their
interest lies in the wide range of applications in the real world, such as
in physics, biology, engineering and economics, for systems governed by
principles of equilibrium and optimality in the presence of nonlinear
effects. From a mathematical perspective, nonlinear variational problems
stimulate the development of sophisticated analytical techniques, such as
compactness methods, variational inequalities and critical point theory.
They also promote significant connections between functional and convex
analysis, partial differential equations, geometry, as well as other areas
of mathematics.

Although general approaches for studying them exist, such as the celebrated
catastrophe theory, nonlinear variational problems are generally very
difficult to analyze rigorously. In this paper, we propose a general method
to study variational problems of the form $\sup \mathbb{F}\left( K\right) $,
where $\mathbb{F}:K\rightarrow \mathbb{R}$ is a nonlinear function defined
on a compact convex Hausdorff space $K$. This is done in the scope of convex
analysis. Indeed, given two real normed spaces $\mathcal{X}_{\pm }$, the
nonlinear functionals $\mathbb{F}$ we consider are of the form 
\begin{equation*}
\mathbb{F}\doteq f-g_{-}\circ \tau _{-}+g_{+}\circ \tau _{+}\ ,
\end{equation*}%
where $f:K\rightarrow \mathbb{\{-\infty \}\cup R}$ is an upper
semicontinuous concave function, $\tau _{\pm }:K\rightarrow \mathcal{X}_{\pm
}$ are two continuous affine transformations and $g_{\pm }:\mathcal{X}_{\pm
}\rightarrow \mathbb{R}$ are two lower semicontinuous and convex functions
whose Legendre-Fenchel transforms, $g_{\pm }^{\ast }$, have a full domain%
\footnote{%
A slightly more general assumption for $g_{-}$ can be used. See Condition B2
of Section \ref{sect conc conv Bogo}.} and grow sufficiently fast at large
arguments. This situation is very general, as discussed in\ Remarks \ref%
{Remark-condition1} and \ref{Remark-condition2}. For example, any $C^{1}$%
-functions on a compact subset of $\mathbb{R}^{N}$ ($N\in \mathbb{N}$) can
be represented as the difference of convex and continuous functions. See
Remark \ref{Remark-condition2}. In applications, $f$, $-g_{-}\circ \tau $
and \ $g_{+}\circ \tau _{+}$ typically refer to an entropy, a nonlinear
attractive\ and a nonlinear repulsive interaction term, respectively.

Note that even if the function $f$ is a priori only concave (rather than
affine), our primary focus is on the nonlinearity introduced by the
functions $g_{\pm }$. In fact, one might wonder why we do not include $%
-g_{-}\circ \tau _{-}$ in the term $f$, given that they are both upper
semicontinuous and concave. The advantage of splitting $\mathbb{F}$ into
three parts, $f$, $-g_{-}\circ \tau _{-}$ and $g_{+}\circ \tau _{+}$ becomes
apparent when the Legendre-Fenchel transform of $g_{\pm }^{\ast }$ and
certain associated linear variational problems can be easily controlled.
This is the raison d'\^{e}tre of Bogoliubov linearizations. We demonstrate
the effectiveness of this method by applying it to the \emph{nonlinear
thermodynamic formalism}. However, our new approach has a much wider range
of applications.

In fact, it originates from a study of quantum lattice systems at
equilibrium, which was published in the \textit{Memoirs of the AMS} in 2013 
\cite{BruPedra2} and has been extensively developed here. We prove in
Theorem \ref{Proposition importante bogoluibov01 copy(2)} that 
\begin{equation}
\sup \mathbb{F}\left( K\right) =\sup_{y_{+}\in \mathcal{X}_{+}^{\ast
}}\inf_{y_{-}\in \mathcal{X}_{-}^{\ast }}\{\sup \mathcal{G}%
_{y_{+},y_{-}}\left( K\right) +g_{-}^{\ast }\left( y_{-}\right) -g_{+}^{\ast
}\left( y_{+}\right) \}\doteq \mathrm{P}^{\flat }\in \mathbb{R}\text{ },
\label{sdsdsdssdsdsds}
\end{equation}%
where $\mathcal{G}_{y_{+},y_{-}}\doteq f-y_{-}\circ \tau _{-}+y_{+}\circ
\tau _{+}$, $y_{\pm }\in \mathcal{X}_{\pm }^{\ast }$, which are named the 
\emph{Bogoliubov linearizations} of $\mathbb{F}$ here. Solutions to these
variational problems are studied in detail, see again Theorem \ref%
{Proposition importante bogoluibov01 copy(2)}. For instance, we obtain the
following unexpected result:%
\begin{equation*}
E_{\mathbb{F}}\doteq \left\{ \mu \in K:\exists (\mu _{j})_{j\in J}\subseteq K%
\mathrm{\ }\text{with }\lim_{J}\mu _{j}=\mu \text{ and\ }\lim_{J}\mathbb{F}%
(\mu _{j})=\mathrm{P}^{\flat }\right\} =\left\{ \mu \in K:\mathbb{F}\left(
\mu \right) =\mathrm{P}^{\flat }\right\} \ ,
\end{equation*}%
keeping in mind that $\mathbb{F}$ is\emph{\ generally not} upper
semicontinuous. Additionally, this set is compact and its elements are \emph{%
self-consistent} solutions to associated linear variational problems.
Indeed, as can be seen from Equation (\ref{sdsdsdssdsdsds}), the nonlinear%
\footnote{%
The nonlinearity here refers to the functions $g_{\pm }$.} problem can be
studied through the family of \emph{linear}\footnote{%
The function $f$ is not necessarily linear. It is the functions $g_{\pm }$
that are linearized, being replaced by $y_{\pm }\circ \tau _{\pm }$ in $%
\mathcal{G}_{y_{+},y_{-}}$. We nonetheless refer to $\sup \mathcal{G}%
_{y_{+},y_{-}}\left( K\right) $ as a \textquotedblleft
linear\textquotedblright\ variational problem, because nonlinearity is
represented here by $g_{\pm }$; furthermore, in statistical mechanics the
function $f$ represents an entropy functional (typically the entropy per
unit volume of space-invariant states) that is usually both concave and
convex in the thermodynamic limit, and is therefore affine. See, for
example, the application of this method to nonlinear thermodynamic
formalism, as explained in this paper.} variational problems%
\begin{equation}
P_{\mathrm{L}}\left( y_{+},y_{-}\right) \doteq \sup \mathcal{G}%
_{y_{+},y_{-}}\left( K\right) \text{ },\qquad y_{\pm }\in \mathcal{X}_{\pm
}^{\ast }\text{ }.  \label{linear case}
\end{equation}%
This is achieved by determining beforehand the solutions $x_{\pm }\in 
\mathcal{X}_{\pm }^{\ast }$ to the $\sup_{y_{+}\in \mathcal{X}_{+}^{\ast
}}\inf_{y_{-}\in \mathcal{X}_{-}^{\ast }}$ in (\ref{sdsdsdssdsdsds}). There
is in particular a \emph{canonical} two-person zero-sum game associated with
the maximization of $\mathbb{F}$, the payoff function of which is given by
the \emph{nonlinear} approximating pressure%
\begin{equation*}
P_{\mathrm{NL}}\left( y_{+},y_{-}\right) \doteq P_{\mathrm{L}}\left(
y_{+},y_{-}\right) +g_{-}^{\ast }\left( y_{-}\right) -g_{+}^{\ast }\left(
y_{+}\right) \ .
\end{equation*}%
This game is studied in detail in Section \ref{Decision} because it itself
has interesting applications, as shown in Section \ref{Section game +
measusre} in the context of the thermodynamic formalism or in \cite%
{BruPedra2,Kac} for lattice quantum systems.

In Section \ref{transport}, we explain how generalized nonlinear equilibrium
states 
\begin{equation*}
\mu \in G_{\mathbb{F}}\doteq \overline{\mathrm{co}}(E_{\mathbb{F}})\subseteq
K
\end{equation*}%
can lead to distributions (at equilibrium) of order parameters\footnote{%
The term \textquotedblleft order parameter\textquotedblright\ originates in
physics and refers to a quantity that measures the degree of order in a
system, distinguishing different phases of matter. It is expected to exhibit
different behavior at phase transitions. In the context of mean-field
theory, it refers to the $c$-number substitution that arises from the
mean-field approximation. A typical example of an order parameter is the
magnetization density in spin systems.} for the system under consideration,
which is an important concept in physics, related to phase transitions. In
this context, we show that, for any given order parameter distribution at
equilibrium, the nonlinear pressure $\sup \mathbb{F}(K)$ can be exactly
recovered from a \emph{Monge-Kantorovich (transportation) problem} \cite[%
page 10]{Vilani} associated with these distributions, the cost function of
which is nothing but the nonlinear approximating pressure $P_{\mathrm{NL}}$
defining the above thermodynamic game. This observation is very useful
because of the celebrated Kantorovich dual problem \cite{Vilani}, for which
various high-performance numerical tools are available. Moreover, in usual
applications, the distributions at equilibrium of order parameters are
simpler mathematical objects than general equilibrium states. In fact,
typically, the former are probability distributions in some compact subset
of a finite-dimensional space, whereas the latter are positive functionals
on some infinite-dimensional $C^{\ast }$-algebra.

Optimal transport is a fundamental theory that has numerous applications in
fields such as economics, machine learning, fluid dynamics, signal
processing, mechanics, etc. Therefore, in Theorem \ref{prop order copy(1)}
we prove the Kantorovich dual problem for $\sup \mathbb{F}(K)$, thereby
establishing a link between nonlinear equilibria and this theory. To our
knowledge, this is a new result. For more details we recommend Section \ref%
{transport}, which could even be a starting point for further developments
in optimal transport theory. Indeed, our aim is not to present here a
detailed analysis of the associated optimal transport problem, but rather to
build a fundamental connection between the maximization of real-valued
functions $\mathbb{F}$ on compact convex spaces and the dual Kantorovich
problem, thereby opening the door to entirely new mathematical and numerical
developments.

Moreover, as the frontiers of quantum computing and high-performance
processing continue to expand, the ability to effectively linearize complex
problems is becoming increasingly important. Our approach, based on
Bogoliubov linearizations, offers an alternative to conventional tools used
nowadays for dynamical systems, such as the Carleman or Koopman methods and
perturbation / Taylor expansions \cite{Colbrook,Sanz}. It thus unlocks a new
framework for solving nonlinear variational problems of paramount importance
at present.

\subsection{The nonlinear thermodynamic formalism of dynamical systems}

In this paper the paradigmatic example of application is the \emph{nonlinear
thermodynamic formalism}, a relatively new area of mathematical research: It
appears in a series of seminal papers \cite{TF1,TF2,TF3,TF4,TF5}, which
introduce a rigorous approach to studying several questions in mean-field
theory of classical statistical mechanics, in particular in relation to the
Curie-Weiss-Potts models.

The linear thermodynamic formalism, {in the Bowen-Ruelle-Sinai sense (see,
e.g., \cite{TF-Linear9})}, is a comparatively old but still extremely active
area of mathematics. See \cite%
{TF-Linear0,TF-Linear1,TF-Linear2,TF-Linear3,TF-Linear4,TF-Linear5,TF-Linear6,TF-Linear8,TF-Linear9,Wang,Climenhaga}%
. As explained in \cite{TF-Linear9}, it has numerous ramifications towards
other fields of mathematics, such as the ergodic transport, the fractals and
multifractal formalism, discrete-time linear dynamics, $C^{\ast }$-algebras,
Haar systems, groupoids and quasi-invariant probabilities for cocycles,
mathematical statistics, etc.

In 2023, Buzzi, Kloeckner and Leplaideur \cite{TF4} explained how the
nonlinear thermodynamic formalism can be understood from the linear one. Our
aim is the same, but we take a \emph{completely different} approach with our
general theory of Bogoliubov linearizations. Indeed, similar to Buzzi%
%TCIMACRO{\TeXButton{\-}{\-}}%
%BeginExpansion
\-%
%EndExpansion
-Kloeckner-Leplaideur's method \cite{TF4}, the theory of Bogoliubov
linearizations transforms the nonlinear problem into a family of linear
ones. As explained above, this is achieved at the expense of some
self-consistency condition, which can, however, be effectively studied in
the linear thermodynamic formalism, as already demonstrated in \cite%
{Bru-Pedra-Lopes1} for some explicit examples.

This corresponds to Theorem \ref{Theorem-main1}, which allows us to
determine equilibrium measures via the so-called thermodynamic game
discussed above. In fact, this game is shown in Theorem \ref{Theorem-main1
copy(1)} to have a direct and general interpretation in terms of (nonlinear)
equilibrium measures. As explained in Section \ref{Linear thermo}, the case
of H\"{o}lder potentials in linear thermodynamic formalism remains a very
general situation and has excellent mathematical properties. For example, in
this case (linear) equilibrium measures are always unique and ergodic (see 
\cite{LMMS,ACR}). We thus apply Theorem \ref{Theorem-main1} to a nonlinear
version of this situation and obtain a much stronger result in Corollary \ref%
{Theorem-main1 copy(2)} (see also Corollary \ref{Theorem-main1 copy(3)}).

We study the nonlinear thermodynamic formalism here via its linear version,
as Buzzi, Kloeckner and Leplaideur did in \cite{TF4}; however, in our view,
our method is simpler and more transparent. This allows us to reach the same
conclusions under much more general assumptions, as well as derive new
results, such as Theorems \ref{Theorem-main1} and \ref{Theorem-main1 copy(1)}%
. See Section \ref{Buzzy-Kloeckner-Leplaideur's Approach} which compares
Buzzi%
%TCIMACRO{\TeXButton{\-}{\-}}%
%BeginExpansion
\-%
%EndExpansion
-Kloeckner-Leplaideur's approach with ours. See also Conditions TF1--TF3 of
Section \ref{Bogoliubov linearizations TF}.

Last but not least, with regard to the mathematical scope, it should be
noted that the (symbolic) space considered in relation to the thermodynamic
formalism is the set of all (infinite) sequences in an alphabet $\Omega $,
which is not necessarily finite but is only assumed to be compact with
respect to some metric. Despite the great importance of this more general
case in applications -- such as in the classical $XY$ model -- the
mathematical literature remains sparse. This has made it necessary to
provide proofs for several fundamental results which, although anticipated,
are essential to our analysis. In particular, we define the entropy via
(transfer) Ruelle operators and make the connection with a thermodynamic
limit of finite volume entropies, thanks to \cite{ACR}. We also introduce
new concepts, such as $\Delta $-functionals. We believe that, in addition to
establishing a very general mathematical foundation for the Bogoliubov
linearization method, which stems from a long tradition in statistical
physics, the present work provides a solid basis for developing the
nonlinear version of the thermodynamic formalism for compact metric
alphabets.

\subsection{Structure of the paper}

To summarize, Theorems \ref{Theorem-main1}, \ref{Theorem-main1 copy(1)}, \ref%
{Proposition importante bogoluibov01 copy(2)} and \ref{prop order copy(1)},
as well as Corollaries \ref{Theorem-main1 copy(2)} and \ref{Theorem-main1
copy(3)}, are the main results of the paper, which is divided into two main
parts:

\begin{itemize}
\item Section \ref{Setup of the Problem} explains the thermodynamic
formalism, including Buzzi-Kloeckner%
%TCIMACRO{\TeXButton{\-}{\-}}%
%BeginExpansion
\-%
%EndExpansion
-Leplaideur's seminal\ approach \cite{TF4} to its nonlinear version, in
Section \ref{Buzzy-Kloeckner-Leplaideur's Approach}. We begin with this
example to directly illustrate a concrete application of our abstract theory
of Bogoliubov linearizations. This example is indeed pedagogical and even
paradigmatic, with a very general scope of application.

\item Section \ref{Abstract Theory} presents our general abstract theory of
Bogoliubov linearizations, which could prove very useful in many other
contexts. Note that in Section \ref{Sect Bogoliubov app tech1} we provide a
more detailed explanation of why it was named after Bogoliubov.
\end{itemize}

\noindent Each of the two main sections ends with a more technical
subsection and an appendix that compile mathematical assertions used in this
work. The results presented in the two appendices are either standard,
largely unknown (e.g., Theorem \ref{theorem sympa}) or new. This makes the
paper self-contained and accessible to a wide audience.

\section{Paradigmatic Example: Nonlinear Thermodynamic Formalism of
Dynamical Systems\label{thermo form}\label{Setup of the Problem}}

The thermodynamic formalism is an old subject in strong connection with
dynamical systems and ergodic theory. See, e.g., Ruelle's book \cite%
{TF-Linear5} which first systematized this theory in 1978. It remains an
extremely active and important area of mathematics. See, for example, the
book \cite{CIRM1} that emerged from a semester in 2019 at the Centre
International de Rencontres Math\'{e}matiques on \textquotedblleft
Thermodynamic Formalism: Applications to Probability, Geometry and
Fractals\textquotedblright . It has indeed numerous ramifications across
other areas of mathematics, including: the ergodic transport, the fractal
and multifractal formalism, discrete-time linear dynamics, $C^{\ast }$%
-algebras, Haar systems, groupoids, quasi-invariant probabilities for
cocycles, mathematical statistics, etc. For example, it has been known since
the 1980s that the Hausdorff dimension of the support of a probability
measure can, in certain cases, be determined through the derivative of a
(linear) topological pressure. See, e.g., \cite{Lopez-fractal}. For various
references and examples of works connected to the thermodynamic formalism,
see \cite%
{TF-Linear0,TF-Linear1,TF-Linear2,TF-Linear3,TF-Linear4,TF-Linear5,TF-Linear6,TF-Linear8,TF-Linear9,Wang,Climenhaga}%
.

In recent years, approaches to study questions in mean-field theory from the
ergodic viewpoint were introduced, in particular for the Curie-Weiss type
models. These results constitute the foundations of a new area called the 
\emph{nonlinear thermodynamic formalism} \cite{TF1,TF2,TF3,TF4,TF5}. As
explained in \cite{TF4}, the nonlinear version of the thermodynamic
formalism has been also considered in relation to the multifractal analysis 
\cite{Climenhaga}.

As it turns out, there is an intricate relationship between the\ nonlinear
formalism and the linear one. For compact metric alphabets, in the case of
convex and concave nonlinearities, or a combination of both, we present a
new method inspired by \textquotedblleft quantum
mathematics\textquotedblright\ to solve this problem effectively. In fact,
the link between both formalisms is made via a min-max principle which leads
to the concept of thermodynamic games, the cooperative equilibria of which
provide a complete classification of the nonlinear equilibrium measures of a
given nonlinear (topological) pressure as equilibrium measures of
self-consistent (effective) linear pressures.

The general (abstract) version of this method is postponed to Section \ref%
{Abstract Theory}. To motivate it, we first present its application to the
nonlinear version of the thermodynamic formalism as paradigmatic example.
Although the nonlinear thermodynamic formalism is a known subject, our
approach introduces new developments in the form of a novel yet natural
variational problem on probability measures (Section \ref{Section game +
measusre}). Even in the case of finite alphabets our results cover a more
general class than the current literature in nonlinear thermodynamic
formalism \cite{TF1,TF2,TF3,TF4,TF5}.

To make the paper self-contained, we begin with the mathematical framework
of the symbolic dynamical systems formalism.

\subsection{Mathematical framework\label{math framework}}

\textbf{Alphabet.} Let $(\Omega ,d)$ be any compact metric space. It
represents a general (possibly infinite) alphabet. For simplicity and
without loss of generality, we assume that 
\begin{equation}
\max d(\Omega \times \Omega )\doteq \max \left\{ d\left( \omega _{1},\omega
_{2}\right) :\omega _{1},\omega _{2}\in \Omega \right\} =1\ ,
\label{max metric}
\end{equation}%
i.e., $\Omega $ has normalized diameter. In addition, we set an arbitrary a
priori probability measure $\mathrm{m}$, which is fixed once and for all, on
the Borel $\sigma $-algebra of the alphabet $(\Omega ,d)$. Recall that such
a measure is always regular, for the alphabet is a metric space. Below, it
is important to assume that $\mathrm{m}$ has always support equal to $\Omega 
$.\medskip

\noindent \textbf{Infinite strings.} From the compact metric space $\Omega $%
\ we construct the compact\footnote{%
It is compact, by Tychonoff's theorem \cite[Section A.3]{Rudin}.}
topological space $\Sigma \doteq \Omega ^{\mathbb{N}}$ as the set of
infinite strings in the alphabet $\Omega $ endowed with the product
topology. A standard basis of the topology of $\Sigma $ is given by the
collection of \textit{cylinders}, defined for any $N\in \mathbb{N}$ and
finite sequence $O_{1},\ldots ,O_{N}$ of open sets of $(\Omega ,d)$ by 
\begin{equation*}
\left[ O_{1},\ldots ,O_{N}\right] \doteq \left\{ \sigma =\left( \omega
_{n}\right) _{n\in \mathbb{N}}\in \Sigma :\omega _{1}\in O_{1},\ldots
,\omega _{N}\in O_{N}\right\} \subseteq \Sigma \text{ }.
\end{equation*}%
Fix once and for all throughout the paper a parameter $\eta \in (0,1)$.
Then, a metric generating the topology of $\Sigma $ can be defined by 
\begin{equation}
d_{\eta }\left( \sigma ,\sigma ^{\prime }\right) \doteq \left( 1-\eta
\right) \sum_{n\in \mathbb{N}}\eta ^{n-1}d\left( \omega _{n},\omega
_{n}^{\prime }\right) \ ,\text{\qquad }\sigma =\left( \omega _{n}\right)
_{n\in \mathbb{N}},\sigma ^{\prime }=\left( \omega _{n}^{\prime }\right)
_{n\in \mathbb{N}}\in \Sigma \ .  \label{metric sigma}
\end{equation}%
Note that the metric is defined to satisfy the normalization 
\begin{equation*}
\max d_{\eta }\left( \Sigma \times \Sigma \right) \doteq \max \left\{
d_{\eta }\left( \sigma _{1},\sigma _{2}\right) :\sigma _{1},\sigma _{2}\in
\Sigma \right\} =1,
\end{equation*}%
given a fixed parameter $\eta \in (0,1)$. Observe that \cite[Proposition 1.2]%
{TF-Linear1} shows that results for equilibrium probabilities on $\Omega ^{%
\mathbb{Z}}$ can be derived from the ones obtained in $\Omega ^{\mathbb{N}}$%
, which are studied here. \medskip

\noindent \textbf{Real-valued functions on strings.} $C(\Sigma )\equiv
C(\Sigma ;\mathbb{R})$ is the Banach space of all real-valued continuous
functions endowed with the supremum norm:%
\begin{equation*}
\left\Vert \varphi \right\Vert _{\infty }\equiv \left\Vert \varphi
\right\Vert _{\infty ,\Sigma }\doteq \sup \left\vert \varphi \left( \Sigma
\right) \right\vert \doteq \sup \left\{ \left\vert \varphi \left( \sigma
\right) \right\vert :\sigma \in \Sigma \right\} \ ,\qquad \varphi \in
C\left( \Sigma \right) \ .
\end{equation*}%
$C_{\mathrm{f}}(\Sigma )\equiv C_{\mathrm{f}}(\Sigma ;\mathbb{R})$ denotes
the subspace of all continuous functions $\Sigma \rightarrow \mathbb{R}$
that are supported in some cylinder $[O_{1},\ldots ,O_{N}]$. It is a dense
subspace of $C(\Sigma )$.

For any $\alpha \in (0,1]$, 
\begin{equation*}
C^{\alpha }\left( \Sigma \right) \equiv C^{\alpha }(\Sigma ;\mathbb{R}%
)\doteq \left\{ \varphi \in C\left( \Sigma \right) :\exists C>0\ \text{so
that }\left\vert \varphi \left( \sigma \right) -\varphi \left( \sigma
^{\prime }\right) \right\vert \leq Cd_{\eta }\left( \sigma ,\sigma ^{\prime
}\right) ^{\alpha }\text{ for }\sigma ,\sigma ^{\prime }\in \Sigma \right\}
\end{equation*}%
is the space of all real-valued $\alpha $-H\"{o}lder-continuous functions
with respect to the metric $d_{\eta }$. We use in this space the so-called H%
\"{o}lder norm%
\begin{equation}
\left\Vert \varphi \right\Vert _{\alpha }\doteq \left\Vert \varphi
\right\Vert _{\infty }+\sup_{\sigma ,\sigma ^{\prime }\in \Sigma ,\text{ }%
\sigma \neq \sigma ^{\prime }}\left\{ d_{\eta }\left( \sigma ,\sigma
^{\prime }\right) ^{-\alpha }\left\vert \varphi \left( \sigma \right)
-\varphi \left( \sigma ^{\prime }\right) \right\vert \right\} \text{ },\text{%
\qquad }\varphi \in C^{\alpha }\left( \Sigma \right) \ ,  \label{norm1}
\end{equation}%
with respect to which $C^{\alpha }(\Sigma )$ is a Banach space. Note also
that $C_{\mathrm{f}}(\Sigma )\subseteq C^{\alpha }(\Sigma )$. In particular, 
$C^{\alpha }(\Sigma )$ is a dense subspace of $C(\Sigma )$.

Some of our results require the use of the unit closed ball 
\begin{equation}
S\doteq \left\{ \varphi \in C\left( \Sigma \right) :\left\Vert \varphi
\right\Vert _{\infty }\leq 1\right\} \subseteq C\left( \Sigma \right)
\label{norm1bis}
\end{equation}%
of the Banach space $C(\Sigma )$ of continuous functions. $S$ is endowed
with the uniform metric%
\begin{equation*}
d_{S}(\varphi ,\varphi ^{\prime })\doteq \left\Vert \varphi -\varphi
^{\prime }\right\Vert _{\infty }\ ,\text{\qquad }\varphi ,\varphi ^{\prime
}\in S\text{ }.
\end{equation*}%
Note that $S$ is a separable metric space, $C(\Sigma )$ being a separable
normed space. Then, $\mathcal{M}(S)\equiv \mathcal{M}(S;\mathbb{R})$ denotes
the Banach space of bounded, real-valued Borel-measurable functions $%
S\rightarrow \mathbb{R}$ with the supremum norm:%
\begin{equation*}
\left\Vert f\right\Vert _{\infty }\equiv \left\Vert f\right\Vert _{\infty
,S}\doteq \sup \left\vert f\left( S\right) \right\vert \doteq \sup \left\{
\left\vert f\left( \varphi \right) \right\vert :\varphi \in S\right\} \
,\qquad f\in \mathcal{M}(S)\ .
\end{equation*}

\noindent \textbf{Shift-invariant probability measures on strings.} The
so-called \textit{shift mapping} $T:\Sigma \rightarrow \Sigma $ is defined by%
\begin{equation}
T\left( \sigma \right) _{n}\doteq \omega _{n+1}\ ,\qquad n\in \mathbb{N},\
\sigma =\left( \omega _{n}\right) _{n\in \mathbb{N}}\in \Sigma \ .
\label{Transformation definition}
\end{equation}%
Clearly, $T$ is continuous and in particular Borel measurable. Then, a
probability measure $\mu $ on the Borel $\sigma $-algebra of $\Sigma $ is,
by definition, \textit{$T$-invariant}\ when $T_{\ast }(\mu )=\mu $, where $%
T_{\ast }(\mu )$ stands for the pushforward of the measure $\mu $ with
respect to $T$, i.e., $T_{\ast }\left( \mu \right) (B)\doteq \mu (T^{-1}(B))$
for any Borel set $B\subseteq \Sigma $.

As $\Sigma $ is a metrizable compact space, by the Riesz-Markov-Kakutani
representation theorem, one can identify the space of Borel measures of
finite variation on $\Sigma $ with the (topological) dual space $C(\Sigma
)^{\ast }$ of the Banach space $C(\Sigma )$. See, e.g., \cite[14.15 Corollary%
]{AliBor}. $\mathcal{P}\subseteq C(\Sigma )^{\ast }$ denotes the convex
space of all Borel probability measures on $\Sigma $ and the convex set of $%
T $-invariant ones is denoted 
\begin{equation}
\mathcal{P}\left( T\right) \doteq \left\{ \mu \in \mathcal{P}:T_{\ast }(\mu
)=\mu \right\} \ .  \label{set T-inv}
\end{equation}%
It is a weak$^{\ast }$-closed subset of $\mathcal{P}$ and, since $\mathcal{P}
$ is weak$^{\ast }$-compact, $\mathcal{P}\left( T\right) $ is also weak$%
^{\ast }$-compact. Notice that the weak$^{\ast }$ topology of $\mathcal{P}$
and, consequently, that of $\mathcal{P}(T)$, is metrizable, because the
Banach space $C(\Sigma )$ is separable, as $\Sigma $ is a metrizable compact
space.

Thanks to the Krein-Milman theorem \cite[Theorem 3.23]{Rudin}, $\mathcal{P}%
(T)$ is the weak$^{\ast }$-closure of the convex hull of the (nonempty) set $%
\mathcal{P}_{\mathrm{erg}}(T)$ of its extreme points, i.e., 
\begin{equation*}
\mathcal{P}\left( T\right) =\overline{\mathrm{co}}\ \mathcal{P}_{\mathrm{erg}%
}\left( T\right) \ .
\end{equation*}%
By the metrizability of $\mathcal{P}(T)$ and Lemma \ref{lemma extr gd}, $%
\mathcal{P}_{\mathrm{erg}}(T)$ is a Borel set with respect to the weak$%
^{\ast }$ topology. Extreme $T$-invariant Borel probability measures on $%
\Sigma $, i.e., the elements of $\mathcal{P}_{\mathrm{erg}}(T)$, are the 
\textit{ergodic} measures, because of Proposition \ref{prop erg extreme}.
See also \cite{Walters} for the case of a finite alphabet. In addition, by
Proposition \ref{density of ergodic measures}, the set $\mathcal{P}_{\mathrm{%
erg}}(T)$ of ergodic measures is a weak$^{\ast }$-dense subset of $\mathcal{P%
}(T)$:%
\begin{equation}
\mathcal{P}\left( T\right) =\overline{\mathcal{P}_{\mathrm{erg}}\left(
T\right) }\ .  \label{eq erg dense}
\end{equation}

\subsection{Entropy for compact metric alphabets\label{sect entropy}}

In the literature, entropy is usually only defined for finite alphabets $%
\Omega $, but, motivated by possible applications to continuous-spin
systems, following \cite{LMMS} (see also \cite{ACR}), we allow the alphabet $%
\Omega $ to be an arbitrary compact metric space. Entropy is not as well
understood in this broader context and much less literature is available.

To define it, recall that we fix in all the paper an arbitrary a priori
probability measure $\mathrm{m}$ whose support is equal to $\Omega $. It is
used to introduce the (transfer) Ruelle operator $\mathcal{L}_{0}^{\mathrm{m}%
}$, which is the linear operator on $C\left( \Sigma \right) $ defined, for
any $\varphi \in C(\Sigma )$ and $\sigma =(\omega _{n})_{n\in \mathbb{N}}\in
\Sigma $, by 
\begin{equation*}
\mathcal{L}_{0}^{\mathrm{m}}\varphi \left( \sigma \right) \doteq
\int_{\Omega }\varphi \left( \omega _{0}\sigma \right) \mathrm{m}\left( 
\mathrm{d}\omega _{0}\right) \ ,\qquad \omega _{0}\sigma \doteq \left(
\omega _{0},\omega _{1},\ldots \right) \in \Sigma \ .
\end{equation*}%
In the case the alphabet is the finite set $\{1,2,...,k\}$, it is natural to
take the a priori probability measure $\mathrm{m}$ as the normalized
counting measure, see \cite{LMMS} for details.

Let $\mathcal{C}^{+}$ be the cone of positive functions of $C(\Sigma )$,
i.e., the set of continuous functions $\Sigma \rightarrow \mathbb{R}_{0}^{+}$%
. Then, the entropy for $T$-invariant probability measures is defined as
follows:

\begin{definition}[Entropy]
\label{uod}\mbox{ }\newline
The entropy of any $T$-invariant probability measure is, by definition,
equal to 
\begin{equation}
h\left( \mu \right) \equiv h_{\mathrm{m}}\left( \mu \right) \doteq
\inf_{\varphi \in \mathcal{C}^{+}}\left\{ \int_{\Sigma }\ln \left( \frac{%
\mathcal{L}_{0}^{\mathrm{m}}\varphi \left( \sigma \right) }{\varphi \left(
\sigma \right) }\right) \mu \left( \mathrm{d}\sigma \right) \right\} \
,\qquad \mu \in \mathcal{P}\left( T\right) \ .  \label{oiu54}
\end{equation}
\end{definition}

Remark that the infimum in (\ref{oiu54}) is not necessarily attained for
some positive function $\varphi \in \mathcal{C}^{+}$. In addition, the
entropy is a mapping $\mu \mapsto h(\mu )$ from $\mathcal{P}(T)$ to $%
[-\infty ,0]$. For instance, the $T$-invariant measure whose support is a
fixed point for $T$ is equal to $-\infty $, while 
\begin{equation*}
h\left( \mu \right) \leq \int_{\Sigma }\ln \left( \frac{\mathcal{L}_{0}^{%
\mathrm{m}}\mathbf{1}\left( \sigma \right) }{\mathbf{1}\left( \sigma \right) 
}\right) \mu \left( \mathrm{d}\sigma \right) =0
\end{equation*}%
for any $T$-invariant probability measure $\mu \in \mathcal{P}\left(
T\right) $, where $\mathbf{1}\in \mathcal{C}^{+}$ is the (constant) function
equal to $1$.

A useful observation concerns an equivalent definition of the entropy
defined above. To state it, we use a generalization of the Ruelle operator.
For a given $\alpha $-H\"{o}lder-continuous potential $f\in C^{\alpha
}(\Sigma )$ ($\alpha \in (0,1)$) and an a priori measure $\mathrm{m}$ on $%
\Omega $ whose support is equal to $\Omega $, it is defined, for any $%
\varphi \in C^{\alpha }(\Sigma )$, by%
\begin{equation*}
\mathcal{L}_{f}^{\mathrm{m}}(\varphi )\left( \sigma \right) =\int_{\Omega
}\exp (f(\omega _{0}\sigma ))\varphi (\omega _{0}\sigma )\,\mathrm{m}\left( 
\mathrm{d}\omega _{0}\right) \ ,\qquad \omega _{0}\sigma \doteq \left(
\omega _{0},\omega _{1},\ldots \right) \in \Sigma \ .
\end{equation*}%
This Ruelle operator is a linear operator on $C^{\alpha }\left( \Sigma
\right) $. A normalized potential $f$ is, by definition, a potential $f\in
C^{\alpha }(\Sigma )$ ($\alpha \in (0,1)$) satisfying $\mathcal{L}_{f}^{%
\mathrm{m}}(\mathbf{1})(\sigma )=1$ for all $\sigma \in \Sigma $. Then, we
have the following fact:\ 

\begin{proposition}[Alternative definition of the entropy]
\label{kul copy(1)}\mbox{ }\newline
For the entropy of Definition \ref{uod}, the following equality holds: 
\begin{equation}
h\left( \mu \right) =\inf_{f\in C^{\alpha }(\Sigma )}\left\{ \log \lambda
_{f}-\int_{\Sigma }f\left( \sigma \right) \mu \left( \mathrm{d}\left( \sigma
\right) \right) \right\} \ ,  \label{vam1}
\end{equation}%
where $\lambda _{f}$ is the main (i.e., the largest) eigenvalue of the
Ruelle operator $\mathcal{L}_{f}^{\mathrm{m}}$.
\end{proposition}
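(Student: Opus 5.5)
We prove the two inequalities between $h(\mu )$ of Definition \ref{uod} and the right-hand side of (\ref{vam1}) separately, using the Ruelle--Perron--Frobenius theorem for H\"{o}lder potentials on compact metric alphabets (as in \cite{LMMS}). For $f\in C^{\alpha }(\Sigma )$, $\mathcal{L}_{f}^{\mathrm{m}}$ has a simple main eigenvalue $\lambda _{f}>0$ with a strictly positive H\"{o}lder eigenfunction $\psi _{f}$, $\mathcal{L}_{f}^{\mathrm{m}}\psi _{f}=\lambda _{f}\psi _{f}$.

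\emph{Inequality $h(\mu )\leq \inf_{f}\{\cdots \}$.} Fix $f\in C^{\alpha }(\Sigma )$ and $\mu \in \mathcal{P}(T)$, and take the test function $\varphi \doteq \psi _{f}e^{f}\in \mathcal{C}^{+}$. By the definition of $\mathcal{L}_{0}^{\mathrm{m}}$, $\mathcal{L}_{0}^{\mathrm{m}}(e^{f}\psi _{f})=\mathcal{L}_{f}^{\mathrm{m}}\psi _{f}=\lambda _{f}\psi _{f}$. Hence
\begin{equation*}
\int_{\Sigma }\ln \frac{\mathcal{L}_{0}^{\mathrm{m}}\varphi }{\varphi }\,\mathrm{d}\mu =\log \lambda _{f}-\int_{\Sigma }f\,\mathrm{d}\mu +\int_{\Sigma }\left( \ln \psi _{f}-\ln \psi _{f}\right) \mathrm{d}\mu =\log \lambda _{f}-\int_{\Sigma }f\,\mathrm{d}\mu .
\end{equation*}
Taking the infimum over $f$ gives this inequality. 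Here $T$-invariance is not even used, since $\varphi $ is evaluated at the same point $\sigma $ in numerator and denominator.

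\emph{Reverse inequality.} Given $\varphi \in \mathcal{C}^{+}$, we want some $f$ with $\log \lambda _{f}-\int f\,\mathrm{d}\mu \leq \int \ln (\mathcal{L}_{0}^{\mathrm{m}}\varphi /\varphi )\,\mathrm{d}\mu +\varepsilon $. First reduce to $\varphi $ strictly positive and H\"{o}lder. Replace $\varphi $ by $\varphi +\delta $. Then approximate uniformly by a positive H\"{o}lder function, for instance a cylinder-type function from the dense set $C^{\alpha }(\Sigma )$; $\mathcal{L}_{0}^{\mathrm{m}}$ is a positive contraction on $C(\Sigma )$. For $\varphi \geq c>0$ the integrand depends continuously on $\varphi $ in sup norm, so its integral changes by at most $\varepsilon $. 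The limit $\delta \rightarrow 0$ can be handled by monotone/dominated convergence, or one simply notes that $\ln (\mathcal{L}_{0}^{\mathrm{m}}(\varphi +\delta )/(\varphi +\delta ))$ is bounded above by an integrable bound and tends pointwise to the original integrand. This lets us restrict the infimum in (\ref{oiu54}) to $\varphi \in C^{\alpha }(\Sigma )$ with $\varphi >0$.

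For such $\varphi $, define
\begin{equation*}
f\doteq \ln \varphi -\ln \left( \mathcal{L}_{0}^{\mathrm{m}}\varphi \right) \circ T\in C^{\alpha }(\Sigma ),
\end{equation*}
noting that $\mathcal{L}_{0}^{\mathrm{m}}\varphi $ is H\"{o}lder and positive. Then
\begin{equation*}
\mathcal{L}_{f}^{\mathrm{m}}\mathbf{1}(\sigma )=\int_{\Omega }\frac{\varphi (\omega _{0}\sigma )}{\mathcal{L}_{0}^{\mathrm{m}}\varphi (\sigma )}\,\mathrm{m}(\mathrm{d}\omega _{0})=1 ,
\end{equation*}
so $f$ is normalized. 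For a normalized potential the main eigenvalue is $\lambda _{f}=1$, because $\mathbf{1}$ is a positive eigenfunction and the positive eigenfunction corresponds to the main eigenvalue. By $T$-invariance of $\mu $,
\begin{equation*}
-\int_{\Sigma }f\,\mathrm{d}\mu =\int_{\Sigma }\ln \left( \mathcal{L}_{0}^{\mathrm{m}}\varphi \right) \mathrm{d}\mu -\int_{\Sigma }\ln \varphi \,\mathrm{d}\mu =\int_{\Sigma }\ln \frac{\mathcal{L}_{0}^{\mathrm{m}}\varphi }{\varphi }\,\mathrm{d}\mu .
\end{equation*}
Thus $\log \lambda _{f}-\int f\,\mathrm{d}\mu $ equals the functional in (\ref{oiu54}), and combining with the approximation step yields the reverse inequality.

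\textbf{Main obstacle.} The delicate point is the approximation step, namely showing that restricting (\ref{oiu54}) to strictly positive H\"{o}lder $\varphi $ does not change the infimum when $\varphi $ may vanish and the integrand may equal $-\infty $. The case $h(\mu )=-\infty $ requires producing H\"{o}lder $f$ with arbitrarily negative values. I would handle it by the $\delta $-regularization above together with Fatou/monotone convergence in the limit $\delta \downarrow 0$. The other needed input is the spectral fact that a positive eigenfunction forces the main eigenvalue, which I take from the Ruelle--Perron--Frobenius theorem of \cite{LMMS}.
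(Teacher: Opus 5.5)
Your proof is correct, but it takes a different route from the paper. The paper does no computation. It refers to \cite[Remark 7.8]{GKLM} and \cite[Definition 2]{ACR}, and observes that the finite-alphabet argument there uses only abstract Legendre--Fenchel properties. In effect, $h$ is recovered as the concave conjugate of the pressure $f\mapsto\log\lambda_f$, via the variational principle for H\"older potentials, biconjugation of the concave upper semicontinuous $h$, and density of $C^{\alpha}(\Sigma)$ in $C(\Sigma)$.

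You instead write down explicit value-preserving maps between the two index sets:
\begin{itemize}
\item $f\mapsto\psi_f e^{f}$, which needs no invariance;
\item $\varphi\mapsto\ln\varphi-(\ln\mathcal{L}_0^{\mathrm{m}}\varphi)\circ T$, which yields a normalized potential, with $T$-invariance removing the coboundary.
\end{itemize}

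What your route buys:
\begin{itemize}
\item It is more elementary and self-contained, bypassing the variational principle and the Fenchel--Moreau theorem.
\item From the Ruelle--Perron--Frobenius theorem it uses only a positive continuous eigenfunction for $\lambda_f$. For normalized $f$, $\lambda_f=1$ even follows directly from $|\lambda|\,|g|\le\mathcal{L}_f^{\mathrm{m}}|g|\le\|g\|_\infty$.
\item It keeps the exponent $\alpha$ fixed, because $T$ is $\eta^{-1}$-Lipschitz for $d_\eta$.
\item It deals with the fact that $\mathcal{C}^{+}$, as literally defined, contains functions with zeros.
\end{itemize}
The paper's route, in exchange, places the statement inside the pressure/entropy duality that is reused in Proposition \ref{kul}.

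One detail in your regularization step should be made explicit. Write $a=\mathcal{L}_0^{\mathrm{m}}\varphi$, $b=\varphi$, $g\doteq\ln(a/b)$ and $g_\delta\doteq\ln\frac{a+\delta}{b+\delta}$, with $g\doteq 0$ on $\{a=b=0\}$. The convergence $g_\delta\to g$ is not monotone, and $-\ln(\varphi+\delta)$ gives no uniform bound. The correct bound is $g_\delta\le g^{+}$. It holds because $\ln\frac{a+\delta}{b+\delta}\le\max\{\ln\frac{a}{b},0\}$ for $a,b\ge0$ not both $0$. Reverse Fatou then yields $\limsup_{\delta\downarrow0}\int g_\delta\,\mathrm{d}\mu\le\int g\,\mathrm{d}\mu$ whenever the right-hand side is $<+\infty$, which is exactly the case you need.
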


\begin{proof}
See in \cite[Remark 7.8]{GKLM} and \cite[Definition 2]{ACR}. Note that the
arguments in \cite[Section 7]{GKLM} refer to a finite alphabet $\Omega $,
but a close analysis of the proofs shows that only the abstract properties
of the Legendre-Fenchel transform are necessary. Therefore, everything can
be done in the same way with a compact metric space $\Omega $ and, in
particular, the first equation of \cite[Section 7.1]{GKLM} applies to the
case the alphabet is a compact metric space $\Omega $.
\end{proof}

By Definition \ref{uod}, the mapping $\mu \mapsto h(\mu )$ from $\mathcal{P}%
(T)$ to $[-\infty ,0]$ is upper semicontinuous in the weak$^{\ast }$
topology, the entropy being defined as the infimum of weak$^{\ast }$%
-continuous functions (indexed by $\varphi \in \mathcal{C}^{+}$). It is
additionally affine, thanks to Proposition \ref{Affinity of the entropy
copy(1)}. That is, the entropy $h$ is both concave and convex. See also \cite%
{Walters} for the case of finite alphabets. This follows from the concept of
specific entropy given in \cite{ACR} for compact metric alphabets, along
with the characterization of the entropy in terms of a thermodynamic limit,
as given by \cite[Theorems 3.1 and 3.4]{ACR}. See Theorem \ref%
{teo-equiv-entrop}. We elaborate on that in Section \ref{Further Study of
the Entropy}.

\begin{remark}[Alternative formulations of entropy]
\mbox{ }\newline
Non-dynamical entropies, such as the Gibbs-Boltzmann (or Shannon) entropy
(density) can also be considered. Recent relevant examples include more
general forms of entropy on the probability space, as presented in \cite%
{LMO,LO}, along with their relationship to the pushforward dynamics (the
level-2 setting). There, the authors define an entropy that generalizes the
one we use for a compact metric alphabet. \cite{TF-Linear8} defines an
abstract pressure and derives a corresponding entropy via the
Legendre-Fenchel transform. In all these variants, the entropy functional
remains concave and upper semicontinuous, ensuring that the results of
Section \ref{Abstract Theory} still apply.
\end{remark}

\subsection{Linear thermodynamic formalism\label{Linear thermo}}

Given an a priori probability measure $\mathrm{m}$ whose support is equal to 
$\Omega $, the so-called \emph{topological pressure} (pressure for short) $%
\mathfrak{P}_{L}(\varphi )$ of a continuous function $\varphi \in C(\Sigma )$
is defined by%
\begin{equation}
\mathfrak{P}_{L}\left( \varphi \right) \doteq \sup_{\mu \in \mathcal{P}%
\left( T\right) }\left\{ h\left( \mu \right) +\varphi \left( \mu \right)
\right\} \ ,  \label{sdsdsdsdfssfg}
\end{equation}%
where 
\begin{equation*}
\varphi \left( \mu \right) \doteq \int_{\Sigma }\varphi \left( \sigma
\right) \mu \left( \mathrm{d}\sigma \right) \ ,\qquad \mu \in \mathcal{P}%
\left( T\right) \ .
\end{equation*}%
In this context, the continuous function $\varphi \in C(\Sigma )$ is called 
\emph{potential}. This pressure is nothing else than the Legendre-Fenchel
transform of minus the entropy and, conversely, the entropy is minus the
Legendre-Fenchel transform of the pressure (see \cite[Theorem 9.12]{Walters}
for the case of a finite alphabet):

\begin{proposition}[Pressure versus entropy]
\label{kul}\mbox{ }\newline
Considering the entropy of Definition \ref{uod}, one has 
\begin{equation*}
h\left( \mu \right) =\inf_{\varphi \in C(\Sigma )}\left\{ \mathfrak{P}%
_{L}\left( \varphi \right) -\varphi \left( \mu \right) \right\} \ ,\qquad
\mu \in \mathcal{P}\left( T\right) \ .
\end{equation*}
\end{proposition}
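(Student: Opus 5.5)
This is a Fenchel--Moreau biduality statement, so the plan is to show that $-h$, suitably extended, is a proper lower semicontinuous convex function on $C(\Sigma)^{\ast}$ whose conjugate is $\mathfrak{P}_L$. Concretely, extend $h$ to all of $C(\Sigma)^{\ast}$ (weak$^{\ast}$ topology) by $h(\mu)\doteq-\infty$ for $\mu\notin\mathcal{P}(T)$, and put $F\doteq -h:C(\Sigma)^{\ast}\to(-\infty,+\infty]$. Then (\ref{sdsdsdsdfssfg}) reads
\begin{equation*}
\mathfrak{P}_L(\varphi)=\sup_{\mu\in C(\Sigma)^{\ast}}\{\varphi(\mu)-F(\mu)\}=F^{\ast}(\varphi),
\end{equation*}
since the dual of $C(\Sigma)^{\ast}$ with the weak$^{\ast}$ topology is $C(\Sigma)$. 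The claimed identity is $F=F^{\ast\ast}$ on $\mathcal{P}(T)$, because $F^{\ast\ast}(\mu)=\sup_{\varphi}\{\varphi(\mu)-\mathfrak{P}_L(\varphi)\}$.

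\textbf{Easy inequality.} From (\ref{sdsdsdsdfssfg}), $\mathfrak{P}_L(\varphi)\geq h(\mu)+\varphi(\mu)$ for every $\varphi$, which gives $h\leq\inf_{\varphi}\{\mathfrak{P}_L(\varphi)-\varphi(\mu)\}$.

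\textbf{Hypotheses of Fenchel--Moreau.} For the reverse inequality I invoke the Fenchel--Moreau theorem in the locally convex space $(C(\Sigma)^{\ast},\text{weak}^{\ast})$, so I must check three properties of $F$.
\begin{itemize}
\item Properness: $F\not\equiv+\infty$ because $h$ is finite somewhere, e.g.\ $h(\mathrm{m}^{\otimes\mathbb{N}})=0$ by the explicit computation with $\varphi=\mathbf{1}$ and the product structure. Alternatively, $\mathfrak{P}_L(0)\le 0$ together with $\mathfrak{P}_L$ being finite forces $h$ to be finite somewhere. Also $F>-\infty$ since $h\le 0$.
\item Convexity: $h$ is affine on the convex set $\mathcal{P}(T)$ (Proposition \ref{Affinity of the entropy copy(1)}), and the extension by $-\infty$ outside a convex set preserves concavity.
\item Lower semicontinuity: $h$ is an infimum of weak$^{\ast}$-continuous functions on $\mathcal{P}(T)$, hence upper semicontinuous there. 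Since $\mathcal{P}(T)$ is weak$^{\ast}$-closed in $C(\Sigma)^{\ast}$, the superlevel sets $\{h\ge c\}$ are closed in $\mathcal{P}(T)$ and hence closed in $C(\Sigma)^{\ast}$. So $F$ is weak$^{\ast}$-lsc.
\end{itemize}
With these, Fenchel--Moreau gives $F=F^{\ast\ast}$, which for $\mu\in\mathcal{P}(T)$ is exactly the statement, including the case $h(\mu)=-\infty$.

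\textbf{Main obstacle.} Upper semicontinuity is immediate from Definition \ref{uod}, so the real content lies in the concavity of $h$, i.e.\ the affinity result (Proposition \ref{Affinity of the entropy copy(1)}) via the thermodynamic-limit characterization of \cite{ACR}; I take that as given. A secondary point to watch is that duality is applied in $C(\Sigma)^{\ast}$ rather than in $\mathcal{P}(T)$ itself. This matters because affine functionals on $\mathcal{P}(T)$ that are continuous are restrictions of $\varphi+c$ with $\varphi\in C(\Sigma)$. The constant $c$ is harmless because every $\mu\in\mathcal{P}(T)$ satisfies $\mu(\mathbf{1})=1$, so $c$ can be absorbed into $\varphi$.
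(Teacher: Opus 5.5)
Your proof is correct and is essentially the paper's own argument. Like the paper, you extend $h$ by $-\infty$ outside $\mathcal{P}(T)$ and apply Fenchel--Moreau biconjugation in $C(\Sigma)^{\ast}$ with the weak$^{\ast}$ topology, whose dual is $C(\Sigma)$, so that $\mathfrak{P}_L$ is the conjugate of $-h$. The one misstatement is your ``main obstacle'': concavity of $h$ is as immediate from Definition \ref{uod} as upper semicontinuity, because each map $\mu\mapsto\int_{\Sigma}\ln(\mathcal{L}_{0}^{\mathrm{m}}\varphi/\varphi)\,\mathrm{d}\mu$ is affine and an infimum of affine maps is concave, so neither Proposition \ref{Affinity of the entropy copy(1)} nor the thermodynamic-limit result of \cite{ACR} is needed here (they supply the convexity of $h$, which plays no role in this duality).
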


\begin{proof}
By Definition \ref{uod}, $h$ is concave and weak$^{\ast }$-upper
semicontinuous, being the infimum of weak$^{\ast }$ continuous functions
indexed by $\varphi \in \mathcal{C}^{+}$. So, the assertion is nothing else
than Equation (\ref{Biconjugate definition}) for the concave and weak$^{\ast
}$-upper semicontinuous function $g$ on the dual space $C(\Sigma )^{\ast }$
defined by $g=h$ on $\mathcal{P}(T)$ and $-\infty $ otherwise.
\end{proof}

In the thermodynamic formalism (and statistical mechanics of one-dimensional
lattices) one is interested in $T$-invariant probabilities $\mu _{\varphi }$
giving the exact pressure $\mathfrak{P}_{L}\left( \varphi \right) $, i.e.,
satisfying the equality 
\begin{equation}
P_{L}\left( \mu _{\varphi }\right) =\sup P_{L}\left( \mathcal{P}\left(
T\right) \right) \text{ },  \label{tryp}
\end{equation}%
where%
\begin{equation*}
P_{L}\left( \mu \right) \doteq h\left( \mu \right) +\varphi \left( \mu
\right) \ ,\text{\qquad }\mu \in \mathcal{P}\left( T\right) \text{ }.
\end{equation*}%
(Remark that the potential $\varphi $ considered here corresponds to minus
the Hamiltonian in statistical physics.) Such probabilities refer to the
following notion:

\begin{definition}[Equilibrium measures -- linear case]
\label{Equilibrium measures def}\mbox{ }\newline
Given a continuous function $\varphi \in C(\Sigma )$, a probability measure $%
\mu _{\varphi }$ satisfying (\ref{tryp}) is called a linear equilibrium
measure for $\varphi $.
\end{definition}

The set 
\begin{equation}
E_{P_{L}}\doteq \left\{ \mu \in \mathcal{P}\left( T\right) :P_{L}\left( \mu
\right) =\sup P_{L}\left( \mathcal{P}\left( T\right) \right) \right\}
\label{dfgdfgdfg}
\end{equation}%
of linear equilibrium measures is nonempty, weak$^{\ast }$-compact and
convex, because the mapping $\mu \mapsto \varphi \left( \mu \right) +h\left(
\mu \right) $ from $\mathcal{P}(T)$ to $\mathbb{R}$ is upper weak$^{\ast }$%
-semicontinuous and affine, thanks to Proposition \ref{Affinity of the
entropy copy(1)}.

When $\varphi $ is of H\"{o}lder class, i.e., $\varphi \in C^{\alpha
}(\Sigma )$ for some $\alpha \in (0,1]$, the linear equilibrium measure $\mu
_{\varphi }$ is unique and ergodic \cite{LMMS,ACR}. The set of all linear
equilibrium measures $\mu _{\varphi }$ for all possible H\"{o}lder
potentials $\varphi $ is even weak$^{\ast }$-dense in $\mathcal{P}(T)$,
i.e., 
\begin{equation*}
\overline{\left\{ \mu _{\varphi }:\exists \alpha \in (0,1]\ \text{so that}\
\varphi \in C^{\alpha }(\Sigma )\right\} }=\mathcal{P}\left( T\right) \ .
\end{equation*}%
(In particular one has that the ergodic probabilities are dense in $\mathcal{%
P}\left( T\right) $, see (\ref{eq erg dense}).) This follows from \cite[%
Corollary 7.14]{GKLM}. It is easy to see that the proof of this result given
in \cite{LMMS,ACR} is general enough to include the case where the alphabet
is a compact metric space.

\subsection{Nonlinear thermodynamic formalism}

A nonlinear version of the thermodynamic formalism is relatively recent. It
appeared in a series of seminal papers \cite{TF1,TF2,TF3,TF4,TF5}, which
contribute a rigorous approach to studying mean-field theory in classical
statistical mechanics (cf. the Curie-Weiss-Potts models).

In previous works, the nonlinearity is encoded by a continuous function $F:%
\mathbb{R}^{N}\rightarrow \mathbb{R}$ ($N\in \mathbb{N}$), the arguments of
which are the expectation values $\mu \left( \varphi _{1}\right) ,\ldots
,\mu \left( \varphi _{N}\right) $ of $N$ fixed $\alpha $-H\"{o}%
lder-continuous functions $\varphi _{1},\ldots ,\varphi _{N}\in C^{\alpha
}(\Sigma )$ with respect to a probability measure $\mu \in \mathcal{P}(T)$.
This refers to Example \ref{example F II} presented below. Here we consider
a significant generalization of what has been done in the literature on
nonlinear thermodynamic formalism by considering nonlinearities that,
instead of depending on a finite vector $\varphi _{1},\ldots ,\varphi
_{N}\in C^{\alpha }(\Sigma )$ of $\alpha $-H\"{o}lder-continuous functions,
now depend on a continuous function on the space $C^{\alpha }(\Sigma )$ of $%
\alpha $-H\"{o}lder-continuous functions. This allows us, for instance, to
include Example \ref{example F I}, which is the classical analogue of the
quantum lattice systems considered in \cite{BruPedra2}.

Intuitively, this allows us to use infinite, possibly uncountable, families $%
(\varphi _{\alpha })_{\alpha \in I}\in C^{\alpha }(\Sigma )$ of H\"{o}lder
potentials to formally define the nonlinear part as $F((\mu (\varphi
_{\alpha }))_{\alpha \in I})$ for any probability measure $\mu \in \mathcal{P%
}(T)$. We restrict ourselves to uniformly bounded families $(\varphi
_{\alpha })_{\alpha \in I}$ to avoid innocuous technical complications and
discussions. Since the function $F$ will be completely arbitrary, we can
take $(\varphi _{\alpha })_{\alpha \in I}$ in the unit ball $S\subseteq
C^{\alpha }(\Sigma )$ without loss of generality (for the uniformly bounded
case).

Therefore, for any continuous linear functional $\mu :C(\Sigma )\rightarrow 
\mathbb{R}$, define the continuous bounded function $\mu _{S}\in C(S)\equiv
C(S;\mathbb{R})$ by%
\begin{equation}
\mu _{S}(\varphi )\doteq \mu (\varphi )\ ,\text{\qquad }\varphi \in S\text{ }%
.  \label{eq def muS}
\end{equation}%
In other words, $\mu _{S}$ is a kind of projectivization of the action of $%
\mu $ on $C(\Sigma )$. Then, our aim is to study the variational problem 
\begin{equation}
\mathfrak{P}\left( F\right) \doteq \sup_{\mu \in \mathcal{P}\left( T\right)
}\left\{ h\left( \mu \right) +F\left( \mu _{S}\right) \right\} \text{ },
\label{variational problem}
\end{equation}%
where $F:C(S)\rightarrow \mathbb{R}$ is some function satisfying 
\begin{equation}
\sup_{\mu \in \mathcal{P}\left( T\right) }F\left( \mu _{S}\right) <\infty \ .
\label{sdsdsdsdsdsdsdsd}
\end{equation}%
(For instance, $F$ is bounded on the unit closed ball of $C(S)$. Note that $%
\mu _{S}$ is in this ball for all $\mu \in \mathcal{P}\left( T\right) $.)
Recall that $h(\mu )$ stands for the entropy of the $T$-invariant measure $%
\mu \in \mathcal{P}\left( T\right) $.\ The quantity $\mathfrak{P}(F)$ is
called here the nonlinear pressure of $F$.

As an example, if for some fixed $\alpha $-H\"{o}lder-continuous potential $%
A\in C^{\alpha }(\Sigma )\backslash \{0\}$ we are interested in maximizing
the quantity%
\begin{equation*}
\frac{1}{2}\mu \left( A\right) ^{2}+h\left( \mu \right)
\end{equation*}%
over $T$-invariant measures $\mu \in \mathcal{P}(T)$, then we take 
\begin{equation}
F\left( f\right) =\frac{1}{2}\left( \left\Vert A\right\Vert _{\infty
}f\left( \frac{A}{\left\Vert A\right\Vert _{\infty }}\right) \right)
^{2},\qquad f\in C\left( S\right) .  \label{ssdsdsdsklkl}
\end{equation}%
Moreover, since the metric $d$ and the function $F$ are arbitrary, our
setting also includes the case studied by \cite{TF4}, which is described in
detail in Section \ref{Buzzy-Kloeckner-Leplaideur's Approach}. The most
important cases for applications in statistical mechanics are probably the
following examples:

\begin{example}
\label{example F II}\mbox{ }\newline
Fix $N\in \mathbb{N}$, take any continuous function $F$\ on $\mathbb{R}^{N}$%
, along with (normalized) potentials $\varphi _{1},\ldots ,\varphi _{N}\in S$%
. Then%
\begin{equation*}
F^{(\varphi _{1},\ldots ,\varphi _{N})}\left( f\right) \doteq F\left(
f\left( \varphi _{1}\right) ,\ldots ,f\left( \varphi _{N}\right) \right) \
,\qquad f\in \mathcal{M}(S)\text{ },
\end{equation*}%
defines a continuous function on $\mathcal{M}(S)$ that is bounded on bounded
sets. This includes the following important particular choice: 
\begin{equation*}
F\left( \mu _{S}\right) =\mu \left( \varphi _{1}\right) +\lambda \left( \mu
\left( \varphi _{2}\right) \right) ^{2}\ ,\qquad \mu \in \mathcal{P}\left(
T\right) \ ,
\end{equation*}%
for any continuous functions\footnote{%
A priori we should have $\varphi _{1},\varphi _{2}\in S$, but a simple
change of the definition of the original function $F$ gives $\mu (\varphi
_{1})+\lambda (\mu (\varphi _{2}))^{2}$ for arbitrary $\varphi _{1},\varphi
_{2}\in C(\Sigma )$.} $\varphi _{1},\varphi _{2}\in C(\Sigma )$ and $\lambda
\in \mathbb{R}$. In particular, the linear case $\lambda =0$ is included.
\end{example}

\begin{example}
\label{example F I}\mbox{ }\newline
Take any finite positive Borel measure $\mathfrak{a}$ on $S$, $\lambda \in 
\mathbb{R}$ and some continuous function $\varphi \in S$. Then, 
\begin{equation*}
F_{\pm }^{(\varphi ,\mathfrak{a})}\left( f\right) \doteq \lambda f\left(
\varphi \right) \pm \int_{S}f\left( \varphi ^{\prime }\right) ^{2}\mathfrak{a%
}\left( \mathrm{d}\varphi ^{\prime }\right) \ ,\qquad f\in \mathcal{M}(S)\ ,
\end{equation*}%
defines a continuous real-valued function on $\mathcal{M}(S)$ that is
bounded on bounded sets.
\end{example}

Similar to\ Definition \ref{Equilibrium measures def}, we define equilibrium
measures in the nonlinear thermodynamic formalism as follows:

\begin{definition}[Equilibrium measures -- nonlinear case]
\label{Equilibrium measures def copy(1)}\mbox{ }\newline
A $T$-invariant probability measure maximizing (\ref{variational problem})
is called a nonlinear equilibrium measure.
\end{definition}

We show, among other things, that the nonlinear equilibrium measures
considered in the paper \cite{TF4} fit (under its assumptions) within the
above definition. However, our setting covers a much broader class of
models, which are in line with important examples from mathematical physics.
These are related to a convex nonlinearity $F_{+}$, representing long-range
attractive forces, and a concave one $F_{-}$, associated with long-range
repulsive forces.

Note that the bare existence of a nonlinear equilibrium measure is unclear,
depending on the choice of the function $F$ in (\ref{variational problem}).
Consequently, the existence of nonlinear equilibrium measures, and the
entire scope of our study, require certain minimum conditions on the
nonlinearity $F$. These are the following assumptions:

\begin{condition}
\label{Condition essential}\mbox{ }\newline
\emph{(i) }$F$ is the restriction to $C(S)\subseteq \mathcal{M}(S)$ of the
sum of two functions $F_{\pm }:\mathcal{M}(S)\rightarrow \mathbb{R}$ that
are $\sigma $-normal, meaning here that, for any bounded sequence $%
(f_{n})_{n\in \mathbb{N}}\subseteq \mathcal{M}(S)$ converging point-wise to $%
f\in \mathcal{M}(S)$, one has%
\begin{equation*}
\lim_{n\rightarrow \infty }F_{\pm }\left( f_{n}\right) =F_{\pm }\left(
f\right) \text{ }.
\end{equation*}%
\emph{(ii) }The functions $F_{+}$ and $F_{-}$ are respectively convex and
concave.
\end{condition}

\noindent Under these assumptions, the functions $F_{\pm }$ are necessarily
continuous with respect to the uniform convergence in $\mathcal{M}(S)$ and
they always satisfy (\ref{sdsdsdsdsdsdsdsd}): Observe that $\mathcal{P}(T)$
is weak$^{\ast }$-compact and its weak$^{\ast }$ topology is metrizable, as $%
C(\Omega )$ is separable (with respect to the supremum norm, $\Omega $ being
a compact metric space). In particular it is sequentially compact with
respect to the weak$^{\ast }$ topology. Furthermore, the weak$^{\ast }$
convergence of $\mu $ in $\mathcal{P}(T)$ implies the point-wise convergence
of $\mu _{S}$ in $\mathcal{M}(S)$. Consequently, the set $\{F(\mu _{S}):\mu
\in \mathcal{P}(T)\}\subseteq \mathbb{R}$ is (sequentially) compact and
thus, bounded.

Typically, in statistical mechanics the convex function $F_{+}$ represents
some mean-field attraction, while $F_{-}$ represents a mean-field repulsion.
This physical interpretation is similar to that given in \cite{BruPedra2}
for quantum lattice systems. Here we simply refer to the concave and convex
parts of the nonlinear pressure functional $P$, which is equal in this case
to%
\begin{equation}
P\left( \mu \right) =h\left( \mu \right) +F_{-}\left( \mu _{S}\right)
+F_{+}\left( \mu _{S}\right) \ ,\qquad \mu \in \mathcal{P}\left( T\right) \ .
\label{nonlinear pressure functional}
\end{equation}%
Examples of such a $\sigma $-normal convex (concave) function $F_{+}$ ($%
F_{-} $) for applications in statistical mechanics are given by Examples \ref%
{example F II} and \ref{example F I}\footnote{%
The $\sigma $-normality of $F_{\pm }^{(\varphi ,\mathfrak{a})}$ is a direct
consequence of Lebesgue's dominated convergence theorem.} when the
real-valued function $F_{+}$\ ($F_{-}$) is additionally convex (concave) on $%
\mathbb{R}^{N}$. These situations are included in the following more general
example:

\begin{example}
\label{example F III}\mbox{ }\newline
Let $\mathcal{X}_{\pm }$ be two real normed spaces and $g_{\pm }:\mathcal{X}%
_{\pm }\rightarrow \mathbb{R}$ two continuous convex functions. Take two
bounded linear transformations $\theta _{\pm }:\mathcal{M}(S)\rightarrow 
\mathcal{X}_{\pm }$ that are $\sigma $-normal, as explained in Condition \ref%
{Condition essential} (i). Then define $F_{\pm }\doteq \pm g_{\pm }\circ
\theta _{\pm }$, which are functions that are $\sigma $-normal. $F_{+}$ and $%
F_{-}$ are obviously convex and concave, respectively.
\end{example}

\noindent The above examples $F_{\pm }^{(\varphi _{1},\ldots ,\varphi _{N})}$
and $F_{\pm }^{(\varphi ,\mathfrak{a})}$ refer to the (real normed) spaces $%
\mathcal{X}_{\pm }=\mathbb{R}^{N}$ and $\mathcal{X}_{\pm }=\mathbb{R}\times
L^{2}(S,\mathfrak{a})$, respectively, with obvious choices for the convex
functions $g_{\pm }$ and the linear transformations $\theta _{\pm }$.

Under Condition \ref{Condition essential} the nonlinear pressure functional $%
P:\mathcal{P}\left( T\right) \rightarrow \mathbb{R}$ of a $T$-invariant
measure is weak$^{\ast }$-upper semicontinuous, observing that the mappings $%
\mu \mapsto F_{\pm }\left( \mu _{S}\right) $ from $\mathcal{P}\left(
T\right) $ to $\mathbb{R}$ are weak$^{\ast }$-continuous and the entropy $h$%
, weak$^{\ast }$-upper semicontinuous. Therefore, the variational problem (%
\ref{variational problem}) has maximizers and the set 
\begin{equation}
E_{P}\doteq \left\{ \mu \in \mathcal{P}\left( T\right) :P\left( \mu \right)
=\sup P\left( \mathcal{P}\left( T\right) \right) \right\}
\label{NL eq measure}
\end{equation}%
of nonlinear equilibrium measures forms a (nonempty) weak$^{\ast }$-compact
and metrizable set, thanks to the metrizability of the weak$^{\ast }$
topology in $\mathcal{P}(T)$. However, unlike the linear case (cf. (\ref%
{dfgdfgdfg})), this set is \textbf{not necessarily convex} if the functional 
$F$ is not at least concave. This contrasts with the linear case, for which $%
F$ is affine (i.e., both convex and concave).

\subsection{Generalized equilibrium measures\label{Generalized Equilibrium
Measures}}

The non-convexity of the set $E_{P}$ (\ref{NL eq measure}) of nonlinear
equilibrium measure can be highly problematic in physical applications,
since, in this case, the (convex) mixture of two equilibrium states would no
longer be a new equilibrium measure, as physically expected. The same issue
occurs for quantum spin systems or fermions on lattices in the presence of
mean-field interactions \cite{BruPedra2}. In other words, physically, one
should not only consider equilibrium measures as being the solutions to the
nonlinear pressure, but also any convex combination of them should be an
equilibrium measure. This refers in the present work to \emph{generalized}
nonlinear equilibrium measures. See Section \ref{Minimization} and \cite[%
Definition 2.15 and Theorem 2.21]{BruPedra2}.

\begin{definition}[Generalized nonlinear equilibrium measures]
\label{Gneralized equilibrium measures}\mbox{ }\newline
A generalized nonlinear equilibrium measure is any element of the weak$%
^{\ast }$-closed convex hull $G_{P}\doteq \overline{\mathrm{co}}(E_{P})$ of
the set $E_{P}$ of (usual) nonlinear equilibrium measures.
\end{definition}

$G_{P}$ is in particular weak$^{\ast }$-compact and convex. Thus, by the
Milman theorem \cite[Proposition 1.5]{Phe}, generalized nonlinear
equilibrium measures that are extreme in $G_{P}$ must belong to the weak$%
^{\ast }$-compact set $E_{P}$. In other words, extreme generalized nonlinear
equilibrium measures are always usual nonlinear equilibrium measures. Then,
by the Choquet theorem (Theorem \ref{th Choquet}), for any $\mu \in G_{P}$,
there is a probability measure $m_{\mu }$ on $E_{P}$ such that%
\begin{equation*}
\mu =\int_{E_{P}}\nu m_{\mu }\left( \mathrm{d}\nu \right) \ ,
\end{equation*}%
i.e., the generalized nonlinear equilibrium measures are nothing but the
barycenters of probability measures on the set of (usual) nonlinear
equilibrium measures. See Definition \ref{Barycenters of a measure} and
compare also with Corollary \ref{Choquet decomposition}.

We use the terminology \textquotedblleft generalized nonlinear equilibrium
measure\textquotedblright\ because of the property stated in Theorem \ref%
{Theorem-main1 copy(1)} (ii), which establishes a precise relationship
between these measures and the maximization of an affine functional ($%
\mathfrak{F}^{\flat }$) that naturally appears in our setting for the
nonlinear thermodynamic formalism. This approach is inspired by the study of
quantum lattice systems done in \cite{BruPedra2}, in which the affine
functional is canonical from a thermodynamic point of view, being the
function that determines the properties of such quantum systems at
equilibrium.

To explain this in a simple way, we come back to Example \ref{example F II}
above. Let $\mathrm{m}$ be any fixed a priori probability measure on the
character set $(\Omega ,d)$, whose support is equal to $\Omega $, as in
Section \ref{sect entropy}, and $\mathrm{m}_{\otimes }\in \mathcal{P}\left(
T\right) $ the corresponding product (probability) measure on the symbolic
space $\Sigma \doteq \Omega ^{\mathbb{N}}$. Notice that $\mathrm{m}_{\otimes
}$ is even ergodic. Let further 
\begin{equation}
\mathbb{E}_{n}\left[ \varphi \right] \doteq n^{-1}\left( \varphi +\varphi
\circ T+\cdots +\varphi \circ T^{n-1}\right) \ ,\qquad n\in \mathbb{N},\
\varphi \in C(\Sigma )\ ,  \label{sdsdsdssdsd}
\end{equation}%
be the normalized Birkhoff sums of continuous potentials $\varphi $. Then,
for any $n\in \mathbb{N}$, the corresponding Gibbs measure on $C(\Sigma )$
is defined for any $\varphi \in C(\Sigma )$ by 
\begin{equation*}
\mu _{\mathrm{Gibbs}}^{(n)}\left( \varphi \right) \doteq \frac{1}{Z_{\mathrm{%
Gibbs}}^{(n)}}\int_{\Sigma _{n}}\mathrm{e}^{F_{+}\left( \mathbb{E}_{n}\left[
\varphi _{1}\right] \left( \sigma \right) ,\ldots ,\mathbb{E}_{n}\left[
\varphi _{N}\right] \left( \sigma \right) \right) +F_{-}\left( \mathbb{E}_{n}%
\left[ \varphi _{1}\right] \left( \sigma \right) ,\ldots ,\mathbb{E}_{n}%
\left[ \varphi _{N}\right] \left( \sigma \right) \right) }\varphi (\sigma )%
\mathrm{m}_{\otimes }\left( \mathrm{d}\sigma \right) \ ,
\end{equation*}%
where $\varphi _{1},\ldots ,\varphi _{N}$ are the potentials of Example \ref%
{example F II} and%
\begin{equation*}
Z_{\mathrm{Gibbs}}^{(n)}\doteq \int_{\Sigma _{n}}\mathrm{e}^{F_{+}\left( 
\mathbb{E}_{n}\left[ \varphi _{1}\right] \left( \sigma \right) ,\ldots ,%
\mathbb{E}_{n}\left[ \varphi _{N}\right] \left( \sigma \right) \right)
+F_{-}\left( \mathbb{E}_{n}\left[ \varphi _{1}\right] \left( \sigma \right)
,\ldots ,\mathbb{E}_{n}\left[ \varphi _{N}\right] \left( \sigma \right)
\right) }\mathrm{m}_{\otimes }\left( \mathrm{d}\sigma \right) \ .
\end{equation*}%
Then, using Varadhan's lemma one shows \cite{Bru-Pedra-Lopes1} that the
following limit exists and is given by a variational problem on the set of $%
T $-invariant probability measures:%
\begin{equation}
\lim_{n\rightarrow \infty }n^{-1}\ln Z_{\mathrm{Gibbs}}^{(n)}=\sup \mathfrak{%
F}^{\flat }\left( \mathcal{P}\left( T\right) \right) \ ,
\label{sdsdsssdsdsdsdsdsd}
\end{equation}%
where $\mathfrak{F}^{\flat }:\mathcal{P}\left( T\right) \rightarrow \mathbb{R%
}$ is the affine, albeit \textbf{not} necessarily weak$^{\ast }$-upper
semicontinuous, functional defined below by (\ref{F^bemol}) and named\ here
the \emph{affine nonlinear pressure}. Keeping in mind that $\mathcal{P}(T)$
is sequentially weak$^{\ast }$-compact, the set of approximated maximizers
of this variational problem is then defined by 
\begin{equation}
E_{\mathfrak{F}^{\flat }}\doteq \left\{ \mu \in \mathcal{P}\left( T\right)
:\exists \left( \mu _{n}\right) _{n\in \mathbb{N}}\subseteq \mathcal{P}%
\left( T\right) \mathrm{\ }\text{with }\lim_{n\rightarrow \infty }\mu
_{n}=\mu \text{ and\ }\lim_{n\rightarrow \infty }\mathfrak{F}^{\flat }\left(
\mu _{n}\right) =\sup \mathfrak{F}^{\flat }\left( \mathcal{P}\left( T\right)
\right) \right\} \ ,  \label{sdsdsdfdgdfhfghhf}
\end{equation}%
where the limit of $T$-invariant probability measures refers to the weak$%
^{\ast }$ convergence. This set is nothing but the weak$^{\ast }$-closed
convex hull $G_{P}\doteq \overline{\mathrm{co}}(E_{P})$ of $E_{P}$:

\begin{proposition}[Generalized equilibrium measures]
\label{structure of sets of maximizers copy(1)}\mbox{ }\newline
Fix $N\in \mathbb{N}$, take any convex (concave) real-valued function $F_{+}$%
\ ($F_{-}$) on $\mathbb{R}^{N}$, as well as norm-one potentials $\varphi
_{1},\ldots ,\varphi _{N}\in S$, and define the continuous function $F_{\pm
}^{(\varphi _{1},\ldots ,\varphi _{N})}$ as in\ Example \ref{example F II}.
Then, $E_{\mathfrak{F}^{\flat }}=\overline{\mathrm{co}}(E_{P})\doteq G_{P}$.
\end{proposition}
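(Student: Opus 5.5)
The plan is to reduce the statement to the abstract theory of Section \ref{Abstract Theory}, applied with $K=\mathcal{P}(T)$, $f=h$ and the finite-dimensional spaces $\mathcal{X}_{\pm}=\mathbb{R}^{N}$. Here $\tau_{\pm}(\mu)=(\mu(\varphi_{1}),\ldots,\mu(\varphi_{N}))$ and $g_{+}=F_{+}$, $g_{-}=-F_{-}$. Since the concave part is continuous on a compact set of $\mathbb{R}^{N}$, the required growth conditions on $g_{\pm}^{\ast}$ can be arranged, after modifying $F_{\pm}$ outside a bounded neighbourhood of $\tau_{\pm}(\mathcal{P}(T))$, without changing $P$ on $\mathcal{P}(T)$. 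I will use the explicit form (\ref{F^bemol}) of the affine nonlinear pressure $\mathfrak{F}^{\flat}$ as follows: the convex part is averaged over the ergodic decomposition, $\mu\mapsto\int F_{+}(\nu_{S})\,m_{\mu}(\mathrm{d}\nu)$, and the concave part is replaced by its affine regularization, which agrees with the Bogoliubov min--max representation $\inf_{y_{-}}\{y_{-}\circ\tau_{-}+g_{-}^{\ast}(y_{-})\}$ read on extreme points. The key identity I aim for is that the weak$^{\ast}$-upper semicontinuous hull $\widehat{\mathfrak{F}}^{\flat}$ of $\mathfrak{F}^{\flat}$ equals
\begin{equation*}
\mu\mapsto\inf_{y_{-}}\sup_{y_{+}}\{\mathcal{G}_{y_{+},y_{-}}(\mu)+g_{-}^{\ast}(y_{-})-g_{+}^{\ast}(y_{+})\}\ \text{(suitably regularized)},
\end{equation*}
and that $\sup\mathfrak{F}^{\flat}(\mathcal{P}(T))=\sup P(\mathcal{P}(T))=\mathrm{P}^{\flat}$ by Theorem \ref{Proposition importante bogoluibov01 copy(2)}.

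\emph{Step 1 (the sup).} First I show $\mathfrak{F}^{\flat}=P$ on $\mathcal{P}_{\mathrm{erg}}(T)$, since the Choquet measure of an ergodic measure is a Dirac mass. Together with density of ergodic measures (\ref{eq erg dense}) and affinity of $h$ (Proposition \ref{Affinity of the entropy copy(1)}), this gives $\sup\mathfrak{F}^{\flat}\ge\sup P$. For the reverse inequality, Jensen applied to convex $F_{+}$ and concave $F_{-}$ gives $\mathfrak{F}^{\flat}(\mu)\le\int P(\nu)\,m_{\mu}(\mathrm{d}\nu)\le\sup P$, where the affine regularization of the concave part is at most its value averaged over extreme points.

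\emph{Step 2 ($G_{P}\subseteq E_{\mathfrak{F}^{\flat}}$).} Because $\mathcal{P}(T)$ is metrizable, $E_{\mathfrak{F}^{\flat}}$ is exactly the set of maximizers of $\widehat{\mathfrak{F}}^{\flat}$. By the min--max representation, $\widehat{\mathfrak{F}}^{\flat}$ is an infimum of affine upper semicontinuous maps, hence concave and upper semicontinuous, so $E_{\mathfrak{F}^{\flat}}$ is weak$^{\ast}$-compact and convex. It therefore suffices to show $E_{P}\subseteq E_{\mathfrak{F}^{\flat}}$. For $\mu\in E_{P}$, pick ergodic $\mu_{n}\to\mu$ with $h(\mu_{n})\to h(\mu)$; such a sequence should come from the entropy-approximation property behind Proposition \ref{density of ergodic measures} and \cite{GKLM}. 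Continuity of $F_{\pm}$ on $\mathbb{R}^{N}$ then gives $\mathfrak{F}^{\flat}(\mu_{n})=P(\mu_{n})\to P(\mu)=\sup\mathfrak{F}^{\flat}$.

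\emph{Step 3 ($E_{\mathfrak{F}^{\flat}}\subseteq G_{P}$), which I expect to be the main obstacle.} By Krein--Milman it is enough to show that every extreme point $\mu$ of the compact convex set $E_{\mathfrak{F}^{\flat}}$ lies in $E_{P}$. I would first try to invoke the abstract part of Theorem \ref{Proposition importante bogoluibov01 copy(2)}. This theorem identifies the set of approximate maximizers of $\mathbb{F}=P$ with $\{P=\mathrm{P}^{\flat}\}$, and identifies its closed convex hull with the closed convex set of self-consistent maximizers of $\mathcal{G}_{x_{+},x_{-}}$ for the min--max solutions $(x_{+},x_{-})$. I would then show that $E_{\mathfrak{F}^{\flat}}$ coincides with this set of self-consistent maximizers. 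The two containments go as follows. If $\mu_{n}$ approximately maximizes $\mathfrak{F}^{\flat}$, then its Choquet measures $m_{\mu_{n}}$ concentrate, by Jensen with equality in the limit, on ergodic $\nu$ with $P(\nu)$ close to $\sup P$ and with $\tau_{-}(\nu)$ essentially constant. Passing to a weak$^{\ast}$ limit of the $m_{\mu_{n}}$ on the compact set $\mathcal{P}(T)$, the upper semicontinuity of $P$ places the limiting barycentric measure on $E_{P}$, so $\mu\in G_{P}$. The delicate points are that Choquet measures need not vary continuously, which is why I would work with limits of the $m_{\mu_{n}}$ as probability measures on $\mathcal{P}(T)$ rather than with $m_{\mu}$ itself, and the handling of strict versus non-strict convexity of $F_{+}$ in the Jensen equality case.
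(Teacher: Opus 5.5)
Several statements your plan is built on are false, although none of them is actually needed.

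(a) The ``key identity''. For fixed $\mu$ the $\sup_{y_+}$ and $\inf_{y_-}$ decouple, and biconjugation turns your inf--sup into $h(\mu)-g_-(\tau_-(\mu))+g_+(\tau_+(\mu))=P(\mu)$.
\begin{itemize}
\item This is not an infimum of affine maps: after $\sup_{y_+}$ the bracket contains the convex term $g_+\circ\tau_+$.
\item In general $\widehat{\mathfrak{F}}^{\flat}\neq P$. Take $F_-=0$ and $F_+$ strictly convex. Then $\mathfrak{F}^{\flat}=h+\Delta^{F_+}$ is already weak$^{\ast}$-upper semicontinuous (Theorem \ref{properties de Delta} (i)). It strictly exceeds $P$ at the midpoint of two ergodic measures of finite entropy with different $\varphi_1$-averages.
\item What is true is $\widehat{\mathfrak{F}}^{\flat}=\Gamma_-(\mathfrak{F}^{\flat})=\Gamma_-(P)$. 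This uses that the upper semicontinuous hull of an affine function is concave, together with Proposition \ref{Gamma-regularisation of pressure} (ii). That equality is exactly the nontrivial input of the paper's proof, and your plan does not establish it.
\item Step 2 does not need it. It only needs $E_{\mathfrak{F}^{\flat}}$ to be convex and closed. Convexity follows from the affinity of $\mathfrak{F}^{\flat}$ (take convex combinations of approximating sequences), and closedness from a diagonal argument in the metrizable space $\mathcal{P}(T)$.
\end{itemize}

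(b) Theorem \ref{Proposition importante bogoluibov01 copy(2)} (iii) describes $E_P$ itself as the union $\bigcup_{x_+\in\mathrm{M}^{\flat}}E^{\mathrm{sc}}_{\mathcal{G}_{x_+,\cdot}}$, not its closed convex hull. So $E_{\mathfrak{F}^{\flat}}$ cannot coincide with ``the set of self-consistent maximizers''. In the H\"older-type case with two distinct nonlinear equilibrium measures, each self-consistent set is a single ergodic measure (Corollary \ref{Theorem-main1 copy(2)}), while $G_P$ contains their non-ergodic midpoint. Your claim that $\tau_-(\nu)$ is essentially constant is likewise unfounded.

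(c) In Step 1, density of ergodic measures and affinity of $h$ do not give $\sup\mathfrak{F}^{\flat}\geq\sup P$, because $h$ is only upper semicontinuous. You need ergodic abundance (Corollary \ref{ergodic abundance}), which you use in Step 2 anyway. Also, $\mathfrak{F}^{\flat}(\mu)=\int P\,\mathrm{d}\xi_{\mu}$ is an identity, Equation (\ref{assertion sympa}). It comes from the barycentric formula for the affine upper semicontinuous entropy, not from a Jensen inequality.

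Discard this scaffolding, together with the reduction to Conditions TF1--TF3 and the modification of $F_{\pm}$ at infinity. What remains is a correct proof along a route different from the paper's. The last part of your Step 3 already gives $E_{\mathfrak{F}^{\flat}}\subseteq G_P$, with no Krein--Milman reduction:
\begin{itemize}
\item Let $\mu_n\to\mu$ with $\int P\,\mathrm{d}\xi_{\mu_n}=\mathfrak{F}^{\flat}(\mu_n)\to s\doteq\sup P(\mathcal{P}(T))$. Let $\xi$ be a weak$^{\ast}$ limit point of $(\xi_{\mu_n})_n$ among probability measures on the compact metrizable space $\mathcal{P}(T)$.
\item Since $\nu\mapsto\nu(\varphi)$ is continuous for each $\varphi\in C(\Sigma)$, $\xi$ has barycenter $\mu$.
\item Since $s-P\geq 0$ is lower semicontinuous, $\int(s-P)\,\mathrm{d}\xi\leq\liminf_n\int(s-P)\,\mathrm{d}\xi_{\mu_n}=0$.
\item Hence $\xi(E_P)=1$, and $\mu\in\overline{\mathrm{co}}(E_P)$ by Proposition \ref{Existence of barycenters}.
\end{itemize}
Combined with Step 2, this proves the proposition.

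The paper instead deduces the result from Theorem \ref{structure of sets of maximizers}. It first shows $\Gamma_-(\mathfrak{F}^{\flat})=\Gamma_-(P)$ in Proposition \ref{Gamma-regularisation of pressure}, using $\mathfrak{F}^{\flat}=P$ on ergodic measures, ergodic abundance, and Jensen's inequality for $\Gamma_-(P)$ against $\xi_{\mu}$. It then applies Theorem \ref{theorem sympa} twice. That route also identifies $G_P$ as the maximizer set of $\Gamma_-(P)=\Gamma_-(\mathfrak{F}^{\flat})$ and fits the general $\Gamma$-regularization framework. Your route is self-contained, relying only on compactness of measures, the portmanteau inequality and barycenters. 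It never uses the Bogoliubov min--max and works verbatim for arbitrary $\sigma$-normal $F_{\pm}$. In fact it does not use the convexity of $F_+$ or the concavity of $F_-$.
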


\begin{proof}
The assertion is a direct consequence of Theorem \ref{structure of sets of
maximizers}.
\end{proof}

Observe that weak$^{\ast }$-accumulation points of probability measures $%
(\mu _{\mathrm{Gibbs}}^{(n)}\circ \mathbb{E}_{n})_{n\in \mathbb{N}}$ always
belong to $G_{P}=E_{\mathfrak{F}^{\flat }}$, but \textbf{not necessarily }to 
$E_{P}$. This last property is proven in \cite{Bru-Pedra-Lopes1} and we omit
the details here because we are solely interested in the variational
problems themselves, with no (extended) thermodynamic considerations other
than those presented above for pedagogical reasons.

In fact, Theorem \ref{structure of sets of maximizers} is a pivotal result
in this context, which can be used to generalize Proposition \ref{structure
of sets of maximizers copy(1)} beyond Example \ref{example F II}, i.e., for
any $\sigma $-normal functions $F_{\pm }:\mathcal{M}(S)\rightarrow \mathbb{R}
$, with $F_{+}$ and $F_{-}$ being respectively convex and concave. This
generalization is not considered in the present section to avoid repeating
essentially the same arguments in a case that is more complicated to explain
properly and, therefore, much less pedagogical. In fact, Proposition \ref%
{structure of sets of maximizers copy(1)} already shows us that the notion
of generalized nonlinear equilibrium states is the correct one for
describing the thermodynamic limit of nonlinear systems at equilibrium.

\begin{remark}
\mbox{ }\newline
Buzzi, Kloeckner and Leplaideur \cite{TF4} already explain how to obtain
nonlinear equilibrium measures from an affine pressure, but their approach 
\textbf{is very different} from the one that leads to the affine pressure $%
\mathfrak{F}^{\flat }$ presented above. Compare (\ref{the free-energy
density functional}) below with (\ref{maximizers z})--(\ref{var y}). In
fact, the arguments used by these authors are reminiscent of the Bogoliubov
linearization, which is explained below. However, they do not exploit it to
the fullest extent, which is what we propose to do here.
\end{remark}

\subsection{Buzzi-Kloeckner-Leplaideur's approach to nonlinear pressures 
\label{Buzzy-Kloeckner-Leplaideur's Approach}}

In 2019, Leplaideur and Watbled \cite{TF4} pioneered the study of nonlinear
topological pressure through an affine pressure. Considering a generalized
Curie-Weiss model, they investigated the special case of quadratic
non-linearity. In 2023, in the context of Example \ref{example F II} Buzzi,
Kloeckner and Leplaideur \cite{TF4} show how nonlinear equilibrium measures
can be obtained from an affine pressure. In fact, given $N\in \mathbb{N}$, $%
\varphi _{1},\ldots ,\varphi _{N}\in C(\Sigma )$ and a real-valued
continuous function $F$ on some set $U\subseteq \mathbb{R}^{N}$ (cf. Example %
\ref{example F II}) containing the compact convex set 
\begin{equation}
\left\{ \left( \mu \left( \varphi _{1}\right) ,\ldots ,\mu \left( \varphi
_{N}\right) \right) :\mu \in \mathcal{P}\left( T\right) \right\} \subseteq
U\ ,  \label{compact set}
\end{equation}%
observe that 
\begin{equation}
\sup_{\mu \in \mathcal{P}\left( T\right) }P\left( \mu \right) \doteq
\sup_{\mu \in \mathcal{P}\left( T\right) }\left\{ F\left( \mu \left( \varphi
_{1}\right) ,\ldots ,\mu \left( \varphi _{N}\right) \right) +h\left( \mu
\right) \right\} =\sup_{z\in U}\left\{ F\left( z\right) +\mathrm{h}\left(
z\right) \right\} \ ,  \label{maximizers z}
\end{equation}%
where 
\begin{equation*}
\mathrm{h}\left( z\right) \doteq \sup \left\{ h\left( \mu \right) :\mu \in 
\mathcal{P}\left( T\right) \text{ such that }z=\left( \mu \left( \varphi
_{1}\right) ,\ldots ,\mu \left( \varphi _{N}\right) \right) \right\} \ .
\end{equation*}%
The variational problem (\ref{maximizers z}) (used in \cite{TF4}) is a
particular case of (\ref{variational problem}), in accordance with Example %
\ref{example F II}.

By \cite[Proposition 3.3]{TF4}, solutions $\mu _{z}$ to this last
variational problem are linear equilibrium measures $\nu $ for some linear
combination $y_{1}\varphi _{1}+\cdots +y_{N}\varphi _{N}$, i.e.,%
\begin{equation}
p\left( \nu ,y_{1},\ldots ,y_{N}\right) =\sup_{\mu \in \mathcal{P}\left(
T\right) }p\left( \mu ,y_{1},\ldots ,y_{N}\right) \doteq \mathrm{P}\left(
y_{1},\ldots ,y_{N}\right) \mathbf{\ ,}  \label{var y}
\end{equation}%
where 
\begin{equation*}
p\left( \mu ,y_{1},\ldots ,y_{N}\right) \doteq h\left( \mu \right) +y_{1}\mu
\left( \varphi _{1}\right) +\cdots +y_{N}\mu \left( \varphi _{N}\right)
,\qquad y_{1},\ldots ,y_{N}\in \mathbb{R}^{N},\ \mu \in \mathcal{P}\left(
T\right) \ .
\end{equation*}%
By \cite[Theorem C]{TF4}, all the values of $y_{1},\ldots ,y_{N}$ to be
taken to maximize the nonlinear pressure functional $P$ in (\ref{maximizers
z}) are obtained from the maximizers in $U\subseteq \mathbb{R}^{N}$ of the
variational problem on the right-hand side of Equation (\ref{maximizers z}),
in the following way: Define the gradient $\nabla \mathrm{P}:\mathbb{R}%
^{N}\rightarrow \mathbb{R}^{N}$ by 
\begin{equation*}
\nabla \mathrm{P}\left( y_{1},\ldots ,y_{N}\right) \doteq \left( \partial
_{y_{1}}\mathrm{P}\left( y_{1},\ldots ,y_{N}\right) ,\ldots ,\partial
_{y_{N}}\mathrm{P}\left( y_{1},\ldots ,y_{N}\right) \right) \ .
\end{equation*}%
Then, we must take all possible $(y_{1},\ldots ,y_{N})\in \mathcal{Y}\doteq
\left( \nabla \mathrm{P}\right) ^{-1}\left( \mathcal{V}\right) $ in (\ref%
{var y}), where 
\begin{equation}
\mathcal{V}\doteq \left\{ c\in \mathbb{R}^{N}:F\left( c\right) +\mathrm{h}%
\left( c\right) =\sup_{z\in \mathbb{R}^{N}}\left\{ F\left( z\right) +\mathrm{%
h}\left( z\right) \right\} \right\} \ ,  \label{maximizers zbis}
\end{equation}%
in order to obtain all nonlinear equilibrium measures, i.e., maximizers of $%
P $ (see the left-hand side of the variational problem (\ref{maximizers z}%
)), as linear equilibrium measures $\nu $ of (\ref{var y}). This is done for 
$C^{1}$-functions $F$.

The authors get interesting results, but there are however several
drawbacks:\ First, the computation of the set $\mathcal{V}$ looks highly
nontrivial for general $C^{1}$-functions $F$. It requires, in particular, a
good control of the entropy function \textrm{$h$}, which is minus the
Legendre%
%TCIMACRO{\TeXButton{\-}{\-}}%
%BeginExpansion
\-%
%EndExpansion
-Fenchel transform of the pressure function $\mathrm{P}$. Then, one has to
be able to compute the preimage of the set $\mathcal{V}$ trough the function 
$\nabla \mathrm{P}$, which is again another nontrivial task, in general.

Below we present a more effective method related to the \emph{Bogoliubov
linearization}, which is explained in detail and great generality in Section %
\ref{Abstract Theory}. As applied to the setting of \cite[Theorem C]{TF4},
this method allows us to study the variational problem (\ref{maximizers z}),
but the corresponding set $\mathcal{Y}$ is obtained much more
straightforwardly than in \cite[Theorem C]{TF4}, by means of a simple
variational problem that takes advantage of the convexity or concavity of
the function $F$. Summarizing, via the Buzzi%
%TCIMACRO{\TeXButton{\-}{\-}}%
%BeginExpansion
\-%
%EndExpansion
-Kloeckner-Leplaideur's method, 
\begin{equation*}
\sup_{\mu \in \mathcal{P}\left( T\right) }P\left( \mu \right) =P\left( \mu
_{c}\right)
\end{equation*}%
for some $T$-invariant measure $\mu _{c}\in \mathcal{P}\left( T\right) $
satisfying so-called self-consistency equations $\mu _{c}(\varphi
_{j})=c_{j} $, $j\in \{1,\ldots ,N\}$, and%
\begin{equation*}
\sup_{\mu \in \mathcal{P}\left( T\right) }p\left( \mu ,y_{1},\ldots
,y_{N}\right) =p\left( \mu _{c},y_{1},\ldots ,y_{N}\right) \ ,
\end{equation*}%
for some $c\in \mathcal{V}$ and $y_{1},\ldots ,y_{N}\in \mathcal{Y}\doteq
\left( \nabla \mathrm{P}\right) ^{-1}\left( \mathcal{V}\right) $. Theorem %
\ref{Theorem-main1} below yields essentially the same result, but it does
not require explicit knowledge of the set $\mathcal{V}$, which avoids the
need to study a variational problem involving the highly non-trivial entropy
function $\mathrm{h}\left( z\right) $. Instead, the numbers $y_{1},\ldots
,y_{N}$ are computed from a variational problem involving only linear
pressures and the Legendre%
%TCIMACRO{\TeXButton{\-}{\-}}%
%BeginExpansion
\-%
%EndExpansion
-Fenchel transform of $F$.

To get a first intuitive idea, let us assume for simplicity that $F$ is
convex and satisfies 
\begin{equation}
F\left( z\right) =\sup_{x\in \mathbb{R}^{N}}\left\{ \left\langle
z,x\right\rangle _{\mathbb{R}^{d}}-F^{\ast }\left( x\right) \right\} \qquad 
\text{with}\qquad F^{\ast }\left( x\right) \doteq \sup_{z\in \mathbb{R}%
^{N}}\left\{ \left\langle x,z\right\rangle _{\mathbb{R}^{d}}-F\left(
z\right) \right\} \ .  \label{legendre transforms of F}
\end{equation}%
Then%
\begin{eqnarray*}
\sup_{\mu \in \mathcal{P}\left( T\right) }P\left( \mu \right) &=&\sup_{z\in 
\mathbb{R}^{N}}\left\{ F\left( z\right) +\mathrm{h}\left( z\right) \right\}
=\sup_{x\in \mathbb{R}^{N}}\sup_{z\in \mathbb{R}^{N}}\left\{ \left\langle
z,x\right\rangle _{\mathbb{R}^{d}}+\mathrm{h}\left( z\right) -F^{\ast
}\left( x\right) \right\} \\
&=&\sup_{x\in \mathbb{R}^{N}}\left\{ \mathrm{P}\left( x\right) -F^{\ast
}\left( x\right) \right\} =\sup_{x\in \mathbb{R}^{N}}\inf_{z\in \mathbb{R}%
^{N}}\left\{ \mathrm{P}\left( x\right) -\left\langle x,z\right\rangle _{%
\mathbb{R}^{d}}+F\left( z\right) \right\} \ .
\end{eqnarray*}%
In particular, if one can interchange the last $\sup $ over $x\in \mathbb{R}%
^{N}$ and $\inf $ over $z\in \mathbb{R}^{N}$, then%
\begin{eqnarray*}
\sup_{\mu \in \mathcal{P}\left( T\right) }P\left( \mu \right) &=&\inf_{z\in 
\mathbb{R}^{N}}\sup_{x\in \mathbb{R}^{N}}\left\{ \mathrm{P}\left( x\right)
-\left\langle x,z\right\rangle _{\mathbb{R}^{d}}+F\left( z\right) \right\}
=\inf_{z\in \mathbb{R}^{N}}\left\{ \mathrm{P}\left( y\right) -\left\langle
y,z\right\rangle _{\mathbb{R}^{d}}+F\left( z\right) \right\} \\
&=&\mathrm{P}\left( y\right) -\left\langle y,c\right\rangle _{\mathbb{R}%
^{d}}+F\left( c\right)
\end{eqnarray*}%
with $y$ being chosen so that 
\begin{equation*}
\mathrm{P}\left( y\right) -\left\langle y,z\right\rangle _{\mathbb{R}%
^{d}}=\sup_{x\in \mathbb{R}^{N}}\left\{ \mathrm{P}\left( x\right)
-\left\langle x,z\right\rangle _{\mathbb{R}^{d}}\right\} \ .
\end{equation*}%
In the same way, $c$ is chosen so that 
\begin{equation*}
\mathrm{P}\left( y\right) -\left\langle y,c\right\rangle _{\mathbb{R}%
^{d}}+F\left( c\right) =\inf_{z\in \mathbb{R}^{N}}\left\{ \mathrm{P}\left(
y\right) -\left\langle y,z\right\rangle _{\mathbb{R}^{d}}+F\left( z\right)
\right\} \ .
\end{equation*}%
In particular, $\nabla \mathrm{P}(y)=c$, i.e., $y=(y_{1},\ldots ,y_{N})\in
\left( \nabla \mathrm{P}\right) ^{-1}(\{c\})$. Compare with \cite[Theorem C]%
{TF4} and $y_{1},\ldots ,y_{N}\in \mathcal{Y}\doteq \left( \nabla \mathrm{P}%
\right) ^{-1}\left( \mathcal{V}\right) $ above. Equation (\ref{legendre
transforms of F}) refers to the Legendre-Fenchel transform of the convex
functions $F$ and $F^{\ast }$, see Section \ref{Legendre-Fenchel transform
section} below.

Our method makes this heuristics rigorous. In fact, the problematic entropy
function $\mathrm{h}\left( z\right) $ is \textbf{not} used directly in 
\textbf{any }of our arguments. Instead, we use its Legendre-Fenchel
transform, $\mathrm{P}\left( x\right) $, which is much more tractable in
many important applications. Our method also avoids calculating any
gradients, but requires the convex and concave parts $F_{\pm }$ of $F$ to be
separated and written as Legendre-Fenchel transforms. Since \cite{TF4} only
considers the compact convex set (\ref{compact set}) and any $C^{1}$%
-functions on a compact subset of $\mathbb{R}^{N}$ ($N\in \mathbb{N}$) can
be represented as the difference of convex and continuous functions (see
Remark \ref{Remark-condition2}), our assumptions are much weaker than those
of \cite{TF4}. Moreover, we provide new results. See Conditions TF1--TF3 of
Section \ref{Bogoliubov linearizations TF}, Theorems \ref{Theorem-main1}, %
\ref{Theorem-main1 copy(1)} and Corollary \ref{Theorem-main1 copy(2)}. For
example, we can handle not only the case of finitely many potentials $%
\varphi _{1},\ldots ,\varphi _{N}$, as in \cite{TF4} or Example \ref{example
F II}, but also the very general framework of Example \ref{example F III},
while making the approach transparent with regard to the main arguments used
in proofs.

\subsection{New approach to nonlinear thermodynamic formalism\label%
{Bogoliubov Approximation}}

The sets $E_{P}$ and $G_{P}\doteq \overline{\mathrm{co}}(E_{P})$ of
equilibrium measures are pivotal; see Equation (\ref{NL eq measure}),
Definitions \ref{Equilibrium measures def copy(1)} and \ref{Gneralized
equilibrium measures}. However, it is a priori not clear how useful the
corresponding variational principles defining these sets are to study phase
transitions. In fact, it turns out to be more convenient to reduce the
nonlinear thermodynamic formalism to the linear one, for which concrete
calculations can often be performed, similar to Buzzi-Kloeckner-Leplaideur's
approach (Section \ref{Buzzy-Kloeckner-Leplaideur's Approach}).

Our abstract theory of \emph{Bogoliubov linearizations}, which is explained
in detail in Section \ref{Abstract Theory}, provides a systematic approach
to this. It generalizes the study conducted in \cite{BruPedra2} on quantum
lattice systems, which, in this special case, resolves an old open problem
addressed by Ginibre in 1968 \cite{Ginibre} concerning the exactness of
Bogoliubov approximations of equilibrium states in the thermodynamic limit.
Indeed, in the quantum framework previously studied \cite{BruPedra2} only
the specific case of \textquotedblleft Example \ref{example F I}\footnote{%
Adapted appropriately to the fermionic/quantum spin situation.}%
\textquotedblright\ is studied. Our framework here is much more general,
encompassing Example \ref{example F III} and thus going far beyond the
quadratic case of Example \ref{example F I}.

\subsubsection{Bogoliubov linearizations\label{Bogoliubov linearizations TF}}

\noindent \textbf{General assumptions.} We study the nonlinear pressure 
\begin{equation*}
P\left( \mu \right) \doteq h\left( \mu \right) +F_{-}\left( \mu _{S}\right)
+F_{+}\left( \mu _{S}\right) \ ,\qquad \mu \in \mathcal{P}\left( T\right) \ ,
\end{equation*}%
for the general case given by Example \ref{example F III}. In fact, our
abstract theory of Bogoliubov linearizations would allow us to extend this
example even further to functions that are not necessarily of the $F_{\pm
}=\pm g_{\pm }\circ \theta _{\pm }$, and even \textbf{non}-$\sigma $-normal
ones. For simplicity, we refrain from making this further generalization
here, bearing in mind that the objective of this section is to illustrate
our approach using the nonlinear thermodynamic formalism as a paradigmatic
example.

To present our general assumptions, we use the Legendre-Fenchel transform $%
g^{\ast }:\mathcal{X}^{\ast }\rightarrow (-\infty ,\infty ]$ of a function $%
g:\mathcal{X}\rightarrow \mathbb{R}$ defined for a given dual pair $(%
\mathcal{X},\mathcal{X}^{\ast })$ by 
\begin{equation}
g^{\ast }\left( x\right) \doteq \sup_{y\in \mathcal{X}}\left\{ x\left(
y\right) -g\left( y\right) \right\} \ ,\qquad x\in \mathcal{X}^{\ast }\ .
\label{conjugate definitionbis}
\end{equation}%
We say that this Legendre-Fenchel transform has \emph{minimal linear growth} 
$\lambda \in \mathbb{R}^{+}$ if 
\begin{equation}
\lim_{R\rightarrow \infty }\sup_{y\in \mathcal{X}^{\ast }\backslash
B(0,R)}\left\{ \lambda \left\Vert y\right\Vert _{\mathrm{op}}-g^{\ast
}\left( y\right) \right\} =-\infty \ ,  \label{linear growbis}
\end{equation}%
where $\Vert \cdot \Vert _{\mathrm{op}}$ denotes the operator norm (see (\ref%
{ball2})) and $B(0,R)$ is the closed ball of radius $R\in \mathbb{R}^{+}$
and center $0\in \mathcal{X}^{\ast }$ (see (\ref{ball1})). We recommend
referring to Section \ref{Legendre-Fenchel transform section} for more
details. We consider the following assumptions on the nonlinear energies:

\begin{itemize}
\item[TF1] $\mathcal{X}_{\pm }$ are two real normed spaces and $\theta _{\pm
}:\mathcal{M}(S)\rightarrow \mathcal{X}_{\pm }$ are two linear
transformations that are $\sigma $-normal, i.e., for any bounded sequence $%
(f_{n})_{n\in \mathbb{N}}\subseteq \mathcal{M}(S)$ converging point-wise to $%
f\in \mathcal{M}(S)$, $(\theta _{\pm }(f_{n}))_{n\in \mathbb{N}}$ converges
to $\theta _{\pm }(f)$ in $\mathcal{X}_{\pm }$.

\item[TF2] $g_{-}:\mathcal{X}_{-}\rightarrow \mathbb{R}$ is a lower
semicontinuous and convex function for which there is a positive radius $%
R_{0}\in \mathbb{R}^{+}$ such that $\{x\in \mathcal{X}_{-}:\Vert x\Vert _{%
\mathrm{op}}<R,\ g_{-}^{\ast }(x)<\infty \}$ is nonempty and weak$^{\ast }$%
-closed for all $R\in \lbrack R_{0},\infty )$. Moreover, $g_{-}^{\ast }$ has
minimal\emph{\ }linear growth $\Vert \theta _{-}\Vert _{\mathrm{op}}$ in the
sense of Equation (\ref{linear growbis}).

\item[TF3] $g_{+}:\mathcal{X}_{+}\rightarrow \mathbb{R}$ is a lower
semicontinuous, bounded and convex function for which $g_{+}^{\ast }(%
\mathcal{X}_{+})\subseteq \mathbb{R}$ and $g_{+}^{\ast }$ has minimal\emph{\ 
}linear growth $\Vert \theta _{+}\Vert _{\mathrm{op}}$ in the sense of
Equation (\ref{linear growbis}).
\end{itemize}

\noindent Note that a $\sigma $-normal linear mapping on $\mathcal{M}(S)$ is
always bounded. In particular, $\Vert \theta _{\pm }\Vert _{\mathrm{op}%
}<\infty $. Observe also that Conditions TF2--TF3 are trivially satisfied by
quadratic functions $g_{\pm }:\mathcal{X}_{\pm }\rightarrow \mathbb{R}$, as $%
g_{+}^{\ast }$ and $g_{-}^{\ast }$ are also quadratic in this case. This is
the most relevant case in statistical physics. These assumptions are in fact
very general. They are an adapted version of (the much more general)
Conditions B1--B3 of Section \ref{sect conc conv Bogo}. Their general nature
is discussed in Remarks \ref{Remark-condition1} and \ref{Remark-condition2}.
\bigskip

\noindent \textbf{Bogoliubov linearizations.} They refer to a $\mu $%
-linearization method for the nonlinear pressure, which is now defined as
follows: 
\begin{equation}
P\left( \mu \right) \doteq h\left( \mu \right) -g_{-}\circ \theta _{-}\left(
\mu _{S}\right) +g_{+}\circ \theta _{+}\left( \mu _{S}\right) \ ,\qquad \mu
\in \mathcal{P}\left( T\right) \ ,  \label{eq 21}
\end{equation}%
under Conditions TF1--TF3. Since $g^{\ast \ast }=g$ for any lower
semicontinuous convex function (see (\ref{Biconjugate definition})), we
observe from Equation (\ref{eq 21}) that 
\begin{equation}
P\left( \mu \right) =\sup_{y_{+}\in \mathcal{X}_{+}^{\ast }}\inf_{y_{-}\in 
\mathcal{X}_{-}^{\ast }}\left\{ \mathrm{P}\left( y_{+},y_{-},\mu \right)
+g_{-}^{\ast }\left( y_{-}\right) -g_{+}^{\ast }\left( y_{+}\right) \right\}
\ ,\qquad \mu \in \mathcal{P}\left( T\right) \ ,  \label{pressure legendre}
\end{equation}%
where, for any $y_{\pm }\in \mathcal{X}_{\pm }^{\ast }$ and $\mu \in 
\mathcal{P}\left( T\right) $,%
\begin{equation}
\mathrm{P}\left( y_{+},y_{-},\mu \right) \doteq h\left( \mu \right)
-y_{-}\circ \theta _{-}\left( \mu _{S}\right) +y_{+}\circ \theta _{+}\left(
\mu _{S}\right) \ ,  \label{approximating pressure}
\end{equation}%
is called here an approximating pressure of the $T$-invariant probability
measure $\mu $.

We call the new functionals $\mu \mapsto \mathrm{P}(y_{+},y_{-},\mu )$, $%
y_{\pm }\in \mathcal{X}_{\pm }^{\ast }$, \emph{Bogoliubov linearizations}\
of $P$. Then, the method we present refers to the study the $\mu $%
-linearized variational problem%
\begin{equation}
\mathrm{P}^{\flat }\doteq \sup_{y_{+}\in \mathcal{X}_{+}^{\ast
}}\inf_{y_{-}\in \mathcal{X}_{-}^{\ast }}P_{\mathrm{NL}}\left(
y_{+},y_{-}\right) \ ,  \label{bogo approx}
\end{equation}%
instead of $\sup P(\mathcal{P}(T))$, where, for any continuous linear
functionals $y_{\pm }\in \mathcal{X}_{\pm }^{\ast }$,%
\begin{eqnarray}
P_{\mathrm{NL}}\left( y_{+},y_{-}\right) &\doteq &P_{\mathrm{L}}\left(
y_{+},y_{-}\right) +g_{-}^{\ast }\left( y_{-}\right) -g_{+}^{\ast }\left(
y_{+}\right) \ ,  \label{nonlinear approximating pressure} \\
P_{\mathrm{L}}\left( y_{+},y_{-}\right) &\doteq &\sup \mathrm{P}\left(
y_{+},y_{-},\mathcal{P}(T)\right) \doteq \sup_{\mu \in \mathcal{P}(T)}%
\mathrm{P}\left( y_{+},y_{-},\mu \right) \ .  \label{pression approx}
\end{eqnarray}%
$P_{\mathrm{NL}}$ and $P_{\mathrm{L}}$ are called here the \emph{nonlinear
and linear approximating pressures}, respectively. Thus, proceeding in this
manner, we first analyze the linear problem for $T$-invariant probability
measures and then study the nonlinear part via the other two variational
problems over $\mathcal{X}_{\pm }^{\ast }$.

The variational problem (\ref{bogo approx}) is supposed to be much easier
than $\sup P(\mathcal{P}(T))$ in many important cases because the linear
thermodynamic formalism (Section \ref{Linear thermo}), which is now very
well developed, should give us good control over the linear approximating
pressures $P_{\mathrm{L}}\left( y_{+},y_{-}\right) $. We explain below how (%
\ref{bogo approx}) can be used rigorously to study the original variational
problem $\sup P(\mathcal{P}(T))$. When $g_{-}=0$, note from (\ref{pressure
legendre}) that (\ref{bogo approx}) and $\sup P(\mathcal{P}(T))$ are
trivially equivalent variational problems, because two suprema always
commute with each other. The main difficulty is therefore to handle the case
where $g_{-}\neq 0$. This is done in Sections \ref{Sect Bogoliubov app tech1}%
--\ref{sect conc conv Bogo} using the celebrated von Neumann minimax theorem
(Theorem \ref{theorem minmax von Neumann}).\bigskip

\noindent \textbf{Bogoliubov linearizations -- H\"{o}lder case.} Remark
that, for any continuous linear functional $y_{\pm }\in \mathcal{X}_{\pm
}^{\ast }$, the new functional 
\begin{equation*}
\mu \mapsto y_{+}\circ \theta _{+}\left( \mu _{S}\right) -y_{-}\circ \theta
_{-}\left( \mu _{S}\right)
\end{equation*}%
from $\mathcal{P}$ to $\mathbb{R}$ in Equation (\ref{approximating pressure}%
)\ is affine and weak$^{\ast }$ continuous. Similar to \cite[Proposition 3.9]%
{Bru-pedra-MF-I}, for any $y_{\pm }\in \mathcal{X}_{\pm }^{\ast }$, there is
a unique continuous function $\Theta _{y_{-},y_{+}}\in C(\Sigma )$ such that 
\begin{equation}
y_{+}\circ \theta _{+}\left( \mu _{S}\right) -y_{-}\circ \theta _{-}\left(
\mu _{S}\right) =\mu \left( \Theta _{y_{-},y_{+}}\right) \ ,\qquad \mu \in 
\mathcal{P}\ .  \label{def potential}
\end{equation}%
We name the continuous function $\Theta _{y_{-},y_{+}}$ the \emph{%
approximating potential}\ associated with $y_{-},y_{+},\theta _{-},\theta
_{+}$.

Note that the approximating potentials are not necessarily H\"{o}lder
continuous. In particular, the linear equilibrium measure, i.e., the $T$%
-invariant probability measure realizing the supremum in (\ref{pression
approx}), is not necessarily unique. This property can nevertheless always
be ensured for specific linear transformations $\theta _{\pm }$:

\begin{definition}[H\"{o}lder-type linear functions]
\label{sect Holder type theta}\mbox{ }\newline
The pair $(\theta _{-},\theta _{+})$ of linear functions is H\"{o}lder-type\
if, for all $y_{\pm }\in \mathcal{X}_{\pm }^{\ast }$, their associated
approximating potential (see (\ref{def potential})) is H\"{o}lder
continuous, i.e., $\Theta _{y_{-},y_{+}}\in C^{\alpha }(\Sigma )$ for some $%
\alpha =\alpha _{y_{-},y_{+}}\in (0,1]$.
\end{definition}

It corresponds in Example \ref{example F II} to fix functions $\varphi
_{1},\ldots ,\varphi _{N}$ only in $C^{\alpha }(\Sigma )$, instead of the
whole space $C(\Sigma )$. Another general example of this situation can be
given in the scope of Example \ref{example F I} as follows: Fix $\alpha \in
(0,1]$. Define the unit closed ball 
\begin{equation*}
S_{\alpha }\doteq \left\{ \varphi \in C^{\alpha }\left( \Sigma \right)
:\left\Vert \varphi \right\Vert _{\alpha }\leq 1\right\} \subseteq C^{\alpha
}\left( \Sigma \right)
\end{equation*}%
of the Banach space $C^{\alpha }(\Sigma )$ of $\alpha $-H\"{o}lder
continuous functions, endowed with the H\"{o}lder metric 
\begin{equation*}
d_{S_{\alpha }}\left( \varphi ,\psi \right) \doteq \left\Vert \varphi -\psi
\right\Vert _{\alpha }\ ,\text{\qquad }\varphi ,\psi \in S_{\alpha }\text{ }.
\end{equation*}%
See also Equation (\ref{norm1}). Take any finite positive Borel measure $%
\mathfrak{a}_{\alpha }$ on $S_{\alpha }$, $\lambda \in \mathbb{R}$ and some $%
\alpha $-H\"{o}lder continuous function $\varphi \in S_{\alpha }$. Observe
that the identity mapping $S_{\alpha }\rightarrow S$ is continuous. Thus, we
can define a measure $\mathfrak{a}$ on $S$ as the pushforward of $\mathfrak{a%
}_{\alpha }$ through the identity mapping. Then, with such a measure $%
\mathfrak{a}$ in Example \ref{example F I} we again obtain H\"{o}lder-type
functions $(\theta _{-},\theta _{+})$, where $\mathcal{X}_{\pm }=\mathbb{R}%
\times L^{2}(S,\mathfrak{a})$ with obvious choices for the linear
transformations $\theta _{\pm }$.

H\"{o}lder-type linear transformations $\theta _{\pm }$ are particularly
useful since the linear approximating pressure (\ref{pression approx}) leads
in this case to a unique and ergodic linear equilibrium measure for any $%
y_{\pm }\in \mathcal{X}_{\pm }^{\ast }$, as already explained in Section \ref%
{Linear thermo}. See also \cite{LMMS,ACR}. This is a very interesting
situation for studying the nonlinear problem, as we shall see.

\subsubsection{Thermodynamic game}

The $\mu $-linearized variational problem $\mathrm{P}^{\flat }$ refers to
Equations (\ref{bogo approx})--(\ref{pression approx}). Observe in
particular that the concave and convex parts of the nonlinear pressure do
not have symmetric roles, since an infimum and a supremum do not commute in
general. In special situations, this could be the case, but certainly not in
the general case, as is explicitly shown in \cite[Section 2.7]{BruPedra2}
for lattices quantum systems. A sufficient condition for $\sup $ and $\inf $
to commute is given by Sion's minimax theorem \cite{HIDETOSHI KOMIYA}.

The switching of $\sup $ and $\inf $ as it appears in $\mathrm{P}^{\flat }$ (%
\ref{bogo approx}) leads to the min-max variational problem 
\begin{equation}
\mathrm{P}^{\sharp }\doteq \inf_{y_{-}\in \mathcal{X}_{-}^{\ast
}}\sup_{y_{+}\in \mathcal{X}_{+}^{\ast }}\left\{ P_{\mathrm{L}}\left(
y_{+},y_{-}\right) +g_{-}^{\ast }\left( y_{-}\right) -g_{+}^{\ast }\left(
y_{+}\right) \right\} \doteq \inf_{y_{-}\in \mathcal{X}_{-}^{\ast
}}\sup_{y_{+}\in \mathcal{X}_{+}^{\ast }}P_{\mathrm{NL}}\left(
y_{+},y_{-}\right) \ .  \label{bogo approxdiese}
\end{equation}%
The max-min variational problem $\mathrm{P}^{\flat }$ and min-max
variational problem $\mathrm{P}^{\sharp }$ have both a meaning in terms of
equilibrium measures, as shown in Section \ref{Section game + measusre}. In
the case of lattice quantum systems, this has already been observed in \cite[%
Theorem 2.36]{BruPedra2}. The max-min variational problem also appears in
the so-called Kac\ or van der Waals limit of lattice quantum models \cite%
{Kac}.

Similar to what is done in \cite[Definition 2.35]{BruPedra2} for quantum
lattice systems, the variational problems $\mathrm{P}^{\flat }$ (\ref{bogo
approx}) and $\mathrm{P}^{\sharp }$ (\ref{bogo approxdiese}) can be
interpreted as the conservative values of a two-person zero-sum game whose
payoff function is nothing but the nonlinear approximating pressure $P_{%
\mathrm{NL}}$ (\ref{nonlinear approximating pressure}), called here \emph{%
the thermodynamic game}. Note that we always consider here \emph{non-zero}
continuous functions $g_{\pm }:\mathcal{X}_{\pm }\rightarrow \mathbb{R}$, as
otherwise there is no proper two-person game. The thermodynamic game is
characterized by the following objects:

\begin{itemize}
\item For any continuous linear functionals $y_{\pm }\in \mathcal{X}_{\pm
}^{\ast }$, we define the variational problems%
\begin{eqnarray}
P^{\flat }\left( y_{+}\right) &\doteq &\inf P_{\mathrm{NL}}\left( y_{+},%
\mathcal{X}_{-}^{\ast }\right) \ ,\qquad y_{+}\in \mathcal{X}_{+}^{\ast }\ ,
\label{Pbemol} \\
P^{\sharp }\left( y_{-}\right) &\doteq &\sup P_{\mathrm{NL}}\left( \mathcal{X%
}_{+}^{\ast },y_{-}\right) \ ,\qquad y_{-}\in \mathcal{X}_{-}^{\ast }\ ,
\label{Pdiese}
\end{eqnarray}%
\ as well as their set of minimizers: 
\begin{eqnarray}
M^{\flat }\left( y_{+}\right) &\doteq &\left\{ x_{-}\in \mathcal{X}%
_{-}^{\ast }:P^{\flat }\left( y_{+}\right) =P_{\mathrm{NL}}\left(
y_{+},x_{-}\right) \right\} \subseteq \mathcal{X}_{-}^{\ast }\ ,
\label{Mbemol(y)} \\
M^{\sharp }\left( y_{-}\right) &\doteq &\left\{ x_{+}\in \mathcal{X}%
_{+}^{\ast }:P^{\sharp }\left( y_{-}\right) =P_{\mathrm{NL}}\left(
x_{+},y_{-}\right) \right\} \subseteq \mathcal{X}_{+}^{\ast }\ .
\label{Mdiese(y)}
\end{eqnarray}

\item For the max-min variational problem $\mathrm{P}^{\flat }$ and min-max
variational problem $\mathrm{P}^{\sharp }$ we consider the sets of
optimizers defined by%
\begin{eqnarray}
\mathrm{M}^{\flat } &\doteq &\left\{ x_{+}\in \mathcal{X}_{+}^{\ast
}:P^{\flat }\left( x_{+}\right) =\sup P^{\flat }\left( \mathcal{X}_{+}^{\ast
}\right) \doteq \mathrm{P}^{\flat }\right\} \subseteq \mathcal{X}_{+}^{\ast
}\ ,  \label{Mbemol} \\
\mathrm{M}^{\sharp } &\doteq &\left\{ x_{-}\in \mathcal{X}_{-}^{\ast
}:P^{\sharp }\left( x_{-}\right) =\sup P^{\sharp }\left( \mathcal{X}%
_{-}^{\ast }\right) \doteq \mathrm{P}^{\sharp }\right\} \subseteq \mathcal{X}%
_{-}^{\ast }\ .  \label{Mdiese}
\end{eqnarray}
\end{itemize}

\noindent As proven in Section \ref{Nonlinear pressure}, under Conditions
TF1--TF3 these objects have the properties gathered in the proposition
below. Let $\mathrm{dom}(g_{-}^{\ast })$ be the domain of the
Legendre-Fenchel transform $g_{-}^{\ast }$ of the function $g_{-}:\mathcal{X}%
_{-}\rightarrow \mathbb{R}$, i.e., the set of all $x\in \mathcal{X}%
_{-}^{\ast }$ such that $g_{-}^{\ast }\left( x\right) <\infty $. Since the
function $g_{-}$ never takes the value $-\infty $ (Condition TF2), observe
that $\mathrm{dom}(g_{-}^{\ast })\neq \emptyset $ (see (\ref%
{sdfsdfsdfsdfsdfs})) and $\mathrm{dom}(g_{-}^{\ast })$ is a convex subset of 
$\mathcal{X}_{-}^{\ast }$, by convexity of $g_{-}^{\ast }$.

\begin{proposition}[Properties of variational problems]
\label{Proposition importante bogoluibov01 copy(5)}\mbox{ }\newline
Assume Conditions TF1--TF3. Then, the following assertions hold: \newline
\emph{(i)} For all $y_{+}\in \mathcal{X}_{+}^{\ast }$ and $y_{-}\in \mathrm{%
dom}(g_{-}^{\ast })\subseteq \mathcal{X}_{-}^{\ast }$, $P^{\flat
}(y_{+}),P^{\sharp }(y_{-})\in \mathbb{R}$ and $M^{\flat }(y_{+}),M^{\sharp
}(y_{-})$ are nonempty weak$^{\ast }$-compact sets. For all $y_{+}\in 
\mathcal{X}_{+}^{\ast }$, $M^{\flat }(y_{+})$ is additionally convex.\newline
\emph{(ii)} $\mathrm{P}^{\flat },\mathrm{P}^{\sharp }\in \mathbb{R}$ and $%
\mathrm{M}^{\flat },\mathrm{M}^{\sharp }$ are nonempty, norm-bounded and weak%
$^{\ast }$-compact. $\mathrm{M}^{\sharp }$ is additionally a convex subset
of $\mathrm{dom}(g_{-}^{\ast })$.
\end{proposition}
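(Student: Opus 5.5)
The plan is to first record the basic regularity of the linear approximating pressure, then study the inner problems in $y_{\pm}$ using lower semicontinuity and coercivity coming from TF2--TF3, and finally the outer problems.

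\emph{Properties of $P_{\mathrm{L}}$.} For fixed $\mu\in\mathcal{P}(T)$, the map $(y_+,y_-)\mapsto \mathrm{P}(y_+,y_-,\mu)$ is affine and weak$^{\ast}$-continuous in each variable. Hence $P_{\mathrm{L}}$, a supremum of such maps, is jointly convex and weak$^{\ast}$-lower semicontinuous in each variable. Moreover, since $h\le 0$, $h(\mathrm{m}_{\otimes})=0$ (or at least $\sup h>-\infty$), and $|\theta_\pm(\mu_S)|\le\|\theta_\pm\|_{\mathrm{op}}$ because $\|\mu_S\|_\infty\le1$, we get the two-sided bound
\begin{equation*}
C_0-\|\theta_-\|_{\mathrm{op}}\|y_-\|_{\mathrm{op}}-\|\theta_+\|_{\mathrm{op}}\|y_+\|_{\mathrm{op}}\le P_{\mathrm{L}}(y_+,y_-)\le \|\theta_-\|_{\mathrm{op}}\|y_-\|_{\mathrm{op}}+\|\theta_+\|_{\mathrm{op}}\|y_+\|_{\mathrm{op}} .
\end{equation*}
The difference of Lipschitz-type estimates also shows that $P_{\mathrm{L}}$ is norm-Lipschitz in each variable, with constants $\|\theta_\pm\|_{\mathrm{op}}$.

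\emph{Part (i).} Fix $y_+$. The map $y_-\mapsto P_{\mathrm{NL}}(y_+,y_-)=P_{\mathrm{L}}+g_-^{\ast}-g_+^{\ast}(y_+)$ is convex and weak$^{\ast}$-lsc, and finite on $\mathrm{dom}(g_-^\ast)\ne\emptyset$. By the minimal linear growth $\|\theta_-\|_{\mathrm{op}}$ of $g_-^\ast$ combined with the lower bound above, $P_{\mathrm{NL}}(y_+,y_-)\to+\infty$ as $\|y_-\|\to\infty$. The infimum can therefore be restricted to a ball $B(0,R)$, $R\ge R_0$, which is weak$^{\ast}$-compact by Banach--Alaoglu; the closedness assumption in TF2 handles the intersection with the domain if one needs lsc restricted there. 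A lsc function on a compact set attains its minimum. This gives $P^\flat(y_+)\in\mathbb{R}$ and $M^\flat(y_+)$ nonempty, weak$^{\ast}$-compact (closed sublevel set inside a ball) and convex (sublevel set of a convex function).

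Symmetrically, for $y_-\in\mathrm{dom}(g_-^\ast)$, the map $y_+\mapsto P_{\mathrm{NL}}$ equals $P_{\mathrm{L}}-g_+^\ast+\text{const}$. By the upper bound on $P_{\mathrm{L}}$ and the minimal growth of $g_+^\ast$, it tends to $-\infty$ at infinity, so the supremum localizes on a ball. Upper semicontinuity is the subtle point here, since $-g_+^\ast$ is weak$^\ast$-usc but $P_{\mathrm{L}}$ is only lsc. I plan to use that $g_+$ is bounded: then $g_+^\ast\ge -\sup g_+ $ plus $\sup_x y(x)$, which gives $g_+^\ast=\infty$ off $0$, unless $\mathcal{X}_+$ is such that... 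This needs checking. The alternative I would try is to write $P_{\mathrm{L}}(y_+,y_-)=\sup_\mu$ and note that the sup over $y_+$ of $\mathrm{P}(y_+,y_-,\mu)-g_+^\ast(y_+)$ is $h(\mu)-y_-\theta_-(\mu_S)+g_+(\theta_+(\mu_S))$. Exchanging the two suprema, a maximizer $x_+$ is obtained from a maximizing $\mu$ (which exists by upper semicontinuity of $P$-type functionals under TF1) via $x_+\in\partial g_+(\theta_+(\mu_S))$. Such a subgradient exists and lies in a bounded set by continuity/boundedness of $g_+$. Compactness of $M^\sharp(y_-)$ then follows from boundedness together with closedness, the latter obtained by a net argument using the same exchange. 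This usc/closedness issue for $M^\sharp$ is the main obstacle.

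\emph{Part (ii).} The function $P^\flat$ is an infimum of functions of $y_+$, and I will show it is bounded above with $\sup$ attained. The bound $P^\flat(y_+)\le P_{\mathrm{NL}}(y_+,y_-^0)$ for fixed $y_-^0\in\mathrm{dom}(g_-^\ast)$ tends to $-\infty$ as $\|y_+\|\to\infty$, which gives norm-boundedness of near-maximizers. For attainment and compactness, I would again use the exchange representation: $\mathrm{P}^\flat=\sup P(\mathcal{P}(T))$, which is to be proven via the von Neumann minimax theorem (Theorem \ref{theorem minmax von Neumann}) applied to the convex inner infimum over a compact ball of $y_-$ against the concave set $\mathcal{P}(T)$. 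Maximizers $x_+$ are then subgradients as above; closedness follows from weak$^\ast$ upper semicontinuity of $P^\flat$ restricted to bounded sets.

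For $\mathrm{M}^\sharp$, the function $P^\sharp$ is a supremum of convex lsc functions of $y_-$, hence convex and lsc. By part (i) it is coercive, with growth dominated by $g_-^\ast$ against the Lipschitz bound on $P_{\mathrm{L}}$. Its minimizers therefore form a nonempty, bounded, weak$^\ast$-compact, convex set contained in $\mathrm{dom}(g_-^\ast)$, and $\mathrm{P}^\sharp\in\mathbb{R}$.
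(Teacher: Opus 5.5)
You handle $M^{\flat}(y_{+})$ in (i), and $\mathrm{P}^{\sharp}$, $\mathrm{M}^{\sharp}$ in (ii), correctly and in line with the paper. Those parts only need weak$^{\ast}$-lower semicontinuity of $P_{\mathrm{L}}$ and $g_{-}^{\ast}$, plus the growth conditions.

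The gap is your claim that $P_{\mathrm{L}}$ is ``only lsc''. The missing idea is Lemma \ref{Lemma bogoluibov2}. By TF1 and metrizability of $\mathcal{P}(T)$, the map $\tau_{\pm}(\mu)=\theta_{\pm}(\mu_{S})$ is weak$^{\ast}$-to-norm continuous, so $\tau_{\pm}(\mathcal{P}(T))$ is norm-compact. A norm-bounded weak$^{\ast}$-convergent net converges uniformly on a norm-compact set (take a finite $\varepsilon$-net). Hence $P_{\mathrm{L}}$ is weak$^{\ast}$-continuous on $B_{+}(0,R)\times B_{-}(0,R)$.

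Without this fact, two steps remain unproven.
\begin{itemize}
\item For $M^{\sharp}(y_{-})$, your subgradient construction does give nonemptiness. You still need to justify that $g_{+}\circ\tau_{+}$ is continuous; this follows from (\ref{inequality grow}), which makes $g_{+}$ Lipschitz on $\{\|x\|\le\|\theta_{+}\|_{\mathrm{op}}\}$. However, weak$^{\ast}$-closedness is only announced as a ``net argument''. Carrying it out requires exactly the joint continuity of $(y_{+},\mu)\mapsto y_{+}(\tau_{+}(\mu))$ on bounded sets, i.e.\ the fact above.
\item In (ii) the problem is worse. You deduce closedness of $\mathrm{M}^{\flat}$ from weak$^{\ast}$-upper semicontinuity of $P^{\flat}$ on bounded sets. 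You never establish this, and by your own remark it cannot come from lower semicontinuity of $P_{\mathrm{L}}$: $P^{\flat}$ is an infimum, so it is usc only if the functions $y_{+}\mapsto P_{\mathrm{NL}}(y_{+},y_{-})$ are.
\item Your detour through the von Neumann minimax theorem (first proving $\mathrm{P}^{\flat}=\sup P(\mathcal{P}(T))$) yields at most nonemptiness of $\mathrm{M}^{\flat}$, not compactness. The paper does not use minimax for this proposition at all.
\end{itemize}

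With Lemma \ref{Lemma bogoluibov2} in hand, the rest follows directly, as in Lemmata \ref{Lemma bogoluibov3}--\ref{Lemma bogoluibov6}:
\begin{itemize}
\item $y_{+}\mapsto P_{\mathrm{NL}}(y_{+},y_{-})$ is weak$^{\ast}$-usc on balls, because $-g_{+}^{\ast}$ is usc.
\item $P^{\flat}$ is usc on balls, as an infimum of such functions.
\item The growth of $g_{\pm}^{\ast}$ confines the relevant maximizers to a fixed ball (Lemma \ref{Lemma suplin growth}).
\item $M^{\sharp}(y_{-})$ and $\mathrm{M}^{\flat}$ are therefore weak$^{\ast}$-closed superlevel sets inside weak$^{\ast}$-compact balls.
\end{itemize}

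Your aside about boundedness of $g_{+}$ is correct for the literal wording of TF3. A convex function bounded on all of $\mathcal{X}_{+}$ is constant, so $g_{+}^{\ast}$ could not have full domain unless $\mathcal{X}_{+}=\{0\}$. But it is beside the point here: the argument uses no boundedness of $g_{+}$, only the continuity of $P_{\mathrm{L}}$ on bounded sets.
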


\begin{proof}
Conditions TF1--TF3 imply Conditions B1--B3 of Section \ref{sect conc conv
Bogo} for $K=\mathcal{P}(T)$, $f=h$ and $\tau _{\pm }:\mathcal{P}%
(T)\rightarrow \mathcal{X}_{\pm }$ defined by $\tau _{\pm }(\mu )\doteq
\theta _{\pm }(\mu _{S})$ for any $\mu \in \mathcal{P}(T)$. Indeed, $%
\mathcal{P}(T)$ is a compact convex Hausdorff space, $h$ is affine and upper
semicontinuous (Definition \ref{uod} and Proposition \ref{Affinity of the
entropy copy(1)}), and $\tau _{\pm }$ is affine and continuous in this case,
because the weak$^{\ast }$ topology of $\mathcal{P}(T)$ is metrizable and
the weak$^{\ast }$ convergence of $\mu $ in $\mathcal{P}(T)$ implies the
point-wise convergence of $\mu _{S}$ in $\mathcal{M}(S)$. Note also that 
\begin{equation*}
\left\Vert \tau _{\pm }\right\Vert _{\infty }\doteq \sup \left\{ \left\Vert
\tau _{\pm }\left( \mu \right) \right\Vert _{\mathcal{X}}:\mu \in \mathcal{P}%
\left( T\right) \right\} \leq \left\Vert \theta _{\pm }\right\Vert _{\mathrm{%
op}}\sup_{\mu \in \mathcal{P}\left( T\right) }\left\Vert \mu _{S}\right\Vert
_{\mathrm{op}}=\left\Vert \theta _{\pm }\right\Vert _{\mathrm{op}}\ .
\end{equation*}%
So, it suffices to invoke Propositions \ref{Proposition importante
bogoluibov01 copy(3)} and \ref{Proposition importante bogoluibov01 copy(4)}
to get the assertion.
\end{proof}

Much more features of the thermodynamic game can be considered. This is done
in Section \ref{Decision}. For example, we can define decision rules for the
thermodynamic game (Definition \ref{decision rules}) and find their
particular properties. E.g., if we assume Conditions TF1--TF3 but also that $%
g_{+}^{\ast }$ is continuous, $\mathrm{dom}(g_{-}^{\ast })=\mathcal{X}%
_{-}^{\ast }$ and $g_{-}^{\ast }:\mathcal{X}_{-}^{\ast }\rightarrow \mathbb{R%
}$ is strictly convex, then we can deduce from Proposition \ref{decision
rules prop} that, for all $x_{+}\in \mathrm{M}^{\flat }$, $M^{\flat }(x_{+})$
contains exactly one element $x_{-}(x_{+})\in \mathcal{X}_{+}^{\ast }$. In
this case, the function $x_{+}\mapsto x_{-}(y_{+})$ from $\mathrm{M}^{\flat
} $ to $\mathcal{X}_{-}^{\ast }$ is continuous, both sets being endowed with
the weak$^{\ast }$ topology. This means that there exists a unique $\flat $%
-decision rule, see Definition \ref{decision rules}.

\begin{remark}
\label{Remark1}\mbox{ }\newline
The simpler case where $g_{+}=0$ or $g_{-}=0$ can be studied in the same way
(mutatis mutandis). For example, if $g_{+}=0$ and $g_{-}\neq 0$, we consider 
$P^{\flat }\doteq \inf P_{\mathrm{NL}}\left( \mathcal{X}_{-}^{\ast }\right) $
and $M^{\flat }\left( 0\right) $, since the set $\mathrm{M}^{\flat }$ does
not make sense in this case. Similarly, if $g_{+}\neq 0$ and $g_{-}=0$, we
consider $\mathrm{P}^{\flat }\doteq \sup P^{\flat }\left( \mathcal{X}%
_{+}^{\ast }\right) $ and $\mathrm{M}^{\flat }$ with the new function $%
P^{\flat }\doteq P_{\mathrm{L}}\left( \cdot ,0\right) -g_{+}^{\ast }$. Then
Proposition \ref{Proposition importante bogoluibov01 copy(5)} holds true,
mutatis mutandis. We refrain from going into further detail, as this special
situation is already discussed in Section \ref{Abstract Theory}.
\end{remark}

\subsubsection{Validity of Bogoliubov linearizations}

Under Condition \ref{Condition essential} nonlinear equilibrium measures
(Definition \ref{Equilibrium measures def copy(1)}) form the weak$^{\ast }$%
-compact set (\ref{NL eq measure}), that is,%
\begin{equation*}
E_{P}\doteq \left\{ \mu \in \mathcal{P}\left( T\right) :P\left( \mu \right)
=\sup P\left( \mathcal{P}\left( T\right) \right) \right\} \ .
\end{equation*}%
However, if we look at the same variational problem with the pressure $P$
defined by (\ref{eq 21}) but under the \emph{more general} Conditions
TF1--TF3, it is not a priori clear whether the above set is nonempty, since $%
P$ is not necessarily weak$^{\ast }$-upper semicontinuous. So, in this case,
it is instead convenient to use%
\begin{equation}
E_{P}\doteq \left\{ \mu \in \mathcal{P}\left( T\right) :\exists (\mu
_{n})_{n\in \mathbb{N}}\subseteq \mathcal{P}\left( T\right) \mathrm{\ }\text{%
with }\lim_{n\rightarrow \infty }\mu _{n}=\mu \text{ and\ }%
\lim_{n\rightarrow \infty }P(\mu _{n})=\sup P\left( \mathcal{P}\left(
T\right) \right) \right\} \ ,  \label{Ep}
\end{equation}%
as the basic definition for the (nonempty) set of nonlinear equilibrium
measures, similar to (\ref{sdsdsdfdgdfhfghhf}). Above, the limit $\mu
_{n}\rightarrow \mu $ of $T$-invariant probability measures is taken with
respect to the weak$^{\ast }$ topology. Surprisingly, this set is exactly
what one would like from a conceptual point of view, the set 
\begin{equation*}
M_{P}\doteq \left\{ \mu \in \mathcal{P}\left( T\right) :P\left( \mu \right)
=\sup P\left( \mathcal{P}\left( T\right) \right) \right\} \ 
\end{equation*}%
of the usual maximizers of $P$. This is proven in the next theorem.

In fact, the sets $\mathrm{M}^{\flat }\subseteq \mathcal{X}_{+}^{\ast }$\
and $M^{\flat }(y_{+})\subseteq \mathcal{X}_{-}^{\ast }$, $y_{+}\in \mathcal{%
X}_{+}^{\ast }$, of optimizers can be used to obtain all elements of $E_{P}$
via the weak$^{\ast }$-compact convex sets 
\begin{equation*}
E_{\mathrm{L}}\left( y_{+},y_{-}\right) \doteq \left\{ \nu \in \mathcal{P}%
\left( T\right) :\mathrm{P}\left( y_{+},y_{-},\nu \right) =\sup \mathrm{P}%
\left( y_{+},y_{-},\mathcal{P}\left( T\right) \right) \doteq P_{\mathrm{L}%
}\left( y_{+},y_{-}\right) \right\} \ ,\qquad y_{\pm }\in \mathcal{X}_{\pm
}^{\ast }\ ,
\end{equation*}%
of approximating linear equilibrium measures. Note that the weak$^{\ast }$
compactness and convexity of $E_{\mathrm{L}}(y_{+},y_{-})$ for all $y_{\pm
}\in \mathcal{X}_{\pm }^{\ast }$ is a direct consequence of the affine
property and weak$^{\ast }$-upper semicontinuity of the approximating
pressure $\mu \mapsto \mathrm{P}(y_{+},y_{-},\mu )$ defined by (\ref%
{approximating pressure}), thanks to Proposition \ref{Affinity of the
entropy copy(1)} and the weak$^{\ast }$-upper semicontinuity of the entropy
(cf. Definition \ref{uod}).

As is usual, here the so-called \emph{subdifferential} of a convex function $%
g$ at $z\in \mathrm{dom}\left( g\right) $ is denoted $\partial g(z)$, see
Section \ref{Legendre-Fenchel transform section} and in particular Equation (%
\ref{subdifferential}) for more details. Recall that if $g$ is
Gateaux-differentiable at $z$ then $\partial g(z)$ must be a singleton.

We are now in a position to show how the $\mu $-linearized\ variational
problem $\mathrm{P}^{\flat }$ (\ref{bogo approx})--(\ref{pression approx})
can be used to completely solve the original problem, including not only the
calculation of the nonlinear pressure but also the nonlinear equilibrium
measures:

\begin{theorem}[Nonlinear pressure and nonlinear equilibrium measures]
\label{Theorem-main1}\mbox{ }\newline
Assume Conditions TF1--TF3. Let $\tau _{\pm }(\mu )\doteq \theta _{\pm }(\mu
_{S})$ for any $T$-invariant probability measure $\mu \in \mathcal{P}(T)$. 
\newline
\emph{(i)} Nonlinear pressure:%
\begin{equation*}
\sup P\left( \mathcal{P}\left( T\right) \right) =\mathrm{P}^{\flat }\doteq
\sup_{y_{+}\in \mathcal{X}_{+}^{\ast }}\inf_{y_{-}\in \mathcal{X}_{-}^{\ast
}}\left\{ P_{\mathrm{L}}\left( y_{+},y_{-}\right) +g_{-}^{\ast }\left(
y_{-}\right) -g_{+}^{\ast }\left( y_{+}\right) \right\} \ .
\end{equation*}%
\emph{(ii)} Self-consistency conditions: For any $x_{+}\in \mathrm{M}^{\flat
}$ and $x_{-}\in M^{\flat }\left( x_{+}\right) $, the set 
\begin{equation*}
E_{\mathrm{L}}^{\mathrm{sc}}\left( x_{+},x_{-}\right) \doteq \left\{ \mu \in
E_{\mathrm{L}}\left( x_{+},x_{-}\right) :x_{-}\in \partial g_{-}\left( \tau
_{-}\left( \mu \right) \right) \right\} \equiv E_{\mathrm{L}}^{\mathrm{sc}%
}\left( x_{+}\right)
\end{equation*}%
of self-consistent equilibrium measures of Bogoliubov linearizations is
nonempty, convex and weak$^{\ast }$-compact, and does not depend upon the
choice of $x_{-}\in M^{\flat }\left( x_{+}\right) $. Furthermore, for all $%
x_{+}\in \mathrm{M}^{\flat }$,%
\begin{equation*}
E_{\mathrm{L}}^{\mathrm{sc}}\left( x_{+}\right) \subseteq \left\{ \mu \in 
\mathcal{P}\left( T\right) :x_{+}\in \partial g_{+}\left( \tau _{+}\left(
\mu \right) \right) \right\} \ .
\end{equation*}%
\emph{(iii)} Nonlinear equilibrium measures: $E_{P}=M_{P}$ is weak$^{\ast }$%
-compact and corresponds to the union of all above sets of self-consistent
equilibrium measures, that is, 
\begin{equation*}
E_{P}=M_{P}=\bigcup_{x_{+}\in \mathrm{M}^{\flat }}E_{\mathrm{L}}^{\mathrm{sc}%
}\left( x_{+}\right) \ .
\end{equation*}%
If the function $g_{+}:\mathcal{X}_{+}\rightarrow \mathbb{R}$ is
additionally Gateaux%
%TCIMACRO{\TeXButton{\-}{\-}}%
%BeginExpansion
\-%
%EndExpansion
-differentiable, then the above union is disjoint.
\end{theorem}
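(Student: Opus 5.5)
The plan is to reduce Theorem \ref{Theorem-main1} to the abstract results of Section \ref{Abstract Theory}, and in particular to Theorem \ref{Proposition importante bogoluibov01 copy(2)}, in exactly the way the proof of Proposition \ref{Proposition importante bogoluibov01 copy(5)} was done. We set $K=\mathcal{P}(T)$, $f=h$ and $\tau_{\pm}(\mu)\doteq\theta_{\pm}(\mu_S)$. As already checked there, $K$ is a compact convex metrizable space and $h$ is affine and upper semicontinuous. The maps $\tau_{\pm}$ are affine, and they are norm-continuous: weak$^{\ast}$ convergence in $\mathcal{P}(T)$ gives bounded pointwise convergence of $\mu_S$, and $\theta_{\pm}$ is $\sigma$-normal. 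They also satisfy $\|\tau_{\pm}\|_\infty\le\|\theta_{\pm}\|_{\mathrm{op}}$, so the growth conditions in TF2--TF3 are precisely Conditions B2--B3. For completeness I would also sketch the abstract argument itself, specialised to this setting.

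\emph{Step (i).} Using $g_{\pm}=g_{\pm}^{\ast\ast}$, write $P$ as in (\ref{pressure legendre}). The supremum over $\mu$ commutes with the supremum over $y_+$, so for each fixed $y_+$ it remains to prove
\[
\sup_{\mu}\inf_{y_-}\{\mathrm{P}(y_+,y_-,\mu)+g_-^{\ast}(y_-)\}=\inf_{y_-}\sup_{\mu}\{\mathrm{P}(y_+,y_-,\mu)+g_-^{\ast}(y_-)\}.
\]
First, the minimal linear growth of $g_-^{\ast}$ with rate $\|\theta_-\|_{\mathrm{op}}$ lets us restrict $y_-$ to a set $\{\|y_-\|\le R,\ g_-^{\ast}(y_-)<\infty\}$, which is weak$^{\ast}$-compact and convex by TF2. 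On that set the integrand is concave and upper semicontinuous in $\mu$, and convex and lower semicontinuous in $y_-$. Hence von Neumann's minimax theorem (Theorem \ref{theorem minmax von Neumann}) applies and yields $\sup P(\mathcal{P}(T))=\mathrm{P}^{\flat}$.

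\emph{Step (ii).} Fix $x_+\in\mathrm{M}^{\flat}$ and $x_-\in M^{\flat}(x_+)$. The minimax step also provides a $\mu$ maximizing the usc functional $\mu\mapsto h(\mu)-g_-(\tau_-(\mu))+x_+(\tau_+(\mu))$. For such a $\mu$ we have the chain
\[
\mathrm{P}^{\flat}=P^{\flat}(x_+)=P_{\mathrm{NL}}(x_+,x_-)\ge\mathrm{P}(x_+,x_-,\mu)+g_-^{\ast}(x_-)-g_+^{\ast}(x_+)\ge h(\mu)-g_-(\tau_-\mu)+x_+(\tau_+\mu)-g_+^{\ast}(x_+),
\]
and the last quantity equals $P^{\flat}(x_+)$ by the minimax identity at $x_+$. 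So every inequality in the chain is an equality. Equality in the last step is the Fenchel equality, i.e.\ $x_-\in\partial g_-(\tau_-(\mu))$. Equality in the first step says $\mu\in E_{\mathrm{L}}(x_+,x_-)$. Conversely, running the chain backwards shows that $E^{\mathrm{sc}}_{\mathrm{L}}(x_+,x_-)$ is exactly the set of maximizers of that usc concave functional. That functional does not involve $x_-$, which gives nonemptiness, convexity, compactness and independence of $x_-$ all at once. Next, since $\mathrm{P}^{\flat}\ge P(\mu)\ge h(\mu)-g_-(\tau_-\mu)+x_+(\tau_+\mu)-g_+^{\ast}(x_+)=\mathrm{P}^{\flat}$, we get $g_+(\tau_+\mu)=x_+(\tau_+\mu)-g_+^{\ast}(x_+)$, i.e.\ $x_+\in\partial g_+(\tau_+(\mu))$. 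The same computation shows $\bigcup_{x_+}E^{\mathrm{sc}}_{\mathrm{L}}(x_+)\subseteq M_P\subseteq E_P$.

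\emph{Step (iii), the main obstacle.} The remaining inclusion is $E_P\subseteq\bigcup_{x_+\in\mathrm{M}^{\flat}}E^{\mathrm{sc}}_{\mathrm{L}}(x_+)$, and it is delicate because $P$ is not usc. Take $\mu_n\to\mu$ with $P(\mu_n)\to\mathrm{P}^{\flat}$ and choose $y_n\in\partial g_+(\tau_+(\mu_n))$. The growth condition TF3, together with the boundedness of $g_+$ and of $\tau_+$, forces $(y_n)$ to be norm-bounded. By Banach--Alaoglu it has a weak$^{\ast}$ cluster point $x_+$ along a subnet. Then
\[
P(\mu_n)=h(\mu_n)-g_-(\tau_-\mu_n)+y_n(\tau_+\mu_n)-g_+^{\ast}(y_n)\le P^{\flat}(y_n).
\]
Because $\tau_+(\mu_n)\to\tau_+(\mu)$ in norm and $(y_n)$ is bounded, we get $y_n(\tau_+\mu_n)-y_n(\tau_+\mu)\to0$. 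Combining this with upper semicontinuity of $h-g_-\circ\tau_-$ and weak$^{\ast}$ lower semicontinuity of $g_+^{\ast}$ gives
\[
h(\mu)-g_-(\tau_-\mu)+x_+(\tau_+\mu)-g_+^{\ast}(x_+)\ge\mathrm{P}^{\flat}.
\]
The left side is at most $P^{\flat}(x_+)\le\mathrm{P}^{\flat}$. Hence $x_+\in\mathrm{M}^{\flat}$ and $\mu$ maximizes the functional of Step (ii), so $\mu\in E^{\mathrm{sc}}_{\mathrm{L}}(x_+)$; in particular $E_P=M_P$. The delicate point is justifying the joint limit $y_n(\tau_+\mu_n)$ and the upper semicontinuity of $P^{\flat}$ along the subnet. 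Compactness of $E_P$ is immediate from its definition as a set of limits of maximizing sequences in the metrizable compact space $\mathcal{P}(T)$. Finally, if $g_+$ is Gateaux-differentiable, then $\partial g_+(\tau_+(\mu))$ is a singleton by Step (ii), so $x_+$ is determined by $\mu$ and the union is disjoint.
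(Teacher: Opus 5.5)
Your proposal is correct and follows essentially the paper's route: the paper proves the theorem by reducing it to Theorem~\ref{Proposition importante bogoluibov01 copy(2)} with $K=\mathcal{P}(T)$, $f=h$ and $\tau_{\pm}(\mu)=\theta_{\pm}(\mu_S)$, and your sketch reproduces that theorem's Steps~1--5 (biconjugation plus minimax at fixed $y_+$, self-consistency from equality in Fenchel--Young, and a weak$^{\ast}$ cluster point of uniformly bounded subgradients to get $E_P\subseteq\bigcup_{x_+}E_{\mathrm{L}}^{\mathrm{sc}}(x_+)$), with your sequential closedness argument for $E_P$ replacing the paper's projection of a compact solution set in $B_+(0,R)\times K$. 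The one step you leave implicit is that $\partial g_+(\tau_+(\mu_n))\neq\emptyset$, since $\mathcal{X}_+$ is only a normed space; this follows from the same growth condition you already use, because the supremum in (\ref{inequality grow}) is attained on a weak$^{\ast}$-compact ball, which is exactly how the paper obtains its subgradients in Step~4.
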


\begin{proof}
As explained in Proposition \ref{Proposition importante bogoluibov01 copy(5)}%
, Conditions TF1--TF3 imply Conditions B1--B3 of Section \ref{sect conc conv
Bogo} for $K=\mathcal{P}(T)$, $f=h$ and $\tau _{\pm }:\mathcal{P}%
(T)\rightarrow \mathcal{X}_{\pm }$ defined by $\tau _{\pm }(\mu )\doteq
\theta _{\pm }(\mu _{S})$ for any $\mu \in \mathcal{P}(T)$. The assertions
are therefore direct consequences of Theorem \ref{Proposition importante
bogoluibov01 copy(2)} (i)--(iii).
\end{proof}

Compared to \cite[Theorem C]{TF4}, in particular Equations (\ref{maximizers
z})--(\ref{maximizers zbis}), the computation of the sets $\mathrm{M}^{\flat
}$, $M^{\flat }\left( x_{+}\right) $, $x_{+}\in \mathrm{M}^{\flat }$, and $%
E_{\mathrm{L}}\left( x_{+},x_{-}\right) $ of optimizers is much simpler,
while our setup allows for parameter sets in \textbf{infinite-dimensional}
spaces $\mathcal{X}_{\pm }$. The cost is that we have to separate the convex
and concave parts of the nonlinearity. This cannot be avoided and is not
just a technical artefact. On the other hand, the $C^{1}$-character of the
nonlinearity is not needed\footnote{%
In contrast to the corresponding results of \cite{TF4}.}, although it can be
very useful for solving the corresponding variational problems.

Note that the (nonempty)\ compact convex set $E_{\mathrm{L}}^{\mathrm{sc}%
}\left( x_{+}\right) $ is naturally called the set of self-consistent
equilibrium measures\ of Bogoliubov linearizations. Indeed, take for
instance $\mathcal{X}_{\pm }=\mathbb{R}$, $g_{-}(x)=g_{+}(x)=x^{2}/2$ and $%
\tau _{\pm }(\mu )\doteq \theta _{\pm }(\mu _{S})=\mu (\varphi )$ for fixed H%
\"{o}lder potentials $\varphi _{\pm }\in C^{\alpha }(\Sigma )$ ($\alpha \in
(0,1]$). In this case we canonically identify the dual spaces $\mathcal{X}%
_{\pm }^{\ast }$ with $\mathbb{R}$. Then, for all $T$-invariant probability
measures $\mu \in \mathcal{P}\left( T\right) $,%
\begin{equation*}
\partial g_{\pm }\left( \tau \left( \mu \right) \right) =\left\{ \mu \left(
\varphi _{\pm }\right) \right\} \ .
\end{equation*}%
Thus, $\mu \in E_{\mathrm{L}}^{\mathrm{sc}}\left( x_{+}\right) $ means that $%
\mu $ is the unique linear equilibrium probability measure of the H\"{o}lder
potential $(x_{+}\varphi _{+}-x_{-}\varphi _{-})$ and satisfies $\mu
(\varphi _{\pm })=x_{\pm }$. By Theorem \ref{Theorem-main1}, it implies in
particular that, for any fixed $\mu \in E_{P}=M_{P}$, $\mu (\varphi _{\pm
})=x_{\pm }$ for all $x_{+}\in \mathrm{M}^{\flat }$ and $x_{-}\in M^{\flat
}\left( x_{+}\right) $, i.e., $\mathrm{M}^{\flat }=\{x_{+}\}$ and $M^{\flat
}(x_{+})=\{x_{-}\}$. Compare this with Corollary \ref{Theorem-main1 copy(2)}
below. More generally, similar results are true if the functions $g_{\pm }$
are Gateaux-differentiable and their gradient mapping is injective.

This last example refers to the H\"{o}lder case, which is very important in
the linear thermodynamic formalism. Indeed, when a potential $\varphi $ is
of H\"{o}lder class, i.e. $\varphi \in C^{\alpha }(\Sigma )$ for a certain $%
\alpha \in (0,1]$, recall that the linear equilibrium measure $\mu _{\varphi
}$ is unique and ergodic (see \cite{LMMS,ACR}). In the nonlinear framework,
this leads us to define the H\"{o}lder-type for pairs $(\theta _{-},\theta
_{+})$ of linear functions, in Definition \ref{sect Holder type theta}. In
this particular case the nonlinear equilibrium measures have strong
properties. Among other things, they are always ergodic.

\begin{corollary}[Nonlinear equilibrium measures -- H\"{o}lder case]
\label{Theorem-main1 copy(2)}\mbox{ }\newline
Assume Conditions TF1--TF3 and that the pair $(\theta _{-},\theta _{+})$ of
linear functions is H\"{o}lder-type. \newline
\emph{(i)} Ergodic nonlinear equilibrium measures: For any $x_{+}\in \mathrm{%
M}^{\flat }$ and $x_{-}\in M^{\flat }\left( x_{+}\right) $, 
\begin{equation*}
E_{\mathrm{L}}^{\mathrm{sc}}\left( x_{+}\right) =E_{\mathrm{L}}^{\mathrm{sc}%
}\left( x_{+},x_{-}\right) =E_{\mathrm{L}}\left( x_{+},x_{-}\right) =\left\{
\mu _{x_{+}}\right\} \subseteq \mathcal{P}_{\mathrm{erg}}\left( T\right)
\end{equation*}%
and $E_{P}=M_{P}$ is a nonempty weak$^{\ast }$-compact set of ergodic
probability measures:%
\begin{equation*}
E_{P}=M_{P}=\left\{ \mu _{x_{+}}:x_{+}\in \mathrm{M}^{\flat }\right\}
\subseteq \mathcal{P}_{\mathrm{erg}}\left( T\right) \ .
\end{equation*}%
\emph{(ii)} If $g_{+}:\mathcal{X}_{+}^{\ast }\rightarrow \mathbb{R}$ is
Gateaux%
%TCIMACRO{\TeXButton{\-}{\-}}%
%BeginExpansion
\-%
%EndExpansion
-differentiable then the mapping $x_{+}\mapsto \mu _{x_{+}}$ from the weak$%
^{\ast }$-compact $\mathrm{M}^{\flat }\subseteq \mathcal{X}_{+}^{\ast }$ to $%
E_{P}\subseteq \mathcal{P}_{\mathrm{erg}}\left( T\right) $ is a
homeomorphism with respect to the weak$^{\ast }$ topology of $\mathrm{M}%
^{\flat }$ and $E_{P}$.
\end{corollary}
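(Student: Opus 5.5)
The plan is to derive everything from Theorem \ref{Theorem-main1}, using the uniqueness of linear equilibrium measures for H\"{o}lder potentials recalled in Section \ref{Linear thermo}.

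For (i), fix $x_{+}\in \mathrm{M}^{\flat }$ and $x_{-}\in M^{\flat }(x_{+})$. By (\ref{def potential}), $\mathrm{P}(x_{+},x_{-},\mu )=h(\mu )+\mu (\Theta _{x_{-},x_{+}})$ for all $\mu \in \mathcal{P}(T)$. Hence $P_{\mathrm{L}}(x_{+},x_{-})=\mathfrak{P}_{L}(\Theta _{x_{-},x_{+}})$, and $E_{\mathrm{L}}(x_{+},x_{-})$ is exactly the set of linear equilibrium measures of the potential $\Theta _{x_{-},x_{+}}$. This potential is H\"{o}lder continuous by the H\"{o}lder-type assumption, so by \cite{LMMS,ACR} the set $E_{\mathrm{L}}(x_{+},x_{-})$ is a singleton consisting of an ergodic measure. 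By Theorem \ref{Theorem-main1} (ii), $E_{\mathrm{L}}^{\mathrm{sc}}(x_{+},x_{-})$ is a nonempty subset of $E_{\mathrm{L}}(x_{+},x_{-})$, so the two sets coincide. The same theorem says $E_{\mathrm{L}}^{\mathrm{sc}}(x_{+},x_{-})=E_{\mathrm{L}}^{\mathrm{sc}}(x_{+})$ does not depend on $x_{-}$. We may therefore define $\mu _{x_{+}}$ as its unique element. The description of $E_{P}=M_{P}$ as $\{\mu _{x_{+}}:x_{+}\in \mathrm{M}^{\flat }\}$ then follows immediately from Theorem \ref{Theorem-main1} (iii), and its weak$^{\ast }$ compactness is stated there.

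For (ii), the map $x_{+}\mapsto \mu _{x_{+}}$ is surjective onto $E_{P}$ by (i). For injectivity, Gateaux differentiability of $g_{+}$ makes the union in Theorem \ref{Theorem-main1} (iii) disjoint, so distinct $x_{+}$ give distinct measures. More concretely, $x_{+}\in \partial g_{+}(\tau _{+}(\mu _{x_{+}}))$ and this subdifferential is a singleton, so $x_{+}$ is determined by $\mu _{x_{+}}$. The inverse map is therefore $\mu \mapsto $ the Gateaux derivative of $g_{+}$ at $\tau _{+}(\mu )$. Since $\mathrm{M}^{\flat }$ is weak$^{\ast }$ compact (Proposition \ref{Proposition importante bogoluibov01 copy(5)}) and $E_{P}$ is Hausdorff, it suffices to prove that one of the two directions is continuous; a continuous bijection from a compact space to a Hausdorff space is a homeomorphism.

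I would prove continuity of $x_{+}\mapsto \mu _{x_{+}}$, and this is the main obstacle. I expect to use a closed-graph argument. Let $(x_{+}^{j})_{j\in J}$ be a net in $\mathrm{M}^{\flat }$ converging weak$^{\ast }$ to $x_{+}$, and suppose $\mu _{x_{+}^{j}}\rightarrow \nu $ along a subnet. By compactness of $E_{P}$, $\nu \in E_{P}$, so $\nu =\mu _{y_{+}}$ for some $y_{+}\in \mathrm{M}^{\flat }$. It remains to show $y_{+}=x_{+}$. For this I would prove that the graph $\{(x_{+},\mu ):x_{+}\in \partial g_{+}(\tau _{+}(\mu ))\}$ is closed in the product of the weak$^{\ast }$ topologies.

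Closedness should follow from the characterization $x\in \partial g_{+}(z)$ if and only if $g_{+}(z)+g_{+}^{\ast }(x)=x(z)$, combined with three facts: $g_{+}^{\ast }$ is weak$^{\ast }$ lower semicontinuous; $\tau _{+}$ is continuous and bounded; and $(x,z)\mapsto x(z)$ is jointly continuous on norm-bounded sets of $\mathrm{M}^{\flat }$ times norm-convergent $z$. The inequality $\geq $ in that characterization always holds, and passing to limits gives $\leq $.

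The delicate point is the continuity of $g_{+}\circ \tau _{+}$ along the net. I would obtain it from the continuity of $g_{+}$ on the finite-valued normed space $\mathcal{X}_{+}$: $g_{+}$ is lower semicontinuous, convex and bounded, hence continuous at least on a Banach completion. If this fails in general, I would instead use the abstract continuity statement of Theorem \ref{Proposition importante bogoluibov01 copy(2)}. Once closedness is established, $y_{+}\in \partial g_{+}(\tau _{+}(\nu ))$ and $x_{+}\in \partial g_{+}(\tau _{+}(\nu ))$, and since this subdifferential is a singleton, $y_{+}=x_{+}$. Hence every subnet limit equals $\mu _{x_{+}}$, and the map is continuous.
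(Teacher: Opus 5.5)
Part (i) of your proposal is exactly the paper's argument. You identify $E_{\mathrm{L}}(x_+,x_-)$ with the linear equilibrium measures of the H\"older potential $\Theta_{x_-,x_+}$, use uniqueness and ergodicity from \cite{LMMS,ACR}, and read off the rest from Theorem \ref{Theorem-main1}. In (ii), the bijectivity and the compact-to-Hausdorff reduction also match the paper (via Proposition \ref{equilibrium state selections prop} (i)--(iii)). Your proof of continuity of $x_+\mapsto\mu_{x_+}$ is correct but takes a different route.

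The paper's continuity step (Proposition \ref{equilibrium state selections prop} (ii)) uses the identity $E^{\mathrm{sc}}_{\mathcal{G}_{x_+,\cdot}}=E_{\mathcal{F}_{x_+}}$, where $E_{\mathcal{F}_{x_+}}$ is the set of maximizers of the concave upper semicontinuous function $\mathcal{F}_{x_+}=f-g_-\circ\tau_-+x_+\circ\tau_+$. The inequality $\mathcal{F}_{x_+^{(j)}}(\nu)\le\mathcal{F}_{x_+^{(j)}}(\mu_{x_+^{(j)}})$ passes to the limit, so every cluster point lies in $E_{\mathcal{F}_{x_+}}=\{\mu_{x_+}\}$. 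That argument needs only uniqueness of the self-consistent state, so the selection is continuous even without differentiability of $g_+$; Gateaux differentiability enters only for injectivity.

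Your closed-graph argument instead passes the Fenchel--Young equality $g_+(\tau_+(\mu))+g_+^\ast(x_+)=x_+(\tau_+(\mu))$ to the limit. You then identify the cluster point because $\partial g_+$ is single-valued. This is valid and self-contained, but it uses Gateaux differentiability a second time. Your fallback ``abstract continuity statement'' is not in Theorem \ref{Proposition importante bogoluibov01 copy(2)}; it is Proposition \ref{equilibrium state selections prop}, i.e.\ the paper's route.

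The step you flag as delicate is not delicate. Write $z_j=\tau_+(\mu_{x_+^{j}})\to z=\tau_+(\nu)$ in norm and $x_j=x_+^{j}\to x=x_+$ weak$^\ast$. The equality holds along the net, and the reverse inequality always holds, so it suffices that
$g_+(z)+g_+^\ast(x)\le\liminf_j\left(g_+(z_j)+g_+^\ast(x_j)\right)=\lim_j x_j(z_j)=x(z)$.
This uses only the lower semicontinuity of $g_+$, which TF3 assumes, together with the weak$^\ast$ lower semicontinuity of $g_+^\ast$ and the norm-boundedness of $\mathrm{M}^\flat$. No continuity of $g_+$ is needed. Your detour through a Banach completion, where $g_+$ is not even defined, should be dropped.
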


\begin{proof}
Use Theorem \ref{Theorem-main1}, Proposition \ref{equilibrium state
selections prop} (iii) and the fact that the nonlinear equilibrium measure
for H\"{o}lder potentials is unique and ergodic (see \cite{LMMS,ACR}).
\end{proof}

The conditions of Corollary \ref{Theorem-main1 copy(2)} are satisfied in
most cases of interest. For instance, consider Example \ref{example F II}
with $\varphi _{1},\ldots ,\varphi _{N}\in C^{\alpha }(\Sigma )$ for some $%
\alpha \in (0,1]$ and a lower semicontinuous convex function $F=F_{+}$\ on $%
\mathbb{R}^{N}$ for which the Legendre-Fenchel transform is strictly convex
(e.g., $N=1$ and $F(x)=x^{2}/2$). We refrain to discuss in more detail such
explicit examples here, since this is already done in \cite{Bru-Pedra-Lopes1}%
.

Recall that $G_{P}\doteq \overline{\mathrm{co}}(E_{P})$ is the set of
so-called generalized nonlinear equilibrium measures, which can be
equivalently defined in a more fundamental way by a property stated below in
Theorem \ref{Theorem-main1 copy(1)} (ii). See Definition \ref{Gneralized
equilibrium measures} and discussions thereafter. Under the conditions of
Corollary \ref{Theorem-main1 copy(2)}, by the Milman theorem \cite[%
Proposition 1.5]{Phe}\ and observing that ergodic measures are extreme in
the convex set $\mathcal{P}\left( T\right) $ of $T$-invariant measures, the
set $G_{P}$ of generalized nonlinear equilibrium measures is a \emph{face}%
\footnote{%
A face $F$ of a convex set $K$ is a subset of $K$ with the property that, if 
$\rho =\lambda _{1}\rho _{1}+\cdots +\lambda _{n}\rho _{n}\in F$ with $\rho
_{1},\ldots ,\rho _{n}\in K$, $\lambda _{1},\ldots ,\lambda _{n}\in (0,1)$
and $\lambda _{1}+\cdots +\lambda _{n}=1$, then $\rho _{1},\ldots ,\rho
_{n}\in F$.} of $\mathcal{P}\left( T\right) $, the extreme boundary of which
is precisely the set $E_{P}$ of (simple) nonlinear equilibrium measures. In
particular, as $\mathcal{P}\left( T\right) $ is a Choquet simplex
(Proposition \ref{prop Choquet inv meas}), $G_{P}$ is in this case a \emph{%
Bauer simplex}, i.e., a Choquet simplex whose extreme boundary is compact.

\begin{remark}
\mbox{ }\newline
Theorem \ref{Theorem-main1} and Corollary \ref{Theorem-main1 copy(2)} also
hold when $g_{+}=0$ or $g_{-}=0$ with obvious modifications. In fact, this
case is even simpler. For more details, see Remark \ref{Remark1} as well as
the discussions following Theorem \ref{Proposition importante bogoluibov01
copy(2)}.
\end{remark}

\begin{remark}
\mbox{ }\newline
Theorem \ref{Theorem-main1} and Corollary \ref{Theorem-main1 copy(2)} are,
of course, only relevant when the functions $g_{\pm }$ are not affine,
because otherwise we are dealing with the linear thermodynamic formalism.
\end{remark}

\subsubsection{Validity of the thermodynamic game\label{Section game +
measusre}}

The conservative values $\mathrm{P}^{\flat }$ (\ref{bogo approx}) and $%
\mathrm{P}^{\sharp }$ (\ref{bogo approxdiese}) of the thermodynamic game are
related to two variational problems on $T$-invariant measures respectively
setup from the functionals $\mathfrak{F}^{\sharp }:\mathcal{P}(T)\rightarrow 
\mathbb{R}$ and $\mathfrak{F}^{\flat }:\mathcal{P}(T)\rightarrow \mathbb{R}$
defined, for any $T$-invariant measure $\mu \in \mathcal{P}\left( T\right) $%
, by%
\begin{eqnarray}
\mathfrak{F}^{\sharp }\left( \mu \right) &\doteq &\Delta ^{g_{+}\circ \theta
_{+}}\left( \mu \right) -g_{-}\circ \theta _{-}\left( \mu _{S}\right)
+h\left( \mu \right) \ ,  \label{F^diese2} \\
\mathfrak{F}^{\flat }\left( \mu \right) &\doteq &\Delta ^{g_{+}\circ \theta
_{+}}\left( \mu \right) -\Delta ^{g_{-}\circ \theta _{-}}\left( \mu \right)
+h\left( \mu \right) \ ,  \label{F^bemol}
\end{eqnarray}%
where $g_{\pm }:\mathcal{X}_{\pm }\rightarrow \mathbb{R}$ are \emph{%
continuous} convex functions and $\theta _{\pm }:\mathcal{M}(S)\rightarrow 
\mathcal{X}_{\pm }$ are two linear transformations that are $\sigma $%
-normal. Here, 
\begin{equation}
\Delta ^{F}\left( \mu \right) \doteq \int_{\mathcal{P}_{\mathrm{erg}}\left(
T\right) }F\left( \nu _{S}\right) \xi _{\mu }\left( \mathrm{d}\nu \right) \
,\qquad \mu \in \mathcal{P}\left( T\right) \ ,  \label{sdsssdsdsd}
\end{equation}%
for any convex and $\sigma $-normal function $F:\mathcal{M}(S)\rightarrow 
\mathbb{R}$, where $\xi _{\mu }$ is the (unique) Choquet measure associated
with $\mu $. To understand why the functional $\Delta ^{F}$ in both (\ref%
{F^diese2}) and (\ref{F^bemol}) is important and natural, we recommend
Proposition \ref{prop Delta F} below. See meanwhile Definition \ref%
{definition de Delta}.

Recall that the entropy $h$ is affine weak$^{\ast }$-upper semicontinuous
(Definition \ref{uod} and Proposition \ref{Affinity of the entropy copy(1)}%
). We can thus apply \cite[Lemma 10.17]{BruPedra2} to $h$ in order to get --
via Equations (\ref{F^bemol})--(\ref{sdsssdsdsd}) -- the ergodic
decomposition of the functional $\mathfrak{F}^{\flat }$: 
\begin{equation}
\mathfrak{F}^{\flat }\left( \mu \right) =\int_{\mathcal{P}_{\mathrm{erg}%
}\left( T\right) }P\left( \nu \right) \xi _{\mu }\left( \mathrm{d}\nu
\right) \text{ },\qquad \mu \in \mathcal{P}\left( T\right) \ ,
\label{assertion sympa}
\end{equation}%
where $\xi _{\mu }$\ is the unique Choquet measure associated to $\mu $ (see
Proposition \ref{prop Choquet inv meas}). It is an elementary result which,
whilst certainly not surprising given that the functional $\mathfrak{F}%
^{\flat }$ is affine, is highlighted here because it is a very useful
observation, enabling the functional $\mathfrak{F}^{\flat }$ to be
calculated in practice via the nonlinear pressure functional $P$.

In this subsection, we only consider \emph{continuous} functions $g_{+}$ and 
$g_{-}$ in order to highlight the singular nature of $\Delta $-functionals,
which even in this simpler context are discontinuous on a dense set (cf.
Theorem \ref{properties de Delta}). The aim is not to obtain the most
general framework possible, but rather to explain the associated variational
problems in an accessible manner, given that they are new in the
thermodynamic formalism.

The functional $\mathfrak{F}^{\flat }$ is exactly the one obtained from the
limit (\ref{sdsdsssdsdsdsdsdsd}). It is therefore quite intuitive. The
functional $\mathfrak{F}^{\sharp }$ turns out to be very useful in the proof
of Proposition \ref{Gamma-regularisation of pressure}, in which \textbf{no}
Bogoliubov linearization is considered. See Equation (\ref{F^diese}). In
fact, these seemingly incidental functionals are very natural and the
theorem below shows their importance for the study of nonlinear equilibrium
measures.

By Definition \ref{uod}, Theorem \ref{properties de Delta} and Proposition %
\ref{Affinity of the entropy copy(1)}, for any (weak$^{\ast }$-)lower
semicontinuous and convex function $g_{-}$ and $\sigma $-normal linear
transformations $\theta _{\pm }:\mathcal{M}(S)\rightarrow \mathcal{X}_{\pm }$%
, the functional $\mathfrak{F}^{\sharp }$ is concave and weak$^{\ast }$%
-upper semicontinuous. Notice also that Definition \ref{uod} implies the weak%
$^{\ast }$-upper semicontinuity of the entropy. Thus, the set 
\begin{equation*}
E_{\mathfrak{F}^{\sharp }}=\left\{ \mu \in \mathcal{P}\left( T\right) :%
\mathfrak{F}^{\sharp }\left( \mu \right) =\sup \mathfrak{F}^{\sharp }\left( 
\mathcal{P}\left( T\right) \right) \right\}
\end{equation*}%
of maximizers of $\mathfrak{F}^{\sharp }$ is convex and weak$^{\ast }$%
-compact. By contrast, the functional $\mathfrak{F}^{\flat }$ is affine but
(generally) \textbf{not} weak$^{\ast }$-upper semicontinuous, which
therefore forces us to take sequences of approximating maximizers (similar
to (\ref{sdsdsdfdgdfhfghhf}) and (\ref{Ep})): 
\begin{equation}
E_{\mathfrak{F}^{\flat }}\doteq \left\{ \mu \in \mathcal{P}\left( T\right)
:\exists \left( \mu _{n}\right) _{n\in \mathbb{N}}\subseteq \mathcal{P}%
\left( T\right) \mathrm{\ }\text{with }\lim_{n\rightarrow \infty }\mu
_{n}=\mu \text{ and\ }\lim_{n\rightarrow \infty }\mathfrak{F}^{\flat }\left(
\mu _{n}\right) =\sup \mathfrak{F}^{\flat }\left( \mathcal{P}\left( T\right)
\right) \right\} ,  \label{equilibriun statebis}
\end{equation}%
where the first limit $\mu _{n}\rightarrow \mu $ refers to the weak$^{\ast }$
topology. See again Theorem \ref{properties de Delta} for the continuity
properties of $\Delta $-functionals.

Now we are in a position to associate the conservative values $\mathrm{P}%
^{\flat }$ (\ref{bogo approx}) and $\mathrm{P}^{\sharp }$ (\ref{bogo
approxdiese}) of the thermodynamic game with two different variational
problems on $T$-invariant measures:

\begin{theorem}[Thermodynamic game and nonlinear equilibrium measures]
\label{Theorem-main1 copy(1)}\mbox{ }\newline
Assume Conditions TF1--TF3 with continuous functions $g_{\pm }$. Let $\tau
_{\pm }(\mu )\doteq \theta _{\pm }(\mu _{S})$ for any $T$-invariant
probability measure $\mu \in \mathcal{P}(T)$.\newline
\emph{(i)} Nonlinear pressures:%
\begin{eqnarray*}
\sup \mathfrak{F}^{\flat }\left( \mathcal{P}\left( T\right) \right) &=&%
\mathrm{P}^{\flat }\doteq \sup_{y_{+}\in \mathcal{X}_{+}^{\ast
}}\inf_{y_{-}\in \mathcal{X}_{-}^{\ast }}\left\{ P_{\mathrm{L}}\left(
y_{+},y_{-}\right) +g_{-}^{\ast }\left( y_{-}\right) -g_{+}^{\ast }\left(
y_{+}\right) \right\} \ , \\
\sup \mathfrak{F}^{\sharp }\left( \mathcal{P}\left( T\right) \right) &=&%
\mathrm{P}^{\sharp }\doteq \inf_{y_{-}\in \mathcal{X}_{-}^{\ast
}}\sup_{y_{+}\in \mathcal{X}_{+}^{\ast }}\left\{ P_{\mathrm{L}}\left(
y_{+},y_{-}\right) +g_{-}^{\ast }\left( y_{-}\right) -g_{+}^{\ast }\left(
y_{+}\right) \right\} \ .
\end{eqnarray*}%
\emph{(ii)} Generalized nonlinear equilibrium measures:%
\begin{eqnarray*}
E_{\mathfrak{F}^{\flat }} &=&\overline{\mathrm{co}}(E_{P})\doteq G_{P}\ , \\
E_{\mathfrak{F}^{\sharp }} &=&\left\{ \mu \in \overline{\mathrm{co}}\left(
E_{\mathrm{NL}}\left( x_{-}\right) \right) :x_{-}\in \mathrm{M}^{\sharp
}\cap \partial g_{-}\left( \tau _{-}\left( \mu \right) \right) \right\} \ ,
\end{eqnarray*}%
where $E_{P}=M_{P}$ is given by Theorem \ref{Theorem-main1} (ii)--(iii),
while 
\begin{equation*}
E_{\mathrm{NL}}\left( x_{-}\right) \doteq \bigcup_{x_{+}\in M^{\sharp
}\left( x_{-}\right) }E_{\mathrm{L}}\left( x_{+},x_{-}\right) \ .
\end{equation*}
\end{theorem}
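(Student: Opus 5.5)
Like Theorem \ref{Theorem-main1}, the statement should be a translation of the abstract results of Section \ref{Abstract Theory}. The main new point is to identify $\mathfrak{F}^{\flat }$ and $\mathfrak{F}^{\sharp }$ with the abstract regularized functionals built there.

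\textbf{Step 1: reduction to the abstract setting.} As in the proof of Proposition \ref{Proposition importante bogoluibov01 copy(5)}, set $K=\mathcal{P}(T)$, $f=h$ and $\tau_{\pm}(\mu)\doteq\theta_{\pm}(\mu_S)$. Conditions TF1--TF3 then give Conditions B1--B3. Since $g_{\pm}$ are now continuous, the maps $\mu\mapsto g_{\pm}\circ\theta_{\pm}(\mu_S)$ are weak$^{\ast}$-continuous on $\mathcal{P}(T)$. This holds because weak$^{\ast}$ convergence in $\mathcal{P}(T)$ gives bounded pointwise convergence of $\mu_S$, $\theta_{\pm}$ is $\sigma$-normal, and $\mathcal{P}(T)$ is metrizable.

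\textbf{Step 2: identification of the $\Delta$-functionals.} $\mathcal{P}(T)$ is a metrizable Choquet simplex (Proposition \ref{prop Choquet inv meas}) with extreme boundary $\mathcal{P}_{\mathrm{erg}}(T)$. By Definition \ref{definition de Delta} and Proposition \ref{prop Delta F}, $\Delta^{g\circ\theta}$ is then the concave upper semicontinuous-type envelope of the continuous convex function $g\circ\tau$. More precisely, it is the affine functional obtained by integrating $g\circ\tau$ against the maximal (Choquet) measure, and it coincides with the abstract $\Delta$-regularization used in Section \ref{Abstract Theory}. Consequently $\mathfrak{F}^{\flat}$ and $\mathfrak{F}^{\sharp}$ are exactly the abstract functionals whose suprema are $\mathrm{P}^{\flat}$ and $\mathrm{P}^{\sharp}$. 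The key ingredients are the ergodic decomposition (\ref{assertion sympa}) and the affinity of $h$ (Proposition \ref{Affinity of the entropy copy(1)}, \cite[Lemma 10.17]{BruPedra2}), which give $\mathfrak{F}^{\flat}(\mu)=\int P\,\mathrm{d}\xi_\mu$.

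\textbf{Step 3: the $\flat$ identities.} For (i), $\sup\mathfrak{F}^{\flat}\le\sup P$ follows from this integral formula, since $\xi_\mu$ is a probability measure. The reverse inequality holds because $\mathfrak{F}^{\flat}=P$ on ergodic measures, and these are dense (\ref{eq erg dense}). One has to be careful here, because $P$ is not upper semicontinuous. The cleaner route is to apply Theorem \ref{structure of sets of maximizers} together with Theorem \ref{Theorem-main1}(i), which gives $\sup P=\mathrm{P}^{\flat}$.

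For (ii), if $\mu\in E_{\mathfrak{F}^{\flat}}$ is the limit of $\mu_n$ with $\int P\,\mathrm{d}\xi_{\mu_n}\to\sup P$, then the $\xi_{\mu_n}$ concentrate, in the limit, on approximate maximizers of $P$. Passing to a weak$^{\ast}$ limit of the $\xi_{\mu_n}$ on the compact set $\mathcal{P}(T)$, and using $E_P=M_P$ compact from Theorem \ref{Theorem-main1}(iii), gives a measure supported on $E_P$ with barycenter $\mu$. Hence $\mu\in\overline{\mathrm{co}}(E_P)$. Conversely, finite convex combinations of elements of $E_P$ are maximizers of $\mathfrak{F}^{\flat}$, since $\mathfrak{F}^{\flat}\ge$ the corresponding convex combination of values of $P$ via concavity/affinity. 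Closure then follows from the definition of $E_{\mathfrak{F}^{\flat}}$ via limits. This is precisely the content of Theorem \ref{structure of sets of maximizers}, which I would invoke directly.

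\textbf{Step 4: the $\sharp$ identities.} $\mathfrak{F}^{\sharp}$ is concave and upper semicontinuous, and $g_-\circ\tau_-$ is kept unregularized. I would write $g_-=g_-^{\ast\ast}$ and apply the von Neumann minimax theorem (Theorem \ref{theorem minmax von Neumann}) in the pair $(\mu,y_-)$. This is legitimate because the function is concave upper semicontinuous in $\mu$ on the compact set $K$ and convex in $y_-$. It yields $\sup\mathfrak{F}^{\sharp}=\inf_{y_-}\sup_\mu\{\Delta^{g_+\circ\theta_+}(\mu)-y_-(\tau_-(\mu))+h(\mu)+g_-^{\ast}(y_-)\}$. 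The inner supremum is a $\flat$-problem with $g_-=0$, handled as in Remark \ref{Remark1} and by the Step 3 argument. That problem equals $\sup_{y_+}P_{\mathrm{NL}}(y_+,y_-)=P^{\sharp}(y_-)$, so (i) follows.

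For the maximizers: $\mu\in E_{\mathfrak{F}^{\sharp}}$ holds iff, for some $x_-\in\mathrm{M}^{\sharp}$, the minimax saddle condition holds. That condition is $x_-\in\partial g_-(\tau_-(\mu))$ (equality in Fenchel--Young) together with $\mu$ maximizing the $x_-$-linearized $\flat$-functional. By the $g_-=0$ version of Step 3, the latter set is $\overline{\mathrm{co}}(\bigcup_{x_+\in M^{\sharp}(x_-)}E_{\mathrm{L}}(x_+,x_-))$.

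\textbf{Main obstacle.} The hardest step is Step 2, together with the saddle-point characterization in Step 4. In Step 2 the difficulty is verifying that the thermodynamic $\Delta$-functional equals the abstract one, given that it is discontinuous on a dense set (Theorem \ref{properties de Delta}). In Step 4 one must show that $\mu$ does not depend on a specific choice of $x_-$, and that the set of minimizers $\mathrm{M}^{\sharp}$ is nonempty and compact, so that the minimax is attained; the latter is provided by Proposition \ref{Proposition importante bogoluibov01 copy(5)}(ii).
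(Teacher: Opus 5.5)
Your overall route is the paper's. For the $\flat$ case you end up invoking Theorem~\ref{Theorem-main1}(i) and Theorem~\ref{structure of sets of maximizers}. For the $\sharp$ case:
\begin{itemize}
\item your minimax in $(\mu ,y_{-})$ is Proposition~\ref{Proposition importante bogoluibov01} applied with $f=\Delta ^{g_{+}\circ \theta _{+}}+h$, $g=g_{-}$, $\tau =\tau _{-}$;
\item your saddle-point description of $E_{\mathfrak{F}^{\sharp }}$ is Corollary~\ref{corollary importante bogoluibov01 copy(1)};
\item your treatment of the inner problem is the paper's Steps 2--3: Theorem~\ref{structure of sets of maximizers} with the continuous affine term $-y_{-}\circ \tau _{-}$ added to $h$, then the $g_{-}=0$ version of Theorem~\ref{Proposition importante bogoluibov01 copy(2)}, whose optimizer set at fixed $y_{-}$ is $M^{\sharp }(y_{-})$.
\end{itemize}
Your direct argument for $E_{\mathfrak{F}^{\flat }}\subseteq \overline{\mathrm{co}}(E_{P})$ is correct and more elementary than the $\Gamma $-regularization route. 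You take a weak$^{\ast }$ limit $\xi $ of the $\xi _{\mu _{n}}$, use continuity of the barycenter map, and get $\int P\,\mathrm{d}\xi \geq \limsup_{n}\int P\,\mathrm{d}\xi _{\mu _{n}}=\sup P$, hence $\xi (E_{P})=1$. This step needs upper semicontinuity of $P$. That does hold here because $g_{\pm }$ are continuous, contrary to your remark that $P$ is not upper semicontinuous.

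The proof goes through only because you defer to these results; four of your own justifications are wrong and should be deleted.

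\textbf{Step 2.} Section~\ref{Abstract Theory} contains no abstract $\Delta $-regularization and no functional on $K$ whose supremum is $\mathrm{P}^{\sharp }$; it only treats $\mathbb{F}$ and the game. The identity $\sup \mathfrak{F}^{\sharp }(\mathcal{P}(T))=\mathrm{P}^{\sharp }$ is exactly what is new in this theorem. So Step~2 establishes nothing and is not the main obstacle. Your description of $\Delta ^{F}$ as the smallest concave upper semicontinuous majorant of $\mu \mapsto F(\mu _{S})$ is true, but it is never used.

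\textbf{Reverse inequality in (i).} $\sup \mathfrak{F}^{\flat }\geq \sup P$ does not follow from density of ergodic measures. Since $h$ is only upper semicontinuous, $\nu _{n}\rightarrow \mu _{0}$ gives only $\limsup_{n}P(\nu _{n})\leq P(\mu _{0})$, the wrong direction. You need ergodic abundance (Corollary~\ref{ergodic abundance}), as in Proposition~\ref{Gamma-regularisation of pressure}.

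\textbf{Converse inclusion in (ii).} Finite convex combinations of elements of $E_{P}$ need not maximize $\mathfrak{F}^{\flat }$. The inequality $\mathfrak{F}^{\flat }\geq P$ fails because $\Delta ^{g_{-}\circ \theta _{-}}\geq g_{-}\circ \tau _{-}$ enters with a minus sign. For a non-ergodic $\mu \in E_{P}$ one has $\mathfrak{F}^{\flat }(\mu )=\int P\,\mathrm{d}\xi _{\mu }<\sup P$ unless $\xi _{\mu }(E_{P})=1$. In general only $\mathrm{co}(E_{P}\cap \mathcal{P}_{\mathrm{erg}}(T))\subseteq M_{\mathfrak{F}^{\flat }}$ holds (Corollary~\ref{Theorem-main1 copy(3)}). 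The correct argument is $E_{P}\subseteq E_{\mathfrak{F}^{\flat }}$ by ergodic abundance, together with convexity and closedness of $E_{\mathfrak{F}^{\flat }}$.

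\textbf{Minimax in Step 4.} Von Neumann's theorem cannot be applied with $y_{-}$ ranging over the non-compact space $\mathcal{X}_{-}^{\ast }$. Nonemptiness of $\mathrm{M}^{\sharp }$ does not by itself allow interchanging $\sup $ and $\inf $. You must first restrict to $B_{-}(0,R)\cap \mathrm{dom}(g_{-}^{\ast })$, using the minimal linear growth and closedness assumptions in TF2. That is precisely what Proposition~\ref{Proposition importante bogoluibov01} does, so cite it rather than redo it.
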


\begin{proof}
Assertions (i)--(ii) for the case ($\flat $) are direct consequences of
Theorems \ref{Theorem-main1} and \ref{structure of sets of maximizers}. See
also Definition \ref{Gneralized equilibrium measures}. Assertions (i)--(ii)
for the case ($\sharp $) are proven as follows:\medskip

\noindent \underline{Step 1:} The functional $f:\mathcal{P}(T)\rightarrow 
\mathbb{R}$, as defined by%
\begin{equation}
f=\Delta ^{g_{+}\circ \theta _{+}}+h\ ,  \label{f}
\end{equation}%
is affine (in particular concave) and weak$^{\ast }$ upper semicontinuous,
thanks to Definition \ref{uod}, Proposition \ref{Affinity of the entropy
copy(1)} and Theorem \ref{properties de Delta} (i). Thus, using Proposition %
\ref{Proposition importante bogoluibov01} for $K=\mathcal{P}\left( T\right) $%
, $\mathcal{X}=\mathcal{X}_{-}$, $g=g_{-}$, $\tau (\mu )=\theta _{-}(\mu
_{S})$ and $f$ defined by (\ref{f}), we obtain that%
\begin{equation}
\sup \mathfrak{F}^{\sharp }\left( \mathcal{P}\left( T\right) \right)
=\inf_{y_{-}\in \mathcal{X}_{-}^{\ast }}\left\{ \sup \mathcal{G}%
_{y_{-}}\left( \mathcal{P}\left( T\right) \right) +g_{-}^{\ast }\left(
y_{-}\right) \right\}  \label{Eq diese1}
\end{equation}%
with 
\begin{equation*}
\mathcal{G}_{y_{-}}\left( \mu \right) \doteq \Delta ^{g_{+}\circ \theta
_{+}}\left( \mu \right) +h\left( \mu \right) -y_{-}\circ \theta _{-}(\mu
_{S})\ ,\qquad \mu \in \mathcal{P}\left( T\right) \ .
\end{equation*}%
In addition, Corollary \ref{corollary importante bogoluibov01 copy(1)} also
yields 
\begin{equation}
E_{\mathfrak{F}^{\sharp }}=\left\{ \mu \in E_{\mathcal{G}_{x_{-}}}:x_{-}\in 
\mathrm{M}^{\sharp }\cap \partial g_{-}\left( \tau _{-}\left( \mu \right)
\right) \right\} \ ,  \label{Eq diese2}
\end{equation}%
where%
\begin{equation*}
E_{\mathcal{G}_{y_{-}}}\doteq \left\{ \mu \in \mathcal{P}\left( T\right) :%
\mathcal{G}_{y_{-}}\left( \mu \right) =\sup \mathcal{G}_{y_{-}}\left( 
\mathcal{P}\left( T\right) \right) \right\}
\end{equation*}%
for any continuous linear functional $y_{-}\in \mathcal{X}_{-}^{\ast }$.
\medskip

\noindent \underline{Step 2:} Note that, with obvious adaptations in the
proofs, Theorem \ref{structure of sets of maximizers} and Corollary \ref%
{Choquet decomposition} also hold if we add any arbitrary affine and weak$%
^{\ast }$ continuous function to the entropy $h$. It follows that 
\begin{equation}
\sup \mathcal{G}_{y_{-}}\left( \mathcal{P}\left( T\right) \right) =\sup
P_{y_{-}}\left( \mathcal{P}\left( T\right) \right)  \label{Eq diese4}
\end{equation}%
with 
\begin{equation*}
P_{y_{-}}\left( \mu \right) \doteq g_{+}\circ \theta _{+}\left( \mu
_{S}\right) +h\left( \mu \right) -y_{-}\circ \theta _{-}(\mu _{S})\ ,\qquad
\mu \in \mathcal{P}\left( T\right) \ ,
\end{equation*}%
while (the extended version of) Corollary \ref{Choquet decomposition} yields 
\begin{equation}
E_{\mathcal{G}_{y_{-}}}=\overline{\mathrm{co}}(E_{P_{y_{-}}})\ ,\qquad
y_{-}\in \mathcal{X}_{-}^{\ast }\ ,  \label{Eq diese5}
\end{equation}%
with 
\begin{equation*}
E_{P_{y_{-}}}=\left\{ \mu \in \mathcal{P}\left( T\right) :P_{y_{-}}\left(
\mu \right) =\sup P_{y_{-}}\left( \mathcal{P}\left( T\right) \right) \right\}
\end{equation*}%
for any continuous linear functional $y_{-}\in \mathcal{X}_{-}^{\ast }$.
\medskip

\noindent \underline{Step 3:} Now, we apply the simplified version of
Theorem \ref{Proposition importante bogoluibov01 copy(2)} (see the
discussions following Theorem \ref{Proposition importante bogoluibov01
copy(2)}) with $K=\mathcal{P}\left( T\right) $, $g_{-}=0$, and 
\begin{equation*}
\tau _{+}(\mu )=\theta _{+}(\mu _{S})\ ,\quad f\left( \mu \right) =h\left(
\mu \right) -y_{-}\circ \theta _{-}(\mu _{S})\ ,\qquad \mu \in \mathcal{P}%
\left( T\right) \ ,
\end{equation*}%
observing that $g_{+}$ is by assumption a continuous and convex function,
that the functional $f$ defined as above is affine and weak$^{\ast }$-upper
semicontinuous (cf. Definition \ref{uod} and Proposition \ref{Affinity of
the entropy copy(1)}) and that $\tau _{+}$ is a continuous linear
transformation, $\theta _{+}$ being a $\sigma $-normal linear
transformation. Doing so, for any continuous linear functional $y_{-}\in 
\mathcal{X}_{-}^{\ast }$, we get 
\begin{equation}
\sup P_{y_{-}}\left( \mathcal{P}\left( T\right) \right) =\sup_{y_{+}\in 
\mathcal{X}_{+}^{\ast }}\left\{ P_{\mathrm{L}}\left( y_{+},y_{-}\right)
-g_{+}^{\ast }\left( y_{+}\right) \right\}  \label{Eq diese7}
\end{equation}%
and 
\begin{equation}
E_{P_{y_{-}}}=\bigcup_{y_{+}\in M^{\sharp }\left( y_{-}\right) }E_{\mathrm{L}%
}\left( y_{+},y_{-}\right) \ .  \label{Eq diese8}
\end{equation}%
It remains now to combine the three steps. \medskip

\noindent \underline{Step 4:} We combine Equations (\ref{Eq diese1}), (\ref%
{Eq diese4}) and (\ref{Eq diese7}) to get Assertion (i) for the case ($%
\sharp $). Assertion (ii) for the case ($\sharp $) is a consequence of
Equations (\ref{Eq diese2}), (\ref{Eq diese5}) and (\ref{Eq diese8}).
\end{proof}

By Theorem \ref{Theorem-main1 copy(1)}, the thermodynamic game has a direct
interpretation in terms of equilibrium measures. This is a very useful
information that can be employed, for example, to show that generally $%
\mathrm{P}^{\sharp }>\mathrm{P}^{\flat }$, that is, the $\sup $ and $\inf $
in $\mathrm{P}^{\flat }$ (\ref{bogo approx}) or $\mathrm{P}^{\sharp }$ (\ref%
{bogo approxdiese}) do not commute, as is done in the quantum case in \cite[%
Discussions after Theorem 2.36]{BruPedra2}. The interpretation of the
conservative values $\mathrm{P}^{\flat }$ (\ref{bogo approx}) and $\mathrm{P}%
^{\sharp }$ (\ref{bogo approxdiese}) as variational problems on $T$%
-invariant measures given by Theorem \ref{Theorem-main1 copy(1)} also turns
out to be crucial for the Kac,\ or van der Waals, limit, very well-known in
statistical mechanics. This is exploited in \cite{Kac} in the context of
quantum lattice models, but the arguments used, in the light of the results
presented here, can clearly be adapted to produce a version of \cite{Kac}
for the nonlinear thermodynamic formalism. We therefore believe that the
above results on the thermodynamic game can be useful for further
developments of the nonlinear version of the thermodynamic formalism. Notice
additionally that Theorem \ref{Theorem-main1 copy(1)} follows relatively
easily from our general approach, which is presented in\ Section \ref%
{Abstract Theory}.

We conclude this subsection by examining whether generalized equilibrium
states can be true maximizers of the affine pressure $\mathfrak{F}^{\flat }$%
. So, consider the set 
\begin{equation*}
M_{\mathfrak{F}^{\flat }}\doteq \left\{ \mu \in \mathcal{P}\left( T\right) :%
\mathfrak{F}^{\flat }\left( \mu \right) =\sup \mathfrak{F}^{\flat }\left( 
\mathcal{P}\left( T\right) \right) \right\} \subseteq E_{\mathfrak{F}^{\flat
}}
\end{equation*}%
of strict maximizers of $\mathfrak{F}^{\flat }$. Compare this definition
with (\ref{equilibriun statebis}). By the affine property of $\mathfrak{F}%
^{\flat }$, $M_{\mathfrak{F}^{\flat }}$ is convex. One might wonder whether
the equality $M_{\mathfrak{F}^{\flat }}=E_{\mathfrak{F}^{\flat }}$ holds
true even if $\mathfrak{F}^{\flat }$ is not necessarily upper
semicontinuous. In fact, in general, one only has the strict inclusion $M_{%
\mathfrak{F}^{\flat }}\varsubsetneq E_{\mathfrak{F}^{\flat }}$. The reason
is that, because of the affine property of $\mathfrak{F}^{\flat }$, $M_{%
\mathfrak{F}^{\flat }}$ must be a (convex) face of $\mathcal{P}\left(
T\right) $, but this is generally not the case for $E_{\mathfrak{F}^{\flat
}} $. We proved this in the context of quantum lattice systems, but our
arguments can easily be adapted to the nonlinear thermodynamic formalism,
see \cite[Lemma 9.8]{BruPedra2}. However, if the linear transformations $%
\theta _{\pm }$ are H\"{o}lder-type then the equality $M_{\mathfrak{F}%
^{\flat }}=E_{\mathfrak{F}^{\flat }}$ does indeed hold. This is proven in
the next corollary, deduced from Theorem \ref{Theorem-main1 copy(1)}
(i)--(ii) and Equation (\ref{assertion sympa}).

\begin{corollary}[Generalized equilibrium measures and maximizers of the
affine pressure]
\label{Theorem-main1 copy(3)}\mbox{ }\newline
Assume Conditions TF1--TF3 with continuous functions $g_{\pm }$. Then,%
\begin{equation}
\mathrm{co}\left( E_{P}\cap \mathcal{P}_{\mathrm{erg}}(T)\right) \subseteq
M_{\mathfrak{F}^{\flat }}\subseteq \overline{\mathrm{co}}\left( E_{P}\right)
=E_{\mathfrak{F}^{\flat }}  \label{eq iii}
\end{equation}%
and if $\theta _{\pm }$ are additionally H\"{o}lder-type then $E_{\mathfrak{F%
}^{\flat }}=M_{\mathfrak{F}^{\flat }}$.
\end{corollary}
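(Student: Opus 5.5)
The plan is to combine Theorem \ref{Theorem-main1 copy(1)}, which gives $\sup \mathfrak{F}^{\flat }(\mathcal{P}(T))=\mathrm{P}^{\flat }$ and $E_{\mathfrak{F}^{\flat }}=\overline{\mathrm{co}}(E_{P})$, with Theorem \ref{Theorem-main1} (i), which gives $\mathrm{P}^{\flat }=\sup P(\mathcal{P}(T))$. The key tool is the ergodic decomposition (\ref{assertion sympa}),
\begin{equation*}
\mathfrak{F}^{\flat }(\mu )=\int_{\mathcal{P}_{\mathrm{erg}}(T)}P(\nu )\,\xi _{\mu }(\mathrm{d}\nu ),
\end{equation*}
where $\xi _{\mu }$ is the unique Choquet measure of $\mu $ (Proposition \ref{prop Choquet inv meas}).

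\emph{Chain of inclusions (\ref{eq iii}).} The last equality is exactly Theorem \ref{Theorem-main1 copy(1)} (ii). The middle inclusion $M_{\mathfrak{F}^{\flat }}\subseteq E_{\mathfrak{F}^{\flat }}$ is trivial: take the constant sequence $\mu _{n}=\mu $. For the first inclusion, take an ergodic $\nu \in E_{P}=M_{P}$. Then $\xi _{\nu }=\delta _{\nu }$, so $\mathfrak{F}^{\flat }(\nu )=P(\nu )=\sup P(\mathcal{P}(T))=\mathrm{P}^{\flat }=\sup \mathfrak{F}^{\flat }(\mathcal{P}(T))$, and hence $\nu \in M_{\mathfrak{F}^{\flat }}$. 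Since $\mathfrak{F}^{\flat }$ is affine, $M_{\mathfrak{F}^{\flat }}$ is convex and therefore contains the convex hull of these points.

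\emph{H\"{o}lder case.} By Corollary \ref{Theorem-main1 copy(2)} (i), $E_{P}=\{\mu _{x_{+}}:x_{+}\in \mathrm{M}^{\flat }\}\subseteq \mathcal{P}_{\mathrm{erg}}(T)$, and this set is weak$^{\ast }$-compact. It remains to show $\overline{\mathrm{co}}(E_{P})\subseteq M_{\mathfrak{F}^{\flat }}$. The plan is to take $\mu \in \overline{\mathrm{co}}(E_{P})$ and show that $\xi _{\mu }$ is concentrated on $E_{P}$.

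By Choquet's theorem (or Milman's theorem together with the Bishop--de Leeuw representation) applied to the compact convex set $\overline{\mathrm{co}}(E_{P})$, whose extreme points lie in the compact set $E_{P}$, there is a probability measure $m_{\mu }$ on $E_{P}$ with barycenter $\mu $. Since $E_{P}\subseteq \mathcal{P}_{\mathrm{erg}}(T)$, the measure $m_{\mu }$ is a probability measure on the extreme boundary of $\mathcal{P}(T)$ representing $\mu $. Because $\mathcal{P}(T)$ is a Choquet simplex with metrizable topology, such a representing measure is unique, so $m_{\mu }=\xi _{\mu }$. Consequently $\xi _{\mu }(E_{P})=1$.

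Then
\begin{equation*}
\mathfrak{F}^{\flat }(\mu )=\int_{E_{P}}P(\nu )\,\xi _{\mu }(\mathrm{d}\nu )=\sup P(\mathcal{P}(T))=\mathrm{P}^{\flat },
\end{equation*}
so $\mu \in M_{\mathfrak{F}^{\flat }}$. Combined with (\ref{eq iii}), this gives $E_{\mathfrak{F}^{\flat }}=M_{\mathfrak{F}^{\flat }}$.

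\emph{Main obstacle.} The delicate step is the identification $m_{\mu }=\xi _{\mu }$. It requires the compactness, hence Borel measurability, of $E_{P}$ inside the Borel set $\mathcal{P}_{\mathrm{erg}}(T)$ (Lemma \ref{lemma extr gd}), and the uniqueness of boundary measures in the metrizable simplex $\mathcal{P}(T)$. One must also check that the integral in (\ref{assertion sympa}) makes sense: this holds because $P$ is bounded above, and on $E_{P}$ it is constant.
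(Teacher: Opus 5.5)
Your proposal is correct and follows essentially the same route as the paper: the chain (\ref{eq iii}) comes from Theorem \ref{Theorem-main1 copy(1)} and Theorem \ref{Theorem-main1} (i), combined with $\mathfrak{F}^{\flat }=P$ on ergodic measures and the affinity of $\mathfrak{F}^{\flat }$. In the H\"{o}lder case you identify the representing measure on $E_{P}$ with the unique Choquet measure $\xi _{\mu }$ in $\mathcal{P}(T)$ and then apply (\ref{assertion sympa}), exactly as the paper does via Corollary \ref{Choquet decomposition} and Proposition \ref{prop Choquet inv meas}.
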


\begin{proof}
Clearly, $M_{\mathfrak{F}^{\flat }}\subseteq E_{\mathfrak{F}^{\flat }}$ and
thus, by Theorem \ref{Theorem-main1 copy(1)} (ii) for the case ($\flat $), $%
M_{\mathfrak{F}^{\flat }}\subseteq \overline{\mathrm{co}}(E_{P})$. Note that 
$\mathfrak{F}^{\flat }=P$ on ergodic measures, by the definition of $\Delta $%
-functionals. Hence, by combining Theorem \ref{Theorem-main1} (i) with
Theorem \ref{Theorem-main1 copy(1)} (i) for the case ($\flat $), we deduce
that $E_{P}\cap \mathcal{P}_{\mathrm{erg}}(T)\subseteq M_{\mathfrak{F}%
^{\flat }}$ and (\ref{eq iii}) follows from the affine property of $%
\mathfrak{F}^{\flat }$. If $\theta _{\pm }$ are additionally H\"{o}lder-type
then, by Corollary \ref{Theorem-main1 copy(2)} (i), $E_{P}$ is a weak$^{\ast
}$-compact set of ergodic measures only. Hence, the Choquet measure in $%
\overline{\mathrm{co}}(E_{P})$ of any $\mu \in \overline{\mathrm{co}}(E_{P})$%
, as given by Corollary \ref{Choquet decomposition} ($E_{\mathfrak{F}^{\flat
}}=\overline{\mathrm{co}}(E_{P})$), coincides with its unique Choquet
measure in $\mathcal{P}(T)$, as given by Proposition \ref{prop Choquet inv
meas}. As explained in Corollary \ref{Choquet decomposition} (cf. the Milman
theorem \cite[Proposition 1.5]{Phe}), this measure must be supported in $%
E_{P}$. Therefore, we can combine Theorem \ref{Theorem-main1 copy(1)} (ii)
with Equation (\ref{assertion sympa}) to get $\overline{\mathrm{co}}%
(E_{P})=M_{\mathfrak{F}^{\flat }}$ when $\theta _{\pm }$ are additionally H%
\"{o}lder-type.
\end{proof}

As a consequence, in the H\"{o}lder case, generalized nonlinear equilibrium
measures are precisely all (strict) maximizers of the affine pressure
functional $\mathfrak{F}^{\flat }$, which is a very satisfactory
characterization of such $T$-invariant probability measures.

\subsection{Technical Results: $\Delta $-functionals and nonlinear pressures 
\label{Non-linear and Affine Pressures}\label{Technical Results}}

We address the maximization of the \emph{nonlinear} pressure functional $P:%
\mathcal{P}\left( T\right) \rightarrow \mathbb{R}$ defined on $T$-invariant
measures by (\ref{nonlinear pressure functional}). Although $P$ is generally
neither convex nor concave, this variational problem can be studied via
another real-valued functional on $T$-invariant measures, which is \emph{%
affine}, i.e., both concave and convex. This fact may seem curious, or even
strange, at a first sight. To properly understand this, we must first
precisely define the affine functional, which requires a few preliminary
definitions and observations.

Recall that $\mathbb{E}_{n}[\varphi ]$ stands for Birkhoff sums (\ref%
{sdsdsdssdsd}) on $C(\Sigma )$. It defines a linear contraction $\mathbb{E}%
_{n}$ mapping $S$ to itself. Now, we use the following observation:

\begin{lemma}[Properties of ergodic measures]
\label{lemma In conv pw}\mbox{ }\newline
For any ergodic measure $\mu \in \mathcal{P}_{\mathrm{erg}}(T)$ and for $\mu 
$-a.s. $\sigma \in \Sigma $, 
\begin{equation*}
\lim_{n\rightarrow \infty }\mathbb{E}_{n}\left[ \varphi \right] \left(
\sigma \right) =\mu \left( \varphi \right) \doteq \mu _{S}\left( \varphi
\right) \ ,\qquad \varphi \in S\ .
\end{equation*}
\end{lemma}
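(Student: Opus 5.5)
The plan is to combine Birkhoff's pointwise ergodic theorem with the separability of $S$, and then pass from a countable dense subset to all of $S$ by an equicontinuity argument. The key point is that the exceptional null set must not depend on $\varphi$. Everything happens in the separable metric space $(S,d_S)$, which is separable because $C(\Sigma)$ is.

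First, for a single $\varphi\in C(\Sigma)$ and ergodic $\mu\in\mathcal{P}_{\mathrm{erg}}(T)$, Birkhoff's ergodic theorem applies: $T$ is measurable and $\mu$-preserving, and $\varphi\in L^1(\mu)$ since it is bounded. It gives $\mathbb{E}_n[\varphi](\sigma)\to\mu(\varphi)$ for $\sigma$ outside a $\mu$-null set $N_\varphi$. Here ergodicity is used so that the limit, the conditional expectation onto $T$-invariant sets, is the constant $\mu(\varphi)$. I will take ergodicity in the usual sense (invariant sets have measure $0$ or $1$), as identified with extremality by Proposition \ref{prop erg extreme}.

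Next, I fix a countable norm-dense subset $D\subseteq S$ and set $N\doteq\bigcup_{\psi\in D}N_\psi$. This is still $\mu$-null. Now take $\sigma\notin N$, $\varphi\in S$ and $\varepsilon>0$, and choose $\psi\in D$ with $\|\varphi-\psi\|_\infty<\varepsilon$. Since each $\mathbb{E}_n$ is a linear contraction for the supremum norm, and $|\mu(\varphi)-\mu(\psi)|\le\|\varphi-\psi\|_\infty$, we get
\begin{equation*}
\left|\mathbb{E}_n[\varphi](\sigma)-\mu(\varphi)\right|\le 2\varepsilon+\left|\mathbb{E}_n[\psi](\sigma)-\mu(\psi)\right|\ .
\end{equation*}
Taking $\limsup_{n}$ and then letting $\varepsilon\downarrow 0$ gives the claim simultaneously for all $\varphi\in S$, with the same null set $N$.

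The only step requiring care is this uniformity in $\varphi$: a naive application of Birkhoff's theorem gives a null set depending on $\varphi$, and $S$ is uncountable. The $\varepsilon/3$-type argument above handles it, since it uses only the separability of $C(\Sigma)$, which holds because $\Sigma$ is a compact metric space, and the uniform contraction property of $\mathbb{E}_n$. If $\Omega$ were not metrizable this step would fail, but here it is routine. Once this is settled, the statement follows immediately, with $\mu_S(\varphi)=\mu(\varphi)$ by definition (\ref{eq def muS}).
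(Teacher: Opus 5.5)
Your proposal is correct and follows essentially the same route as the paper: apply the ergodic theorem (the paper uses Proposition~\ref{prop erg disp free}~(ii)) on a countable dense subset of $S$, take the union of the resulting null sets, and extend to all of $S$ using the bound $\max\{|\mathbb{E}_n[\varphi_1](\sigma)-\mathbb{E}_n[\varphi_2](\sigma)|,|\mu(\varphi_1)-\mu(\varphi_2)|\}\le\|\varphi_1-\varphi_2\|_\infty$. The index shift between $\mathbb{E}_n$ and the Birkhoff averages $\frac{1}{n}\sum_{m=1}^{n}\varphi\circ T^m$ is harmless, since the two differ by $(\varphi\circ T^n-\varphi)/n\to 0$ for bounded $\varphi$.
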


\begin{proof}
Recall that any countable intersection of measurable sets of full measure
has full measure. Therefore, we infer from Proposition\textit{\ }\ref{prop
erg disp free} that, for any countable subset $\tilde{S}\subseteq S$ and for 
$\mu $-a.s. $\sigma \in \Sigma $,%
\begin{equation*}
\lim_{n\rightarrow \infty }\mathbb{E}_{n}\left[ \varphi \right] =\mu \left(
\varphi \right) \ ,\qquad \varphi \in \tilde{S}\ .
\end{equation*}%
Since $S\subseteq C(\Sigma )$ is separable (with respect to the supremum
norm) and for any $n\in \mathbb{N}$, $\sigma \in \Sigma $ and $\varphi
_{1},\varphi _{2}\in C(\Sigma )$, 
\begin{equation*}
\max \left\{ |\mathbb{E}_{n}\left[ \varphi _{1}\right] \left( \sigma \right)
-\mathbb{E}_{n}\left[ \varphi _{2}\right] \left( \sigma \right) |,|\mu
\left( \varphi _{1}\right) -\mu \left( \varphi _{2}\right) |\right\} \leq
\left\Vert \varphi _{1}-\varphi _{2}\right\Vert _{\infty }\ ,
\end{equation*}%
the lemma directly follows.
\end{proof}

\begin{remark}
\mbox{ }\newline
Ergodic probability measures are mutually singular. See \cite[Theorem 6.10
(iv)]{Walters}. Note also that we restrict Lemma \ref{lemma In conv pw} to
functions $\varphi \in S$ in view of our applications. By linearity, the
same assertion holds true for all $\varphi \in C(\Sigma )$.
\end{remark}

Recall that a function $F:\mathcal{M}(S)\rightarrow \mathbb{R}$ is $\sigma $%
-normal when, for any bounded sequence $(f_{n})_{n\in \mathbb{N}}\subseteq 
\mathcal{M}(S)$ converging point-wise to $f\in \mathcal{M}(S)$, one has%
\begin{equation*}
\lim_{n\rightarrow \infty }F\left( f_{n}\right) =F\left( f\right) \text{ }.
\end{equation*}%
We also remind that $\mathcal{P}_{\mathrm{erg}}(T)$ denotes the set of
ergodic (or extreme) measures of the space $\mathcal{P}(T)$ of $T$-invariant
probability measures. Note also that, given $\sigma \in \Sigma $ and $n\in 
\mathbb{N}$, Equation (\ref{sdsdsdssdsd}) defines a continuous and bounded
function $\mathbb{E}_{n,\sigma }:S\rightarrow \mathbb{R}$ by 
\begin{equation}
\mathbb{E}_{n,\sigma }\left[ \varphi \right] \doteq \mathbb{E}_{n}\left[
\varphi \right] (\sigma )\ ,\qquad \varphi \in S\ .  \label{eq def Ensigbis}
\end{equation}%
In particular, $\mathbb{E}_{n,\sigma }\in \mathcal{M}(S)$. Clearly, for any $%
\sigma \in \Sigma $ and $n\in \mathbb{N}$, $\mathbb{E}_{n,\mathbb{\sigma }%
^{\prime }}$ converges point-wise to $\mathbb{E}_{n,\sigma }$, as $\sigma
^{\prime }\rightarrow \sigma $. Therefore, for any $\sigma $-normal function 
$F:\mathcal{M}(S)\rightarrow \mathbb{R}$, the mapping $\sigma \mapsto F(%
\mathbb{E}_{n,\sigma })$ is a continuous function from $\Sigma $ to $\mathbb{%
R}$. By a slight abuse of notation, we denote this function by
\textquotedblleft $F\circ \mathbb{E}_{n}$\textquotedblright .

We can now show an important property that leads to a convenient definition
of the affine pressure functional on $T$-invariant measures.

\begin{proposition}
\label{prop Delta F}\mbox{ }\newline
Let $F:\mathcal{M}(S)\rightarrow \mathbb{R}$ be any convex and $\sigma $%
-normal function. Then, for any $T$-invariant measure $\mu \in \mathcal{P}%
(T) $, 
\begin{equation*}
\lim_{n\rightarrow \infty }\mu \left( F\circ \mathbb{E}_{n}\right)
=\inf_{n\in \mathbb{N}}\mu \left( F\circ \mathbb{E}_{n}\right) =\int_{%
\mathcal{P}_{\mathrm{erg}}\left( T\right) }F\left( \nu _{S}\right) \xi _{\mu
}\left( \mathrm{d}\nu \right) \ ,
\end{equation*}%
where $\xi _{\mu }$ is the Choquet measure associated with $\mu $
(Proposition \ref{prop Choquet inv meas}).
\end{proposition}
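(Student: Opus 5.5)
The plan is to prove the statement first for ergodic $\mu$ and then extend it to all of $\mathcal{P}(T)$ by Choquet decomposition, with subadditivity supplying the $\inf$.

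\textbf{Step 1: the ergodic case.} Fix $\nu\in\mathcal{P}_{\mathrm{erg}}(T)$. By Lemma \ref{lemma In conv pw}, for $\nu$-a.e.\ $\sigma$ the functions $\mathbb{E}_{n,\sigma}$ converge point-wise on $S$ to $\nu_S$. They are bounded by $1$ in $\mathcal{M}(S)$, since $\mathbb{E}_n$ maps $S$ into itself. By $\sigma$-normality, $F(\mathbb{E}_{n,\sigma})\to F(\nu_S)$ for $\nu$-a.e.\ $\sigma$.

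We also need a uniform bound. A $\sigma$-normal convex function is norm continuous, as remarked after Condition \ref{Condition essential}. I would also need $F$ bounded on the unit ball of $\mathcal{M}(S)$. If it were unbounded, there would be a sequence in the ball with $|F(f_n)|\to\infty$. I would then extract a point-wise convergent subsequence by evaluating on a countable dense subset of $S$. Since the elements $\mathbb{E}_{n,\sigma}$ are $1$-Lipschitz on $S$, this is only needed on that equicontinuous family, so the extraction works, and $\sigma$-normality then gives a contradiction. Dominated convergence then yields $\nu(F\circ\mathbb{E}_n)\to F(\nu_S)$.

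\textbf{Step 2: monotonicity and the $\inf$.} Set $a_n(\mu)\doteq n\,\mu(F\circ\mathbb{E}_n)$. Using the identity
\begin{equation*}
\mathbb{E}_{n+m}=\tfrac{n}{n+m}\,\mathbb{E}_n+\tfrac{m}{n+m}\,\mathbb{E}_m\circ T^n
\end{equation*}
(with $\mathbb{E}_m\circ T^n$ meaning $\sigma\mapsto\mathbb{E}_{m,T^n\sigma}$), convexity of $F$ and $T$-invariance of $\mu$, we get $a_{n+m}\le a_n+a_m$. Fekete's lemma then gives
\begin{equation*}
\lim_n \mu(F\circ\mathbb{E}_n)=\inf_n \mu(F\circ\mathbb{E}_n).
\end{equation*}
The sequence is bounded below, $F$ being bounded on the ball, so the limit is finite. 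This settles the first equality for every $\mu\in\mathcal{P}(T)$.

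\textbf{Step 3: extension by Choquet decomposition.} By Proposition \ref{prop Choquet inv meas}, $\mu=\int\nu\,\xi_\mu(\mathrm{d}\nu)$ with $\xi_\mu$ supported on $\mathcal{P}_{\mathrm{erg}}(T)$. Since $F\circ\mathbb{E}_n\in C(\Sigma)$, the barycentric formula gives
\begin{equation*}
\mu(F\circ\mathbb{E}_n)=\int\nu(F\circ\mathbb{E}_n)\,\xi_\mu(\mathrm{d}\nu).
\end{equation*}
The integrands are uniformly bounded and converge point-wise in $\nu$ to $F(\nu_S)$ by Step 1, so dominated convergence gives the last equality. Measurability of $\nu\mapsto F(\nu_S)$ holds because it is a point-wise limit of the weak$^{\ast}$-continuous maps $\nu\mapsto\nu(F\circ\mathbb{E}_n)$, restricted to the Borel set $\mathcal{P}_{\mathrm{erg}}(T)$.

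\textbf{Main obstacle.} The delicate point is the uniform bound on $F\circ\mathbb{E}_n$ needed for both dominated-convergence steps, since the unit ball of $\mathcal{M}(S)$ is not sequentially compact for point-wise convergence. I would handle it as sketched in Step 1, by working only on the equicontinuous family $\{\mathbb{E}_{n,\sigma}\}\cup\{\nu_S\}$, on which diagonal extraction succeeds.
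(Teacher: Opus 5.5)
Your proof is correct, but it obtains $\lim=\inf$ by a different route from the paper. Your Steps 1 and 3 are the same as the paper's Step 2: almost-everywhere convergence $\mathbb{E}_{n,\sigma}\to\nu_S$ for ergodic $\nu$ (Lemma \ref{lemma In conv pw}), then $\sigma$-normality, then dominated convergence, first in $\sigma$ and then against $\xi_\mu$.

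The paper gets $\lim=\inf$ from a Jensen-type inequality. It takes a continuous tangent functional $\vartheta\in\mathcal{M}(S)^{\ast}$ of $F$ at $\nu_S$ and shows that $\vartheta$ inherits $\sigma$-normality. Approximating $\nu$ by convex combinations of Dirac measures, it then derives $\nu(\vartheta\circ\mathbb{E}_n)=\vartheta(\nu_S)$ for $T$-invariant $\nu$. This gives $\nu(F\circ\mathbb{E}_n)\geq F(\nu_S)$ for every $\nu\in\mathcal{P}(T)$ and every $n$. Hence $\mu(F\circ\mathbb{E}_n)\geq\int F(\nu_S)\,\xi_\mu(\mathrm{d}\nu)$ for all $n$, and $\inf=\lim$ follows once the limit is identified.

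You instead use the decomposition $\mathbb{E}_{n+m,\sigma}=\frac{n}{n+m}\mathbb{E}_{n,\sigma}+\frac{m}{n+m}\mathbb{E}_{m,T^{n}\sigma}$. Together with convexity and $T$-invariance, it makes $n\,\mu(F\circ\mathbb{E}_n)$ subadditive, and Fekete's lemma gives $\lim=\inf$. Your route is more elementary: no subgradients, no $\sigma$-normality of $\vartheta$, no density argument. It also still yields $\mu(F\circ\mathbb{E}_n)\geq\Delta^{F}(\mu)$ for every $n$.

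What the paper's route gives in addition is the inequality $\nu(F\circ\mathbb{E}_n)\geq F(\nu_S)$ for non-ergodic $\nu$. This is reused in Theorem \ref{properties de Delta} to get $\Delta^{F}(\mu)\geq F(\mu_S)$. With your approach, that inequality would need a separate barycentric Jensen inequality for the convex weak$^{\ast}$-continuous map $\nu\mapsto F(\nu_S)$ on $\mathcal{P}(T)$.

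On the uniform bound, you are more careful than the paper. The paper asserts $\sup\{|F(f)|:\|f\|_{\infty}<1\}<\infty$ by appeal to (\ref{sdsdsdsdsdsdsdsd}), but that estimate only controls $F$ on $\{\mu_S:\mu\in\mathcal{P}(T)\}$. A convex $\sigma$-normal $F$ can in fact be unbounded on the whole unit ball of $\mathcal{M}(S)$. For example, let $\lambda$ be Lebesgue measure on the segment $\{c\mathbf{1}:c\in[0,1]\}\subseteq S$, let $r_k$ be the Rademacher functions transported there and extended by $0$, and set $F(f)=\sum_{k}k\max\{0,\int f r_k\,\mathrm{d}\lambda-\frac{1}{2}\}$.

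So where your Steps 1 and 2 say ``$F$ bounded on the unit ball'', replace it by the restricted claim you actually prove: $F$ is bounded on $\{\mathbb{E}_{n,\sigma}\}\cup\{\nu_S\}$. Your equicontinuity and diagonal-extraction argument does establish this. A shorter route to the same bound: $\mathbb{E}_{n,\sigma}=\rho_S$ with $\rho=\frac{1}{n}\sum_{k<n}\delta_{T^{k}\sigma}\in\mathcal{P}$, and $\rho\mapsto F(\rho_S)$ is sequentially weak$^{\ast}$-continuous on the compact metrizable set $\mathcal{P}$, hence bounded.
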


\begin{proof}
Fix\textit{\ }$\mu \in \mathcal{P}(T)$ and denote by $\xi _{\mu }$\ the
associated Choquet measure. Since $F$ is $\sigma $-normal, remark that the
mapping $\nu \mapsto F\left( \nu _{S}\right) $ from $\mathcal{P}(T)$ to $%
\mathbb{R}$ is weak$^{\ast }$-continuous and, thus, integrable with respect
to the Choquet measure $\xi _{\mu }$. We divide the proof into two steps:
\medskip

\noindent \underline{Step 1:} Since $F$ is a convex continuous function on
the Banach space $\mathcal{M}(S)$, it has continuous tangents at any $f\in 
\mathcal{M}(S)$. That is, for any fixed $f\in \mathcal{M}(S)$, there is a
continuous linear functional $\vartheta \in \mathcal{M}(S)^{\ast }$ such
that, for all $u\in \mathcal{M}(S)$, 
\begin{equation}
F\left( u\right) -F\left( f\right) \geq \vartheta \left( u\right) -\vartheta
\left( f\right) \text{ }.  \label{jensen1}
\end{equation}%
Such a linear functional $\vartheta $ is also $\sigma $-normal. Indeed, take
any bounded sequence $(f_{n})_{n\in \mathbb{N}}\subseteq \mathcal{M}(S)$
converging pointwise to $f$. By Inequality (\ref{jensen1}) for $u=f_{n}$ and
the $\sigma $-normality of $F$, we deduce that 
\begin{equation}
\vartheta \left( f\right) \geq \limsup_{n\rightarrow \infty }\vartheta
\left( f_{n}\right) \ .  \label{jkl1}
\end{equation}%
Similarly, from Inequality (\ref{jensen1}) for $u=2f-f_{n}$,%
\begin{equation}
\liminf_{n\rightarrow \infty }\vartheta \left( f_{n}\right) \geq \vartheta
\left( f\right) \text{ }.  \label{jkl2}
\end{equation}%
Altogether, Inequalities (\ref{jkl1}) and (\ref{jkl2})\ yield%
\begin{equation}
\lim_{n\rightarrow \infty }\vartheta \left( f_{n}\right) =\vartheta \left(
f\right)  \label{jkl3}
\end{equation}%
for any bounded sequence $(f_{n})_{n\in \mathbb{N}}\subseteq \mathcal{M}(S)$
converging pointwise to $f$. By linearity, it follows that $\vartheta $ is $%
\sigma $-normal. Since $(\Omega ,d)$ is a compact metric space, $\Sigma $ is
also a compact metric space (see, e.g., (\ref{metric sigma})). In
particular, $C(\Sigma )$ is a separable Banach space, which implies that the
weak$^{\ast }$ topology of $\mathcal{P}$, the space of all (i.e., not
necessarily $T$-invariant) probability measures on $\Sigma $, is metrizable.
From this fact and the $\sigma $-normality of $\vartheta $ it follows that
the mapping $\nu \mapsto \vartheta (\nu _{S})$ from $\mathcal{P}$ to $%
\mathbb{R}$ is weak$^{\ast }$-continuous. Note that, for any $\sigma \in
\Sigma $,%
\begin{equation*}
\delta _{\sigma }(\vartheta \circ \mathbb{E}_{n})=\vartheta \circ \mathbb{E}%
_{n}(\sigma )=\vartheta ((\delta _{\sigma }\circ \mathbb{E}_{n})_{S})\text{ }%
,
\end{equation*}%
where $\vartheta \circ \mathbb{E}_{n}$ stands for the continuous function $%
\sigma \mapsto \vartheta \left( \mathbb{E}_{n,\sigma }\right) $ and $\delta
_{\sigma }$ is the $\delta $-probability measure at $\sigma \in \Sigma $,
that is, $\delta _{\sigma }(\{\sigma \})=1$. Since $\Sigma $ is a separable
(being compact) metric space, any probability measure on $\Sigma $ is the
weak$^{\ast }$ limit of a sequence of convex combinations of $\delta $%
-probability measures. See, for instance, \cite[Chapter II, Theorem 7.1]%
{Parthasarathy}. Using this fact, along with the weak$^{\ast }$ continuity
of the mapping $\nu \mapsto \vartheta (\nu _{S})$ and the linearity of $%
\vartheta $, one arrives at 
\begin{equation*}
\nu (\vartheta \circ \mathbb{E}_{n})=\vartheta \left( (\nu \circ \mathbb{E}%
_{n})_{S}\right)
\end{equation*}%
for all $\nu \in \mathcal{P}$ and $n\in \mathbb{N}$. In particular, if $\nu $
is $T$-invariant then%
\begin{equation*}
\nu (\vartheta \circ \mathbb{E}_{n})=\vartheta \left( \nu _{S}\right) \ .
\end{equation*}%
We then conclude from Inequality (\ref{jensen1}) applied to $f=\nu _{S}$ and 
$u=\mathbb{E}_{n,\sigma }$ that%
\begin{equation}
\nu \left( F\circ \mathbb{E}_{n}\right) -F\left( \nu _{S}\right) \geq 0\text{
},  \label{Jensen ineq}
\end{equation}%
for any $T$-invariant probability measure $\nu \in \mathcal{P}(T)$, which is
nothing but a version of Jensen's inequality. For a more general statement,
see \cite[Lemma 10.33]{BruPedra2}. Hence, by convexity of $F$ and (the above
version of) Jensen's inequality, 
\begin{equation}
\mu \left( F\circ \mathbb{E}_{n}\right) =\int_{\mathcal{P}_{\mathrm{erg}%
}\left( T\right) }\nu \left( F\circ \mathbb{E}_{n}\right) \xi _{\mu }\left( 
\mathrm{d}\nu \right) \geq \int_{\mathcal{P}_{\mathrm{erg}}\left( T\right)
}F(\nu _{S})\xi _{\mu }\left( \mathrm{d}\nu \right) \text{ }.
\label{dsdssddsd}
\end{equation}

\noindent \underline{Step 2:} Keeping in mind (\ref{dsdssddsd}), it suffices
now to show that%
\begin{equation}
\lim_{n\rightarrow \infty }\int_{\mathcal{P}_{\mathrm{erg}}\left( T\right)
}\nu \left( F\circ \mathbb{E}_{n}\right) \xi _{\mu }\left( \mathrm{d}\nu
\right) =\int_{\mathcal{P}_{\mathrm{erg}}\left( T\right) }F(\nu _{S})\xi
_{\mu }\left( \mathrm{d}\nu \right)  \label{dssdsdsdssd}
\end{equation}%
to get the assertion. Using the $\sigma $-normality of $F$ and Lemma \ref%
{lemma In conv pw}, for any ergodic measure $\nu \in \mathcal{P}_{\mathrm{erg%
}}\left( T\right) $ and for $\nu $-a.s. $\sigma \in \Sigma $, 
\begin{equation}
\lim_{n\rightarrow \infty }F\left( \mathbb{E}_{n,\sigma }\right) =F\left(
\nu _{S}\right) \text{ }.  \label{sdadadadsad}
\end{equation}%
The continuous functions 
\begin{equation*}
\sigma \mapsto F\circ \mathbb{E}_{n}\left( \sigma \right) \doteq F\left( 
\mathbb{E}_{n,\sigma }\right)
\end{equation*}%
are uniformly bounded for all $n\in \mathbb{N}$, i.e., 
\begin{equation*}
\sup_{n\in \mathbb{N}}\left\Vert F\circ \mathbb{E}_{n}\right\Vert _{\infty
}=\sup_{\sigma \in \Sigma }\sup_{n\in \mathbb{N}}\left\vert F\left( \mathbb{E%
}_{n,\sigma }\right) \right\vert \leq \sup \left\{ \left\vert F\left(
f\right) \right\vert :f\in \mathcal{M}(S)\text{ with }\left\Vert
f\right\Vert _{\infty }<1\right\} <\infty \ ,
\end{equation*}%
because 
\begin{equation*}
\sup_{n\in \mathbb{N}}\left\Vert \mathbb{E}_{n,\sigma }\right\Vert _{\infty
}=\sup \left\{ \left\vert \mathbb{E}_{n,\sigma }\left[ \varphi \right]
\right\vert :n\in \mathbb{N},\ \varphi \in S\text{ with }\left\Vert \varphi
\right\Vert _{\infty }=1\right\} \leq 1
\end{equation*}%
and any $\sigma $-normal function $F$ satisfies (\ref{sdsdsdsdsdsdsdsd}), as
explained after Condition \ref{Condition essential}. So, we can use
Lebesgue's dominated convergence to infer from (\ref{sdadadadsad}) that, for
any ergodic measure $\nu \in \mathcal{P}_{\mathrm{erg}}\left( T\right) $,%
\begin{equation*}
\lim_{n\rightarrow \infty }\nu \left( F\circ \mathbb{E}_{n}\right) =F\left(
\nu _{S}\right) \ .
\end{equation*}%
Applying again Lebesgue's dominated convergence, we obtain the limit (\ref%
{dssdsdsdssd}).
\end{proof}

Proposition \ref{prop Delta F} motivates the following definition of a
real-valued functional on $T$-invariant probability measures:

\begin{definition}[$\Delta $-Functionals]
\label{definition de Delta}\mbox{ }\newline
Given a convex and $\sigma $-normal function $F:\mathcal{M}(S)\rightarrow 
\mathbb{R}$, $\Delta ^{F}$ denotes the mapping $\mathcal{P}(T)\rightarrow 
\mathbb{R}$ defined by 
\begin{equation*}
\Delta ^{F}\left( \mu \right) \doteq \int_{\mathcal{P}_{\mathrm{erg}}\left(
T\right) }F\left( \nu _{S}\right) \xi _{\mu }\left( \mathrm{d}\nu \right) \
,\qquad \mu \in \mathcal{P}\left( T\right) \ ,
\end{equation*}%
where $\xi _{\mu }$ is the Choquet measure associated with $\mu $
(Proposition \ref{prop Choquet inv meas}).
\end{definition}

\noindent As far as we know, the use of such a functional is entirely new to
thermodynamic formalism. Additionally, mutatis mutandis, it is a broad
extension of the space-averaging functional introduced in \cite[Section 1.3]%
{BruPedra2} for quantum lattice systems. Important properties of this
functional are gathered in the next theorem. But before stating them, we
recall that the $\Gamma $-regularization of a functional $f:\mathcal{P}%
(T)\rightarrow (-\infty ,\infty ]$ is the convex functional $\Gamma (f):%
\mathcal{P}(T)\rightarrow (-\infty ,\infty ]$ defined by the supremum over
all affine and continuous minorants of $f$, i.e., for all $\mu \in \mathcal{P%
}(T)$, 
\begin{equation}
\Gamma \left( f\right) \left( \mu \right) \doteq \sup \left\{ m\left( \mu
\right) :m\in \mathrm{A}\;\text{and }m|_{\mathcal{P}(T)}\leq f\right\} \ ,
\label{Gamma-regularisation def}
\end{equation}%
where $\mathrm{A}$ denotes the set of all affine and weak$^{\ast }$%
-continuous functionals on the dual space $C(\Sigma )^{\ast }$. It is the
largest weak$^{\ast }$-lower semicontinuous convex function below $f$, see 
\cite[Corollary 3.2]{BruPedraconvex}. Cf. Section \ref{Minimization}, in
particular Equation (\ref{gamm regu general}). It is also convenient to
define the concave\ counterpart of $\Gamma $-regularizations: For any
functional $f:\mathcal{P}(T)\rightarrow \lbrack -\infty ,\infty )$ and all $%
\mu \in \mathcal{P}(T)$,%
\begin{equation}
\Gamma _{-}\left( f\right) \left( \mu \right) \doteq -\Gamma \left(
-f\right) \left( \mu \right) =\inf \left\{ -m\left( \mu \right) :m\in 
\mathrm{A}\;\text{and }-m|_{\mathcal{P}(T)}\geq f\right\} \ ,
\label{Gamma-regularisation defbis}
\end{equation}%
which is the smallest weak$^{\ast }$-upper semicontinuous concave function
above $f$. We call $\Gamma _{-}\left( f\right) $ the upper $\Gamma $%
-regularization of $f$.

Note that taking the $\Gamma $-regularization is equivalent to performing
the Legendre-Fenchel transform twice, i.e., it is equal to the biconjugate
of a function. See Sections \ref{Legendre-Fenchel transform section} and \ref%
{Minimization}. However, the concept of $\Gamma $-regularization is more
convenient, in the sense that we do not need to talk about dual pairs $(%
\mathcal{X},\mathcal{X}^{\ast })$ to introduce it. Furthermore, its
definition via affine weak$^{\ast }$-continuous functionals is also
technically very useful. In particular, as shown in \cite[Lemma 3.4]%
{BruPedraconvex}, it leads to an extension of the Bauer maximum principle.

\begin{theorem}[Properties of the functional $\Delta ^{F}$]
\label{properties de Delta}\mbox{ }\mbox{ }\newline
Let $F:\mathcal{M}(S)\rightarrow \mathbb{R}$ be a convex and $\sigma $%
-normal function. \newline
\emph{(i) }$\Delta ^{F}$ is a weak$^{\ast }$-upper semicontinuous affine
functional. \newline
\emph{(ii) }$\Delta ^{F}$ is weak$^{\ast }$-continuous iff the mapping $\mu
\mapsto F(\mu _{S})$ is affine on $\mathcal{P}(T)$. In this case, there is $%
f\in C(\Sigma )$ such that 
\begin{equation}
\Delta ^{F}(\mu )=\mu (f)\ ,\text{\qquad }\mu \in \mathcal{P}\left( T\right) 
\text{ }.  \label{sdsdsd}
\end{equation}%
\emph{(iii)} If the mapping $\mu \mapsto F(\mu _{S})$ is strictly convex
then $\Delta ^{F}$ is weak$^{\ast }$-discontinuous on a weak$^{\ast }$-dense
subset of $\mathcal{P}(T)$.\newline
\emph{(iv)} $\Delta ^{F}$ is continuous on the $G_{\delta }$ weak$^{\ast }$%
-dense subset $\mathcal{P}_{\mathrm{erg}}\left( T\right) $ of ergodic
measures. In particular, the set of all $T$-invariant measures on which $%
\Delta ^{F}$ is weak$^{\ast }$-discontinuous is weak$^{\ast }$-meager.%
\newline
\emph{(v)} Its $\Gamma $-regularization $\Gamma (\Delta ^{F})$ is the weak$%
^{\ast }$-continuous convex mapping $\mu \mapsto F\left( \mu _{S}\right) $
on $\mathcal{P}(T)$.
\end{theorem}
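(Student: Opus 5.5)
The starting point will be Proposition \ref{prop Delta F}, which gives the representation $\Delta^{F}(\mu)=\inf_{n\in\mathbb{N}}\mu(F\circ\mathbb{E}_{n})$. For each $n$ the function $F\circ\mathbb{E}_{n}$ lies in $C(\Sigma)$, so $\mu\mapsto\mu(F\circ\mathbb{E}_{n})$ is affine and weak$^{\ast}$-continuous on $\mathcal{P}(T)$.

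\emph{Assertions (i) and (v).} Upper semicontinuity in (i) follows because $\Delta^{F}$ is an infimum of continuous functionals. Affinity follows from the Choquet representation: since $\mathcal{P}(T)$ is a Choquet simplex (Proposition \ref{prop Choquet inv meas}), $\mu\mapsto\xi_{\mu}$ is affine, so $\mu\mapsto\int F(\nu_S)\,\xi_\mu(\mathrm{d}\nu)$ is affine. For (v), the map $\mu\mapsto F(\mu_{S})$ is weak$^{\ast}$-continuous by $\sigma$-normality and metrizability, and convex because $\mu\mapsto\mu_S$ is affine. Jensen gives $F(\mu_S)\le\Delta^{F}(\mu)$, by convexity of $F$ applied to the barycenter $\mu_S=\int\nu_S\,\xi_\mu(\mathrm{d}\nu)$; I will justify this with the $\sigma$-normal tangent $\vartheta$ built in Step 1 of Proposition \ref{prop Delta F}. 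So $\mu\mapsto F(\mu_S)$ is a lsc convex minorant, and hence $\le\Gamma(\Delta^F)$. Conversely, any affine continuous $m\le\Delta^F$ satisfies $m(\nu)\le\Delta^F(\nu)=F(\nu_S)$ on ergodic $\nu$. By density of ergodic measures \eqref{eq erg dense} and continuity of both sides, $m\le F(\cdot_S)$ on all of $\mathcal{P}(T)$. This gives equality in (v).

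\emph{Assertion (iv).} On ergodic $\nu$ we have $\Delta^F(\nu)=F(\nu_S)$. Now take $\mu_k\to\nu$ with $\nu$ ergodic. Using (v), $F((\mu_k)_S)\le\Delta^F(\mu_k)$, so
\[
\liminf_k\Delta^F(\mu_k)\ge\lim_k F((\mu_k)_S)=F(\nu_S)=\Delta^F(\nu),
\]
and upper semicontinuity from (i) gives the reverse inequality. So $\Delta^F$ is continuous at ergodic points. $\mathcal{P}_{\mathrm{erg}}(T)$ is a $G_\delta$ (Lemma \ref{lemma extr gd}) and dense, so its complement is meager.

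\emph{Assertion (ii).} If $\mu\mapsto F(\mu_S)$ is affine, then Jensen's inequality is an equality and $\Delta^F(\mu)=F(\mu_S)$, which is continuous. Conversely, if $\Delta^F$ is continuous, it is a continuous affine (hence convex lsc) function, so $\Delta^F=\Gamma(\Delta^F)=F(\cdot_S)$ by (v), and that map is affine. For the representation \eqref{sdsdsd}, a continuous affine function on $\mathcal{P}(T)$ is a uniform limit of functions $\mu\mapsto\mu(f_k)+c_k$. I would rather argue directly: $F\circ\mathbb{E}_n$ yields $\mu(F\circ\mathbb{E}_n)\downarrow\Delta^F(\mu)$, which is continuous, so by Dini the convergence is uniform on the compact $\mathcal{P}(T)$. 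The functions $f_n\doteq F\circ\mathbb{E}_n$ are then Cauchy modulo coboundaries. The cleanest route is to cite the known fact that continuous affine functions on $\mathcal{P}(T)$ are of the form $\mu(f)$, as in \cite[Proposition 3.9]{Bru-pedra-MF-I}; constants are absorbed since $\mu(c)=c$.

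\emph{Assertion (iii).} This is the main obstacle. Given any $\mu$ and a neighborhood, I will find a non-ergodic point in it where $\Delta^F$ is discontinuous. Take two distinct ergodic measures $\nu_1,\nu_2$ near $\mu$; by density, and by (iv), $\mu$ can be approximated by finite convex combinations of ergodic measures. At $\rho=\tfrac12(\nu_1+\nu_2)$, strict convexity gives
\[
\Delta^F(\rho)=\tfrac12\bigl(F(\nu_{1,S})+F(\nu_{2,S})\bigr)>F(\rho_S).
\]
Since ergodic measures are dense, pick ergodic $\rho_k\to\rho$. Then $\Delta^F(\rho_k)=F(\rho_{k,S})\to F(\rho_S)<\Delta^F(\rho)$, so $\Delta^F$ is discontinuous at $\rho$. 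The set of such midpoints is dense, because midpoints of pairs of ergodic measures approximate any finite convex combination only after iterating. To avoid that, I would use general finite combinations $\sum\lambda_i\nu_i$ of distinct ergodic measures, where strict inequality again holds by strict convexity. These combinations are dense in $\mathcal{P}(T)$ by Krein--Milman.
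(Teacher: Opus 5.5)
Your proof is correct and uses the same ingredients as the paper: the representation $\Delta^{F}(\mu)=\inf_{n}\mu(F\circ\mathbb{E}_{n})$ from Proposition \ref{prop Delta F}, the Jensen bound $F(\mu_{S})\leq\Delta^{F}(\mu)$ with equality on $\mathcal{P}_{\mathrm{erg}}(T)$, density of ergodic measures, and \cite[Proposition 3.9]{Bru-pedra-MF-I} for the function $f\in C(\Sigma)$. Your (v), which bounds each affine minorant on $\mathcal{P}_{\mathrm{erg}}(T)$ and extends by density, is an equivalent variant of the paper's lower-semicontinuity argument.

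The real difference is the converse in (ii). You prove (v) first and note that a continuous affine $\Delta^{F}$ equals $\Gamma(\Delta^{F})=F(\cdot_{S})$, which makes (ii) a corollary of (v). The paper instead argues by contraposition. If $F(\mu_{S})<\lambda F(\nu_{S})+(1-\lambda)F(\zeta_{S})$ for $\mu=\lambda\nu+(1-\lambda)\zeta$, then ergodic $\nu^{(n)}\rightarrow\mu$ give $\lim_{n}\Delta^{F}(\nu^{(n)})=F(\mu_{S})<\lambda F(\nu_{S})+(1-\lambda)F(\zeta_{S})\leq\Delta^{F}(\mu)$. This produces an explicit discontinuity point, and the paper reuses it for (iii).

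That inequality uses only affinity and $F(\cdot_{S})\leq\Delta^{F}$, so $\nu,\zeta$ need not be ergodic. Under strict convexity it therefore shows that $\Delta^{F}$ is discontinuous at \emph{every} non-ergodic measure, which is stronger than your version of (iii) and makes density immediate. In your own (iii), the worry about midpoints is unfounded. If $\nu_{1}^{(k)}\neq\nu_{2}^{(k)}$ are ergodic and both converge to $\mu$, then $\frac{1}{2}(\nu_{1}^{(k)}+\nu_{2}^{(k)})\rightarrow\mu$, so the Krein--Milman detour is harmless but unnecessary.

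Finally, the Dini sketch you set aside for (\ref{sdsdsd}) would not work as written, because Proposition \ref{prop Delta F} gives no monotonicity in $n$. Monotonicity is available along $n=2^{k}$, using $\mathbb{E}_{2n,\sigma}=\frac{1}{2}(\mathbb{E}_{n,\sigma}+\mathbb{E}_{n,T^{n}\sigma})$, convexity of $F$, and $T$-invariance. You would also need the identity $\sup_{\mu\in\mathcal{P}(T)}|\mu(f)|=\mathrm{dist}(f,\overline{\{g-g\circ T:g\in C(\Sigma)\}})$ to pass to a limit modulo coboundaries. Citing the representation result, as the paper does, is the right call.
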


\begin{proof}
We take a function $F$ as specified in the theorem and divide the proof into
several steps. Note that this proof strongly relies on the one of \cite[%
Theorems 1.18-1.19]{BruPedra2}, which regards quantum lattice systems.
\medskip

\noindent \underline{Step 1:} By\ Definition \ref{definition de Delta}, $%
\Delta ^{F}$ is clearly affine. By Proposition \ref{prop Delta F}, 
\begin{equation}
\Delta ^{F}\left( \mu \right) =\lim_{n\rightarrow \infty }\mu \left( F\circ 
\mathbb{E}_{n}\right) =\inf_{n\in \mathbb{N}}\mu \left( F\circ \mathbb{E}%
_{n}\right) \ ,\qquad \mu \in \mathcal{P}\left( T\right) \ .
\label{sdsdsdsdsd}
\end{equation}%
As a consequence, $\Delta ^{F}$ is an infimum over weak$^{\ast }$-continuous
functionals $\mu \mapsto \mu \left( F\circ \mathbb{E}_{n}\right) $ and is
therefore weak$^{\ast }$-upper semicontinuous. (i) is thus proven. From
Jensen's inequality (see (\ref{Jensen ineq})), for any $n\in \mathbb{N}$, 
\begin{equation*}
\mu \left( F\circ \mathbb{E}_{n}\right) \geq F\left( \mu _{S}\right) \ .
\end{equation*}%
This last inequality combined with Definition \ref{definition de Delta} and (%
\ref{sdsdsdsdsd}) leads to 
\begin{equation}
\Delta ^{F}\left( \mu \right) \geq F\left( \mu _{S}\right) \text{ },\qquad
\mu \in \mathcal{P}\left( T\right) \ .  \label{inequality sympa}
\end{equation}%
Note that $\Delta ^{F}(\mu )=F(\mu _{S})$ for all $\mu \in \mathcal{P}_{%
\mathrm{erg}}\left( T\right) $, by Definition \ref{definition de Delta}%
.\medskip

\noindent \underline{Step 2:} Assume that the mapping $\mu \mapsto F(\mu
_{S})$ from $\mathcal{P}(T)$ to $\mathbb{C}$ is affine. We infer from
Definition \ref{definition de Delta} and the $\sigma $-normality of $F$ that 
\begin{equation*}
\Delta ^{F}\left( \mu \right) =F\left( \mu _{S}\right) \ ,\qquad \mu \in 
\mathcal{P}\left( T\right) \ .
\end{equation*}%
By the $\sigma $-normality of $F$, the mapping $\mu \mapsto F\left( \mu
_{S}\right) $ is affine and weak$^{\ast }$-continuous on the whole
(topological) dual space $C(\Sigma )^{\ast }$. In particular, $\Delta ^{F}$
is continuous. Similar to \cite[Proposition 3.9]{Bru-pedra-MF-I}, there is $%
f\in C(\Sigma )$ such that 
\begin{equation*}
F(\mu _{S})=\mu \left( f\right) \ ,\qquad \mu \in \mathcal{P}\left( T\right)
\ ,
\end{equation*}%
which in turn implies (\ref{sdsdsd}). \medskip

\noindent \underline{Step 3:} Assume now that $\mu \mapsto F(\mu _{S})$ is
not affine. Then, there are at least two $T$-invariant probability measures $%
\nu ,\zeta \in \mathcal{P}(T)$ and $\lambda \in (0,1)$ such that, for $\mu
=\lambda \nu +\left( 1-\lambda \right) \zeta $, 
\begin{equation}
F\left( \mu _{S}\right) <\lambda F\left( \nu _{S}\right) +\left( 1-\lambda
\right) F\left( \zeta _{S}\right) \ ,  \label{sdfssdf}
\end{equation}%
the function $F$ being convex. By Proposition \ref{density of ergodic
measures}, the set $\mathcal{P}_{\mathrm{erg}}(T)$ of ergodic probability
measures is weak$^{\ast }$-dense in $\mathcal{P}\left( T\right) $. So, there
is a sequence $(\nu ^{(n)})_{n\in \mathbb{N}}\subseteq \mathcal{P}_{\mathrm{%
erg}}(T)$ of ergodic measures converging with respect to the weak$^{\ast }$
topology to the above probability measure $\mu $. Then, by Definition \ref%
{definition de Delta} as well as\ Equations (\ref{inequality sympa}) and (%
\ref{sdfssdf}), 
\begin{equation}
\lim_{n\rightarrow \infty }\Delta ^{F}(\nu ^{(n)})=\lim_{n\rightarrow \infty
}F(\nu _{S}^{(n)})=F\left( \mu _{S}\right) <\lambda F\left( \nu _{S}\right)
+\left( 1-\lambda \right) F\left( \zeta _{S}\right) \leq \Delta ^{F}\left(
\mu \right) \ ,  \label{yui}
\end{equation}%
because $F$ is $\sigma $-normal, $\lambda \in (0,1)$ and $\Delta ^{F}$ is
affine. Consequently, $\Delta ^{F}$ is discontinuous on the probability
measure $\mu =\lambda \nu +\left( 1-\lambda \right) \zeta $. Steps 2 and 3
yield Assertion (ii).\medskip

\noindent \underline{Step 4:} Assume that the mapping $\mu \mapsto F(\mu
_{S})$ is strictly convex. In particular, it is not the constant mapping.
Therefore, for any $\nu \in \mathcal{P}(T)$, there is at least one $T$%
-invariant probability measure $\zeta ^{(\nu )}\in \mathcal{P}(T)$ such that 
$F(\nu _{S})\neq F(\zeta _{S}^{(\nu )})$. For all $\nu \in \mathcal{P}(T)$,
we define the subset%
\begin{equation*}
\mathrm{I}(\nu )\doteq \left\{ \lambda \nu +\left( 1-\lambda \right) \zeta
^{(\nu )}\text{ for\ any\ }\lambda \in \left( 0,1\right) \right\} \subseteq 
\mathcal{P}\left( T\right) \ .
\end{equation*}%
Finally, let us consider the subset 
\begin{equation*}
\mathrm{D}\doteq \bigcup\limits_{\nu \in \mathcal{P}\left( T\right) }\mathrm{%
I}\left( \nu \right) \subseteq \mathcal{P}\left( T\right) \backslash 
\mathcal{P}_{\mathrm{erg}}\left( T\right) \ .
\end{equation*}%
By continuity of the mapping $\lambda \mapsto \lambda \nu $ for $\lambda \in 
\mathbb{C}$ and $\nu \in \mathcal{P}(T)$, the set $\mathrm{D}$ is weak$%
^{\ast }$-dense in $\mathcal{P}\left( T\right) $. Any $\mu \in \mathrm{D}$
is clearly of the form 
\begin{equation}
\mu =\lambda \nu +\left( 1-\lambda \right) \zeta  \label{form}
\end{equation}%
for some $\lambda \in (0,1)$ and $\nu ,\zeta \in \mathcal{P}(T)$ with $F(\nu
)\neq F(\zeta )$. By Proposition \ref{density of ergodic measures}, for any $%
\mu \in \mathrm{D}$, there is a sequence $(\nu ^{(n)})_{n\in \mathbb{N}%
}\subseteq \mathcal{P}_{\mathrm{erg}}(T)$ of ergodic measures converging
with respect to the weak$^{\ast }$ topology to $\mu $. Then, by Definition %
\ref{definition de Delta} and Equation (\ref{inequality sympa}), for any $%
\mu \in \mathrm{D}$ of the form (\ref{form}), Equation (\ref{yui}) holds
true because $F$ is $\sigma $-normal and strictly convex, $\lambda \in (0,1)$
and $\Delta ^{F}$ is affine. As a consequence, the mapping $\Delta ^{F}$ is
weak$^{\ast }$-discontinuous at any $\mu \in \mathrm{D}$ and hence on a weak$%
^{\ast }$-dense subset of $\mathcal{P}\left( T\right) $. In other words,
(iii) is proven.\medskip

\noindent \underline{Step 5:} We show now that $\Delta ^{F}$ is weak$^{\ast
} $-continuous for any ergodic probability measure, which yields (iv),
thanks to Proposition \ref{density of ergodic measures}. Take $\nu \in 
\mathcal{P}_{\mathrm{erg}}(T)$ and consider any sequence $(\nu ^{(n)})_{n\in 
\mathbb{N}}\subseteq \mathcal{P}(T)$ of probability measures converging with
respect to the weak$^{\ast }$ topology to $\nu $. By Definition \ref%
{definition de Delta}, Assertion (i) and Equation (\ref{inequality sympa}),%
\begin{equation*}
F(\nu _{S})=\Delta ^{F}\left( \nu \right) \geq \limsup_{n\rightarrow \infty
}\Delta ^{F}(\nu ^{(n)})\geq \liminf_{n\rightarrow \infty }\Delta ^{F}(\nu
^{(n)})\geq \lim_{n\rightarrow \infty }F(\nu _{S}^{(n)})=F(\nu _{S})
\end{equation*}%
because $F$ is $\sigma $-normal. In other words, the functional $\Delta ^{F}$
is weak$^{\ast }$-continuous on $\mathcal{P}_{\mathrm{erg}}(T)$. Notice that
the weak$^{\ast }$ topology of $\mathcal{P}(T)$\ is metrizable. (So, there
is no need to use nets here to check continuity on $\mathcal{P}(T)$.)\medskip

\noindent \underline{Step 6:} Recall that the $\Gamma $-regularization $%
\Gamma (f)$ of functionals $f$, as defined by Equation (\ref%
{Gamma-regularisation def}), is the largest weak$^{\ast }$-lower
semicontinuous convex function below $f$. See, e.g., \cite[Corollary 3.2]%
{BruPedraconvex}. By Inequality (\ref{inequality sympa}), it follows that 
\begin{equation}
F\left( \mu _{S}\right) \leq \Gamma \left( \Delta ^{F}\right) \left( \mu
\right) \leq \Delta ^{F}\left( \mu \right) \ ,\qquad \mu \in \mathcal{P}%
\left( T\right) \ ,  \label{sdsds1}
\end{equation}%
because the mapping $\mu \mapsto F(\mu _{S})$ is weak$^{\ast }$-continuous
and convex on $\mathcal{P}(T)$, thanks the convexity and $\sigma $-normality
of $F$. In addition, for any ergodic probability measure $\nu \in \mathcal{P}%
_{\mathrm{erg}}(T)$, 
\begin{equation}
F\left( \nu _{S}\right) =\Gamma \left( \Delta ^{F}\right) \left( \nu \right)
=\Delta ^{F}\left( \nu \right) \ .  \label{sdsds2}
\end{equation}%
Therefore, using the weak$^{\ast }$-density of $\mathcal{P}_{\mathrm{erg}%
}(T) $ in $\mathcal{P}(T)$ (Proposition \ref{density of ergodic measures}),
the weak$^{\ast }$-continuity of $\mu \mapsto F(\mu _{S})$ and the weak$%
^{\ast }$-lower semicontinuity of $\Gamma (\Delta ^{F})$, we deduce from (%
\ref{sdsds1})--(\ref{sdsds2}) that 
\begin{equation*}
\Gamma \left( \Delta ^{F}\right) \left( \mu \right) =F\left( \mu _{S}\right)
\ ,\qquad \mu \in \mathcal{P}\left( T\right) \ ,
\end{equation*}%
that is, (v) follows. This concludes the proof of the theorem.
\end{proof}

Thanks to Theorem \ref{properties de Delta}, under Condition \ref{Condition
essential} the \emph{nonlinear} pressure functional 
\begin{equation}
P\left( \mu \right) \doteq F\left( \mu _{S}\right) +h\left( \mu \right)
=F_{+}\left( \mu _{S}\right) +F_{-}\left( \mu _{S}\right) +h\left( \mu
\right) \ ,\qquad \mu \in \mathcal{P}\left( T\right) \ ,  \label{def P mu}
\end{equation}%
on $T$-invariant measures has a direct relation to the functional $\mathfrak{%
F}^{\flat }:\mathcal{P}(T)\rightarrow \mathbb{R}$ defined by (\ref{F^bemol}%
), that is, 
\begin{equation}
\mathfrak{F}^{\flat }\doteq \Delta ^{F_{+}}-\Delta ^{-F_{-}}+h\ .
\label{the free-energy density functional}
\end{equation}%
In contrast to the nonlinear pressure functional $P:\mathcal{P}%
(T)\rightarrow \mathbb{R}$ (\ref{def P mu}) which is not necessarily affine
but weak$^{\ast }$-upper semicontinuous (for $\sigma $-normal $F_{\pm }$)
(see Proposition \ref{Affinity of the entropy copy(1)}), the functional $%
\mathfrak{F}^{\flat }$ is affine but not necessarily weak$^{\ast }$-upper
semicontinuous, because of Theorem \ref{properties de Delta} and Proposition %
\ref{Affinity of the entropy copy(1)}. However, the variational problems $%
\sup P\left( \mathcal{P}\left( T\right) \right) $ and $\sup \mathfrak{F}%
^{\flat }\left( \mathcal{P}\left( T\right) \right) $ can be related, as well
as can their corresponding maximizers. This is a consequence of the next
observation. Recall that $\mathcal{P}_{\mathrm{erg}}(T)$ stands for the
(dense in $\mathcal{P}\left( T\right) $) set of ergodic probability measures
on $\Sigma $. \ 

\begin{proposition}
\label{Gamma-regularisation of pressure}\mbox{ }\newline
Let $F_{\pm }:\mathcal{M}(S)\rightarrow \mathbb{R}$ be two $\sigma $-normal
functions and assume that $F_{+}$ and $F_{-}$ are respectively convex and
concave. \newline
\emph{(i)} We have 
\begin{equation}
\sup_{\mu \in \mathcal{P}\left( T\right) }\,\mathfrak{F}^{\flat }\left( \mu
\right) =\sup\limits_{\nu \in \mathcal{P}_{\mathrm{erg}}\left( T\right) }\,%
\mathfrak{F}^{\flat }\left( \nu \right) =\sup\limits_{\nu \in \mathcal{P}_{%
\mathrm{erg}}\left( T\right) }\,P\left( \nu \right) =\sup_{\mu \in \mathcal{P%
}\left( T\right) }P\left( \mu \right) <\infty \ .  \label{sdsdsssd1}
\end{equation}%
\emph{(ii)} $\Gamma _{-}(\mathfrak{F}^{\flat })=\Gamma _{-}(P)$ on the whole
space $\mathcal{P}\left( T\right) $ and $\Gamma _{-}(P)=P$ on $\mathcal{P}_{%
\mathrm{erg}}(T)$. Furthermore, 
\begin{equation}
\sup_{\mu \in \mathcal{P}\left( T\right) }\,\mathfrak{F}^{\flat }\left( \mu
\right) =\sup_{\mu \in \mathcal{P}\left( T\right) }\Gamma _{-}\left( 
\mathfrak{F}^{\flat }\right) \left( \mu \right) =\sup_{\mu \in \mathcal{P}%
\left( T\right) }\,\Gamma _{-}\left( P\right) \left( \mu \right) =\sup_{\mu
\in \mathcal{P}\left( T\right) }P\left( \mu \right) <\infty \ .
\label{sdsdsssd2}
\end{equation}
\end{proposition}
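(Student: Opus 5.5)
The plan is to compare $P$ and $\mathfrak{F}^{\flat }$ through their common values on ergodic measures. We know $\Delta ^{F}(\nu )=F(\nu _{S})$ for all $\nu \in \mathcal{P}_{\mathrm{erg}}(T)$ (Step 1 of the proof of Theorem \ref{properties de Delta}), applied to $F_{+}$ and to $-F_{-}$. Hence $\mathfrak{F}^{\flat }=P$ on $\mathcal{P}_{\mathrm{erg}}(T)$. This gives the middle equality in (\ref{sdsdsssd1}). Finiteness follows from the boundedness of $F_{\pm }(\mu _{S})$ over $\mathcal{P}(T)$, noted after Condition \ref{Condition essential}, together with $h\leq 0$.

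For the first equality in (\ref{sdsdsssd1}) I would use the ergodic decomposition (\ref{assertion sympa}), namely $\mathfrak{F}^{\flat }(\mu )=\int P(\nu )\xi _{\mu }(\mathrm{d}\nu )$. Its proof only uses \cite[Lemma 10.17]{BruPedra2} for the affine upper semicontinuous $h$ and Definition \ref{definition de Delta}, not the specific form of $g_{\pm }$. Since $\xi _{\mu }$ is supported on ergodic measures, $\mathfrak{F}^{\flat }(\mu )\leq \sup_{\mathcal{P}_{\mathrm{erg}}(T)}P=\sup_{\mathcal{P}_{\mathrm{erg}}(T)}\mathfrak{F}^{\flat }$.

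The last equality, $\sup P(\mathcal{P}(T))\leq \sup P(\mathcal{P}_{\mathrm{erg}}(T))$, is the main obstacle. Pointwise comparison fails: Jensen gives $F_{+}(\mu _{S})\leq \Delta ^{F_{+}}(\mu )$ but the wrong direction for the concave part. Mere weak$^{\ast }$ density of ergodic measures also fails, because $h$ is only upper semicontinuous. I would therefore prove an \emph{entropy-density} statement: for every $\mu \in \mathcal{P}(T)$ there are ergodic $\nu _{n}\rightarrow \mu $ with $h(\nu _{n})\rightarrow h(\mu )$. Then $P(\nu _{n})\rightarrow P(\mu )$ by weak$^{\ast }$ continuity of $\mu \mapsto F_{\pm }(\mu _{S})$.

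To get entropy-density I would use Proposition \ref{kul}, which exhibits $-h$ as the Legendre--Fenchel conjugate of $\mathfrak{P}_{L}$, together with the Israel--Phelps (Brøndsted--Rockafellar type) approximation theorem. It yields $\mu _{n}\rightarrow \mu $ with $h(\mu _{n})\rightarrow h(\mu )$, where each $\mu _{n}$ is a linear equilibrium measure of some potential $\varphi _{n}$. The potentials can be taken Hölder because $C^{\alpha }(\Sigma )$ is dense in $C(\Sigma )$; this is the same mechanism behind the density result \cite[Corollary 7.14]{GKLM} quoted in Section \ref{Linear thermo}. Since equilibrium measures of Hölder potentials are unique and ergodic \cite{LMMS,ACR}, the $\mu _{n}$ are ergodic. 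If the approximation has to be done with continuous potentials first, I would perturb them to Hölder ones and use upper semicontinuity of $h$ plus a diagonal argument.

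For (ii), I would show that an affine weak$^{\ast }$-continuous $m$ majorizes $P$ on $\mathcal{P}(T)$ iff it majorizes $\mathfrak{F}^{\flat }$ there; by (\ref{Gamma-regularisation defbis}) this gives $\Gamma _{-}(P)=\Gamma _{-}(\mathfrak{F}^{\flat })$. Both conditions are equivalent to $m\geq P$ on $\mathcal{P}_{\mathrm{erg}}(T)$:
\begin{itemize}
\item for $P$, one direction is trivial and the other follows from entropy-density, since $P(\mu )=\lim P(\nu _{n})\leq \lim m(\nu _{n})=m(\mu )$;
\item for $\mathfrak{F}^{\flat }$, use $\mathfrak{F}^{\flat }=P$ on ergodic measures and the decomposition $\mathfrak{F}^{\flat }(\mu )=\int \mathfrak{F}^{\flat }(\nu )\xi _{\mu }(\mathrm{d}\nu )\leq \int m(\nu )\xi _{\mu }(\mathrm{d}\nu )=m(\mu )$.
\end{itemize}

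To get $\Gamma _{-}(P)=P$ on ergodic measures, I would use that for the affine function $\mathfrak{F}^{\flat }$ the upper $\Gamma $-regularization is its upper semicontinuous hull. Indeed, the usc hull of an affine function is concave, and every usc concave majorant dominates it. Moreover $\mathfrak{F}^{\flat }$ is usc at each ergodic $\nu $:
\begin{itemize}
\item $\Delta ^{F_{+}}$ and $h$ are usc everywhere (Theorem \ref{properties de Delta} (i), Definition \ref{uod});
\item $\Delta ^{-F_{-}}$ is continuous at ergodic points (Theorem \ref{properties de Delta} (iv)).
\end{itemize}
Hence $\Gamma _{-}(P)(\nu )=\Gamma _{-}(\mathfrak{F}^{\flat })(\nu )=\mathfrak{F}^{\flat }(\nu )=P(\nu )$.

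Finally, (\ref{sdsdsssd2}) follows from (\ref{sdsdsssd1}) and the general fact $\sup \Gamma _{-}(f)=\sup f$ over $\mathcal{P}(T)$. The constant affine map equal to $\sup f$ is a continuous majorant, which gives $\leq $, and $\Gamma _{-}(f)\geq f$ gives $\geq $.
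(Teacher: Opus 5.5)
Your proof is correct, but it takes a different route from the paper at almost every step.

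\textbf{Part (i).} For the first equality you use the ergodic decomposition (\ref{assertion sympa}) of $\mathfrak{F}^{\flat}$. The paper instead applies the extension of the Bauer maximum principle to sums of lower and upper semicontinuous affine functionals \cite[Lemma 3.4]{BruPedraconvex}.

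\textbf{Part (ii).} The paper introduces the concave upper semicontinuous functional $\mathfrak{F}^{\sharp}(\mu)=\Delta^{F_{+}}(\mu)+F_{-}(\mu_{S})+h(\mu)$ and argues in three steps. The sandwich $\mathfrak{F}^{\flat}\leq\Gamma_{-}(\mathfrak{F}^{\flat})\leq\mathfrak{F}^{\sharp}$ gives $\Gamma_{-}(\mathfrak{F}^{\flat})=P$ on ergodic measures. Upper semicontinuity along ergodic approximants gives $\Gamma_{-}(P)\leq\Gamma_{-}(\mathfrak{F}^{\flat})$. Jensen's inequality for the merely upper semicontinuous concave function $\Gamma_{-}(P)$ against $\xi_{\mu}$ \cite[Lemma 10.33]{BruPedra2} gives $\Gamma_{-}(P)\geq\mathfrak{F}^{\flat}$. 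Your observation that $P$ and $\mathfrak{F}^{\flat}$ have exactly the same continuous affine majorants is neater. It needs only the barycenter identity $\int m\,\mathrm{d}\xi_{\mu}=m(\mu)$ for continuous affine $m$. Your upper semicontinuous hull argument, based on continuity of $\Delta^{-F_{-}}$ at ergodic points (Theorem \ref{properties de Delta} (iv)), replaces $\mathfrak{F}^{\sharp}$ entirely.

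\textbf{Entropy density.} The lemma you single out as the crux is exactly what the paper uses, and it is already available as Corollary \ref{ergodic abundance}. That corollary is proved elementarily with the block-product measures $\mu_{m}$ of Proposition \ref{density of ergodic measures} and the identity $h(\mu_{m})=m^{-1}\mathcal{H}_{\{1,\ldots,m\}}(\mu)$ coming from Theorem \ref{teo-equiv-entrop}. You can simply cite it.

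\textbf{If you keep the Israel route.} Be careful with the H\"{o}lder step. Density of $C^{\alpha}(\Sigma)$ in $C(\Sigma)$ is not by itself the reason, and your fallback does not work. A continuous $\varphi_{n}$ can have a nontrivial face of equilibrium measures. The unique equilibrium measures of H\"{o}lder perturbations of $\varphi_{n}$ accumulate only at some points of that face, not necessarily at the approximant of $\mu$, and no diagonal argument repairs this.

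What works is to apply the Br{\o}ndsted--Rockafellar theorem directly in the Banach space $(C^{\alpha}(\Sigma),\Vert\cdot\Vert_{\alpha})$ to $\mathfrak{P}_{L}|_{C^{\alpha}(\Sigma)}$:
\begin{itemize}
\item Its conjugate at $\mu$ equals $-h(\mu)$, by Proposition \ref{kul copy(1)}.
\item Every tangent functional at a H\"{o}lder $\varphi$ is dominated by $\Vert\cdot\Vert_{\infty}$, because $\mathfrak{P}_{L}$ is $1$-Lipschitz for the supremum norm. It therefore extends to an equilibrium measure of $\varphi$, which is unique and ergodic.
\item Take $\varphi_{0}$ with $\mathfrak{P}_{L}(\varphi_{0})-\mu(\varphi_{0})\leq h(\mu)+\epsilon$ and set $\lambda=\epsilon/(1+\Vert\varphi_{0}\Vert_{\alpha})$. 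This yields a tangent $\nu$ with $\Vert\nu-\mu\Vert_{C^{\alpha}(\Sigma)^{\ast}}\leq\epsilon$ and $|h(\nu)-h(\mu)|\leq2\epsilon$.
\end{itemize}
For $h(\mu)=-\infty$ there is nothing to prove. This route costs more machinery than Corollary \ref{ergodic abundance}, but it gives approximants that are equilibrium measures of H\"{o}lder potentials.
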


\begin{proof}
Note that all suprema in (i) are finite, because any $\sigma $-normal
function $F$ such as $F_{\pm }$ satisfies (\ref{sdsdsdsdsdsdsdsd}), as
explained after Condition \ref{Condition essential}, and the entropy
functional is uniformly bounded from above. Assertion (i) is proven in the
same way as \cite[Lemma 2.9]{BruPedra2}, while the proof of (ii) uses the
same arguments as the first part of the proof of \cite[Theorem 2.21]%
{BruPedra2}. We reproduce these proofs below, adapting them to the present
situation.

\noindent \underline{Step 1:} The Bauer maximum principle \cite[Theorem I.5.3%
]{Alfsen} says that an upper semicontinuous convex real-valued function $f$
over a compact convex subset $K$, such as the weak$^{\ast }$-compact convex
set $\mathcal{P}(T)$, attains its maximum at an extreme point of $K$.
However, by Theorem \ref{properties de Delta}, $\mathfrak{F}^{\flat }$ is
the sum of a convex weak$^{\ast }$-lower semicontinuous functional and a
convex weak$^{\ast }$-upper semicontinuous functional, both on $\mathcal{P}%
(T)$. We thus need the extension of the Bauer maximum principle given by 
\cite[Lemma 3.4]{BruPedraconvex}, which says that the supremum of the values
of a sum $f\doteq f_{-}+f_{+}$ of a lower and upper semicontinuous affine
functionals $f_{-}$ and $f_{+}$ on a compact convex subset $K$ can be
restricted to the set $\mathcal{E}(K)$ of extreme points of $K$ (see Section %
\ref{The Choquet theorem}), i.e.,%
\begin{equation*}
\sup \,f\left( K\right) =\sup \,f\left( \mathcal{E}(K)\right) \ .
\end{equation*}%
Applying this result to $f=\mathfrak{F}^{\flat }$ and $K=\mathcal{P}(T)$\
while using additionally that $\mathfrak{F}^{\flat }=P$ on $\mathcal{P}_{%
\mathrm{erg}}(T)$ (see Definition \ref{definition de Delta}), we thus obtain
the equality%
\begin{equation}
\sup_{\mu \in \mathcal{P}\left( T\right) }\,\mathfrak{F}^{\flat }(\mu
)=\sup\limits_{\nu \in \mathcal{P}_{\mathrm{erg}}\left( T\right) }\,%
\mathfrak{F}^{\flat }\left( \nu \right) =\sup\limits_{\nu \in \mathcal{P}_{%
\mathrm{erg}}\left( T\right) }\,P\left( \nu \right) \ .  \label{hjk}
\end{equation}%
Since $P$ is weak$^{\ast }$-upper semicontinuous, it has a maximizer $\mu
_{0}$ over $\mathcal{P}(T)$. Thus, by Corollary \ref{ergodic abundance},
there is a sequence $(\nu _{n})_{n\in \mathbb{N}}\subseteq \mathcal{P}_{%
\mathrm{erg}}(T)$ of ergodic probability measures converging in the weak$%
^{\ast }$ topology to $\mu _{0}$ with the property that $P(\nu _{n})$
converges to $P(\mu _{0})$ as $n\rightarrow \infty $. In other words,
maximizing $P$ over $\mathcal{P}(T)$ results the same as maximizing $P$ over 
$\mathcal{P}_{\mathrm{erg}}(T)$. Equation (\ref{sdsdsssd1}) then follows.
Observe that Equation (\ref{sdsdsssd2}) is a direct consequence of Equation (%
\ref{sdsdsssd1})\ and Theorem \ref{theorem sympa}. We now prove the equality 
$\Gamma _{-}(\mathfrak{F}^{\flat })=\Gamma _{-}(P)$ on the whole space $%
\mathcal{P}\left( T\right) $ and $\Gamma _{-}(P)=P$ on the dense subset $%
\mathcal{P}_{\mathrm{erg}}(T)$ of ergodic measures. \medskip

\noindent \underline{Step 2:} We start with a partial upper $\Gamma $%
-regularization of $\mathfrak{F}^{\flat }$ by defining the new functional $%
\mathfrak{F}^{\sharp }:\mathcal{P}(T)\rightarrow \mathbb{R}$ as follows:%
\begin{equation}
\mathfrak{F}^{\sharp }\left( \mu \right) \doteq \Delta ^{F_{+}}\left( \mu
\right) +F_{-}\left( \mu _{S}\right) +h\left( \mu \right) \ ,\qquad \mu \in 
\mathcal{P}\left( T\right) \ .  \label{F^diese}
\end{equation}%
By Theorem \ref{properties de Delta}, $\mathfrak{F}^{\sharp }$ is concave,
weak$^{\ast }$-upper semicontinuous and bounded from below by $\mathfrak{F}%
^{\flat }$. Recall that the $\Gamma $-regularization of a functional $f$ is
the largest weak$^{\ast }$-lower semicontinuous convex function below $f$,
see \cite[Corollary 3.2]{BruPedraconvex} or Section \ref{Minimization}.
I.e., (the upper $\Gamma $-regularization) $\Gamma _{-}\left( f\right) $ is
the smallest weak$^{\ast }$-upper semicontinuous concave function above $f$.
Using this property and Equation (\ref{Gamma-regularisation defbis}),%
\begin{equation*}
\mathfrak{F}^{\flat }\leq \Gamma _{-}\left( \mathfrak{F}^{\flat }\right)
\leq \mathfrak{F}^{\sharp }\ .
\end{equation*}%
Hence, for any ergodic probability measure $\nu \in \mathcal{P}_{\mathrm{erg}%
}(T)$, 
\begin{equation}
P\left( \nu \right) =\mathfrak{F}^{\sharp }\left( \nu \right) =\Gamma
_{-}\left( \mathfrak{F}^{\flat }\right) \left( \nu \right) =\mathfrak{F}%
^{\flat }\left( \nu \right) \ .  \label{equality idiote}
\end{equation}

\noindent \underline{Step 3:} We show now that $\Gamma _{-}\left( \mathfrak{F%
}^{\flat }\right) $ is an upper bound for $\Gamma _{-}(P)$. By Corollary \ref%
{ergodic abundance} and the $\sigma $-normality of $F_{\pm }$, any $T$%
-invariant probability measure $\mu \in \mathcal{P}(T)$ is the limit of some
sequence $(\nu _{n})_{n\in \mathbb{N}}\subseteq \mathcal{P}_{\mathrm{erg}%
}(T) $ of ergodic probability measures converging in the weak$^{\ast }$
topology to $\mu $ and such that $P(\nu _{n})$ converges to $P(\mu )$. By (%
\ref{equality idiote}), $\Gamma _{-}(\mathfrak{F}^{\flat })(\nu _{n})$ also
converges to $P(\mu )$. Since $\Gamma _{-}(\mathfrak{F}^{\flat })$ is, by
construction, weak$^{\ast }$-upper semicontinuous on $\mathcal{P}(T)$, we
thus conclude that, for any $T$-invariant probability measure $\mu \in 
\mathcal{P}(T)$, 
\begin{equation}
P\left( \mu \right) =\lim_{n\rightarrow \infty }\Gamma _{-}\left( \mathfrak{F%
}^{\flat }\right) \left( \nu _{n}\right) \leq \Gamma _{-}\left( \mathfrak{F}%
^{\flat }\right) \left( \mu \right) \ ,  \label{equality idiotebis}
\end{equation}%
which, combined with the fact that $\Gamma _{-}(P)$ is the smallest weak$%
^{\ast }$-upper semicontinuous concave function above $P$ (see \cite[%
Corollary 3.2]{BruPedraconvex}), yields 
\begin{equation}
\Gamma _{-}\left( P\right) \leq \Gamma _{-}\left( \mathfrak{F}^{\flat
}\right) \ .  \label{equality idiotebisbis}
\end{equation}%
We show next the converse inequality.\medskip

\noindent \underline{Step 4:} We use now the concavity of $\Gamma _{-}(P)$,
the (unique) Choquet measure $\xi _{\mu }$ associated with an arbitrary $T$%
-invariant probability measure $\mu \in \mathcal{P}(T)$, along with Jensen's
inequality applied to $\xi _{\mu }$ and $\Gamma _{-}(P)$ (see \cite[Lemma
10.33]{BruPedra2} for its proof in this context) in order to bound $\Gamma
_{-}(P)$ from below by%
\begin{equation}
\Gamma _{-}\left( P\right) \left( \mu \right) \geq \int_{\mathcal{P}_{%
\mathrm{erg}}\left( T\right) }\Gamma _{-}\left( P\right) \left( \nu \right)
\xi _{\mu }\left( \mathrm{d}\nu \right) \geq \int_{\mathcal{P}_{\mathrm{erg}%
}\left( T\right) }P\left( \nu \right) \xi _{\mu }\left( \mathrm{d}\nu \right)
\label{sdsdsdsdsds}
\end{equation}%
for all $\mu \in \mathcal{P}(T)$. Now, by applying \cite[Lemma 10.17]%
{BruPedra2} to the entropy $h$ (which is an affine weak$^{\ast }$-upper
semicontinuous functional) and using Definition \ref{definition de Delta},
recall that one gets an ergodic decomposition of $\mathfrak{F}^{\flat }$ in
the following sense: 
\begin{equation*}
\int_{\mathcal{P}_{\mathrm{erg}}\left( T\right) }P\left( \nu \right) \xi
_{\mu }\left( \mathrm{d}\nu \right) =\mathfrak{F}^{\flat }\left( \mu \right) 
\text{ },\qquad \mu \in \mathcal{P}(T)\ ,
\end{equation*}%
see Equation (\ref{assertion sympa}). By Inequality (\ref{sdsdsdsdsds}), It
follows that%
\begin{equation}
\Gamma _{-}\left( P\right) \geq \Gamma _{-}\left( \mathfrak{F}^{\flat
}\right) \ ,  \label{equality idiote2}
\end{equation}%
thanks to Equation (\ref{Gamma-regularisation defbis}) and the fact that $%
\Gamma _{-}\left( \mathfrak{F}^{\flat }\right) $ is the smallest weak$^{\ast
}$-upper semicontinuous concave function above $\mathfrak{F}^{\flat }$ (see 
\cite[Corollary 3.2]{BruPedraconvex}). Consequently, $\Gamma _{-}(P)=\Gamma
_{-}(\mathfrak{F}^{\flat })$, which, combined with (\ref{equality idiote})
also yield $\Gamma _{-}(P)=\Gamma _{-}(\mathfrak{F}^{\flat })=P$ on $%
\mathcal{P}_{\mathrm{erg}}(T)$.
\end{proof}

Proposition \ref{Gamma-regularisation of pressure} combined with Theorem \ref%
{theorem sympa} not only implies that the corresponding variational problems
for all functionals $\mathfrak{F}^{\flat }$, $\Gamma _{-}(\mathfrak{F}%
^{\flat })$, $P$ and $\Gamma _{-}(P)$ give the same numerical value but also
that the corresponding (possibly generalized) maximizers are directly
related to each other. To explain this, similar to Equations (\ref%
{sdsdsdfdgdfhfghhf}) and (\ref{equilibriun statebis}), given a bounded
functional $f:\mathcal{P}(T)\rightarrow \mathbb{R}$, we define the
(nonempty) set 
\begin{equation}
E_{f}\doteq \left\{ \mu \in \mathcal{P}\left( T\right) :\exists \left( \mu
_{n}\right) _{n\in \mathbb{N}}\subseteq \mathcal{P}\left( T\right) \mathrm{\ 
}\text{with }\lim_{n\rightarrow \infty }\mu _{n}=\mu \text{ and\ }%
\lim_{n\rightarrow \infty }f\left( \mu _{n}\right) =\sup f\left( \mathcal{P}%
\left( T\right) \right) \right\} ,  \label{equilibriun state}
\end{equation}%
of\ all weak$^{\ast }$-limits of approximating maximizers, i.e., generalized
maximizers. $E_{f}$ differs a priori from the set 
\begin{equation*}
M_{f}\doteq \left\{ \mu \in \mathcal{P}\left( T\right) :f\left( \mu \right)
=\sup f\left( \mathcal{P}\left( T\right) \right) \right\}
\end{equation*}%
of (strict) maximizers of $f$. One has a priori only the inclusion $%
M_{f}\subseteq E_{f}$. If the functional $f$ is weak$^{\ast }$-upper
semicontinuous then $E_{f}=M_{f}$, that is, $E_{f}$ is nothing but the (weak$%
^{\ast }$-compact) set of (strict) maximizers of $f$ over $\mathcal{P}(T)$.

\begin{theorem}[Sets of generalized maximizers]
\label{structure of sets of maximizers}\mbox{ }\newline
Let $F_{\pm }:\mathcal{M}(S)\rightarrow \mathbb{R}$ be two $\sigma $-normal
functions, with $F_{+}$ and $F_{-}$ being respectively convex and concave.
Then, 
\begin{equation*}
E_{\mathfrak{F}^{\flat }}=E_{\Gamma _{-}(\mathfrak{F}^{\flat })}=M_{\Gamma
_{-}(\mathfrak{F}^{\flat })}=M_{\Gamma _{-}(P)}=E_{\Gamma _{-}(P)}=\overline{%
\mathrm{co}}(E_{P})\text{ },
\end{equation*}%
that is, all sets of (possibly generalized) maximizers are nothing but the
weak$^{\ast }$-closed convex hull of the weak$^{\ast }$-compact set $%
E_{P}=M_{P}$ of strict maximizers of the (weak$^{\ast }$-upper
semicontinuous) functional $P$.
\end{theorem}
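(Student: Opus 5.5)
The plan is to split the chain of equalities into three blocks, relying on Proposition \ref{Gamma-regularisation of pressure} and Theorem \ref{theorem sympa}. The first block is formal. By Proposition \ref{Gamma-regularisation of pressure} (ii) we have $\Gamma_{-}(\mathfrak{F}^{\flat})=\Gamma_{-}(P)$ on all of $\mathcal{P}(T)$, so $M_{\Gamma_{-}(\mathfrak{F}^{\flat})}=M_{\Gamma_{-}(P)}$. Since $\Gamma_{-}(\cdot)$ is weak$^{\ast}$-upper semicontinuous and $\mathcal{P}(T)$ is metrizable, $E_{\Gamma_{-}(f)}=M_{\Gamma_{-}(f)}$ for $f=\mathfrak{F}^{\flat},P$. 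Likewise $E_P=M_P$ because $P$ is upper semicontinuous under Condition \ref{Condition essential}. By (\ref{sdsdsssd2}), all the suprema involved coincide; call the common value $s$.

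The second block is $M_{\Gamma_{-}(P)}=\overline{\mathrm{co}}(E_P)$. For the inclusion $\supseteq$, note that $\Gamma_{-}(P)\geq P$ and both have supremum $s$, so $M_P\subseteq M_{\Gamma_{-}(P)}$. The set $M_{\Gamma_{-}(P)}$ is convex, since $\Gamma_{-}(P)$ is concave, and weak$^{\ast}$-closed, since it is upper semicontinuous; hence it contains $\overline{\mathrm{co}}(M_P)$.

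For the reverse inclusion, which I expect to be the main obstacle, I would use the representation of the upper concave envelope of an upper semicontinuous $P$ on a compact convex metrizable set:
\begin{equation*}
\Gamma_{-}(P)(\mu)=\max\Big\{\int P\,\mathrm{d}m : m \text{ a probability measure on } \mathcal{P}(T) \text{ with barycenter } \mu\Big\}.
\end{equation*}
This is presumably the content of Theorem \ref{theorem sympa}, or follows from it. The maximum is attained because the set of such $m$ is weak$^{\ast}$-compact and $m\mapsto\int P\,\mathrm{d}m$ is upper semicontinuous. Now take $\mu\in M_{\Gamma_{-}(P)}$ and a maximizing $m$. Then $\int P\,\mathrm{d}m=s$ while $P\leq s$, so $m$ is concentrated on $M_P$. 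Hence $\mu$ is the barycenter of a measure supported on the compact set $M_P$, and therefore $\mu\in\overline{\mathrm{co}}(M_P)$. If this representation is not available directly, I would instead argue by contradiction with Hahn--Banach. Separate $\mu$ from $\overline{\mathrm{co}}(E_P)$ by a weak$^{\ast}$-continuous affine $m$. Then build an affine continuous majorant of $P$ of the form $s+\varepsilon(m-c)$ that lies strictly below $s$ at $\mu$; this uses compactness and upper semicontinuity of $P$ to get $P<s-\delta$ on $\{m\geq c'\}$.

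The third block is $E_{\mathfrak{F}^{\flat}}=\overline{\mathrm{co}}(E_P)$. For $\subseteq$: if $\mu_n\to\mu$ with $\mathfrak{F}^{\flat}(\mu_n)\to s$, then $\mathfrak{F}^{\flat}\leq\Gamma_{-}(\mathfrak{F}^{\flat})$ together with upper semicontinuity gives $\Gamma_{-}(\mathfrak{F}^{\flat})(\mu)\geq s$. So $\mu\in M_{\Gamma_{-}(\mathfrak{F}^{\flat})}=\overline{\mathrm{co}}(E_P)$ by the first two blocks.

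For $\supseteq$, first take a finite convex combination $\sum_i\lambda_i\mu_i$ with $\mu_i\in E_P$. Corollary \ref{ergodic abundance} gives ergodic $\nu_i^{(n)}\to\mu_i$ with $P(\nu_i^{(n)})\to P(\mu_i)=s$. Since $\mathfrak{F}^{\flat}=P$ on ergodic measures and $\mathfrak{F}^{\flat}$ is affine, $\mathfrak{F}^{\flat}(\sum_i\lambda_i\nu_i^{(n)})\to s$, so $\mathrm{co}(E_P)\subseteq E_{\mathfrak{F}^{\flat}}$. Finally, $E_{\mathfrak{F}^{\flat}}$ is weak$^{\ast}$-closed by a diagonal argument in the metrizable space $\mathcal{P}(T)$. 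This yields $\overline{\mathrm{co}}(E_P)\subseteq E_{\mathfrak{F}^{\flat}}$ and closes the chain.
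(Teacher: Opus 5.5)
Your proof is correct, but it is organized differently from the paper's. The paper applies Theorem~\ref{theorem sympa}~(ii), via $\Gamma_{-}(f)=-\Gamma(-f)$, to \emph{both} functionals. Applied to $P$ it gives $M_{\Gamma_{-}(P)}=\overline{\mathrm{co}}(E_{P})$. Applied to the non-semicontinuous $\mathfrak{F}^{\flat}$ it gives $M_{\Gamma_{-}(\mathfrak{F}^{\flat})}=\overline{\mathrm{co}}(E_{\mathfrak{F}^{\flat}})$. The paper then observes that $E_{\mathfrak{F}^{\flat}}$ is convex (by affinity) and closed (\cite[Lemma 10.36]{BruPedra2}, i.e.\ your diagonal argument), so $\overline{\mathrm{co}}(E_{\mathfrak{F}^{\flat}})=E_{\mathfrak{F}^{\flat}}$, and it glues the two chains with $\Gamma_{-}(\mathfrak{F}^{\flat})=\Gamma_{-}(P)$.

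You never apply the general theorem to $\mathfrak{F}^{\flat}$. You get $E_{\mathfrak{F}^{\flat}}\subseteq M_{\Gamma_{-}(\mathfrak{F}^{\flat})}$ from upper semicontinuity of the envelope. You get $\overline{\mathrm{co}}(E_{P})\subseteq E_{\mathfrak{F}^{\flat}}$ from Corollary~\ref{ergodic abundance}, affinity of $\mathfrak{F}^{\flat}$, and $\mathfrak{F}^{\flat}=P$ on ergodic measures.

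The step you flagged as the main obstacle is exactly Theorem~\ref{theorem sympa}~(ii) applied to $-P$, with $\mathit{\Omega}(-P,\mathcal{P}(T))=E_{P}=M_{P}$, so you can simply cite it. That theorem is not the barycentric formula, although the formula is true. Your Hahn--Banach fallback is also a correct self-contained proof of this upper semicontinuous special case.

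The trade-off is as follows. The paper's argument is shorter and purely abstract once Proposition~\ref{Gamma-regularisation of pressure} is available, but it uses the harder part of \cite{BruPedraconvex}, namely generalized maximizers of non-semicontinuous functions. Yours needs only the elementary u.s.c.\ case plus one more use of ergodic abundance. In exchange, it makes explicit how points of $\mathrm{co}(E_{P})$ are reached by approximate maximizers of $\mathfrak{F}^{\flat}$, namely convex combinations of ergodic measures.
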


\begin{proof}
As upper $\Gamma $-regularizations are always upper semicontinuous, note
that $E_{\Gamma _{-}(\mathfrak{F}^{\flat })}=M_{\Gamma _{-}(\mathfrak{F}%
^{\flat })}$ and $E_{\Gamma _{-}(P)}=M_{\Gamma _{-}(P)}$. The equality $%
E_{\Gamma _{-}(\mathfrak{F}^{\flat })}=E_{\Gamma _{-}(P)}$ is an obvious
consequence of Proposition \ref{Gamma-regularisation of pressure} (ii) and
we infer from Theorem \ref{theorem sympa} and (\ref{Gamma-regularisation
defbis}) that $E_{\Gamma _{-}(\mathfrak{F}^{\flat })}=\overline{\mathrm{co}}%
(E_{\mathfrak{F}^{\flat }})$. On the other hand, $E_{\mathfrak{F}^{\flat }}$
is a convex set, by the affine property of $\mathfrak{F}^{\flat }$ (cf.
Theorem \ref{properties de Delta} and Proposition \ref{Affinity of the
entropy copy(1)}). By \cite[Lemma 10.36]{BruPedra2}, it is also weak$^{\ast
} $-compact because $\mathcal{P}(T)$ is weak$^{\ast }$-compact and the weak$%
^{\ast }$-topology is metrizable on $\mathcal{P}(T)$. As a consequence, $%
E_{\Gamma _{-}(\mathfrak{F}^{\flat })}=\overline{\mathrm{co}}(E_{\mathfrak{F}%
^{\flat }})=E_{\mathfrak{F}^{\flat }}$. Applying again Theorem \ref{theorem
sympa} and (\ref{Gamma-regularisation defbis}), we obtain meanwhile $%
E_{\Gamma _{-}(P)}=\overline{\mathrm{co}}(E_{P})$, which leads to $E_{%
\mathfrak{F}^{\flat }}=E_{\Gamma _{-}(\mathfrak{F}^{\flat })}=E_{\Gamma
_{-}(P)}=\overline{\mathrm{co}}(E_{P})$. Finally, $E_{P}$ is the weak$^{\ast
}$-compact set of strict maximizers of the functional $P$ as a consequence
of the weak$^{\ast }$-upper semicontinuity of this functional together with
the weak$^{\ast }$-compactness of $\mathcal{P}(T)$.
\end{proof}

\begin{corollary}[Choquet decomposition]
\label{Choquet decomposition}\mbox{ }\newline
Let $F_{\pm }:\mathcal{M}(S)\rightarrow \mathbb{R}$ be two $\sigma $-normal
functions with $F_{+}$ and $F_{-}$ being respectively convex and concave.
Then, equilibrium measures $\nu \in E_{\mathfrak{F}^{\flat }}$ that are
extreme in $E_{\mathfrak{F}^{\flat }}$ belong to $E_{P}$ and\ for any $\mu
\in E_{\mathfrak{F}^{\flat }}$, there is a probability measure $m_{\mu }$ on 
$E_{P}$ such that%
\begin{equation}
m_{\mu }(E_{P})=1\mathrm{\quad }\text{and}\mathrm{\quad }\mu
=\int_{E_{P}}\nu m_{\mu }\left( \mathrm{d}\nu \right) \ .
\label{choquet equilibirum}
\end{equation}
\end{corollary}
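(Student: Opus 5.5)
We reduce everything to Theorem \ref{structure of sets of maximizers}, which gives $E_{\mathfrak{F}^{\flat }}=\overline{\mathrm{co}}(E_{P})$ with $E_{P}=M_{P}$ weak$^{\ast }$-compact. We then apply the Milman converse of Krein--Milman and the Choquet theorem in the weak$^{\ast }$-compact convex set $\overline{\mathrm{co}}(E_{P})$.

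\emph{Extreme points.} Since $E_{P}$ is a weak$^{\ast }$-compact subset of the locally convex space $C(\Sigma )^{\ast }$ (weak$^{\ast }$ topology), its closed convex hull $\overline{\mathrm{co}}(E_{P})$ is compact. By the Milman theorem \cite[Proposition 1.5]{Phe}, every extreme point of $\overline{\mathrm{co}}(E_{P})$ lies in $\overline{E_{P}}=E_{P}$. Using $E_{\mathfrak{F}^{\flat }}=\overline{\mathrm{co}}(E_{P})$, any $\nu $ that is extreme in $E_{\mathfrak{F}^{\flat }}$ therefore belongs to $E_{P}$. This proves the first assertion.

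\emph{Barycentric decomposition.} The set $E_{\mathfrak{F}^{\flat }}$ is a nonempty, convex, weak$^{\ast }$-compact subset of $\mathcal{P}(T)$, and it is metrizable because the weak$^{\ast }$ topology on $\mathcal{P}(T)$ is. By the Choquet theorem (Theorem \ref{th Choquet}), for each $\mu \in E_{\mathfrak{F}^{\flat }}$ there is a Borel probability measure $m_{\mu }$ on $E_{\mathfrak{F}^{\flat }}$ with barycenter $\mu $, concentrated on the set $\mathcal{E}(E_{\mathfrak{F}^{\flat }})$ of extreme points. In the metrizable case this set is a $G_{\delta }$ (Lemma \ref{lemma extr gd}), so $m_{\mu }(\mathcal{E}(E_{\mathfrak{F}^{\flat }}))=1$ makes sense.

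By the first step, $\mathcal{E}(E_{\mathfrak{F}^{\flat }})\subseteq E_{P}$. The set $E_{P}$ is compact, hence Borel, so $m_{\mu }(E_{P})\geq m_{\mu }(\mathcal{E}(E_{\mathfrak{F}^{\flat }}))=1$. We may then regard $m_{\mu }$ as a probability measure on $E_{P}$ by restriction. The barycenter identity $\mu =\int_{E_{P}}\nu \,m_{\mu }(\mathrm{d}\nu )$ is understood weakly: $\mu (\varphi )=\int \nu (\varphi )\,m_{\mu }(\mathrm{d}\nu )$ for all $\varphi \in C(\Sigma )$, as in Definition \ref{Barycenters of a measure}. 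This yields (\ref{choquet equilibirum}).

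\emph{Main obstacle.} The only delicate point is that $E_{\mathfrak{F}^{\flat }}$ is defined through approximating sequences. Its compactness and convexity are therefore not obvious directly; they are exactly what Theorem \ref{structure of sets of maximizers} supplies, via \cite[Lemma 10.36]{BruPedra2} and Theorem \ref{theorem sympa}. Once that identification is invoked, the remaining steps are standard convex analysis.
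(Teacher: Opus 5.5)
Your proposal is correct and follows the paper's proof: you identify $E_{\mathfrak{F}^{\flat}}=\overline{\mathrm{co}}(E_{P})$ via Theorem \ref{structure of sets of maximizers}, use the Milman theorem to place the extreme points in $E_{P}$, and apply the Choquet theorem to the metrizable, weak$^{\ast}$-compact convex set $E_{\mathfrak{F}^{\flat}}$. One small point: local convexity alone does not make $\overline{\mathrm{co}}(E_{P})$ compact, so justify it as a closed subset of the weak$^{\ast}$-compact set $\mathcal{P}(T)$.
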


\begin{proof}
By the Milman theorem \cite[Proposition 1.5]{Phe} and Theorem \ref{structure
of sets of maximizers}, equilibrium measures that are extreme in $E_{%
\mathfrak{F}^{\flat }}$ must belong to $E_{P}$. Then, using this property
and the Choquet theorem (Theorem \ref{th Choquet}), we derive (\ref{choquet
equilibirum}), because\ the set $E_{\mathfrak{F}^{\flat }}$ is convex, weak$%
^{\ast }$-compact and metrizable, by Theorem \ref{structure of sets of
maximizers} and the metrizability of the weak$^{\ast }$ topology in $%
\mathcal{P}(T)$.
\end{proof}

\subsection{Appendix}

This appendix contains a selection of useful results that are difficult to
find within the mathematical framework we use. Indeed, in most previous
works the alphabet $\Omega $ is assumed to be a finite set, whereas we only
assume it to be a compact metric space, which, of course, may be infinite.
Therefore, in Sections \ref{Ergodic measures}--\ref{Further Study of the
Entropy} we re-examine known facts within this broader context.

One thing that is not very well-known is the properties of the entropy
functional for infinite alphabets. Our definition (Definition \ref{uod}) is
based on a variational problem involving the (transfer) Ruelle operator.
This definition is simple and elegant, but it poses certain difficulties in
proving the convexity of this functional. To this end, we use recent results 
\cite{ACR} to write the entropy functional for compact alphabets as a
thermodynamic limit of finite-volume entropies (Theorem \ref%
{teo-equiv-entrop}). Based on this observation, we are able to prove the
affine property\footnote{%
I.e. convex and concave.\ The concavity of the entropy functional is a
direct consequence of its definition.} of the entropy functional of
Definition \ref{uod}. See Proposition \ref{Affinity of the entropy copy(1)}.
Another consequence of this result is the so-called ergodic abundance of the
entropy functional, as given by Corollary \ref{ergodic abundance}, which is
also a fundamental property for our proofs, along with the weak$^{\ast }$
density of ergodic measures (Proposition \ref{density of ergodic measures}).

\subsubsection{Ergodic measures\label{Ergodic measures}}

Let $(X,\mathcal{A})$ be any measurable space. I.e., $X$ is any nonempty set
and $\mathcal{A}$ some $\sigma $-algebra on $X$. In ergodic theory, the
existence of limits of the form 
\begin{equation}
\tilde{\varphi}\left( x\right) =\lim_{n\rightarrow \infty }\frac{1}{n}%
\sum_{m=1}^{n}\varphi \circ f^{m}\left( x\right) \ ,\qquad x\in X\ ,
\label{limit ergodic}
\end{equation}%
is studied for any measurable function $f:X\rightarrow X$ and appropriate
functions $\varphi :X\rightarrow \mathbb{R}$. Such a question led to two
important (types of) ergodic theorems, which were proven for the first time
by von Neumann and Birkhoff, respectively. See \cite[Theorems 3.1.6 and 3.2.3%
]{Viana}. The most interesting one here is Birkhoff's ergodic theorem \cite[%
Theorem 3.2.3]{Viana}, which ensures the existence of the limit (\ref{limit
ergodic}) almost surely for integrable functions $\varphi $:

\begin{theorem}[Birkhoff]
\label{th Birkhoff}\mbox{ }\newline
Let $(X,\mathcal{A})$ be any measurable space, $f:X\rightarrow X$ a
measurable mapping and $\mu $ a probability measure that is invariant%
\footnote{%
I.e., $f_{\ast }(\mu )=\mu $, where $f_{\ast }(\mu )(A)=\mu (f(A))$ for all
Borel sets $A\in \mathcal{A}$.} with respect to $f$. For any $\mu $%
-integrable function $\varphi :X\rightarrow \mathbb{R}$, the limit (\ref%
{limit ergodic}) exists $\mu $-almost surely and defines another $\mu $%
-integrable function $\tilde{\varphi}:X\rightarrow \mathbb{R}$ ($\mu $%
-almost everywhere) satisfying 
\begin{equation*}
\int_{X}\tilde{\varphi}\left( x\right) \mu \left( \mathrm{d}x\right)
=\int_{X}\varphi (x)\mu \left( \mathrm{d}x\right) \text{ }.
\end{equation*}
\end{theorem}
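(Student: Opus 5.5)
Birkhoff's theorem is classical, so the plan is to reproduce the standard route through the maximal ergodic theorem; alternatively one may simply cite \cite[Theorem 3.2.3]{Viana}. The plan is as follows. First reduce to the case $\varphi \geq 0$ is unnecessary; instead work directly with $\limsup$ and $\liminf$ of the averages $A_n\varphi \doteq n^{-1}\sum_{m=1}^{n}\varphi \circ f^{m}$. Note that $A_n\varphi$ differs from the average starting at $m=0$ by $n^{-1}(\varphi\circ f^{n}-\varphi)$. This correction tends to zero $\mu$-a.s., because $\sum_n \mu(|\varphi\circ f^n|>\epsilon n)=\sum_n\mu(|\varphi|>\epsilon n)<\infty$ by invariance and integrability, combined with Borel--Cantelli. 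Hence the indexing convention in (\ref{limit ergodic}) is harmless.

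The key step is the maximal ergodic inequality. For $\psi\in L^{1}(\mu)$, set $S_n\psi=\sum_{m=0}^{n-1}\psi\circ f^m$ and $M_N\psi=\max_{1\le n\le N}S_n\psi$. Then
\begin{equation*}
\int_{\{M_N\psi>0\}}\psi \,\mathrm{d}\mu\geq 0 .
\end{equation*}
I would prove this by Garsia's short argument. Write $M_N^{+}=\max(M_N,0)$ and observe that $\psi\geq M_N\psi-M_N^{+}\psi\circ f$ on $\{M_N\psi>0\}$. Integrating and using the invariance of $\mu$ gives the inequality. This is the only genuinely nontrivial step, and the main obstacle is getting this pointwise inequality right.

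Next, for rationals $a<b$, consider the set $E_{a,b}=\{\liminf A_n\varphi<a<b<\limsup A_n\varphi\}$. It is $f$-invariant, since the averages at $x$ and $f(x)$ have the same $\limsup$ and $\liminf$, by the previous remark. Apply the maximal inequality on the invariant set $E_{a,b}$ to $\psi=(\varphi-b)\mathbf 1_{E_{a,b}}$ and to $\psi=(a-\varphi)\mathbf 1_{E_{a,b}}$, letting $N\to\infty$ with monotone/dominated convergence. This gives $b\,\mu(E_{a,b})\le\int_{E_{a,b}}\varphi\,\mathrm d\mu\le a\,\mu(E_{a,b})$. Since $a<b$, it follows that $\mu(E_{a,b})=0$. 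Taking the countable union over rational pairs shows that the limit $\tilde\varphi$ exists a.s. in $[-\infty,\infty]$.

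Finally, Fatou's lemma together with $\int|A_n\varphi|\,\mathrm d\mu\le\|\varphi\|_1$ gives $\tilde\varphi\in L^1(\mu)$, in particular a.s. finite. For the integral identity, I would first treat bounded $\varphi$. There dominated convergence and invariance ($\int A_n\varphi\,\mathrm d\mu=\int\varphi\,\mathrm d\mu$) yield $\int\tilde\varphi=\int\varphi$. For general $\varphi$, approximate in $L^{1}$ by truncations $\varphi_k$. The maximal inequality implies $\|\widetilde{\varphi-\varphi_k}\|_{1}\le\|\varphi-\varphi_k\|_1$, via Fatou applied to $|A_n(\varphi-\varphi_k)|\le A_n|\varphi-\varphi_k|$. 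Linearity of the limit then passes the identity to $\varphi$.
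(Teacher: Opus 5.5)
Your argument is correct and is essentially the route the paper relies on. The paper gives no proof of its own but cites \cite[Theorem 3.2.3]{Viana}, whose proof likewise runs through the maximal ergodic inequality, the sets where the $\liminf$ and $\limsup$ of the averages are separated, and Fatou's lemma. Two cosmetic remarks: because $\sum_{m=1}^{n}\varphi\circ f^{m}=S_{n}\varphi\circ f=S_{n+1}\varphi-\varphi$ with $\varphi$ finite everywhere, the index shift and the exact (not merely almost sure) $f$-invariance of $E_{a,b}$ both hold pointwise, so the Borel--Cantelli step is unnecessary; and the bound $\|\widetilde{\varphi-\varphi_k}\|_{1}\le\|\varphi-\varphi_k\|_{1}$ comes from Fatou plus invariance alone, with the maximal inequality entering only through the existence of the limit.
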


\noindent The Birkhoff ergodic theorem stated above is proven, for instance,
in \cite[Section 3.2.2]{Viana}, see \cite[Theorem 3.2.3]{Viana}.

The $\mu $-integrable function $\tilde{\varphi}:X\rightarrow \mathbb{R}$,
uniquely defined $\mu $-almost everywhere, is called a \emph{time average}\
of the ($\mu $-integrable) function $\varphi :X\rightarrow \mathbb{R}$. For
any given measurable set $A\subseteq X$ (i.e., $A\in \mathcal{A}$), $\tau
_{A}$ denotes the time average of the corresponding characteristic function $%
\chi _{A}$. It is called the \emph{mean sojourn time}\ associated with $A\in 
\mathcal{A}$. These objects allow us to explain the notion of ergodicity:

\begin{definition}[Ergodicity]
\mbox{ }\newline
Under the conditions of Theorem \ref{th Birkhoff}, the pair $(f,\mu )$ is
ergodic\footnote{%
I.e., the transformation $f$ is ergodic with respect to the measure $\mu $\
or the measure $\mu $ is ergodic with respect to the transformation $f$.}
if, for all $A\in \mathcal{A}$, $\tau _{A}=\mu (A)$, i.e., the constant
function $\mu (A)$ on $X$ is a mean sojourn time for $A$.
\end{definition}

By definition, ergodicity of a probability measure $\mu $ thus refers to the
fact that, for all Borel sets $A\in \mathcal{A}$, 
\begin{equation*}
\lim_{n\rightarrow \infty }\frac{1}{n}\sum_{m=1}^{n}\chi _{A}\circ
f^{m}=\int_{X}\chi _{A}\left( x\right) \mu \left( \mathrm{d}x\right) \text{ }%
,\text{\qquad }\mu \text{-a.s.}
\end{equation*}%
See Theorem \ref{th Birkhoff}. This property is equivalent to the
corresponding one with the characteristic functions $\chi _{A}$, $A\in 
\mathcal{A}$, replaced by general $\mu $-integrable functions. See, for
instance, \cite[Proposition 4.1.3]{Viana}. The ergodicity is also equivalent
to a clustering property \cite[Proposition 4.1.4]{Viana}. All this leads to
the following proposition:

\begin{proposition}[Properties equivalent to ergodicity]
\label{prop erg disp free}\label{prop erg clust}\mbox{ }\newline
Under the assumptions of Theorem \ref{th Birkhoff}, the following conditions
are equivalent: \newline
\emph{(i)} $(f,\mu )$ is ergodic.\newline
\emph{(ii)} For any $\mu $-integrable function $\varphi :X\rightarrow 
\mathbb{R}$, 
\begin{equation*}
\lim_{n\rightarrow \infty }\frac{1}{n}\sum_{m=1}^{n}\varphi \circ
f^{m}=\int_{X}\varphi (x)\mu \left( \mathrm{d}x\right) \text{ },\qquad \mu 
\text{-a.s.}
\end{equation*}%
\emph{(iii)} For all $p,q\in (1,\infty )$ with $1/p+1/q=1$ and all functions 
$\varphi \in \mathcal{L}^{p}(\mu )$, $\psi \in \mathcal{L}^{q}(\mu )$,%
\begin{equation*}
\lim_{k\rightarrow \infty }\frac{1}{k}\sum_{n=0}^{k-1}\int_{X}\psi \left(
x\right) \varphi \circ f^{n}\left( x\right) \mu \left( \mathrm{d}x\right)
=\left( \int_{X}\psi \left( x\right) \mu \left( \mathrm{d}x\right) \right)
\left( \int_{X}\varphi \left( x\right) \mu \left( \mathrm{d}x\right) \right) 
\text{ }.
\end{equation*}
\end{proposition}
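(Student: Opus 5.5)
The statement is a standard equivalence, so I will reduce it to the definition of ergodicity via Birkhoff's theorem (Theorem \ref{th Birkhoff}) and prove the cycle (i)$\Rightarrow$(ii)$\Rightarrow$(iii)$\Rightarrow$(i).

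For (i)$\Rightarrow$(ii), I first note that ergodicity in the sense of the definition (constant mean sojourn times) forces every $f$-invariant set $A$ to have $\mu(A)\in\{0,1\}$. Indeed, for such $A$ one has $\chi_A\circ f^m=\chi_A$ $\mu$-a.s., so $\tau_A=\chi_A$ a.s. Since also $\tau_A=\mu(A)$ is constant, $\chi_A$ is a.s. constant.

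Next, for $\varphi\in\mathcal L^1(\mu)$, the time average $\tilde\varphi$ given by Theorem \ref{th Birkhoff} satisfies $\tilde\varphi\circ f=\tilde\varphi$ a.s. Its superlevel sets $\{\tilde\varphi>c\}$ are therefore invariant modulo null sets. After the usual modification to strictly invariant sets, they have measure $0$ or $1$, hence $\tilde\varphi$ is a.s. constant. The identity $\int\tilde\varphi\,\mathrm d\mu=\int\varphi\,\mathrm d\mu$ from Theorem \ref{th Birkhoff} then identifies this constant as $\mu(\varphi)$.

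For (ii)$\Rightarrow$(iii), I first treat bounded $\varphi$. The Birkhoff averages are then bounded by $\|\varphi\|_\infty$ and converge a.s. to $\mu(\varphi)$, so multiplying by $\psi\in\mathcal L^q\subseteq\mathcal L^1$ and using dominated convergence gives the clustering limit. Invariance of $\mu$ makes the shift between the sums $\sum_{m=1}^n$ and $\sum_{n=0}^{k-1}$ harmless, since the two differ by two terms divided by $k$.

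For general $\varphi\in\mathcal L^p$, I approximate by bounded $\varphi_j$ in $\mathcal L^p$. Invariance of $\mu$ gives $\|\varphi\circ f^n\|_p=\|\varphi\|_p$, and Hölder's inequality then yields an error bounded by $\|\psi\|_q\|\varphi-\varphi_j\|_p$, uniformly in $k$. This uniform approximation is the main technical point, and it is routine.

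For (iii)$\Rightarrow$(i), I take $\varphi=\psi=\chi_A$ with $A$ invariant. The left side of (iii) equals $\mu(A)$ for every $k$, so $\mu(A)=\mu(A)^2$ and therefore $\mu(A)\in\{0,1\}$.

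It then remains to show that this zero-one property gives $\tau_A=\mu(A)$ for every $A\in\mathcal A$. This follows by applying the argument of the first step to $\varphi=\chi_A$: the time average is invariant, hence constant, and the constant is fixed by the integral identity.

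The only subtle step is the passage between "invariant mod $\mu$" and "strictly invariant". I will handle it with the set $\bigcap_{n}\bigcup_{m\ge n}f^{-m}(B)$, as in \cite[Proposition 4.1.3--4.1.4]{Viana}, which I may alternatively simply cite.
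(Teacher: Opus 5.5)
Your argument is correct. The paper gives no proof of its own beyond citing \cite[Propositions 4.1.3 and 4.1.4]{Viana} for (i)$\Leftrightarrow$(ii) and (i)$\Leftrightarrow$(iii), and your cycle through the zero--one law for invariant sets is a self-contained version of those standard textbook arguments; the only organizational difference is that you derive (iii) from (ii) rather than directly from (i).
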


\begin{proof}
By \cite[Proposition 4.1.3, cf. (i) and (iii)]{Viana}, (i) holds true iff
(ii) is satisfied, while \cite[Proposition 4.1.4, cf. (i) and (iii)]{Viana}
proves that (i) is equivalent to the clustering properties (iii).
\end{proof}

It is well known that the ergodicity property precisely characterizes the
extreme invariant measures. In fact, under the conditions of Theorem \ref{th
Birkhoff}, let 
\begin{equation}
\mathcal{P}\left( f\right) \doteq \left\{ \mu \in \mathcal{P}:f_{\ast }(\mu
)=\mu \right\} \   \label{invariant probability measures}
\end{equation}%
be the set of all probability measures on $\mathcal{A}$ that are invariant
with respect to $f$, where $\mathcal{P}$ is the set of all (i.e., not
necessarily invariant) probability measures on $\mathcal{A}$. It is a convex
set and $\mathcal{P}_{\mathrm{erg}}(f)$ denotes the subset of its extreme
points, which turns out to be nothing but the set of ergodic measures with
respect to the measurable transformation $f$:

\begin{proposition}[Ergodicity and extremality]
\label{prop erg extreme}\mbox{ }\newline
Under the conditions of Theorem \ref{th Birkhoff}, the invariant measure $%
\mu \in \mathcal{P}(f)$ is ergodic iff $\mu \in \mathcal{P}_{\mathrm{erg}%
}(f) $.
\end{proposition}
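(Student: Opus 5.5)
The plan is to prove both implications directly from the definitions, using the Birkhoff theorem (Theorem \ref{th Birkhoff}) and the equivalence (i)$\Leftrightarrow$(ii) of Proposition \ref{prop erg disp free}.

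\emph{Ergodic $\Rightarrow$ extreme.} Suppose $(f,\mu)$ is ergodic and $\mu=\lambda\nu_1+(1-\lambda)\nu_2$ with $\nu_1,\nu_2\in\mathcal{P}(f)$ and $\lambda\in(0,1)$. I first show that $\mu$ takes only the values $0$ and $1$ on invariant sets: if $A\in\mathcal{A}$ satisfies $f^{-1}(A)=A$, then $\chi_A\circ f^m=\chi_A$ for all $m$. Hence the time average of $\chi_A$ is $\chi_A$ itself, and ergodicity forces $\chi_A=\mu(A)$ $\mu$-a.s., so $\mu(A)\in\{0,1\}$.

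Next, since $\nu_i\le\lambda^{-1}\mu$ or $\nu_i\le(1-\lambda)^{-1}\mu$, each $\nu_i$ is absolutely continuous with respect to $\mu$. Let $\rho=\mathrm{d}\nu_1/\mathrm{d}\mu\in L^1(\mu)$. The invariance of both $\nu_1$ and $\mu$ gives, for every $B\in\mathcal{A}$, $\int\chi_B\circ f\,\rho\,\mathrm{d}\mu=\int\chi_B\,\rho\,\mathrm{d}\mu$.

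The key step is to conclude that $\rho$ is $\mu$-a.s.\ constant. I would apply this identity to $B=\{\rho>c\}$ for each rational $c$. A standard argument (the one used for $L^1$ densities in \cite{Viana}) then gives $\mu(\{\rho>c\}\,\triangle\, f^{-1}\{\rho>c\})=0$. Each $\{\rho>c\}$ is therefore invariant modulo $\mu$-null sets. From it I build a genuinely invariant set $\bigcap_{n}\bigcup_{k\ge n}f^{-k}\{\rho>c\}$ differing from it by a null set; this set has $\mu$-measure $0$ or $1$. Hence $\rho$ is $\mu$-a.s.\ constant, and since both measures are probabilities, $\rho=1$ and $\nu_1=\mu$. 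The same reasoning gives $\nu_2=\mu$, so $\mu$ is extreme. This density-invariance step is the main technical point; an alternative is to observe that $\rho$ is a fixed point of the transfer (Perron--Frobenius) operator and use (iii) of Proposition \ref{prop erg clust}.

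\emph{Extreme $\Rightarrow$ ergodic.} I argue by contraposition. If $(f,\mu)$ is not ergodic, there is some $A$ whose time average $\tau_A$ is not $\mu$-a.s.\ constant. Then some level set $E=\{\tau_A>c\}$ has $0<\mu(E)<1$. Since $\tau_A\circ f=\tau_A$ $\mu$-a.s., $E$ is invariant modulo null sets, and I replace it by a strictly invariant set as above. Both conditional measures $\mu(\cdot\,|\,E)$ and $\mu(\cdot\,|\,E^c)$ are then $f$-invariant, because $\mu(f^{-1}B\cap E)=\mu(f^{-1}(B\cap E))=\mu(B\cap E)$. Writing $\mu=\mu(E)\,\mu(\cdot\,|\,E)+\mu(E^c)\,\mu(\cdot\,|\,E^c)$ exhibits $\mu$ as a nontrivial convex combination of distinct invariant measures, so $\mu$ is not extreme.

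The delicate points in both directions are the passage from a.s.\ invariance to strict invariance, and the justification of the density identity for $L^1$ (rather than bounded) functions. For the latter I would approximate $\rho$ by $\min(\rho,n)$.
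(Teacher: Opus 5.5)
Your proof is correct and is essentially the standard argument of \cite[Proposition 4.3.2]{Viana} and \cite[Theorem 6.10]{Walters}, to which the paper simply defers: you use the conditional measures on a nontrivial invariant set for one direction, and the invariance of the Radon--Nikodym density $\mathrm{d}\nu_1/\mathrm{d}\mu$ combined with ergodicity for the other. Two small remarks: $\nu_1\le\lambda^{-1}\mu$ already gives $0\le\rho\le\lambda^{-1}$, so the truncation $\min(\rho,n)$ is unnecessary and your clustering alternative via (iii) applies directly; and in the contrapositive direction, you need the Birkhoff identity $\int\tau_A\,\mathrm{d}\mu=\mu(A)$ to turn ``$\tau_A\neq\mu(A)$ on a set of positive measure'' into ``$\tau_A$ is not $\mu$-a.s.\ constant''.
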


\noindent See, for instance, \cite[Proposition 4.3.2]{Viana} or \cite[%
Theorem 6.10]{Walters} for more details, including the proof.

Of course, all these results can be applied to the case where $X$ is the
compact topological space $\Sigma \doteq \Omega ^{\mathbb{N}}$, the set of
infinite strings on the alphabet $\Omega $ endowed with the product
topology, and $f$ is the transformation $T:\Sigma \rightarrow \Sigma $
defined by (\ref{Transformation definition}). In this particular case, we
can identify $\mathcal{P}(T)$, defined above by (\ref{invariant probability
measures}), with the weak$^{\ast }$ compact and convex space of all positive
normalized linear functionals on $C(\Sigma )$, thanks to the Riesz-Markov
theorem \cite[Theorem 4.68]{BruPedra-textbook}. In particular, $\mathcal{P}%
(T)\subseteq C(\Sigma )^{\ast }$. By the Krein-Milman theorem \cite[Theorems
3.4 (b) and 3.21]{Rudin}, the subset $\mathcal{P}_{\mathrm{erg}}(T)$ of its
extreme points is nonempty and 
\begin{equation*}
\mathcal{P}\left( T\right) =\overline{\mathrm{co}}\text{ }\mathcal{P}_{%
\mathrm{erg}}\left( T\right) \ .
\end{equation*}%
Therefore, by\ Proposition \ref{prop erg extreme}, there are ergodic
measures in $\mathcal{P}(T)$ and any $T$-invariant probability measure $\mu
\in \mathcal{P}(T)$ is the weak$^{\ast }$ limit of convex combinations of
ergodic probability measures.

Since $\Sigma $ is a metrizable compact space, $C(\Sigma )$ is also
separable with respect to the topology of uniform convergence and, hence,
the weak$^{\ast }$ topology of $\mathcal{P}(T)\subseteq C(\Sigma )^{\ast }$\
is also metrizable. In particular, by Lemma \ref{lemma extr gd}, $\mathcal{P}%
(T)$ is a Borel set for the weak$^{\ast }$ topology and using the Choquet
theorem (Theorem \ref{th Choquet}), one checks that $\mathcal{P}(T)$ is a
(Choquet) simplex:

\begin{proposition}[$\mathcal{P}(T)$ as a Choquet simplex]
\label{prop Choquet inv meas}\mbox{ }\newline
For any $T$-invariant measure $\mu \in \mathcal{P}(T)$ there is a unique
(Choquet) probability measure $\xi _{\mu }$ on the Borel $\sigma $-algebra
associated with the weak$^{\ast }$ topology of $\mathcal{P}(T)$, that is
supported in $\mathcal{P}_{\mathrm{erg}}(T)$, the barycenter of which is $%
\mu $. In particular, 
\begin{equation}
\xi _{\mu }(\mathcal{P}(T)\backslash \mathcal{P}_{\mathrm{erg}}(T))=0\ .
\label{support}
\end{equation}
\end{proposition}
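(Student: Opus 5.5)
Existence and support of the barycentric measure come from the Choquet theorem (Theorem \ref{th Choquet}). Uniqueness comes from identifying $\mathcal{P}(T)$ as a simplex, either through its lattice-cone structure or through the ergodic decomposition. The setting is the following: $\mathcal{P}(T)$ is a nonempty convex weak$^{\ast}$-compact subset of the locally convex space $C(\Sigma)^{\ast}$ (weak$^{\ast}$ topology), and it is metrizable because $C(\Sigma)$ is separable. By Lemma \ref{lemma extr gd}, its extreme boundary $\mathcal{P}_{\mathrm{erg}}(T)$ is a $G_\delta$, hence Borel. Proposition \ref{prop erg extreme} identifies this boundary with the ergodic measures.

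\emph{Existence.} Theorem \ref{th Choquet} in the metrizable case gives, for each $\mu\in\mathcal{P}(T)$, a Borel probability measure $\xi_\mu$ on $\mathcal{P}(T)$ with barycenter $\mu$ and $\xi_\mu(\mathcal{P}_{\mathrm{erg}}(T))=1$. The barycenter condition means $\mu(\varphi)=\int\nu(\varphi)\,\xi_\mu(\mathrm{d}\nu)$ for all $\varphi\in C(\Sigma)$, and the support condition is exactly (\ref{support}).

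\emph{Uniqueness.} This is the main step. I would follow the classical ergodic-decomposition argument adapted to compact metric $\Sigma$. Suppose $\xi_1,\xi_2$ are two such measures for $\mu$. Fix a countable dense set $\{\varphi_k\}\subseteq C(\Sigma)$. By Birkhoff's theorem (Theorem \ref{th Birkhoff}), there is a Borel set $\Sigma_0$ of points $\sigma$ where all limits $\tilde\varphi_k(\sigma)=\lim_n \mathbb{E}_n[\varphi_k](\sigma)$ exist and define, via Riesz–Markov, a $T$-invariant probability $\beta_\sigma$. Then $\Sigma_0$ has $\mu$-measure one and $\nu$-measure one for every invariant $\nu$.

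By Proposition \ref{prop erg disp free} (ii), applied as in Lemma \ref{lemma In conv pw}, each ergodic $\nu$ gives $\beta_\sigma=\nu$ for $\nu$-a.e. $\sigma$. For any bounded Borel $\Phi:\mathcal{P}(T)\to\mathbb{R}$, the function $\sigma\mapsto\Phi(\beta_\sigma)$ is Borel, since $\sigma\mapsto\beta_\sigma$ is measurable (its coordinates are pointwise limits of continuous functions). Integrating first against $\nu$ and then against $\xi_i$, and using the barycenter relation extended from continuous functions to bounded Borel functions on $\Sigma$ by a monotone class argument, gives
\begin{equation*}
\int\Phi\,\mathrm{d}\xi_i=\int_{\mathcal{P}_{\mathrm{erg}}(T)}\int_\Sigma\Phi(\beta_\sigma)\,\nu(\mathrm{d}\sigma)\,\xi_i(\mathrm{d}\nu)=\int_\Sigma \Phi(\beta_\sigma)\,\mu(\mathrm{d}\sigma).
\end{equation*}
The right-hand side does not depend on $i$, so $\xi_1=\xi_2$.

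The main obstacle is the monotone class extension. One must check that $\nu\mapsto\nu(g)$ is Borel on $\mathcal{P}(T)$ and that $\mu(g)=\int\nu(g)\,\xi_i(\mathrm{d}\nu)$ holds for all bounded Borel $g$, not just continuous ones. Both hold because the class of such $g$ contains $C(\Sigma)$ and is closed under bounded pointwise limits, by dominated convergence. An alternative route is to show that $\mathcal{P}(T)$ generates a lattice cone, using the fact that $T$-invariant measures are closed under the lattice operations of signed measures, and then invoke the Choquet–Meyer uniqueness theorem. The direct argument above is more self-contained, so I would use it.
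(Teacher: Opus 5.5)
Your argument is correct, but it proves uniqueness by a different route from the paper.

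\textbf{The paper's route.} The paper never constructs the decomposition. For an arbitrary representing measure $\xi_\mu$ supported on $\mathcal{P}_{\mathrm{erg}}(T)$, it uses ergodicity of each $\hat\mu$ (via Proposition \ref{prop erg clust}) together with dominated convergence. This shows that every moment $\int\hat\mu(\varphi_k)\cdots\hat\mu(\varphi_1)\,\xi_\mu(\mathrm{d}\hat\mu)$ equals the iterated limit $\lim_{n_{k-1}}\cdots\lim_{n_1}\mu(\varphi_k\mathbb{E}_{n_{k-1}}[\varphi_{k-1}]\cdots\mathbb{E}_{n_1}[\varphi_1])$, which depends only on $\mu$. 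It then applies Stone--Weierstrass to the algebra generated by the affine functions $\nu\mapsto\nu(\varphi)$, so $\xi_\mu$ is determined on $C(\mathcal{P}(T))$.

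\textbf{Your route.} You identify every representing measure with the explicit pushforward $\mu\circ\beta^{-1}$ of $\mu$ under the limiting empirical-measure map. This is the classical ergodic decomposition.

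\textbf{What each costs.} The paper works only with continuous functions on $\Sigma$ and on $\mathcal{P}(T)$, so it avoids measurability bookkeeping. Yours needs the following checks, all of which hold:
\begin{itemize}
\item $\Sigma_0$ is Borel.
\item Limits on the dense set extend to all of $C(\Sigma)$, via the contraction $|\mathbb{E}_n[\varphi]-\mathbb{E}_n[\psi]|\le\|\varphi-\psi\|_\infty$ used in Lemma \ref{lemma In conv pw}.
\item $\beta_\sigma$ is $T$-invariant, since $\mathbb{E}_n[\varphi\circ T]-\mathbb{E}_n[\varphi]=O(1/n)$.
\item $\sigma\mapsto\beta_\sigma$ is Borel. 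This uses that the Borel $\sigma$-algebra of the compact metrizable $\mathcal{P}(T)$ is generated by the countably many point-separating maps $\nu\mapsto\nu(\varphi_k)$.
\item The monotone-class extension of the barycenter formula to bounded Borel $g$, which you flagged yourself.
\end{itemize}

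\textbf{What yours buys.} Your argument gives more than uniqueness: it describes $\xi_\mu$ explicitly as the law under $\mu$ of the empirical limit measure. That is the mechanism underlying Lemma \ref{lemma In conv pw}, Proposition \ref{prop Delta F}, and the Birkhoff-sum recovery of order-parameter distributions in Proposition \ref{prop order}.
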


\begin{proof}
The existence of a (Choquet) probability measure $\xi _{\mu }$ with the
asserted property is a direct consequence of Theorem \ref{th Choquet}, since 
$\mathcal{P}(T)$ is a metrizable, weak$^{\ast }$-compact and convex set, as
already explained. It only remains to show its uniqueness. So fix\textit{\ }$%
\mu \in \mathcal{P}(T)$\textit{\ }and let $\xi _{\mu }$ be any Choquet
measure on $\mathcal{P}(T)$ whose barycenter is $\mu $, as given by Theorem %
\ref{th Choquet}. Using Proposition \ref{prop erg clust}, Equation (\ref%
{support}), Lebesgue's dominated convergence theorem as well as the notation 
$\mathbb{E}_{n}[\cdot ]$ for the Birkhoff sum (\ref{sdsdsdssdsd}), we deduce
that, for any $\varphi _{1},\varphi _{2}\in C(\Sigma )$,%
\begin{eqnarray*}
\int_{\mathcal{P}\left( T\right) }\hat{\mu}\left( \varphi _{2}\right) \hat{%
\mu}\left( \varphi _{1}\right) \xi _{\mu }\left( \mathrm{d}\hat{\mu}\right)
&=&\int_{\mathcal{P}_{\mathrm{erg}}\left( T\right) }\lim_{n\rightarrow
\infty }\hat{\mu}\left( \varphi _{2}\mathbb{E}_{n}\left[ \varphi _{1}\right]
\right) \xi _{\mu }\left( \mathrm{d}\hat{\mu}\right) \\
&=&\lim_{n\rightarrow \infty }\int_{\mathcal{P}_{\mathrm{erg}}\left(
T\right) }\hat{\mu}\left( \varphi _{2}\mathbb{E}_{n}\left[ \varphi _{1}%
\right] \right) \xi _{\mu }\left( \mathrm{d}\hat{\mu}\right) \\
&=&\lim_{n\rightarrow \infty }\mu \left( \varphi _{2}\mathbb{E}_{n}\left[
\varphi _{1}\right] \right) \ .
\end{eqnarray*}%
The second equality follows from Lebesgue's dominated convergence theorem
because 
\begin{equation*}
\left\vert \nu \left( \varphi _{2}\mathbb{E}_{n}\left[ \varphi _{1}\right]
\right) \right\vert \leq \left\Vert \varphi _{1}\right\Vert _{\infty
}\left\Vert \varphi _{2}\right\Vert _{\infty }\text{ },\text{\qquad }\nu \in 
\mathcal{P}\left( T\right) ,\text{ }n\in \mathbb{N}\text{ }.
\end{equation*}%
More generally, by the same arguments, for any $k\in \{2,\ldots ,\infty \}\ $%
and $\varphi _{1},\ldots ,\varphi _{k}\in C(\Sigma )$, 
\begin{equation}
\int_{\mathcal{P}\left( T\right) }\hat{\mu}\left( \varphi _{k}\right) \cdots 
\hat{\mu}\left( \varphi _{1}\right) \xi _{\mu }\left( \mathrm{d}\hat{\mu}%
\right) =\lim_{n_{k-1}\rightarrow \infty }\cdots \lim_{n_{1}\rightarrow
\infty }\mu \left( \varphi _{k}\mathbb{E}_{n_{k-1}}\left[ \varphi _{k-1}%
\right] \cdots \mathbb{E}_{n_{1}}\left[ \varphi _{1}\right] \right) \ .
\label{sdsdsdsffgjhghjgjkhk}
\end{equation}%
Let 
\begin{equation*}
\mathfrak{X}\doteq \mathrm{span}\left\{ \hat{\varphi}_{1}\cdots \hat{\varphi}%
_{k}:k\in \mathbb{N}_{0},\ \varphi _{1},\ldots ,\varphi _{k}\in C\left(
\Sigma \right) \right\} \subseteq C\left( \mathcal{P}\left( T\right) ;%
\mathbb{R}\right) \text{ },
\end{equation*}%
where $\hat{\varphi}_{1}\cdots \hat{\varphi}_{k}\doteq 1$ (i.e., the
constant function $1$) when $k=0$, $\hat{\varphi}:\mathcal{P}(T)\rightarrow 
\mathbb{R}$ is defined for any $\varphi \in C(\Sigma )$ by 
\begin{equation}
\hat{\varphi}\left( \nu \right) =\nu \left( \varphi \right) \ ,\qquad \nu
\in \mathcal{P}\left( T\right) \ ,  \label{adsadassffdgdfgdgfgh}
\end{equation}%
and $C(\mathcal{P}(T);\mathbb{R})$ stands for the space of weak$^{\ast }$%
-continuous functions $\mathcal{P}(T)\rightarrow \mathbb{R}$. By Equation (%
\ref{sdsdsdsffgjhghjgjkhk}), the quantities%
\begin{equation*}
\int_{\mathcal{P}\left( T\right) }\hat{\mu}\left( \varphi _{k}\right) \cdots 
\hat{\mu}\left( \varphi _{1}\right) \xi _{\mu }\left( \mathrm{d}\hat{\mu}%
\right) \ ,\qquad k\in \{2,\ldots ,\infty \},\ \varphi _{1},\ldots ,\varphi
_{k}\in C(\Sigma )\ ,
\end{equation*}%
depend only on the probability measure $\mu $, but not on the particular
choice of the Choquet measure $\xi _{\mu }$ representing $\mu $. As a
consequence, the quantities 
\begin{equation}
\xi _{\mu }\left( f\right) \equiv \int_{\mathcal{P}\left( T\right) }f\left( 
\hat{\mu}\right) \xi _{\mu }\left( \mathrm{d}\hat{\mu}\right) \ ,\qquad f\in 
\mathfrak{X}\ ,  \label{dghl}
\end{equation}%
also depend only on the probability measure $\mu $. Observe that the
elements of $C(\Sigma )$, seen as weak$^{\ast }$-continuous linear
functionals\footnote{%
I.e., for any $\varphi \in C(\Sigma ;\mathbb{R})$, $\hat{\varphi}\left( \nu
\right) =\nu \left( \varphi \right) $ for all $\nu \in C(\Sigma ;\mathbb{R}%
)^{\ast }$. Cf. Equation (\ref{adsadassffdgdfgdgfgh}).} on the dual of the
Banach space $(C(\Sigma ),\left\Vert \cdot \right\Vert _{\infty })$,
(trivially) separate the points of this dual space. As $\mathcal{P}(T)$ is
weak$^{\ast }$-compact, we can invoke the Stone-Weierstrass theorem \cite[%
Theorem 7.191]{BruPedra-textbook} to deduce that $\mathfrak{X}$ is a
uniformly dense subalgebra of $C(\mathcal{P}(T);\mathbb{R})$. Thus, the
quantities (\ref{dghl}) extended to all continuous functions $f\in C(%
\mathcal{P}(T);\mathbb{R})$ only depend on $\mu $. Therefore, the Choquet
measure $\xi _{\mu }$, which is a probability measure on $\mathcal{P}(T)$,
must be unique.
\end{proof}

Proposition \ref{prop Choquet inv meas} means that $\mathcal{P}(T)$ is a
so-called \emph{Choquet simplex}. In fact, $\mathcal{P}(T)$ is even the
so-called Poulsen simplex (which is unique \cite[Theorem 2.3.]%
{Lindenstrauss-etal} up to an affine homeomorphism), as a consequence of the
weak$^{\ast }$ density of its extreme points:

\begin{proposition}[Weak$^{\ast }$ density of ergodic measures]
\label{density of ergodic measures}\mbox{ }\newline
The set $\mathcal{P}_{\mathrm{erg}}(T)$ of ergodic measures is a $G_{\delta
} $ weak$^{\ast }$-dense subset of $\mathcal{P}(T)$.
\end{proposition}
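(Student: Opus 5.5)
Two claims need proof: weak$^{\ast}$ density of $\mathcal{P}_{\mathrm{erg}}(T)$ in $\mathcal{P}(T)$, and the $G_{\delta}$ property. We work throughout in a compatible metric for the weak$^{\ast}$ topology of $\mathcal{P}(T)$, which is metrizable as noted above.

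\emph{The $G_{\delta}$ property.} Let $d_{\ast}$ be a metric on the compact convex metrizable set $\mathcal{P}(T)$ generating the weak$^{\ast}$ topology. A point $\mu$ fails to be extreme iff $\mu=\frac{1}{2}(\nu_{1}+\nu_{2})$ for some $\nu_{1},\nu_{2}\in\mathcal{P}(T)$ with $d_{\ast}(\nu_{1},\nu_{2})\geq 1/k$, for some $k\in\mathbb{N}$. For each $k$ the set
\begin{equation*}
F_{k}\doteq\left\{ \tfrac{1}{2}(\nu_{1}+\nu_{2}):\nu_{1},\nu_{2}\in\mathcal{P}(T),\ d_{\ast}(\nu_{1},\nu_{2})\geq 1/k\right\}
\end{equation*}
is the continuous image of a compact set, hence closed. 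Therefore
\begin{equation*}
\mathcal{P}_{\mathrm{erg}}(T)=\bigcap_{k}\left(\mathcal{P}(T)\backslash F_{k}\right)
\end{equation*}
is a $G_{\delta}$. This is presumably the content of Lemma \ref{lemma extr gd} cited earlier, and I would simply invoke it together with Proposition \ref{prop erg extreme}.

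\emph{Density.} By Krein--Milman, finite convex combinations of ergodic measures are dense in $\mathcal{P}(T)$. Since a dense $G_{\delta}$ argument does not apply before density is known, I would approximate directly, in two steps.

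\emph{Step 1: reduce to periodic orbit measures.} Finitely supported convex combinations $\sum_{i}\lambda_{i}\nu_{i}$ with $\nu_{i}$ ergodic can be further approximated by combinations with rational weights. Each ergodic $\nu_{i}$ is approximated by periodic orbit measures
\begin{equation*}
\frac{1}{p}\sum_{j<p}\delta_{T^{j}x},\qquad x=(w)^{\infty},
\end{equation*}
where the word $w$ is the initial block of length $p$ of a $\nu_{i}$-generic point; Lemma \ref{lemma In conv pw}, applied to a countable dense set of test functions, gives convergence of the empirical measures. Concatenation is free in the full shift $\Omega^{\mathbb{N}}$, even for infinite $\Omega$. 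So it suffices to approximate $\sum_{i}\lambda_{i}\mu_{w_{i}}$, with rational $\lambda_{i}=a_{i}/q$ and $\mu_{w_{i}}$ the periodic measure of the word $w_{i}$.

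\emph{Step 2: glue the periodic pieces.} Form the word
\begin{equation*}
W_{N}=w_{1}^{a_{1}N|w_{2}|\cdots}\,w_{2}^{\cdots}\cdots,
\end{equation*}
choosing the exponents so that the time fraction spent in $w_{i}$ equals $\lambda_{i}$, and take the periodic measure of $W_{N}^{\infty}$. This measure is ergodic, being supported on a single finite orbit. Its integral against any cylinder-dependent function differs from that of $\sum_{i}\lambda_{i}\mu_{w_{i}}$ only by boundary effects at the junctions between blocks, which are $O(1/N)$. Since $C_{\mathrm{f}}(\Sigma)$ is dense in $C(\Sigma)$, and more precisely uniformly continuous functions depend on finitely many coordinates up to $\varepsilon$, this gives weak$^{\ast}$ convergence as $N\to\infty$.

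The main obstacle is the bookkeeping in Step 2: controlling the junction errors uniformly for test functions that are only approximately cylinder functions. This is handled by an $\varepsilon/3$ argument using the metric $d_{\eta}$: if $\varphi$ depends on coordinates only up to error $\varepsilon$ beyond depth $L$, then at most $L$ positions per junction are affected, giving an error bounded by
\begin{equation*}
2\|\varphi\|_{\infty}\cdot\#\text{junctions}\cdot L/|W_{N}|+\varepsilon,
\end{equation*}
which can be made small.
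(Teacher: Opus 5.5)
Your proof is correct. The $G_{\delta}$ part is the same as the paper's, which simply invokes Lemma \ref{lemma extr gd}; you reproduce its standard proof.

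For density you take a genuinely different route. You use Krein--Milman and generic points (Lemma \ref{lemma In conv pw}, which does not depend on density, so there is no circularity). You then use the gluing property of the full shift: measures on a single periodic orbit, obtained by repeating long initial blocks of generic points and concatenating them with rational time fractions, are dense. In Step 1 you should state explicitly that the periodic measure of $(\sigma_{1}\cdots\sigma_{p})^{\infty}$ differs from the empirical measure $\frac{1}{p}\sum_{j<p}\delta_{T^{j}\sigma}$ only by the same $O(L/p)+\varepsilon$ boundary effect you control in Step 2.

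The paper instead builds the approximants directly from $\mu$:
\begin{itemize}
\item It places independent copies of the $m$-dimensional marginal of $\mu$ on consecutive blocks of length $m$, giving a $T^{m}$-invariant product measure $\mu_{m}^{\otimes}$.
\item It averages, $\mu_{m}=\frac{1}{m}\sum_{n<m}T_{\ast}^{n}\mu_{m}^{\otimes}$, and checks $\mu_{m}\to\mu$ on cylinder functions.
\item It proves ergodicity of $\mu_{m}$ via the clustering criterion of Proposition \ref{prop erg clust}, using independence of distant blocks.
\end{itemize}
Neither Krein--Milman nor generic points are needed there.

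Your argument is more combinatorial and yields the stronger fact that periodic-orbit measures alone are dense. The paper's construction pays off later, however. The very same sequence $\mu_{m}$ is reused in Corollary \ref{ergodic abundance}, where one needs $h(\mu_{m})=\frac{1}{m}\mathcal{H}_{\{1,\ldots ,m\}}(\mu)\to h(\mu)$. Your periodic-orbit approximants cannot serve there. Their finite-dimensional marginals are finitely supported, so, for instance, $h=-\infty$ for them whenever the a priori measure $\mathrm{m}$ has no atoms.
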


\begin{proof}
Note that the weak$^{\ast }$ topology of $\mathcal{P}(T)$ is metrizable and
thus, the set $\mathcal{P}_{\mathrm{erg}}(T)$ of extreme points of $\mathcal{%
P}(T)$ is a $G_{\delta }$ set, thanks to Lemma \ref{lemma extr gd}. We now
prove the weak$^{\ast }$-density of $\mathcal{P}_{\mathrm{erg}}(T)$ in
several steps. We start with a general observation on the Banach space $%
C(\Sigma )$ of all continuous functions on character strings $\Sigma \doteq
\Omega ^{\mathbb{N}}$. \medskip

\noindent \underline{Step 1:} For any $m\in \mathbb{N}$, we define the
subspace%
\begin{equation*}
C_{\mathrm{cyl}}\left( \Sigma \right) \doteq \bigcup_{m\in \mathbb{N}%
}C\left( \Sigma \right) \cap \mathcal{J}_{m}
\end{equation*}%
of so-called cylinder functions, where 
\begin{equation*}
\mathcal{J}_{m}\doteq \left\{ \varphi :\Sigma \rightarrow \mathbb{R}:\exists
f:\mathbb{R}^{m}\rightarrow \mathbb{R}\text{ such that }\varphi \left(
\sigma \right) =f\left( \omega _{1},\ldots ,\omega _{m}\right) \text{ for
all }\sigma =\left( \omega _{n}\right) _{n\in \mathbb{N}}\in \Sigma \right\}
\ .
\end{equation*}%
The set $C_{\mathrm{cyl}}\left( \Sigma \right) $ is a dense subspace of $%
C(\Sigma )$, by compactness of $\Sigma $ (cf. Tychonoff's theorem \cite[%
Section A.3]{Rudin}) and the equicontinuity of well-chosen families of
continuous functions on compacts (via Ascoli's theorem \cite[Section A.5]%
{Rudin}). Indeed, fix some $s=\left( s_{n}\right) _{n\in \mathbb{N}}\in
\Sigma $. For any $\varphi \in C(\Sigma )$ and $m\in \mathbb{N}$, we define
the continuous function $\varphi _{m}\in C(\Sigma )\cap \mathcal{J}_{m}$ by 
\begin{equation*}
\varphi _{m}\left( \sigma \right) \doteq \varphi \left( \sigma _{m}\right) \
,\text{\qquad }\sigma =\left( \omega _{n}\right) _{n\in \mathbb{N}}\in
\Sigma \ ,
\end{equation*}%
where 
\begin{equation*}
\sigma _{m}=\left( \omega _{1},\ldots ,\omega _{m},s_{m+1},s_{m+2},\ldots
\right) \in \Sigma \ ,\text{\qquad }\sigma =\left( \omega _{n}\right) _{n\in 
\mathbb{N}}\in \Sigma \ .
\end{equation*}%
Recall that the metric (\ref{metric sigma}) for $\eta =1/2$, that is,%
\begin{equation*}
d_{1/2}\left( \sigma ,\sigma ^{\prime }\right) \doteq \sum_{n\in \mathbb{N}%
}2^{-n}d\left( \omega _{n},\omega _{n}^{\prime }\right) \ ,\text{\qquad }%
\sigma =\left( \omega _{n}\right) _{n\in \mathbb{N}},\sigma ^{\prime
}=\left( \omega _{n}^{\prime }\right) _{n\in \mathbb{N}}\in \Sigma \ ,
\end{equation*}%
generates the topology of $\Sigma $. From (\ref{max metric}) we observe that 
\begin{equation}
d_{1/2}\left( \sigma ,\sigma _{m}\right) =\sum_{n=m}^{\infty }2^{-n}d\left(
\omega _{n},s_{n}\right) \leq 2^{1-m}\ ,\text{\qquad }\sigma =\left( \omega
_{n}\right) _{n\in \mathbb{N}}\in \Sigma ,\ m\in \mathbb{N}\ .
\label{sdsdsdsdsdsdsdsdsd}
\end{equation}%
This implies that the family $\{\varphi _{m}\}_{n\in \mathbb{N}}$ of
functions on the compact $\Sigma $ is equicontinuous. By compactness of $%
\Sigma $ and continuity of $\varphi $, the family is also uniformly bounded.
Again by (\ref{sdsdsdsdsdsdsdsdsd}), $\varphi _{m}$ converges pointwise to $%
\varphi $, as $m\rightarrow \infty $. Now, using Ascoli's theorem \cite[%
Section A.5]{Rudin} we deduce that, at least along some subsequence, the
convergence is uniform, that is, for any $\varphi \in C(\Sigma )$,%
\begin{equation*}
\liminf_{m\rightarrow \infty }\left\Vert \varphi -\varphi _{m}\right\Vert
_{\infty }=\liminf_{m\rightarrow \infty }\sup_{\sigma \in \Sigma }\left\vert
\varphi \left( \sigma \right) -\varphi \left( \sigma _{m}\right) \right\vert
=0\ .
\end{equation*}%
In other words, $C_{\mathrm{cyl}}\left( \Sigma \right) $ is dense in $%
C(\Sigma )$. \medskip

\noindent \underline{Step 2:} For any $m\in \mathbb{N}$ and $k\in \mathbb{N}$%
, let 
\begin{equation*}
C_{m,k}\left( \Sigma \right) \doteq \left\{ \varphi \circ T^{\left(
k-1\right) m}:\varphi \in C\left( \Sigma \right) \cap \mathcal{J}%
_{m}\right\} \text{ }.
\end{equation*}%
In fact, $C_{m,k}(\Sigma )$ is nothing but the space of continuous functions 
$\left( \omega _{n}\right) _{n\in \mathbb{N}}\mapsto \varphi (\left( \omega
_{n}\right) _{n\in \mathbb{N}})$ that only depend on $\omega
_{(k-1)m+1},\ldots ,\omega _{km}$. These closed subalgebras of $C\left(
\Sigma \right) $ are all naturally isomorphic to $C(\Omega ^{m})$. Given any
positive normalized functionals (i.e., probability measures on $\Omega ^{m}$%
) $\nu _{m,k}$ on the subalgebras $C_{m,k}(\Sigma )\subseteq C\left( \Sigma
\right) $, $m,k\in \mathbb{N}$, there is a unique positive normalized
functional $\bigotimes_{k\in \mathbb{N}}\nu _{m,k}$ on $C(\Sigma )$, for
which, for all $l\in \mathbb{N}$, $k_{1},\ldots ,k_{l}\in \mathbb{N}$, $%
k_{1}<\cdots <k_{l}$, and $\varphi _{1}\in C_{m,k_{1}}(\Sigma ),\ldots
,\varphi _{l}\in C_{m,k_{l}}(\Sigma )$, 
\begin{equation*}
\bigotimes_{k\in \mathbb{N}}\nu _{m,k}\left( \varphi _{1}\cdots \varphi
_{l}\right) =\nu _{m,k_{1}}\left( \varphi _{1}\right) \cdots \nu
_{m,k_{l}}\left( \varphi _{l}\right) \text{ }.
\end{equation*}%
In other words, identifying positive normalized functionals on functions on
compacts with probability measures, $\bigotimes_{k\in \mathbb{N}}\nu _{m,k}$
is nothing but the product probability measure of the probability measures $%
\nu _{m,k}$, $m,k\in \mathbb{N}$. Fix now any $T$-invariant measure $\mu \in 
\mathcal{P}(T)\subseteq C(\Sigma )^{\ast }$ and, for all $m,k\in \mathbb{N}$%
, define the positive normalized functional $\mu _{m,k}\in C_{m,k}(\Sigma
)^{\ast }$ as being the restriction to $C_{m,k}(\Sigma )$ of $\mu $, i.e., 
\begin{equation}
\mu _{m,k}\left( \varphi \right) \doteq \mu \left( \varphi \right) \ ,\text{%
\qquad }\varphi \in C\left( \Sigma \right) \cap \mathcal{J}_{m}\ .
\label{mu_m1}
\end{equation}%
Then, for all $m\in \mathbb{N}$, let the probability measure $\mu _{m}$ on $%
\Sigma $ be defined by 
\begin{equation}
\mu _{m}^{\otimes }\doteq \bigotimes_{k\in \mathbb{N}}\mu _{m,k}\in C(\Sigma
)^{\ast }\text{ }.  \label{mu_m0}
\end{equation}%
Observe that, by construction, $T_{\ast }^{m}\left( \mu _{m}^{\otimes
}\right) =\mu _{m}^{\otimes }$, where $T_{\ast }(\nu )$ stands for the
pushforward of an arbitrary probability measure $\nu \in \mathcal{P}$ with
respect to $T$, i.e., $T_{\ast }\left( \nu \right) (A)\doteq \nu (T^{-1}(A))$
for any Borel set $A\subseteq \Sigma $. That is, $\mu _{m}^{\otimes }$ is a $%
m$-periodic probability measure. We can now define a $T$-invariant measure
by averaging out $\mu _{m}^{\otimes }$ within a period: 
\begin{equation}
\mu _{m}\doteq \frac{1}{m}\sum_{n=0}^{m-1}T_{\ast }^{n}\left( \mu
_{m}^{\otimes }\right) \in \mathcal{P}\left( T\right) \ .  \label{mu_m2}
\end{equation}%
By construction we also have 
\begin{equation*}
\lim_{m\rightarrow \infty }\mu _{m}\left( \varphi \right) =\mu \left(
\varphi \right) \ ,\text{\qquad }\varphi \in C_{\mathrm{cyl}}\left( \Sigma
\right) \text{ }.
\end{equation*}%
Since $C_{\mathrm{cyl}}\left( \Sigma \right) $ is a dense subset of $%
C(\Sigma )$ (Step 1), we deduce that the sequence $(\mu _{m})_{m\in \mathbb{N%
}}$ converges to $\mu $, in the weak$^{\ast }$ topology. It remains to show
that $\mu _{m}$ is an ergodic measure for each $m\in \mathbb{N}$.\medskip

\noindent \underline{Step 3:} Fix again a $T$-invariant measure $\mu \in 
\mathcal{P}(T)$. Let $j_{1},j_{2}\in \mathbb{N}$ and $p,q\in (1,\infty )$
with $1/p+1/q=1$. Take $\varphi \in C(\Sigma )\cap \mathcal{J}_{j_{1}}$ and $%
\psi \in C(\Sigma )\cap \mathcal{J}_{j_{2}}$. Then, for any $k\in \mathbb{N}$%
, $k\geq j_{2}+m$, using Equations (\ref{mu_m0})--(\ref{mu_m2}), we compute
that%
\begin{multline*}
\frac{1}{k}\sum_{n=0}^{k}\int_{\Sigma }\psi \left( \sigma \right) \left(
\varphi \circ T^{k}\right) \left( \sigma \right) \mu _{m}\left( \mathrm{d}%
\sigma \right) =\frac{1}{k}\sum_{n=0}^{j_{2}+m}\int_{\Sigma }\psi \left(
\sigma \right) \left( \varphi \circ T^{k}\right) \left( \sigma \right) \mu
_{m}\left( \mathrm{d}\sigma \right) \\
+\left( 1-\frac{\left( j_{2}+m\right) }{k}\right) \int_{\Sigma }\psi \left(
\sigma \right) \mu _{m}\left( \mathrm{d}\sigma \right) \int_{\Sigma }\varphi
\circ T^{k}\left( \sigma \right) \mu _{m}\left( \mathrm{d}\sigma \right) ,
\end{multline*}%
which, combined with H\"{o}lder's inequality and the $T$-invariance of $\mu
_{m}$, yields the bound%
\begin{eqnarray}
&&\left\vert \frac{1}{k}\sum_{n=0}^{k}\int_{\Sigma }\psi \left( \sigma
\right) \left( \varphi \circ T^{k}\right) \left( \sigma \right) \mu
_{m}\left( \mathrm{d}\sigma \right) -\int_{\Sigma }\psi \left( \sigma
\right) \mu _{m}\left( \mathrm{d}\sigma \right) \int_{\Sigma }\varphi \left(
\sigma \right) \mu _{m}\left( \mathrm{d}\sigma \right) \right\vert  \notag \\
&\leq &\frac{2\left( j_{2}+m\right) }{k}\left( \left( \int_{\Sigma
}\left\vert \psi \left( \sigma \right) \right\vert ^{q}\mu _{m}\left( 
\mathrm{d}\sigma \right) \right) ^{1/q}\left( \int_{\Sigma }\left\vert
\varphi \left( \sigma \right) \right\vert ^{p}\mu _{m}\left( \mathrm{d}%
\sigma \right) \right) ^{1/p}\right)  \label{ssdsdfsdfsd}
\end{eqnarray}%
It follows that 
\begin{equation*}
\lim_{k\rightarrow \infty }\frac{1}{k}\sum_{n=0}^{k}\int_{\Sigma }\psi
\left( \sigma \right) \left( \varphi \circ T^{k}\right) \left( \sigma
\right) \mu _{m}\left( \mathrm{d}\sigma \right) =\int_{\Sigma }\psi \left(
\sigma \right) \mu _{m}\left( \mathrm{d}\sigma \right) \int_{\Sigma }\varphi
\left( \sigma \right) \mu _{m}\left( \mathrm{d}\sigma \right) \ .
\end{equation*}%
By Step 1, $C_{\mathrm{cyl}}\left( \Sigma \right) $ is dense in $C(\Sigma )$%
. Therefore, we can extend the last limit to all $\varphi \in C(\Sigma )$
and $\psi \in C(\Sigma )$. In addition, since $C(\Sigma )$ is dense in $%
\mathcal{L}^{p}(\mu _{m})$ for any $p\in (1,\infty )$, using H\"{o}lder's
inequality as done in (\ref{ssdsdfsdfsd}), we can then extend the last limit
to all $\varphi \in \mathcal{L}^{p}(\mu _{m})$ and $\psi \in \mathcal{L}%
^{q}(\mu _{m})$, $p,q\in (1,\infty )$ with $1/p+1/q=1$. Now, by Proposition %
\ref{prop erg disp free}, $\mu _{m}$ is an ergodic measure for all $m\in 
\mathbb{N}$.
\end{proof}

\subsubsection{The entropy functional as a thermodynamic limit\label{Further
Study of the Entropy}}

Our initial goal in this subsection is to show that the entropy is affine in
the case where the alphabet is a general (i.e., not necessarily finite)
compact metric space (see Proposition \ref{Affinity of the entropy copy(1)}%
), what is well-known for finite alphabets \cite[Theorem 8.1]{Walters}. We
summarize the main steps of the proof of this property, which stems from 
\cite{ACR}. Then, we give in Corollary \ref{ergodic abundance} a proof of
ergodic abundance for alphabets that are general compact metric spaces; this
result was already known in the case of a finite alphabet (see \cite{L3} and
also \cite{Clime}).

First, just like in \cite[Section 2]{ACR}, we define the relative entropy
(also known as Kullback-Leibler divergence) of a probability measure $\mu
\in \mathcal{P}$ with respect to second one $\nu $, both on the measurable
space $(\Sigma ,\mathfrak{S})$: For any sub-$\sigma $-algebra $\mathcal{A}%
\subseteq \mathfrak{S}$, 
\begin{equation}
\mathcal{H}_{\mathcal{A}}\left( \mu |\nu \right) \doteq 
\begin{cases}
\int_{\Sigma }f_{\mu }\left( \sigma \right) \ln f_{\mu }\left( \sigma
\right) \ \nu \left( \mathrm{d}\sigma \right) \geq 0\ , & \ f_{\mu }=\frac{%
d\mu |_{\mathcal{A}}}{d\nu |_{\mathcal{A}}}\ \text{if }\mu |_{\mathcal{A}%
}\ll \nu |_{\mathcal{A}} \\[0.5cm] 
\infty \ , & \ \text{otherwise}\ ,%
\end{cases}
\label{def1}
\end{equation}%
where $f_{\mu }$ is the Radon-Nikodym derivative of the restrictions to $%
\mathcal{A}$, $\mu |_{\mathcal{A}}$ and $\nu |_{\mathcal{A}}$, of the
measures $\mu $ and $\nu $. Here, $\mu |_{\mathcal{A}}\ll \nu |_{\mathcal{A}%
} $ means that $\mu |_{\mathcal{A}}$ is absolutely continuous with respect
to $\nu |_{\mathcal{A}}$ and we use the convention $f_{\mu }\left( \sigma
\right) \ln f_{\mu }\left( \sigma \right) \doteq 0$ when $f_{\mu }\left(
\sigma \right) =0$, as is usual. As it is well-known, the relative entropy
is monotonically decreasing with respect to \textquotedblleft coarse
graining\textquotedblright , that is, for any $\sigma $-subalgebra $\mathcal{%
B}\subseteq \mathcal{A}$, we have%
\begin{equation}
\mathcal{H}_{\mathcal{B}}\left( \mu |\nu \right) \leq \mathcal{H}_{\mathcal{A%
}}\left( \mu |\nu \right) \text{ }.  \label{ineq coarse graining rel entr}
\end{equation}

For any finite subset $\Lambda \subseteq \mathbb{N}$, we apply this
definition to the initial $\sigma $-algebra $\mathfrak{S}_{\Lambda }$ of the
canonical projections\footnote{$(\sigma _{i})_{i\in \mathbb{N}}\mapsto
(\sigma _{i})_{i\in \Lambda }$} $\Sigma \rightarrow \Omega ^{\Lambda }$. For
simplicity we use the notation 
\begin{equation}
\mathcal{H}_{\Lambda }\left( \mu |\nu \right) \equiv \mathcal{H}_{\mathfrak{S%
}_{\Lambda }}\left( \mu |\nu \right)  \label{def2}
\end{equation}%
for any two probability measures $\mu ,\nu $ on the measurable space $%
(\Sigma ,\mathfrak{S})$ and finite set $\Lambda \subseteq \mathbb{N}$. We
call this quantity the relative entropy of $\mu $ in $\Lambda $ with respect
to $\nu $. Note from Inequality (\ref{ineq coarse graining rel entr}) that%
\begin{equation}
\mathcal{H}_{\Lambda ^{\prime }}\left( \mu |\nu \right) \leq \mathcal{H}%
_{\Lambda }\left( \mu |\nu \right) \text{ },\qquad \Lambda ^{\prime
}\subseteq \Lambda \text{ }.  \label{ineq coarse graining rel entr Lambda}
\end{equation}

We fix an \textit{a priori} probability measure $\mathrm{m}$ on $\Omega $
and, for any probability measure $\mu \in \mathcal{P}$ and finite subset $%
\Lambda \subseteq \mathbb{N}$, we define $\mathcal{H}_{\Lambda }\left( \mu
\right) $, the so-called specific entropy in $\Lambda $ of $\mu $. It is
nothing but \emph{minus} the relative entropy of $\mu $ in $\Lambda $ with
respect to the product measure on $\Sigma $ constructed from $\mathrm{m}$,
i.e., 
\begin{equation}
\mathrm{m}_{\otimes }\doteq \bigotimes_{k\in \mathbb{N}}\mathrm{m}\in 
\mathcal{P}(T)\ .  \label{product measure}
\end{equation}%
In other words, for any probability measure $\mu $ and finite subset $%
\Lambda \subseteq \mathbb{N}$, 
\begin{equation}
\mathcal{H}_{\Lambda }\left( \mu \right) \doteq -\mathcal{H}_{\Lambda
}\left( \mu |\mathrm{m}_{\otimes }\right) \ .  \label{def3}
\end{equation}%
In the limit $n\rightarrow \infty $ of finite subsets $\{1,\ldots ,n\}$ it
gives the so-called specific entropy per site of any \emph{$T$-invariant}
probability measure $\mu \in \mathcal{P}(T)$, which is well-defined in this
case by the limit%
\begin{equation}
\mathfrak{h}\left( \mu \right) \doteq \lim_{n\rightarrow \infty }n^{-1}%
\mathcal{H}_{\{1,\ldots ,n\}}\left( \mu \right) \in \lbrack -\infty ,0]\ .
\label{thermo limit entropy}
\end{equation}%
The functional $\mathfrak{h}:\mathcal{P}(T)\rightarrow \lbrack -\infty ,0]$
is both concave and weak$^{\ast }$ upper semicontinuous. See \cite{Georgii}
for more details. Note in particular that \cite[Definition 15.13, page 315]%
{Georgii} essentially correspond to this claim, but the lattice is $\mathbb{Z%
}$ there, instead of $\mathbb{N}$.

A thermodynamic limit similar to (\ref{thermo limit entropy}) can also be
obtained by using other reference measures, not just the product measure (%
\ref{product measure}), like in \cite[Theorem 15.30]{Georgii}. Observe that
the product measure is nothing but the Gibbs measure associated with a
constant potential. Indeed, more generally, we can take the (unique) Gibbs
measure associated with any H\"{o}lder potential $f$ as reference measure.
This is done as follows:\medskip

\noindent \underline{Step 1:} Given any H\"{o}lder potential $f\in C^{\alpha
}(\Sigma )$ ($\alpha \in (0,1)$), we define an appropriate normalized
potential $\bar{f}$ and consider the associated Ruelle operator $\mathcal{L}%
_{\bar{f}}^{\mathrm{m}}$. In particular, we should have $\mathcal{L}_{\bar{f}%
}^{\mathrm{m}}(\mathbf{1})(\sigma )=1$ for all $\sigma \in \Sigma $. This is
done in the following way: As already explained in \cite{ACR}, one can use
the generalization of the Ruelle-Perron-Frobenius theorem given in \cite%
{LMMS} to define a cohomologous normalized potential defined by 
\begin{equation*}
\bar{f}\doteq f+\ln h_{f}-\ln (h_{f}\circ T)-\ln \lambda _{f}\ ,
\end{equation*}%
for any $f\in C^{\alpha }(\Sigma )$, where $\lambda _{f}$ is a maximal
eigenvalue of $\mathcal{L}_{f}^{\mathrm{m}}$ having $h_{f}\in C^{\alpha
}(\Sigma )\cap \mathcal{C}^{+}$ as an eigenfunction. \medskip

\noindent \underline{Step 2:} The relation between $f$ and $\bar{f}$, or
more explicitly $\mathcal{L}_{f}^{\mathrm{m}}$ and $\mathcal{L}_{\bar{f}}^{%
\mathrm{m}}$, is explained in \cite{LMMS} and refers to the following
observations: Since $\lambda _{f}$ is a maximal eigenvalue of $\mathcal{L}%
_{f}^{\mathrm{m}}$\ there is an eigenprobability $\nu _{f}$ such that 
\begin{equation*}
(\mathcal{L}_{f}^{\mathrm{m}})^{\ast }\left( \nu _{f}\right) =\lambda
_{f}\nu _{f}
\end{equation*}%
with $(\mathcal{L}_{f}^{\mathrm{m}})^{\ast }$ being the usual dual operator
acting on probability measures. It turns out that the product $h_{f}\nu _{f}$%
, properly normalized, is an equilibrium measure $\mu _{f}\in \mathcal{P}(T)$
for $f$, in the sense of Definition \ref{Equilibrium measures def}. In other
words, $\mu _{f}$ maximizes the pressure functional $\mathfrak{P}_{L}\left(
f\right) $, as defined by (\ref{sdsdsdsdfssfg}) with $\varphi =f$. Moreover, 
$\log \lambda _{f}=P_{L}(f)$ and the set%
\begin{equation}
\left\{ \nu \in \mathcal{P}\left( T\right) :(\mathcal{L}_{\bar{f}}^{\mathrm{m%
}})^{\ast }\nu =\nu \right\}  \label{eleemnt}
\end{equation}%
has only one element, which is equal to $\mu _{f}=\mu _{\bar{f}}\in \mathcal{%
P}(T)$ (notice that the normalized potential $\bar{f}$ is of H\"{o}lder
class, as $f$ is a H\"{o}lder potential), called\footnote{%
It is sometimes named the normalized DLR probability for $\bar{f}$; for the
exact definition and equivalences, see \cite{CLS}.} the Gibbs measure
associated with the potential $f$. \medskip

Having now the Gibbs measure $\mu _{f}$ at our disposal, we study the
thermodynamic limit of the relative entropy of a $T$-invariant measure with
respect to $\mu _{f}$. This refers to \cite[Theorem 3.1]{ACR}, which states
that, for any probability measure $\mathrm{m}$ on $\Omega $ having full
support, each $\alpha $-H\"{o}lder-continuous function $f\in C^{\alpha
}(\Sigma )$ ($\alpha \in (0,1)$) and any $T$-invariant probability measure $%
\nu \in \mathcal{P}(T)$, the following limit exists: 
\begin{equation}
\mathfrak{h}\left( \nu |\mu _{f}\right) \doteq \lim_{n\rightarrow \infty
}n^{-1}\mathcal{H}_{\{1,\ldots ,n\}}\left( \nu |\mu _{f}\right) =\ln \lambda
_{f}-\int_{\Sigma }f\left( \sigma \right) \nu \left( \mathrm{d}\sigma
\right) -\mathfrak{h}\left( \nu \right) \in \left[ 0,\infty \right] \ ,
\label{sdsdsdsd}
\end{equation}%
where $\mu _{f}\in \mathcal{P}(T)$ is the Gibbs measure associated with $f$,
which is the unique element of the set (\ref{eleemnt}). From \cite[Theorem
3.1]{ACR} together with the positivity of the relative entropy it also
follows that, for any $\alpha \in (0,1)$, 
\begin{equation*}
\mathfrak{h}\left( \mu \right) \leq \ln \lambda _{f}-\int_{\Sigma }f\left(
\sigma \right) \mu \left( \mathrm{d}\sigma \right) \ ,\qquad \mu \in 
\mathcal{P}\left( T\right) ,\ f\in C^{\alpha }\left( \Sigma \right) \text{ },
\end{equation*}%
provided $\mathrm{m}$ is a probability measure on $\Omega $ having full
support. By Proposition \ref{kul copy(1)}, we thus deduce that 
\begin{equation*}
\mathfrak{h}\left( \mu \right) \leq h(\mu )=\inf_{f\in C^{\alpha }(\Sigma
)}\left\{ \ln \lambda _{f}-\int_{\Sigma }f\left( \sigma \right) \mu \left( 
\mathrm{d}\left( \sigma \right) \right) \right\} \ ,\qquad \mu \in \mathcal{P%
}\left( T\right) \ ,
\end{equation*}%
when $\mathrm{m}$ is a probability measure on $\Omega $ having full support.
Recall that $h:\mathcal{P}(T)\rightarrow \mathbb{R}$ is the entropy of
Definition \ref{uod}. In fact, the functionals $\mathfrak{h}$ and $h$ are
equal to each other. This claim refers to \cite[Theorem 3.4]{ACR}. Combined
with Proposition \ref{kul copy(1)} and Equation (\ref{thermo limit entropy}%
), we thus arrive at the Aguiar-Cioletti-Ruviaro theorem:

\begin{theorem}[The entropy as a thermodynamic limit]
\label{teo-equiv-entrop}\mbox{ }\newline
Assume that $\mathrm{m}$ is a probability measure on $\Omega $ having full
support. Then, for any $\mu \in \mathcal{P}(T)$, 
\begin{equation*}
h\left( \mu \right) =\inf_{f\in C^{\alpha }(\Sigma )}\left\{ \ln \lambda
_{f}-\int_{\Sigma }f\left( \sigma \right) \mu \left( \mathrm{d}\left( \sigma
\right) \right) \right\} =\lim_{n\rightarrow \infty }n^{-1}\mathcal{H}%
_{\{1,\ldots ,n\}}\left( \mu \right) \ .
\end{equation*}
\end{theorem}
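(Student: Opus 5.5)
The plan is to show the two inequalities $\mathfrak{h}\le h$ and $h\le\mathfrak{h}$ on $\mathcal{P}(T)$, and to get the first equality directly from Proposition \ref{kul copy(1)}. The first equality in the statement, namely $h(\mu)=\inf_{f\in C^{\alpha}(\Sigma)}\{\ln\lambda_f-\mu(f)\}$, is exactly Proposition \ref{kul copy(1)}, so nothing new is needed there. The second equality amounts to identifying $h$ with the specific entropy $\mathfrak{h}$ defined by the limit (\ref{thermo limit entropy}); that this limit exists for $T$-invariant $\mu$ is taken from the standard subadditivity argument, as in \cite{Georgii}.

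\textbf{Step 1: $\mathfrak{h}\le h$.} For each H\"{o}lder $f$, apply \cite[Theorem 3.1]{ACR}, i.e.\ (\ref{sdsdsdsd}). Since the relative entropy is nonnegative, this gives $\mathfrak{h}(\mu)\le \ln\lambda_f-\mu(f)$. Taking the infimum over $f\in C^{\alpha}(\Sigma)$ and using Proposition \ref{kul copy(1)} yields $\mathfrak{h}(\mu)\le h(\mu)$.

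\textbf{Step 2: $h\le\mathfrak{h}$.} This is the content of \cite[Theorem 3.4]{ACR}, and it is the main obstacle. If I had to argue it directly, I would use duality. Both $\mathfrak{h}$ and $h$ are concave and weak$^{\ast}$ upper semicontinuous, so each is the Legendre--Fenchel conjugate of its own pressure. It therefore suffices to show that their pressures agree on a dense set of potentials:
\begin{equation*}
\sup_{\mu}\{\mathfrak{h}(\mu)+\mu(f)\}\ \ge\ \ln\lambda_f \qquad\text{for H\"{o}lder } f .
\end{equation*}
To get this, I would test with $\mu=\mu_f$, the Gibbs measure. For this choice, (\ref{sdsdsdsd}) gives $\mathfrak{h}(\mu_f|\mu_f)=0$, hence $\mathfrak{h}(\mu_f)+\mu_f(f)=\ln\lambda_f$. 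Density of $C^{\alpha}(\Sigma)$ in $C(\Sigma)$, together with the Lipschitz continuity of the pressures in the sup norm, then transfers the equality to all of $C(\Sigma)$. Biconjugation (Equation (\ref{Biconjugate definition})), combined with Proposition \ref{kul}, gives $\mathfrak{h}=h$.

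The delicate point is that the pressure associated with $h$ equals $\ln\lambda_f$ on H\"{o}lder potentials. I expect to extract this from the infimum formula of Proposition \ref{kul copy(1)} together with the fact that $\mu_f$ is the equilibrium measure, as recalled in Step 2 of the preceding discussion.

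Combining Step 1, Step 2 and Proposition \ref{kul copy(1)} gives the theorem.
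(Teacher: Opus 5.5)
Your proposal is correct and follows the paper's argument. The first equality is Proposition \ref{kul copy(1)}; the bound $\mathfrak{h}\le h$ follows from (\ref{sdsdsdsd}), positivity of the relative entropy and Proposition \ref{kul copy(1)}; and the reverse inequality is taken from \cite[Theorem 3.4]{ACR}, exactly as in the discussion preceding the theorem.

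Your optional duality sketch, which replaces \cite[Theorem 3.4]{ACR}, is also sound, and the step you flag as delicate is immediate. Proposition \ref{kul copy(1)} gives $h(\mu)\le\ln\lambda_f-\mu(f)$, and Step 1 gives $h\ge\mathfrak{h}$. Together with your test measure $\mu_f$, this shows that both pressures equal $\ln\lambda_f$ on H\"{o}lder potentials. The rest goes through as you describe: density, Lipschitz continuity of the pressures, and Fenchel--Moreau applied to the concave upper semicontinuous functionals $\mathfrak{h}$ and $h$.
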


The representation of entropy as a thermodynamic limit is very useful
because, among other things, it allows us to derive the affine property of
the entropy as a functional on $\mathcal{P}(T)$:

\begin{proposition}[Affine property of the entropy]
\label{Affinity of the entropy copy(1)}\mbox{ }\newline
Assume that $\mathrm{m}$ is a probability measure on $\Omega $ having full
support. Then, the mapping $\mu \mapsto h(\mu )$ from $\mathcal{P}(T)$ to $%
[-\infty ,0]$ is affine.
\end{proposition}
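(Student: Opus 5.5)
By Theorem \ref{teo-equiv-entrop}, it suffices to show that $\mu\mapsto\mathfrak{h}(\mu)=\lim_{n\rightarrow\infty}n^{-1}\mathcal{H}_{\{1,\ldots,n\}}(\mu)$ is affine on $\mathcal{P}(T)$. Concavity of $h$ is already known from Definition \ref{uod}, since an infimum of affine functionals is concave. So the task reduces to proving the reverse inequality
\begin{equation*}
\mathfrak{h}(\lambda\mu_{1}+(1-\lambda)\mu_{2})\leq\lambda\mathfrak{h}(\mu_{1})+(1-\lambda)\mathfrak{h}(\mu_{2})\ ,\qquad \mu_{1},\mu_{2}\in\mathcal{P}(T),\ \lambda\in(0,1)\ .
\end{equation*}
I would establish this at finite volume with an error of order $\ln 2$, and then remove the error by dividing by $n$.

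Write $\mu\doteq\lambda\mu_{1}+(1-\lambda)\mu_{2}$ and $\mathcal{A}\doteq\mathfrak{S}_{\{1,\ldots,n\}}$. If $\mu_{1}$ or $\mu_{2}$ fails to be absolutely continuous with respect to $\mathrm{m}_{\otimes}$ on $\mathcal{A}$, then so does $\mu$. In that case both sides of the desired finite-volume inequality are $-\infty$, and there is nothing to prove. Otherwise let $f_{i}$ denote the density of $\mu_{i}|_{\mathcal{A}}$ with respect to $\mathrm{m}_{\otimes}|_{\mathcal{A}}$, so that $\mu|_{\mathcal{A}}$ has density $f=\lambda f_{1}+(1-\lambda)f_{2}$.

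The key pointwise estimate is that, wherever $f_{1}>0$, one has $\ln f\geq\ln\lambda+\ln f_{1}$, because $f\geq\lambda f_{1}$. Similarly, wherever $f_{2}>0$, one has $\ln f\geq\ln(1-\lambda)+\ln f_{2}$. Integrating, this gives
\begin{equation*}
\int f\ln f\,\mathrm{d}\mathrm{m}_{\otimes}=\lambda\int f_{1}\ln f\,\mathrm{d}\mathrm{m}_{\otimes}+(1-\lambda)\int f_{2}\ln f\,\mathrm{d}\mathrm{m}_{\otimes}\geq\lambda\int f_{1}\ln f_{1}+(1-\lambda)\int f_{2}\ln f_{2}+\lambda\ln\lambda+(1-\lambda)\ln(1-\lambda)\ .
\end{equation*}
Here the integrals over the sets $\{f_{i}=0\}$ vanish. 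The lower bound on each $\ln f$ term is needed only on $\{f_{i}>0\}$, and it can take values in $(-\infty,\infty]$ there without causing problems, since $f\ln f$ is bounded below.

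Recalling the sign in (\ref{def3}), this yields
\begin{equation*}
\mathcal{H}_{\{1,\ldots,n\}}(\mu)\leq\lambda\mathcal{H}_{\{1,\ldots,n\}}(\mu_{1})+(1-\lambda)\mathcal{H}_{\{1,\ldots,n\}}(\mu_{2})+\ln 2\ ,
\end{equation*}
using that $-\lambda\ln\lambda-(1-\lambda)\ln(1-\lambda)\leq\ln 2$. This inequality holds in $[-\infty,0]$, including when some terms equal $-\infty$. Dividing by $n$ and letting $n\rightarrow\infty$, the limits exist by (\ref{thermo limit entropy}), and we obtain the desired reverse inequality. The only delicate point is careful bookkeeping of infinite values, meaning the case where some $\mathfrak{h}(\mu_{i})=-\infty$. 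This is handled by the convention $a+(-\infty)=-\infty$, since all quantities involved are bounded above by $0$.

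Finally, combining this convexity inequality with the concavity of $h$, together with the identity $h=\mathfrak{h}$ from Theorem \ref{teo-equiv-entrop}, gives that $h$ is affine. I expect no step to be a serious obstacle. The main point requiring care is the absolute-continuity and $\pm\infty$ case analysis in the finite-volume estimate.
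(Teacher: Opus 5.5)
Your proposal is correct and follows essentially the same route as the paper: concavity comes directly from the variational Definition \ref{uod}, and convexity comes from the thermodynamic-limit representation of Theorem \ref{teo-equiv-entrop} via the pointwise bound $\ln f\geq \ln(\lambda f_{1})$, with the mixing term $-\lambda\ln\lambda-(1-\lambda)\ln(1-\lambda)$ vanishing after division by $n$. Your handling of the infinite cases is, if anything, slightly more careful than the paper's: you correctly assign both sides the value $-\infty$ when absolute continuity fails, and you also cover the case $\int f_{i}\ln f_{i}=\infty$.
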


\begin{proof}
We start with the concavity of the entropy $h$, which is easy to prove
directly from its definition (Definition \ref{uod}): For any $\lambda \in
(0,1)$, consider the convex combination 
\begin{equation}
\mu _{3}=\lambda \mu _{1}+\left( 1-\lambda \right) \mu _{2}
\label{liner conb}
\end{equation}%
of two arbitrary $T$-invariant probability measures $\mu _{1},\mu _{2}\in 
\mathcal{P}(T)$. For the $T$-invariant probability measure $\mu _{3}$ and an
arbitrary strictly positive parameter $\epsilon \in \mathbb{R}^{+}$, take an
approximating minimizer of (\ref{oiu54}), i.e., a positive function $f\in 
\mathcal{C}^{+}$ such that%
\begin{equation*}
h\left( \mu _{3}\right) +\epsilon \geq \int_{\Sigma }\ln \left( \frac{%
\mathcal{L}_{0}^{\mathrm{m}}f\left( \sigma \right) }{f\left( \sigma \right) }%
\right) \mu _{3}\left( \mathrm{d}\sigma \right) \ .
\end{equation*}%
It follows from Equation (\ref{liner conb}) that%
\begin{eqnarray*}
h\left( \mu _{3}\right) +\epsilon &\geq &\lambda \int_{\Sigma }\ln \left( 
\frac{\int_{\Omega }f\left( \omega _{0}\sigma \right) \mathrm{m}\left( 
\mathrm{d}\omega _{0}\right) }{f\left( \sigma \right) }\right) \mu
_{1}\left( \mathrm{d}\sigma \right) \\
&&+\left( 1-\lambda \right) \int_{\Sigma }\ln \left( \frac{\int_{\Omega
}f\left( \omega _{0}\sigma \right) \mathrm{m}\left( \mathrm{d}\omega
_{0}\right) }{f\left( \sigma \right) }\right) \mu _{2}\left( \mathrm{d}%
\sigma \right) \\
&\geq &\lambda h\left( \mu _{1}\right) +\left( 1-\lambda \right) h\left( \mu
_{2}\right) \ .
\end{eqnarray*}%
Since $\epsilon \in \mathbb{R}^{+}$ is arbitrary, 
\begin{equation}
h\left( \mu _{3}\right) \geq \lambda h\left( \mu _{1}\right) +\left(
1-\lambda \right) h\left( \mu _{2}\right)  \label{concave}
\end{equation}%
for any $\lambda \in (0,1)$ and $\mu _{1},\mu _{2}\in \mathcal{P}(T)$.

We now prove that the convexity of the entropy $h$, which has become
relatively easy to prove, thanks to Theorem \ref{teo-equiv-entrop}: Take
again arbitrary $\lambda \in \left( 0,1\right) $ and $\mu _{1},\mu _{2}\in 
\mathcal{P}(T)$. Fix also some $n\in \mathbb{N}$. Regarding the $\sigma $%
-algebra $\mathfrak{S}_{\{1,\ldots ,n\}}$, (the convex combination (\ref%
{liner conb})) $\mu _{3}$ has Radon-Nikodym derivative equal to 
\begin{equation}
f_{\mu _{3}}=\frac{d\mu _{3}|_{\mathfrak{S}_{\{1,\ldots ,n\}}}}{d\mathrm{m}%
_{\otimes }|_{\mathfrak{S}_{\{1,\ldots ,n\}}}}=\lambda f_{\mu _{1}}+\left(
1-\lambda \right) f_{\mu _{2}}\ ,  \label{sdssdsds}
\end{equation}%
provided $f_{\mu _{1}}\ $and $f_{\mu _{2}}$, the corresponding Radon-Nikodym
derivatives of $\mu _{1}$ and $\mu _{2}$, exist. In this case, 
\begin{equation*}
\mathcal{H}_{\{1,\ldots ,n\}}\left( \mu _{3}\right) =-\mathcal{H}%
_{\{1,\ldots ,n\}}\left( \mu _{3}|\mathrm{m}_{\otimes }\right)
=-\int_{\Sigma }f_{\mu _{3}}\left( \sigma \right) \ln f_{\mu _{3}}\left(
\sigma \right) \ \mathrm{m}_{\otimes }\left( \mathrm{d}\sigma \right) \ ,
\end{equation*}%
see Equations (\ref{def1})--(\ref{def3}). If one of the two Radon-Nikodym
derivatives $f_{\mu _{1}}\ $and $f_{\mu _{2}}$ does not exist, the
Radon-Nikodym derivative $f_{\mu _{3}}$ does not exist either and, for any $%
\lambda \in (0,1)$, we trivially have 
\begin{equation}
\mathcal{H}_{\{1,\ldots ,n\}}\left( \mu _{3}\right) \leq \lambda \mathcal{H}%
_{\{1,\ldots ,n\}}\left( \mu _{1}\right) +\left( 1-\lambda \right) \mathcal{H%
}_{\{1,\ldots ,n\}}\left( \mu _{2}\right) \ ,  \label{def7bis}
\end{equation}%
both sides of the inequality being equal to $\infty $. Thus, assume that
both $f_{\mu _{1}}\ $and $f_{\mu _{2}}$ exist. Then, since the Radon-Nikodym
derivatives are $\mathrm{m}_{\otimes }$-almost everywhere positive and, by (%
\ref{sdssdsds}) and the fact that the function $\ln (\cdot )$ is
monotonically increasing, 
\begin{equation*}
\ln \left( f_{\mu _{3}}\left( \sigma \right) \right) =\ln \left( \lambda
f_{\mu _{1}}\left( \sigma \right) +\left( 1-\lambda \right) f_{\mu
_{2}}\left( \sigma \right) \right) \geq \ln \left( \lambda f_{\mu
_{1}}\left( \sigma \right) \right) \ ,
\end{equation*}%
($\mathrm{m}_{\otimes }$-almost everywhere), we arrive at 
\begin{eqnarray}
\lambda \int_{\Sigma }f_{\mu _{1}}\ln \left( f_{\mu _{3}}\right) \mathrm{m}%
_{\otimes }\left( \mathrm{d}\sigma \right) &\geq &\lambda \int_{\Sigma
}f_{\mu _{1}}\ln \left( \lambda f_{\mu _{1}}\right) \mathrm{m}_{\otimes
}\left( \mathrm{d}\sigma \right)  \notag \\
&=&\lambda \int_{\Sigma }f_{\mu _{1}}\ln \left( f_{\mu _{1}}\right) \mathrm{m%
}_{\otimes }\left( \mathrm{d}\sigma \right) +\lambda \ln \lambda
\int_{\Sigma }f_{\mu _{1}}\mathrm{m}_{\otimes }\left( \mathrm{d}\sigma
\right)  \notag \\
&=&\lambda \int_{\Sigma }f_{\mu _{1}}\ln \left( f_{\mu _{1}}\right) \mathrm{m%
}_{\otimes }\left( \mathrm{d}\sigma \right) +\lambda \ln \lambda \ .
\label{def5}
\end{eqnarray}%
In the same way, we obtain that 
\begin{equation}
\left( 1-\lambda \right) \int_{\Sigma }f_{\mu _{2}}\ln \left( f_{\mu
_{3}}\right) \mathrm{m}_{\otimes }\left( \mathrm{d}\sigma \right) \geq
\left( 1-\lambda \right) \int_{\Sigma }f_{\mu _{2}}\ln \left( f_{\mu
_{2}}\right) \mathrm{m}_{\otimes }\left( \mathrm{d}\sigma \right) +\left(
1-\lambda \right) \ln \left( 1-\lambda \right) \ .  \label{def6}
\end{equation}%
By adding (\ref{def5}) and (\ref{def6}), we get the upper bound%
\begin{equation}
\mathcal{H}_{\{1,\ldots ,n\}}\left( \mu _{3}\right) \leq \lambda \mathcal{H}%
_{\{1,\ldots ,n\}}\left( \mu _{1}\right) +\left( 1-\lambda \right) \mathcal{H%
}_{\{1,\ldots ,n\}}\left( \mu _{2}\right) -\lambda \ln \lambda -\left(
1-\lambda \right) \ln \left( 1-\lambda \right)  \label{def7}
\end{equation}%
for any $\lambda \in \left( 0,1\right) $, $\mu _{1},\mu _{2}\in \mathcal{P}%
(T)$ and $n\in \mathbb{N}$\ such that $f_{\mu _{1}}\ $and $f_{\mu _{2}}$
both exist. By Theorem \ref{teo-equiv-entrop} combined with (\ref{def7bis})
and (\ref{def7}), it follows that%
\begin{equation}
h\left( \mu _{3}\right) \leq \lambda h\left( \mu _{1}\right) +\left(
1-\lambda \right) h\left( \mu _{2}\right)  \label{convex}
\end{equation}%
for any $\lambda \in (0,1)$ and $\mu _{1},\mu _{2}\in \mathcal{P}(T)$. Thus, 
$h$ is both concave and convex, i.e., affine.
\end{proof}

\begin{corollary}[Ergodic abundance]
\label{ergodic abundance}\mbox{ }\newline
Assume that $\mathrm{m}$ is a probability measure on $\Omega $ having full
support. For any $\mu \in \mathcal{P}(T)$, there exists a sequence $(\mu
_{m})_{m\in \mathbb{N}}$ of ergodic measures converging in the weak$^{\ast }$
topology to $\mu $ such that 
\begin{equation*}
\lim_{m\rightarrow \infty }h\left( \mu _{m}\right) =h\left( \mu \right) 
\text{ }.
\end{equation*}
\end{corollary}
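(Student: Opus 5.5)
The natural route combines the construction in the proof of Proposition \ref{density of ergodic measures} with the thermodynamic-limit representation of the entropy in Theorem \ref{teo-equiv-entrop}. Fix $\mu \in \mathcal{P}(T)$ and take the ergodic measures $\mu _{m}$ defined by (\ref{mu_m1})--(\ref{mu_m2}). We already know that $\mu _{m}\rightarrow \mu $ in the weak$^{\ast }$ topology and that each $\mu _{m}$ is ergodic. By the weak$^{\ast }$-upper semicontinuity of $h$ (Definition \ref{uod}), $\limsup_{m}h(\mu _{m})\leq h(\mu )$. So it only remains to prove $\liminf_{m}h(\mu _{m})\geq h(\mu )$. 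If $h(\mu )=-\infty $ there is nothing to prove, so assume $h(\mu )>-\infty $.

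To get the lower bound, I will first compute the entropy of the $m$-periodic product measure $\mu _{m}^{\otimes }$ and then pass to its shift-average $\mu _{m}$. By construction, $\mu _{m}^{\otimes }$ is a product over consecutive blocks of length $m$, each block having marginal $\mu |_{\mathfrak{S}_{\{1,\ldots ,m\}}}$. The reference measure $\mathrm{m}_{\otimes }$ is also a product. Relative entropy is additive for product measures, so for every $k\in \mathbb{N}$,
\begin{equation*}
\mathcal{H}_{\{1,\ldots ,km\}}(\mu _{m}^{\otimes })=k\,\mathcal{H}_{\{1,\ldots ,m\}}(\mu )\ .
\end{equation*}
Next, $T_{\ast }^{n}(\mu _{m}^{\otimes })$ restricted to $\{1,\ldots ,km\}$ is the image of $\mu _{m}^{\otimes }$ restricted to $\{n+1,\ldots ,n+km\}$. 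For $0\leq n<m$ this window sits inside $k+1$ full blocks. Monotonicity (\ref{ineq coarse graining rel entr Lambda}) then gives
\begin{equation*}
\mathcal{H}_{\{1,\ldots ,km\}}(T_{\ast }^{n}\mu _{m}^{\otimes })\geq (k+1)\mathcal{H}_{\{1,\ldots ,m\}}(\mu )\ .
\end{equation*}
Finally, the finite-volume entropy $\mathcal{H}_{\Lambda }$ of a convex combination of $m$ measures is at least the average of their entropies minus $\ln m$. This follows from the same computation as in (\ref{def5})--(\ref{def7}), now with $m$ terms. Combining these three facts gives
\begin{equation*}
\mathcal{H}_{\{1,\ldots ,km\}}(\mu _{m})\geq (k+1)\mathcal{H}_{\{1,\ldots ,m\}}(\mu )-\ln m\ .
\end{equation*}

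Divide by $km$ and let $k\rightarrow \infty $, using Theorem \ref{teo-equiv-entrop} for the $T$-invariant measure $\mu _{m}$. This yields
\begin{equation*}
h(\mu _{m})\geq m^{-1}\mathcal{H}_{\{1,\ldots ,m\}}(\mu )\ .
\end{equation*}
Now let $m\rightarrow \infty $. By Theorem \ref{teo-equiv-entrop} again, the right-hand side tends to $h(\mu )$. Hence $\liminf_{m}h(\mu _{m})\geq h(\mu )$, and together with the upper semicontinuity bound this gives $\lim_{m}h(\mu _{m})=h(\mu )$.

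I expect the main obstacle to be the bookkeeping in the middle step. Two points need care. First, the exact additivity over blocks has to be justified, including the case where some block marginal is not absolutely continuous with respect to $\mathrm{m}_{\otimes }$; here $\mathcal{H}=-\infty $ and the inequality is trivial, consistent with the reduction to $h(\mu )>-\infty $. Second, the mixture inequality for the shifted copies must be stated with the correct sign convention, since $\mathcal{H}_{\Lambda }$ is minus a relative entropy. Neither point should go beyond the arguments already used in the proof of Proposition \ref{Affinity of the entropy copy(1)}.
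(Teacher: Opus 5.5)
Your proof is correct. It uses the same ingredients as the paper: the ergodic approximants $\mu_m$ of (\ref{mu_m1})--(\ref{mu_m2}), additivity of relative entropy over product blocks, a mixture estimate, and Theorem~\ref{teo-equiv-entrop}. It organizes them differently.

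The paper computes $h(\mu_m)$ exactly, arriving at $h(\mu_m)=m^{-1}\mathcal{H}_{\{1,\ldots,m\}}(\mu)$ in (\ref{fj}). To do this it uses the two-sided estimate $\frac{1}{m}\sum_{k}\mathcal{H}_{\{1,\ldots,n\}}(T_{\ast}^{k}\mu_m^{\otimes})\leq\mathcal{H}_{\{1,\ldots,n\}}(\mu_m)\leq\ln m+\frac{1}{m}\sum_{k}\mathcal{H}_{\{1,\ldots,n\}}(T_{\ast}^{k}\mu_m^{\otimes})$. It also splits each shifted window exactly into two partial blocks and full blocks, which is why it needs the side argument that the partial-block terms $\mathcal{H}_{\{1,\ldots,k\}}(\mu)$ are finite when $h(\mu)>-\infty$.

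You prove only $h(\mu_m)\geq m^{-1}\mathcal{H}_{\{1,\ldots,m\}}(\mu)$. You absorb the shifted window into $k+1$ full blocks via the coarse-graining monotonicity (\ref{ineq coarse graining rel entr Lambda}). The reverse inequality in the limit then comes from the weak$^{\ast}$-upper semicontinuity of $h$, which is immediate from Definition~\ref{uod}.

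Your route removes the partial-block bookkeeping and the finiteness discussion. The paper's route, in exchange, gives the exact value of $h(\mu_m)$ without invoking semicontinuity. Your trick also yields that value: for $0\leq n<m$ the window $\{n+1,\ldots,n+km\}$ contains the $k-1$ full blocks $\{m+1,\ldots,km\}$. Combined with the upper mixture estimate, this gives $h(\mu_m)\leq m^{-1}\mathcal{H}_{\{1,\ldots,m\}}(\mu)$.

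One correction to your justification. The lower mixture bound you use does not follow from (\ref{def5})--(\ref{def7}). That computation bounds $\int f\ln f$ from below, so it gives the \emph{upper} bound ``entropy of the mixture $\leq$ average $+\ln m$''. The lower bound is simply concavity of $\mathcal{H}_{\Lambda}$, i.e.\ pointwise convexity of $x\mapsto x\ln x$ applied to the Radon--Nikodym densities. It holds with no $-\ln m$ term at all: $\mathcal{H}_{\Lambda}(\mu_m)\geq\frac{1}{m}\sum_{n=0}^{m-1}\mathcal{H}_{\Lambda}(T_{\ast}^{n}\mu_m^{\otimes})$, with both sides equal to $-\infty$ if some density fails to exist. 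So your inequality is true, but for a different reason than the one you cite.
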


\begin{proof}
Let $\mu \in \mathcal{P}(T)$ and define the sequence $(\mu _{m})_{m\in 
\mathbb{N}}$ by (\ref{mu_m1})--(\ref{mu_m2}). Indeed, by the proof of
Proposition \ref{density of ergodic measures} (cf. Steps 2 and 3), $(\mu
_{m})_{m\in \mathbb{N}}$ converges to $\mu $ in the weak$^{\ast }$ topology
while $\mu _{m}$ is an ergodic measure for each $m\in \mathbb{N}$. By
Theorem \ref{teo-equiv-entrop},%
\begin{equation*}
h\left( \mu _{m}\right) =\lim_{n\rightarrow \infty }n^{-1}\mathcal{H}%
_{\{1,\ldots ,n\}}\left( \mu _{m}\right) =\lim_{n\rightarrow \infty }n^{-1}%
\mathcal{H}_{\{1,\ldots ,n\}}\left( \frac{1}{m}\sum_{k=0}^{m-1}T_{\ast
}^{k}\left( \mu _{m}^{\otimes }\right) \right) \text{ }.
\end{equation*}%
By convexity of the relative entropy, $\mathcal{H}_{\{1,\ldots ,n\}}$ is
concave. In particular, 
\begin{equation*}
\mathcal{H}_{\{1,\ldots ,n\}}\left( \frac{1}{m}\sum_{k=0}^{m-1}\mu
_{m,k}^{\otimes }\right) \geq \frac{1}{m}\sum_{k=0}^{m-1}\mathcal{H}%
_{\{1,\ldots ,n\}}\left( \mu _{m,k}^{\otimes }\right) \text{ },
\end{equation*}%
where $\mu _{m,k}^{\otimes }\doteq T_{\ast }^{k}\left( \mu _{m}^{\otimes
}\right) $. Since the Radon-Nikodym derivatives are $\mathrm{m}_{\otimes }$%
-almost everywhere positive and the function $\ln (\cdot )$ is monotonically
increasing, we have that 
\begin{eqnarray*}
\left( \frac{1}{m}\sum_{k=0}^{m-1}f_{\mu _{m,k}^{\otimes }}\right) \ln
\left( \frac{1}{m}\sum_{k=0}^{m-1}f_{\mu _{m,k}^{\otimes }}\right) &\geq &%
\frac{1}{m}\sum_{k=0}^{m-1}f_{\mu _{m,k}^{\otimes }}\ln \left( \frac{1}{m}%
f_{\mu _{m,k}^{\otimes }}\right) \\
&=&-\left( \frac{1}{m}\sum_{k=0}^{m-1}f_{\mu _{m,k}^{\otimes }}\right) \ln m+%
\frac{1}{m}\sum_{k=0}^{m-1}f_{\mu _{m,k}^{\otimes }}\ln \left( f_{\mu
_{m,k}^{\otimes }}\right)
\end{eqnarray*}%
($\mathrm{m}_{\otimes }$-almost everywhere). Therefore,%
\begin{equation*}
\frac{1}{m}\sum_{k=0}^{m-1}\mathcal{H}_{\{1,\ldots ,n\}}\left( \mu
_{m,k}^{\otimes }\right) \leq \mathcal{H}_{\{1,\ldots ,n\}}\left( \frac{1}{m}%
\sum_{k=0}^{m-1}\mu _{m,n}^{\otimes }\right) \leq \ln m+\frac{1}{m}%
\sum_{k=0}^{m-1}\mathcal{H}_{\{1,\ldots ,n\}}\left( \mu _{m,k}^{\otimes
}\right) \text{ }.
\end{equation*}%
(This is essentially the same argument used in the proof of Proposition \ref%
{Affinity of the entropy copy(1)} to prove the affine property of the
entropy.) It follows that%
\begin{align}
h\left( \mu _{m}\right) & =\lim_{n\rightarrow \infty }\frac{1}{mn}%
\sum_{k=0}^{m-1}\mathcal{H}_{\{1,\ldots ,n\}}\left( \mu _{m,k}^{\otimes
}\right) =\lim_{n\rightarrow \infty }\frac{1}{m^{2}n}\sum_{k=0}^{m-1}%
\mathcal{H}_{\{1,\ldots ,mn\}}\left( \mu _{m,k}^{\otimes }\right)  \notag \\
& =\lim_{n\rightarrow \infty }\frac{1}{m^{2}n}\sum_{k=0}^{m-1}\mathcal{H}%
_{\{1+k,\ldots ,mn+k\}}\left( \mu _{m}^{\otimes }\right)  \notag \\
& =\lim_{n\rightarrow \infty }\frac{1}{m^{2}n}\left( n\mathcal{H}%
_{\{1,\ldots ,m\}}\left( \mu \right) +\sum_{k=1}^{m-1}\mathcal{H}%
_{\{1,\ldots ,k\}}\left( \mu \right) \right.  \notag \\
& \left. \qquad \qquad \qquad \qquad \qquad +\sum_{k=1}^{m-1}\left( (n-1)%
\mathcal{H}_{\{1,\ldots ,m\}}\left( \mu \right) +\mathcal{H}_{\{k+1,\ldots
,m\}}\left( \mu \right) \right) \right)  \notag \\
& =\frac{1}{m}\mathcal{H}_{\{1,\ldots ,m\}}\left( \mu \right) \ .  \label{fj}
\end{align}%
To get the third equality we use the additivity of the relative entropy with
respect to product probability measures, along with the $T$-invariance of $%
\mu $. Notice also, that by Inequality (\ref{ineq coarse graining rel entr
Lambda}) and the $T$-invariance of $\mu $, the terms 
\begin{equation*}
\mathcal{H}_{\{1,\ldots ,k\}}\left( \mu \right) \qquad \text{and}\qquad 
\mathcal{H}_{\{k+1,\ldots ,m\}}\left( \mu \right) =\mathcal{H}_{\{1,\ldots
,m-k\}}\left( \mu \right)
\end{equation*}%
for $k\in \{1,\ldots ,m-1\}$ have to be finite (i.e., not $-\infty $)
whenever $\mathcal{H}_{\{1,\ldots ,m\}}\left( \mu \right) >-\infty $. In
turn, $\mathcal{H}_{\{1,\ldots ,m\}}\left( \mu \right) >-\infty $ for all $%
m\in \mathbb{N}$ when the entropy $h\left( \mu \right) $ is finite, i.e., $%
h\left( \mu \right) >-\infty $), by Theorem \ref{teo-equiv-entrop} and
Inequality (\ref{ineq coarse graining rel entr Lambda}). In particular $%
h\left( \mu _{m}\right) $ is finite for all\ $m\in \mathbb{N}$ when $h\left(
\mu \right) $ is finite. Now, we deduce once again from Theorem \ref%
{teo-equiv-entrop} and (\ref{fj}) that $h\left( \mu _{m}\right) $ tends to $%
h\left( \mu \right) $, as $m\rightarrow \infty $.
\end{proof}

\subsubsection{Minimization of real-valued functions via $\Gamma $%
-regularization\label{Minimization}}

In 2012 we proved \cite{BruPedraconvex} that, in great generality, the
minimization problem of any non-convex and non-lower semicontinuous function
that is bounded from below and defined on a compact convex subset of a
locally convex real topological vector space can be analyzed through an
associated convex and lower semicontinuous function, which is the so-called $%
\Gamma $-regularization of the original function. We present this result
because it is important in Sections \ref{Section game + measusre} and \ref%
{Non-linear and Affine Pressures}, but is not common knowledge (at least in
the thermodynamic formalism community).

Fix once and for all in all the subsection a (nonempty) compact convex
subset $K$ of a locally convex real (topological vector) space\footnote{%
Topological vector spaces are here Hausdorff spaces, i.e., singletons in
those spaces are closed sets.} $\mathcal{X}$. The aim of \cite%
{BruPedraconvex} is to characterize of the set of \emph{generalized}
minimizers of real-valued functions on $K$. Given an (extended) function $%
f:K\rightarrow \left( -\infty ,\infty \right] $ the set of its generalized
minimizers is, by definition, the closure of the nonempty\footnote{$\mathit{%
\Omega }\left( h,K\right) $ is non-empty because any net $(x_{i})_{i\in
I}\subseteq K$ converges along a subnet,\ $K$ being compact.} set 
\begin{equation*}
\mathit{\Omega }\left( f,K\right) \doteq \left\{ x\in K:\exists
(x_{i})_{i\in I}\subseteq K\mathrm{\ \ }\text{with}\mathrm{\ }%
x_{i}\rightarrow x\;\text{and}\mathrm{\;}\lim_{I}f(x_{i})=\inf \,f(K)\right\}
\end{equation*}%
of all limit points of approximating minimizers of $f$. Note that the
function $f$\ is here completely general. In particular, it is \textbf{not}
necessarily convex or lower semicontinuous.

It turns out that the study of the above set of generalized minimizers can
be performed via $\Gamma $-regularization, which is defined as follows \cite[%
Eq. (1.3) in Chapter I]{Alfsen}: For any extended real-valued function $%
f:K\rightarrow \lbrack \mathrm{k},\infty ]$ with $\mathrm{k}\in \mathbb{R}$,
its $\Gamma $-regularization (on $K$) is the function defined as the
supremum over all affine and continuous minorants $m:\mathcal{X}\rightarrow 
\mathbb{R}$ of $f$, i.e., for all $x\in K$, 
\begin{equation}
\Gamma \left( f\right) \left( x\right) \doteq \sup \left\{ m(x):m\in \mathrm{%
A}\left( \mathcal{X}\right) \;\text{and }m|_{K}\leq f\right\} \ ,
\label{gamm regu general}
\end{equation}%
where $\mathrm{A}\left( \mathcal{X}\right) $ is the space of all affine
continuous real-valued functions on\ $\mathcal{X}$. Since the $\Gamma $%
-regularization $\Gamma \left( f\right) $ is a supremum over continuous
functions, $\Gamma \left( f\right) $ is a convex and lower semicontinuous
function on $K$. In fact, it is the largest weak$^{\ast }$-lower
semicontinuous convex function below $f$, by \cite[Corollary 3.2]%
{BruPedraconvex}. See, e.g., \cite{BruPedraconvex} or \cite[Section 10.5]%
{BruPedra2} for a short review on $\Gamma $-regularizations.

\begin{theorem}[Minimization of real-valued functions]
\label{theorem sympa}\mbox{ }\newline
For any function $f:K\rightarrow \lbrack \mathrm{k},\infty ]$ with $\mathrm{k%
}\in \mathbb{R}$, the following assertions hold true:\newline
\emph{(i)} 
\begin{equation*}
\inf \,f\left( K\right) =\inf \,\Gamma \left( f\right) \left( K\right) \ .
\end{equation*}%
\emph{(ii) }The set $\mathit{M}$ of minimizers of $\Gamma \left( f\right) $
over $K$ is the closed convex hull of the set $\mathit{\Omega }\left(
f,K\right) $ of generalized minimizers of $f$ over $K$, i.e., 
\begin{equation*}
\mathit{M}=\overline{\mathrm{co}}\left( \mathit{\Omega }\left( f,K\right)
\right) \ .
\end{equation*}%
\emph{(iii) }All extreme points of the compact convex set $\mathit{M}$
belong to the set of generalized minimizers of $f$, i.e., $\mathcal{E}\left( 
\mathit{M}\right) \subseteq \overline{\mathit{\Omega }\left( f,K\right) }$.
\end{theorem}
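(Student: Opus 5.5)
The plan is to work directly with the definition (\ref{gamm regu general}) of $\Gamma(f)$ as a supremum of continuous affine minorants, and to use only Hahn--Banach separation in the locally convex space $\mathcal{X}$ together with the compactness of $K$. Throughout, $\Gamma(f)$ is convex and lower semicontinuous on $K$, and $\Gamma(f)\leq f$. Moreover, $\Gamma(f)\geq \mathrm{k}$, since the constant function $\mathrm{k}$ is an admissible minorant.

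For (i), the inequality $\inf\Gamma(f)(K)\leq\inf f(K)$ is immediate from $\Gamma(f)\le f$. Conversely, the constant $c\doteq\inf f(K)\geq\mathrm{k}$ is an affine continuous minorant of $f$ on $K$ whenever $c$ is finite, so $\Gamma(f)\geq c$. If $c=\infty$, then $f\equiv\infty$ and every constant is a minorant, so $\Gamma(f)\equiv\infty$. Hence the two infima coincide.

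For (ii), set $c=\inf f(K)$ and $M=\{x\in K:\Gamma(f)(x)=c\}$. By lower semicontinuity and convexity of $\Gamma(f)$, $M$ is closed and convex. I would first show $\overline{\mathit{\Omega}(f,K)}\subseteq M$. If $x_i\to x$ with $f(x_i)\to c$, then lower semicontinuity gives
\[
\Gamma(f)(x)\le\liminf_I\Gamma(f)(x_i)\le\lim_I f(x_i)=c,
\]
so $x\in M$. This yields $\overline{\mathrm{co}}(\mathit{\Omega}(f,K))\subseteq M$.

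The reverse inclusion is the main obstacle. Suppose $x_0\in M\setminus C$, where $C\doteq\overline{\mathrm{co}}(\mathit{\Omega}(f,K))$ is compact and convex. By Hahn--Banach there is a continuous linear $\ell$ and constants $a<b$ with $\ell\le a$ on $C$ and $\ell(x_0)\ge b$. The key lemma is that for some $\varepsilon>0$ the set $U\doteq\{x\in K:\ell(x)>a'\}$, with $a<a'<b$, satisfies $f\ge c+\varepsilon$ on $U$. If this failed, there would be a net in $\overline{U}\cap K$ with $f\to c$. By compactness it has a cluster point in $\mathit{\Omega}(f,K)\cap\{\ell\ge a'\}$, contradicting $\ell\le a$ on $C$. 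On the complement $K\setminus U$ we only know $f\ge c$.

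Next I would build an affine minorant that beats $c$ at $x_0$:
\[
m\doteq c+\delta\,(\ell-a'),\qquad \delta>0 \text{ small}.
\]
On $K\setminus U$ we have $\ell\le a'$, so $m\le c\le f$. On $U$, $\ell$ is bounded above on the compact set $K$, say $\ell\le L$, so $m\le c+\delta(L-a')\le c+\varepsilon\le f$ once $\delta$ is small enough. Thus $m$ is an admissible minorant, and $\Gamma(f)(x_0)\ge m(x_0)\ge c+\delta(b-a')>c$, contradicting $x_0\in M$. Hence $M=C$.

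For (iii), $M=\overline{\mathrm{co}}(\overline{\mathit{\Omega}(f,K)})$, and $\overline{\mathit{\Omega}(f,K)}$ is compact, being a closed subset of $K$. Milman's theorem \cite[Proposition 1.5]{Phe} then gives $\mathcal{E}(M)\subseteq\overline{\mathit{\Omega}(f,K)}$.
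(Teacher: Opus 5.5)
Your argument is correct and gives a complete, self-contained proof. The paper itself does not prove this theorem. It imports it from \cite[Theorems 1.4--1.5]{BruPedraconvex}, where it belongs to a general theory of $\Gamma$-regularizations. That source also provides the characterization of $\Gamma(f)$ as the largest lower semicontinuous convex minorant \cite[Corollary 3.2]{BruPedraconvex}, which the paper uses throughout.

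Your route uses much less. It needs only three ingredients:
\begin{itemize}
\item strict Hahn--Banach separation of $x_0$ from the compact convex set $\overline{\mathrm{co}}(\mathit{\Omega}(f,K))$;
\item compactness of $K$, which yields your key lemma that $f\geq c+\varepsilon$ on the slab $\{x\in K:\ell(x)>a'\}$;
\item the explicit affine minorant $m=c+\delta(\ell-a')$, which forces $\Gamma(f)(x_0)>c$.
\end{itemize}
Part (iii) then follows from Milman's theorem \cite[Proposition 1.5]{Phe}, exactly as the paper uses it elsewhere.

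The cited approach is more general and more reusable. It is part of a framework that also yields the extended Bauer maximum principle and the Lanford III--Robinson-type results the paper quotes. Your argument is tailored to this single statement, but it is fully elementary and makes the mechanism transparent: a slab separating $x_0$ from the generalized minimizers, on which $f$ is uniformly above its infimum.

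One small gap needs a line. In (ii) the minorant $m=c+\delta(\ell-a')$ only makes sense when $c=\inf f(K)<\infty$. Treat $c=\infty$ separately, as you did in (i). In that case $f\equiv\infty$, so $\Gamma(f)\equiv\infty$ and $M=K=\mathit{\Omega}(f,K)$, and the claim is trivial.
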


This theorem correspond to \cite[Theorems 1.4--1.5]{BruPedraconvex} and
implies, among other things, a generalization to locally convex real spaces
of the Lanford III-Robinson theorem, proven for separable real Banach
spaces, which characterizes the subdifferentials of continuous convex
functions. See \cite[Theorem 1.8]{BruPedraconvex}.

\section{Abstract Theory of Bogoliubov Linearizations\label{Abstract Theory}}

The abstract structure of the arguments used above for the nonlinear
thermodynamic formalism of dynamical systems can be extracted, revealing a
much broader spectrum of applications than that considered in Section \ref%
{thermo form}. In fact, all one needs is a generic compact convex Hausdorff
space $K$, instead of the particular case of the metrizable weak$^{\ast }$%
-compact and convex space $\mathcal{P}(T)$ of $T$-invariant measures of
Section \ref{thermo form}, or the metrizable weak$^{\ast }$-compact and
convex space of states on the CAR $C^{\ast }$-algebra used in \cite%
{BruPedra2}. The considered nonlinear functions can also be very general, a
sum of a concave upper semicontinuous and a convex lower semicontinuous
function. Despite this being a very general framework, explicit results can
be obtained via self-consistency when the associated linear variational
problems are well understood. This transition between nonlinear variational
problems and linear ones is what we call here \emph{Bogoliubov linearization}%
, in honour of Bogoliubov's seminal approach used in his famous microscopic
theory of He$^{4}$ superfluidity, proposed in 1947. That is what is
presented in this section, forming a general theoretical framework that we
believe is both useful and elegant.

For the reader's convenience, we begin by recalling basic definitions and
properties of the Legendre-Fenchel transform and subdifferentials (Section %
\ref{Legendre-Fenchel transform section}). Next, we study the purely concave
case (Section \ref{Sect Bogoliubov app tech1}), which is then used as a
springboard to the general case (Section \ref{sect conc conv Bogo}). In
Section \ref{Decision}, this general study leads to decision rules of a
canonical two-person zero-sum game, referred to here as thermodynamic game,
which is the terminology proposed by \cite{BruPedra2}. In Section \ref%
{transport}, we conclude by linking this formalism to Monge-Kantorovich's
optimal transport and duality formulas. This opens up our approach to a
broader field of research.

\subsection{The Legendre-Fenchel transform and subdifferentials\label%
{Legendre-Fenchel transform section}}

Let $\mathcal{X}$ be any real normed space. Its (topological) dual space is
the set of all continuous linear functionals on $\mathcal{X}$ and is denoted 
$\mathcal{X}^{\ast }$, as is usual. The dual space $\mathcal{X}^{\ast }$ is
then endowed with the weak$^{\ast }$ topology. Recall that it is a locally
convex space. By \cite[Theorem 3.10]{Rudin}, $(\mathcal{X},\mathcal{X}^{\ast
})$ forms a so-called dual pair, meaning that, for all $x\in \mathcal{X}$,
the functional $y\mapsto y(x)\in \mathbb{C}$ on $\mathcal{X}^{\ast }$ is
continuous with respect to the weak$^{\ast }$ topology, and all linear
functionals which are continuous with respect to the weak$^{\ast }$ topology
have this form.\ 

Given such a dual pair $(\mathcal{X},\mathcal{X}^{\ast })$, the
Legendre-Fenchel transform\footnote{%
It is also called the conjugate of $g$.} $g^{\ast }$ of a function $g:%
\mathcal{X}\rightarrow (-\infty ,\infty ]$ (that is, $g$ is allowed to take
the value $\infty $) with nonempty domain 
\begin{equation*}
\mathrm{dom}\left( g\right) \doteq \left\{ x\in \mathcal{X}:g\left( x\right)
<\infty \right\} \neq \emptyset
\end{equation*}%
is the convex lower semicontinuous functional from $\mathcal{X}^{\ast }$ to $%
(-\infty ,\infty ]$ defined by 
\begin{equation}
g^{\ast }\left( x\right) \doteq \sup_{y\in \mathcal{X}}\left\{ x\left(
y\right) -g\left( y\right) \right\} \ ,\qquad x\in \mathcal{X}^{\ast }\ .
\label{conjugate definition}
\end{equation}%
Note that the conditions imposed on $g$ means that we only consider here
so-called \emph{proper} functions, as defined in \cite[Definition 1.3.1]%
{Schirotzek}. In this context, the double Legendre-Fenchel transform $%
g^{\ast \ast }$, also called the biconjugate of $g$, is nothing but the
so-called $\Gamma $-regularization of $g$ (see (\ref{gamm regu general})),
that is, the largest lower semicontinuous convex function below $g$. In
particular, if $g$ is a lower semicontinuous convex function then 
\begin{equation}
g\left( x\right) =\sup_{y\in \mathcal{X}^{\ast }}\left\{ y\left( x\right)
-g^{\ast }\left( y\right) \right\} \ ,\qquad x\in \mathcal{X}\ .
\label{Biconjugate definition}
\end{equation}%
Recall that the Legendre-Fenchel transform is always weak$^{\ast }$-lower
semicontinuous, being the supremum over weak$^{\ast }$-continuous (and
affine) functionals. See \cite[Theorem 2.2.4]{Schirotzek} and \cite[%
Corollary 3.2]{BruPedraconvex}. Observe from (\ref{Biconjugate definition})
that, for lower semicontinuous convex functions $g$, one must have 
\begin{equation}
\mathrm{dom}\left( g^{\ast }\right) \doteq \left\{ x\in \mathcal{X}:g^{\ast
}\left( x\right) <\infty \right\} \neq \emptyset \ ,
\label{sdfsdfsdfsdfsdfs}
\end{equation}%
since the function $g$ never takes the value $-\infty $ (it is even proper,
by assumption). In this case, the nonempty sets $\mathrm{dom}(g)$ and $%
\mathrm{dom}(g^{\ast })$ are always convex, by convexity of Legendre-Fenchel
transforms.

This essential property (that is, (\ref{Biconjugate definition})) is crucial
for allowing us to implement the \emph{Bogoliubov linearization}\ in a
mathematically rigorous manner.

Below, we focus on functions whose Legendre-Fenchel transform grows faster
than linearly. More precisely, we consider functions with large enough
minimal linear growth. In fact, given a dual pair $(\mathcal{X},\mathcal{X}%
^{\ast })$, we say that the Legendre-Fenchel transform $g^{\ast }$ of a
function $g:\mathcal{X}\rightarrow (-\infty ,\infty ]$ with nonempty domain $%
\mathrm{dom}(g)\neq \emptyset $ has \emph{minimal linear growth} $\lambda
\in \mathbb{R}^{+}$ if 
\begin{equation}
\lim_{R\rightarrow \infty }\sup_{y\in \mathcal{X}^{\ast }\backslash
B(0,R)}\left\{ \lambda \left\Vert y\right\Vert _{\mathrm{op}}-g^{\ast
}\left( y\right) \right\} =-\infty \ ,  \label{linear grow}
\end{equation}%
where 
\begin{equation}
B(0,R)\doteq \left\{ y\in \mathcal{X}^{\ast }:\left\Vert y\right\Vert _{%
\mathrm{op}}\leq R\right\} \subseteq \mathcal{X}^{\ast }  \label{ball1}
\end{equation}%
is the closed ball of radius $R\in \mathbb{R}^{+}$ and center $0\in \mathcal{%
X}^{\ast }$, while, as is usual, 
\begin{equation}
\left\Vert y\right\Vert _{\mathrm{op}}\doteq \sup \left\{ \left\vert y\left(
x\right) \right\vert :x\in \mathcal{X},\ \left\Vert x\right\Vert _{\mathcal{X%
}}\leq 1\right\} \ ,\qquad y\in \mathcal{X}^{\ast }\ .  \label{ball2}
\end{equation}%
If such a function $g$ is additionally convex and lower semicontinuous, then
we deduce from (\ref{Biconjugate definition}) and (\ref{linear grow}) the
existence of a strictly positive radius $R\in \mathbb{R}^{+}$ such that, for
any $x\in \mathcal{X}$ satisfying $\left\Vert x\right\Vert _{\mathcal{X}%
}\leq \lambda $,%
\begin{equation}
g\left( x\right) =\sup_{y\in \mathcal{X}^{\ast }}\left\{ y\left( x\right)
-g^{\ast }\left( y\right) \right\} =\sup_{y\in B(0,R)}\left\{ y\left(
x\right) -g^{\ast }\left( y\right) \right\} \ .  \label{inequality grow}
\end{equation}%
This property is important to define below \textquotedblleft equilibrium
states\textquotedblright .

Given again a dual pair $(\mathcal{X},\mathcal{X}^{\ast })$ and a function $%
g:\mathcal{X}\rightarrow (-\infty ,\infty ]$ with nonempty domain $\mathrm{%
dom}(g)\neq \emptyset $, the corresponding Legendre-Fenchel transform is
directly related to the so-called subdifferential of $g$ at $z\in \mathrm{dom%
}(g)$, which is defined by%
\begin{equation}
\partial g\left( z\right) \doteq \left\{ x\in \mathcal{X}^{\ast }:\forall
y\in \mathcal{X},\quad x\left( y-z\right) \leq g\left( y\right) -g\left(
z\right) \right\} \ .  \label{subdifferential}
\end{equation}%
In particular, by \cite[Proposition 4.4.1]{Schirotzek}, when $g$ is also
convex, $z\in \mathrm{dom}(g)$ is solution to the variational problem (\ref%
{conjugate definition}) if and only if $x\in \partial g\left( z\right) $.
When $g$ is a lower semicontinuous convex function, we can also infer from 
\cite[Proposition 4.4.1]{Schirotzek} that, for any $x\in \mathrm{dom}(g)$, $%
z\in \mathcal{X}^{\ast }$ is solution to (\ref{Biconjugate definition}) if
and only if $z\in \partial g\left( x\right) \subseteq \mathcal{X}^{\ast }$.
Note indeed that $z\in \mathcal{X}^{\ast }$ is solution to (\ref{Biconjugate
definition}) if and only if 
\begin{equation*}
g\left( x\right) +g^{\ast }\left( z\right) =z\left( x\right) \ .
\end{equation*}

\subsection{Bogoliubov linearizations for concave interactions\label{Sect
Bogoliubov app tech1}}

\noindent \textbf{General assumptions. }In all this subsection, we consider
the following assumptions:

\begin{itemize}
\item[A1] $\mathcal{X}$ is a real normed space, $K$ is a compact convex
Hausdorff space, $\tau :K\rightarrow \mathcal{X}$ is a continuous affine
transformation and $f:K\rightarrow \mathbb{\{-\infty \}\cup R}$ is an upper
semicontinuous concave function (which is of course not identically equal to 
$\mathbb{-\infty }$).

\item[A2] $g:\mathcal{X}\rightarrow \mathbb{R}$ is a lower semicontinuous
and convex function for which there is a radius $R_{0}\in \mathbb{R}^{+}$
such that $B(0,R)\cap \mathrm{dom}(g^{\ast })\subseteq \mathcal{X}^{\ast }$
is nonempty and weak$^{\ast }$-closed for all $R\in \lbrack R_{0},\infty )$.
Moreover, the Legendre-Fenchel transforms $g^{\ast }$ has minimal\emph{\ }%
linear growth $\Vert \tau \Vert _{\infty }\in \mathbb{R}^{+}$ in the sense
of Equation (\ref{linear grow}).
\end{itemize}

\noindent Above, recall that $B(0,R)\subseteq \mathcal{X}^{\ast }$ is the
weak$^{\ast }$-closed ball of radius $R\in \mathbb{R}^{+}$ and center $0\in 
\mathcal{X}^{\ast }$, see (\ref{ball1})--(\ref{ball2}). We use the standard
notation 
\begin{equation}
\left\Vert \tau \right\Vert _{\infty }\doteq \sup \left\{ \left\Vert \tau
\left( \mu \right) \right\Vert _{\mathcal{X}}:\mu \in K\right\}
\label{sup norm}
\end{equation}%
for the uniform norm (or sup norm) of $\tau $. Recall that any continuous
function on a compact is uniformly bounded. In particular, in Condition A2
one always has that $\Vert \tau \Vert _{\infty }<\infty $. Remark also from
Condition A2 that $g$ has obviously full domain, i.e., $\mathrm{dom}(g)=%
\mathcal{X}$ (it is also proper), and the convex set $\mathrm{dom}(g^{\ast
}) $ is nonempty, thanks to (\ref{sdfsdfsdfsdfsdfs}).

We call the elements of $K$ \emph{(generalized) states}. In this subsection
we study the set 
\begin{equation*}
E_{\mathcal{F}}=\left\{ \mu \in K:\mathcal{F}\left( \mu \right) =\sup 
\mathcal{F}\left( K\right) \right\}
\end{equation*}%
of \emph{nonlinear equilibrium states}, where the function $\mathcal{F}%
:K\rightarrow \mathbb{\{-\infty \}\cup R}$ is defined by\ 
\begin{equation}
\mathcal{F}\doteq f-g\circ \tau \ .  \label{cali F}
\end{equation}%
Note from Conditions A1--A2 that $\mathcal{F}$ is upper semicontinuous and
concave, and so, $E_{\mathcal{F}}$ is a (nonempty) compact convex set.

\begin{remark}
\mbox{ }\newline
Clearly, $\mathcal{F}:K\rightarrow \mathbb{\{-\infty \}\cup R}$ is an upper
semicontinuous concave function like $f$ in Condition A1. The advantage of
splitting $\mathcal{F}$ into two parts $f$ and $-g\circ \tau $ arises when
the Legendre-Fenchel transform of $g^{\ast }$ and the maximization of $%
f-y\circ \tau $, $y\in \mathcal{X}^{\ast }$, can be easily controlled. This
is the raison d'\^{e}tre of Bogoliubov linearizations.
\end{remark}

\noindent \textbf{Bogoliubov linearizations. }We associate to the function (%
\ref{cali F}) the family of functions 
\begin{equation}
\mathcal{G}_{y}\doteq f-y\circ \tau \text{ },\qquad y\in \mathcal{X}^{\ast }%
\text{ }.  \label{def G}
\end{equation}%
This leads in particular to the (nonempty) compact convex sets 
\begin{equation}
E_{\mathcal{G}_{y}}\doteq \left\{ \mu \in K:\mathcal{G}_{y}\left( \mu
\right) =\sup \mathcal{G}_{y}\left( K\right) \right\} \text{ },\qquad y\in 
\mathcal{X}^{\ast }\text{ },  \label{def Gbis}
\end{equation}%
of associated \emph{linear} equilibrium states. We call these new functions $%
\mathcal{G}_{y}$, $y\in \mathcal{X}^{\ast }$, \emph{Bogoliubov linearizations%
}\ of $\mathcal{F}$.

The name \textquotedblleft Bogoliubov linearization\textquotedblright\ used
here is reminiscent to what is known in Quantum Statistical Mechanics as the
Bogoliubov approximation or, sometimes, the approximating Hamiltonian
method. It is a generic method that has been first used in theoretical
physics, starting with Bogoliubov's microscopic superfluidity theory in 1947 
\cite{Bogoliubov1}. It can be made mathematically rigorous, as shown in \cite%
{Bogjunior,AHM-non-poly1,AHM-non-poly2,approx-hamil-method0,approx-hamil-method,approx-hamil-method2,BruPedra2,BruZagrebnov8,LiebSeiringerYngvason3}
for many cases in quantum physics, ranging from Bose gases to quantum-spin
and lattice-fermion systems.

However, apart from \cite{BruPedra2}, all the mathematical results obtained
beyond perturbative arguments (with in particular (often dilute) limits of
infinite particle numbers) on the Bogoliubov approximation concern only the
computation of pressures (or free-energy densities), i.e., the calculation
of infinite-volume limits of the logarithm of partition functions. As far as
we know, the monograph \cite{BruPedra2} is the only study of the Bogoliubov
approximation for equilibrium states. It thus provides results regarding all
correlation functions (or all $N$-body density matrices) and solves in the
case of lattice-fermion systems an old open problem that was first raised by
Ginibre in 1968 \cite{Ginibre}. This approach \cite{BruPedra2} has been
significantly generalized here. Indeed, in the quantum framework, previously
considered in \cite{BruPedra2}, only the specific case of \textquotedblleft
Example \ref{example F I}\footnote{%
Adapted appropriately to the fermionic situation.}\textquotedblright\ is
studied. \bigskip

\noindent \textbf{Exactness of Bogoliubov linearizations at the variational
problem level. }The relationship between the maximization of the function $%
\mathcal{F}:K\rightarrow \mathbb{\{-\infty \}\cup R}$ and its Bogoliubov
linearizations $\mathcal{G}_{y}:K\rightarrow \mathbb{\{-\infty \}\cup R}$ is
established in the sequel using the celebrated von Neumann minimax theorem
(Theorem \ref{theorem minmax von Neumann}). In fact, they are related via
the variational problem 
\begin{equation*}
\mathrm{P}\doteq \inf_{y\in \mathcal{X}^{\ast }}P_{\mathrm{NL}}\left(
y\right)
\end{equation*}%
and its set 
\begin{equation}
\mathrm{M}\doteq \left\{ y\in \mathcal{X}^{\ast }:P_{\mathrm{NL}}\left(
y\right) =\mathrm{P}\right\}  \label{set M}
\end{equation}%
of minimizers, where%
\begin{equation*}
P_{\mathrm{NL}}\left( y\right) \doteq P_{\mathrm{L}}\left( y\right) +g^{\ast
}\left( y\right) ,\qquad P_{\mathrm{L}}\left( y\right) \doteq \sup \mathcal{G%
}_{y}\left( K\right) ,\qquad y\in \mathcal{X}^{\ast }\text{ }.
\end{equation*}%
We call the quantities $P_{\mathrm{L}}(y)$ and $P_{\mathrm{NL}}(y)$, $y\in 
\mathcal{X}^{\ast }$, respectively the linear and nonlinear approximating
pressures associated with the Bogoliubov linearization $\mathcal{G}_{y}$. As 
$P_{\mathrm{L}}(y)\in \mathbb{R}$ for all $y\in \mathcal{X}^{\ast }$, $P_{%
\mathrm{NL}}$ defines a real-valued function on the nonempty convex set $%
\mathrm{dom}(g^{\ast })$ and $P_{\mathrm{NL}}(x)=\infty $ when $x\notin 
\mathrm{dom}(g^{\ast })$.

Note that $\mathrm{P}<\infty $, because $\mathrm{dom}(g^{\ast })\neq
\emptyset $. From Lemma \ref{Lemma bogoluibov4} ($\flat $) (trivially
adapted to the case $g_{+}^{\ast }=0$)\ we can infer that Conditions A1--A2\
imply $\mathrm{P}\in \mathbb{R}$, i.e., $\mathrm{P}>-\infty $. Under the
same conditions, we meanwhile have $\sup \mathcal{F}\left( K\right) \in 
\mathbb{R}$, because $\mathcal{F}$ (\ref{cali F}) is bounded from above, as
it is upper semicontinuous, $\mathcal{F}\left( \mu \right) <\mathbb{\infty }$
for all $\mu \in K$, and $K$ is compact. In fact, the supremum of the
function $\mathcal{F}$ (\ref{cali F}) turns out to be nothing but the
variational problem $\mathrm{P}$:

\begin{proposition}[Exactness of Bogoliubov linearizations]
\label{Proposition importante bogoluibov01}\mbox{ }\newline
Under Conditions A1--A2, 
\begin{equation*}
\sup \mathcal{F}\left( K\right) =\mathrm{P}\doteq \inf_{y\in \mathcal{X}%
^{\ast }}P_{\mathrm{NL}}\left( y\right) \in \mathbb{R}
\end{equation*}%
and there are $x\in \mathrm{M}$ and $\nu \in E_{\mathcal{G}_{x}}$ such that $%
x\in \partial g\left( \tau \left( \nu \right) \right) $, $\nu \in E_{%
\mathcal{F}}$ and%
\begin{equation}
x\circ \tau \left( \nu \right) -g^{\ast }\left( x\right) =\sup_{y\in 
\mathcal{X}^{\ast }}\left\{ y\circ \tau \left( \nu \right) -g^{\ast }\left(
y\right) \right\} \ .  \label{gap eq}
\end{equation}
\end{proposition}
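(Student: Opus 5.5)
Our approach is to write $\sup\mathcal{F}(K)$ as a max--min, swap the two operations with the von Neumann minimax theorem (Theorem \ref{theorem minmax von Neumann}), and then read off the self-consistent pair $(x,\nu)$ from the saddle structure. By (\ref{Biconjugate definition}) applied to the lower semicontinuous convex $g$, for each $\mu\in K$,
\begin{equation*}
\mathcal{F}(\mu)=f(\mu)-\sup_{y\in\mathcal{X}^{\ast}}\{y(\tau(\mu))-g^{\ast}(y)\}=\inf_{y\in\mathcal{X}^{\ast}}\{\mathcal{G}_{y}(\mu)+g^{\ast}(y)\}\ .
\end{equation*}
Hence $\sup\mathcal{F}(K)=\sup_{\mu\in K}\inf_{y}L(\mu,y)$, where $L(\mu,y)\doteq f(\mu)-y\circ\tau(\mu)+g^{\ast}(y)$. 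By definition, $\mathrm{P}=\inf_{y}\sup_{\mu}L(\mu,y)$. The inequality $\sup\inf\le\inf\sup$ is trivial, so only the reverse inequality needs work.

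First, restrict $y$ to a compact set. Since $\Vert\tau(\mu)\Vert_{\mathcal{X}}\le\Vert\tau\Vert_{\infty}$, the minimal linear growth in A2 combined with (\ref{inequality grow}) yields $R\ge R_{0}$ such that, uniformly in $\mu\in K$, the infimum over $y$ may be taken over $K_{R}\doteq B(0,R)\cap\mathrm{dom}(g^{\ast})$. Enlarging $R$ if needed, the infimum of $P_{\mathrm{NL}}$ may also be taken over $K_{R}$, because $P_{\mathrm{L}}(y)\ge f(\mu_{0})+y(-\tau(\mu_{0}))$ for a fixed $\mu_{0}$ with $f(\mu_{0})>-\infty$, and $g^{\ast}(y)-\Vert\tau\Vert_{\infty}\Vert y\Vert_{\mathrm{op}}\to\infty$. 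By A2 and Banach--Alaoglu, $K_{R}$ is a nonempty convex weak$^{\ast}$-compact set. The map $L$ is concave and upper semicontinuous in $\mu$ (because $f$ is u.s.c. concave and $\tau$ is affine and continuous), and convex and weak$^{\ast}$-lower semicontinuous in $y$ (because $g^{\ast}$ is). The values $-\infty$ allowed for $f$ and $+\infty$ allowed for $g^{\ast}$ are harmless: $g^{\ast}$ is finite on $K_{R}$, and $f$ can be restricted to the convex set $\mathrm{dom}(f)$. The minimax theorem on $K\times K_{R}$ then gives $\sup\mathcal{F}(K)=\mathrm{P}$, and finiteness follows from the remarks preceding the statement.

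Next, produce the pair $(x,\nu)$. Since $\mathcal{F}$ is u.s.c. on the compact $K$, pick $\nu\in E_{\mathcal{F}}$. The function $P_{\mathrm{NL}}=\sup_{\mu}L(\mu,\cdot)$ is weak$^{\ast}$-l.s.c. on the weak$^{\ast}$-compact $K_{R}$, so it attains its infimum at some $x\in\mathrm{M}$. Then
\begin{equation*}
\mathrm{P}=\mathcal{F}(\nu)\le L(\nu,x)\le P_{\mathrm{NL}}(x)=\mathrm{P}\ ,
\end{equation*}
so both inequalities are equalities. The second equality says $\nu\in E_{\mathcal{G}_{x}}$. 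The first says $\inf_{y}L(\nu,y)=L(\nu,x)$, which after subtracting $f(\nu)$ is exactly (\ref{gap eq}). By the subdifferential characterization recalled after (\ref{subdifferential}), equation (\ref{gap eq}) is equivalent to $x\in\partial g(\tau(\nu))$. A cancellation issue arises if $f(\nu)=-\infty$, but this cannot happen since $\mathcal{F}(\nu)=\mathrm{P}\in\mathbb{R}$.

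The main obstacle is the compactification step. One needs a single radius $R$ that works simultaneously for all $\mu$ and for the outer infimum, and one must check the exact hypotheses of the version of the minimax theorem being invoked, namely semicontinuity in both variables and compactness of both sets. That is precisely why A2 demands that $B(0,R)\cap\mathrm{dom}(g^{\ast})$ be weak$^{\ast}$-closed.
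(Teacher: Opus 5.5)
Your architecture is the paper's: biconjugate representation of $g$, one radius $R$ that puts both infima over $y$ on $K_R=B(0,R)\cap\mathrm{dom}(g^{\ast})$, then a minimax theorem. The minimax step, however, does not go through as written. Theorem \ref{theorem minmax von Neumann} needs a \emph{real-valued} function on a product of two \emph{compact} convex sets. On $K\times K_R$ your $L$ equals $-\infty$ wherever $f=-\infty$. Your remark that $f$ ``can be restricted to $\mathrm{dom}(f)$'' then costs you compactness. Upper semicontinuity only makes the superlevel sets $\{f\geq c\}$, $c\in\mathbb{R}$, closed, and $\{f>-\infty\}$ is merely their countable union, so it need not be closed. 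For instance, take $K=[0,1]$, $f(t)=\ln t$, $f(0)=-\infty$. The same happens in the paradigmatic example with a non-atomic a priori measure $\mathrm{m}$: the entropy is $-\infty$ at the Dirac measure on a constant string, which is a weak$^{\ast}$ limit of Bernoulli measures of finite entropy. The paper's Step 2 has the same defect: it asserts that $N_{0}=\{f>-\infty\}$ is weak$^{\ast}$-closed, which does not follow from upper semicontinuity.

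The repair is easy, and it suits your argument better than the paper's. Use a minimax theorem that needs compactness on one side only, such as Sion's or Ky Fan's. The function $L$ is real-valued on $K_R\times\mathrm{dom}(f)$. It is convex and weak$^{\ast}$-lower semicontinuous in $y$ on the weak$^{\ast}$-compact convex set $K_R$, and concave and upper semicontinuous in $\mu$ on the convex set $\mathrm{dom}(f)$. Hence $\min_{K_R}\sup_{\mathrm{dom}(f)}L=\sup_{\mathrm{dom}(f)}\min_{K_R}L$, which is $\mathrm{P}=\sup\mathcal{F}(K)$ after your compactification. Alternatively, apply Theorem \ref{theorem minmax von Neumann} on the compact convex sets $\{f\geq c\}\times K_R$ and let $c\to-\infty$: the minima over $K_R$ of the increasing family of weak$^{\ast}$-lower semicontinuous functions $y\mapsto\sup_{\{f\geq c\}}L(\cdot,y)$ converge to $\min_{K_R}P_{\mathrm{NL}}$. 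These theorems give only equality of the values, not a saddle point on $K\times K_R$. This is where your second half is better than the paper's. The paper reads $(x,\nu)$ off the saddle point produced by von Neumann's theorem. You instead take an arbitrary $\nu\in E_{\mathcal{F}}$ and an arbitrary $x\in\mathrm{M}$, both of which exist by semicontinuity and compactness, and close the chain $\mathrm{P}=\mathcal{F}(\nu)\leq L(\nu,x)\leq P_{\mathrm{NL}}(x)=\mathrm{P}$. That needs only the equality of values, and it proves more than the statement: every pair in $\mathrm{M}\times E_{\mathcal{F}}$ satisfies the self-consistency relations. This is the content of Proposition \ref{Proposition importante bogoluibov01 copy(1)}, which the paper derives separately.
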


\begin{proof}
We divide the proof into three steps: \medskip

\noindent \underline{Step 1:} Recall that there is a radius $R_{0}\in 
\mathbb{R}^{+}$ such that the (convex) set $B(0,R)\cap \mathrm{dom}(g^{\ast
})\subseteq \mathcal{X}^{\ast }$ is nonempty and weak$^{\ast }$-closed for
all $R\in \lbrack R_{0},\infty )$ (see Condition A2). On the one hand, by
Lemma \ref{Lemma suplin growth} ($\flat $), there is $R_{1}\in \lbrack
R_{0},\infty )$ such that, for all $R\in \lbrack R_{1},\infty )$,%
\begin{equation}
\mathrm{P}=\inf_{y\in B(0,R)\cap \mathrm{dom}\left( g^{\ast }\right) }P_{%
\mathrm{NL}}\left( y\right) <\inf_{y\in \mathcal{X}^{\ast }\backslash
B(0,R)}P_{\mathrm{NL}}\left( y\right) \ .  \label{condition1bis}
\end{equation}%
On the other hand, by Equations (\ref{Biconjugate definition}) and (\ref%
{cali F}) together with Condition A2 (cf. (\ref{inequality grow})) there is
also $R_{2}\in \lbrack R_{0},\infty )$ such that, for all $R\in \lbrack
R_{2},\infty )$, 
\begin{equation}
\sup \mathcal{F}\left( K\right) =\sup_{\mu \in K}\inf_{y\in B(0,R)\cap 
\mathrm{dom}\left( g^{\ast }\right) }\left\{ f\left( \mu \right) -y\circ
\tau \left( \mu \right) +g^{\ast }\left( y\right) \right\} \ .
\label{condition1bisbis}
\end{equation}%
Therefore, there is $R\in \mathbb{R}^{+}$ ($R\geq R_{j}$, $j=0,1,2$) such
that Equations (\ref{condition1bis})--(\ref{condition1bisbis}) hold true and
the (convex) set $B(0,R)\cap \mathrm{dom}(g^{\ast })\subseteq \mathcal{X}%
^{\ast }$ is nonempty and weak$^{\ast }$-closed.\medskip

\noindent \underline{Step 2:} The space $K$\ is by assumption compact,
convex and Hausdorff (see Condition A1). Observe that the set 
\begin{equation*}
N_{0}\doteq \left\{ \mu \in K:f\left( \mu \right) >-\infty \right\} \ 
\end{equation*}%
is weak$^{\ast }$-closed and convex because the function $f$ is by
assumption upper semicontinuous and concave (see again Condition A1). In
particular, $N_{0}$ is a compact convex Hausdorff subspace of $K$.
Meanwhile, $B(0,R)$ is a (Hausdorff) weak$^{\ast }$-compact convex subset
dual vector space $\mathcal{X}^{\ast }$, thanks to the Banach-Alaoglu
theorem \cite[Theorem 3.15]{Rudin}. By taking a radius $R\in \mathbb{R}^{+}$
as in Step 1 we ensure that (\ref{condition1bis})--(\ref{condition1bisbis})
hold true and the nonempty\ convex set 
\begin{equation*}
M_{0}\doteq B(0,R)\cap \mathrm{dom}\left( g^{\ast }\right)
\end{equation*}%
is weak$^{\ast }$-closed, and therefore weak$^{\ast }$-compact. \medskip

\noindent \underline{Step 3:} Define the real-valued function $\mathfrak{f}%
:M_{0}\times N_{0}\rightarrow \mathbb{R}$ by 
\begin{equation*}
\mathfrak{f}\left( y,\mu \right) \doteq f\left( \mu \right) -y\circ \tau
\left( \mu \right) +g^{\ast }\left( y\right) \in \mathbb{R}\ ,\qquad y\in
M_{0},\ \mu \in N_{0}\ .
\end{equation*}%
For all $\mu \in N_{0}$, the mapping $y\mapsto \mathfrak{f}(y,\mu )$ is
convex and weak$^{\ast }$-lower semicontinuous, whereas, for all $y\in M_{0}$%
, the mapping $\mu \mapsto \mathfrak{f}(y,\mu )$ is concave and upper
semicontinuous, since $f$ is by assumption concave and upper semicontinuous
and $\tau :K\rightarrow \mathcal{X}$ is continuous and affine (see Condition
A1). By applying the von Neumann minimax theorem (Theorem \ref{theorem
minmax von Neumann}), we conclude the existence of a saddle point $(x,\nu
)\in M_{0}\times N_{0}$ of the function $\mathfrak{f}$. Since the radius $%
R\in \mathbb{R}^{+}$ is taken so that (\ref{condition1bis})--(\ref%
{condition1bisbis}) hold true, we infer from (\ref{condition1bisbis}) that 
\begin{equation*}
\sup \mathcal{F}\left( K\right) =\sup_{\mu \in N_{0}}\inf_{y\in M_{0}}%
\mathfrak{f}\left( y,\mu \right) =\inf_{y\in M_{0}}\sup_{\mu \in N_{0}}%
\mathfrak{f}\left( y,\mu \right) =\inf_{y\in B(0,R)}P_{\mathrm{NL}}\left(
y\right) \ .
\end{equation*}%
By the definition of a saddle point, the pair $(x,\nu )$ satisfies $\nu \in
E_{\mathcal{G}_{x}}\cap E_{\mathcal{F}}$, $x\in \mathrm{M}$ and 
\begin{equation*}
g\circ \tau \left( \nu \right) =\sup_{y\in \mathcal{X}^{\ast }}\left\{
y\circ \tau \left( \nu \right) -g^{\ast }\left( y\right) \right\} =x\circ
\tau \left( \nu \right) -g^{\ast }\left( x\right) \ .
\end{equation*}%
In particular, since the function $g$ never takes an infinite value (see
Condition A2), we deduce from \cite[Proposition 4.4.1]{Schirotzek} that $%
x\in \partial g\left( \tau \left( \nu \right) \right) \subseteq \mathcal{X}%
^{\ast }$. \bigskip
\end{proof}

\noindent \textbf{Non-exactness of Bogoliubov linearizations at the level of
equilibrium states. }Under Conditions A1--A2, the function $\mathcal{G}_{y}$
is upper semicontinuous for all $y\in \mathcal{X}^{\ast }$ and therefore,
there is $\nu _{y}\in K$ such that 
\begin{equation}
\sup \mathcal{G}_{y}\left( K\right) =\mathcal{G}_{y}\left( \nu _{y}\right) \
.  \label{varsfgsgs}
\end{equation}%
In other words, $E_{\mathcal{G}_{y}}\neq \emptyset $ for all $y\in \mathcal{X%
}^{\ast }$. In particular, each minimizer $x\in \mathrm{M}$ (see (\ref{set M}%
)) leads to a nonempty set $E_{\mathcal{G}_{x}}$ of linear equilibrium
states of the corresponding Bogoliubov linearization. These sets are
expected to provide good candidates for the nonlinear equilibrium states of
the function $\mathcal{F}$ (\ref{cali F}). However, as shown in \cite[%
Section 9.2]{BruPedra2} for fermion-lattice systems where $K$ is the
(metrizable) weak$^{\ast }$-compact convex space of translation-invariant
states on the CAR $C^{\ast }$-algebra of the infinite lattice, elements of
the sets $E_{\mathcal{G}_{x}}$, $x\in \mathrm{M}$, are not necessarily
nonlinear equilibrium states of the function (\ref{cali F}), i.e., elements
of $E_{\mathcal{F}}$. In particular, we cannot generally expect the
inclusion $E_{\mathcal{G}_{x}}\subseteq E_{\mathcal{F}}$ for $x\in \mathrm{M}
$. Instead, we only have the opposite inclusion, as the following
proposition demonstrates:

\begin{proposition}[Bogoliubov linearizations -- Equilibrium states]
\label{Proposition importante bogoluibov01 copy(1)}\mbox{ }\newline
Under Conditions A1--A2, 
\begin{equation*}
E_{\mathcal{F}}\subseteq \bigcap\limits_{x\in \mathrm{M}}E_{\mathcal{G}%
_{x}}\qquad \text{and}\qquad \emptyset \neq \mathrm{M}\subseteq
\bigcap\limits_{\nu \in E_{\mathcal{F}}}\partial g\left( \tau \left( \nu
\right) \right) .
\end{equation*}
\end{proposition}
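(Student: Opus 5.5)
The plan is to fix arbitrary $\nu \in E_{\mathcal{F}}$ and $x\in \mathrm{M}$ and run a chain of inequalities that is forced to collapse into equalities, because both ends equal $\sup \mathcal{F}(K)=\mathrm{P}$ by Proposition \ref{Proposition importante bogoluibov01}. Nonemptiness of $\mathrm{M}$ is already provided by that proposition, which produces some $x\in \mathrm{M}$. So only the two inclusions remain to be shown.

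By the Fenchel--Young inequality, which follows directly from the definition (\ref{conjugate definition}) of $g^{\ast }$, we have $g(\tau (\nu ))+g^{\ast }(x)\geq x(\tau (\nu ))$ for every $x\in \mathcal{X}^{\ast }$. We can also use $\mathcal{G}_{x}(\nu )\leq P_{\mathrm{L}}(x)$. Combining these gives
\begin{equation*}
\mathrm{P}=\sup \mathcal{F}(K)=\mathcal{F}(\nu )=f(\nu )-g(\tau (\nu ))\leq f(\nu )-x\circ \tau (\nu )+g^{\ast }(x)=\mathcal{G}_{x}(\nu )+g^{\ast }(x)\leq P_{\mathrm{L}}(x)+g^{\ast }(x)=P_{\mathrm{NL}}(x)=\mathrm{P}.
\end{equation*}
All quantities here are finite. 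Indeed, $\mathcal{F}(\nu )=\mathrm{P}\in \mathbb{R}$ forces $f(\nu )>-\infty $, and $g$ is real-valued by Condition A2. Moreover $x\in \mathrm{M}$ gives $P_{\mathrm{NL}}(x)=\mathrm{P}<\infty $, so $x\in \mathrm{dom}(g^{\ast })$.

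Since the chain starts and ends at $\mathrm{P}$, both inequalities are equalities. Equality in the second step says $\mathcal{G}_{x}(\nu )=\sup \mathcal{G}_{x}(K)$, that is, $\nu \in E_{\mathcal{G}_{x}}$. Since $x\in \mathrm{M}$ and $\nu \in E_{\mathcal{F}}$ were arbitrary, this gives the first inclusion. Equality in the first step says $g(\tau (\nu ))+g^{\ast }(x)=x(\tau (\nu ))$. By the characterization recalled at the end of Section \ref{Legendre-Fenchel transform section} (\cite[Proposition 4.4.1]{Schirotzek}, applicable because $g$ is lower semicontinuous, convex and finite everywhere), this is equivalent to $x\in \partial g(\tau (\nu ))$. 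That yields $\mathrm{M}\subseteq \bigcap_{\nu \in E_{\mathcal{F}}}\partial g(\tau (\nu ))$.

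There is no real obstacle here, since the heavy lifting (equality $\sup \mathcal{F}(K)=\mathrm{P}$ via von Neumann's minimax theorem) is already done. The only point needing care is the bookkeeping of possibly infinite values. This means making sure $f(\nu )$ and $g^{\ast }(x)$ are finite so that subtracting within the chain is legitimate, which is handled as noted above.
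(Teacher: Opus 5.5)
Your proof is correct and is essentially the paper's argument. Step 2 of the paper's proof runs the same chain for an arbitrary $\tilde{x}\in \mathrm{M}$ and $\tilde{\nu}\in E_{\mathcal{F}}$, using $\sup \mathcal{F}(K)=\sup \mathcal{G}_{\tilde{x}}(K)+g^{\ast }(\tilde{x})$ and the biconjugate formula to force equality, which yields both $\tilde{x}\in \partial g(\tau (\tilde{\nu}))$ and $\tilde{\nu}\in E_{\mathcal{G}_{\tilde{x}}}$; the paper's preliminary Step 1, which treats only the particular saddle-point functional $x$, is subsumed by this, so your single pass is a slightly streamlined version of the same proof.
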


\begin{proof}
We divide the proof into two steps: \medskip

\noindent \underline{Step 1:} By Proposition \ref{Proposition importante
bogoluibov01}, there is a continuous linear functional $x\in \mathrm{M}$,
along with an equilibrium state $\nu \in E_{\mathcal{G}_{x}}\cap E_{\mathcal{%
F}}$, satisfying (\ref{gap eq}). Using (\ref{Biconjugate definition}) we
thus observe that 
\begin{eqnarray}
\sup \mathcal{G}_{x}\left( K\right) &=&f\left( \nu \right) -x\circ \tau
\left( \nu \right)  \label{equality super bis} \\
&=&f\left( \nu \right) +\inf_{y\in \mathcal{X}^{\ast }}\left\{ -y\circ \tau
\left( \nu \right) +g^{\ast }\left( y\right) \right\} -g^{\ast }\left(
x\right)  \notag \\
&=&f\left( \nu \right) -g\circ \tau \left( \nu \right) -g^{\ast }\left(
x\right)  \notag \\
&=&\sup \mathcal{F}\left( K\right) -g^{\ast }\left( x\right) \ .
\label{equality super bisequality super bis}
\end{eqnarray}%
Take now any nonlinear equilibrium state $\tilde{\nu}\in E_{\mathcal{F}}$.
Then, going backwards from (\ref{equality super bisequality super bis}) to (%
\ref{equality super bis}) we obtain that 
\begin{equation*}
\mathcal{F}\left( \tilde{\nu}\right) -g^{\ast }\left( x\right) =\sup 
\mathcal{G}_{x}\left( K\right) \geq \mathcal{G}_{x}\left( \tilde{\nu}\right)
\ .
\end{equation*}%
Keeping in mind (\ref{Biconjugate definition}), (\ref{cali F}) and (\ref{def
G}), we deduce from the last inequality that 
\begin{equation*}
g\circ \tau \left( \tilde{\nu}\right) =\sup_{y\in \mathcal{X}^{\ast
}}\left\{ y\circ \tau \left( \tilde{\nu}\right) -g^{\ast }\left( y\right)
\right\} \leq x\circ \tau \left( \tilde{\nu}\right) -g^{\ast }\left(
x\right) \ .
\end{equation*}%
It follows that the continuous linear functional $x\in \mathrm{M}$ is always
solution to 
\begin{equation}
g\circ \tau \left( \tilde{\nu}\right) =\sup_{y\in \mathcal{X}^{\ast
}}\left\{ y\circ \tau \left( \tilde{\nu}\right) -g^{\ast }\left( y\right)
\right\} =x\circ \tau \left( \tilde{\nu}\right) -g^{\ast }\left( x\right)
\label{sdsdsddsd}
\end{equation}%
for all nonlinear equilibrium states $\tilde{\nu}\in E_{\mathcal{F}}$. By
going again backwards from (\ref{equality super bisequality super bis}) to (%
\ref{equality super bis}) with $\tilde{\nu}$ in place of $\nu $, along with (%
\ref{sdsdsddsd}), we conclude that, for any $\tilde{\nu}\in E_{\mathcal{F}}$%
, 
\begin{equation*}
\sup \mathcal{G}_{x}\left( K\right) =f\left( \tilde{\nu}\right) -x\circ \tau
\left( \tilde{\nu}\right)
\end{equation*}%
and, hence, $E_{\mathcal{F}}\subseteq E_{\mathcal{G}_{x}}$. \medskip

\noindent \underline{Step 2:} Take now any continuous linear functional $%
\tilde{x}\in \mathrm{M}$. By Proposition \ref{Proposition importante
bogoluibov01}, for any nonlinear equilibrium state $\tilde{\nu}\in E_{%
\mathcal{F}}$, 
\begin{equation*}
\mathcal{F}\left( \tilde{\nu}\right) =\sup \mathcal{F}\left( K\right) =\sup 
\mathcal{G}_{\tilde{x}}\left( K\right) +g^{\ast }\left( \tilde{x}\right)
\geq \mathcal{G}_{\tilde{x}}\left( \tilde{\nu}\right) +g^{\ast }\left( 
\tilde{x}\right) \ .
\end{equation*}%
Via\ Equations (\ref{Biconjugate definition}), (\ref{cali F}) and (\ref{def
G}), we thus get that, for all $\tilde{\nu}\in E_{\mathcal{F}}$, 
\begin{equation*}
g\circ \tau \left( \tilde{\nu}\right) =\sup_{y\in \mathcal{X}^{\ast
}}\left\{ y\circ \tau \left( \tilde{\nu}\right) -g^{\ast }\left( y\right)
\right\} =\tilde{x}\circ \tau \left( \tilde{\nu}\right) -g^{\ast }\left( 
\tilde{x}\right) \ .
\end{equation*}%
Since the function $g$ never takes an infinite value (cf. Condition A2), we
deduce from \cite[Proposition 4.4.1]{Schirotzek} and the last equation that $%
\mathrm{M}\subseteq \partial g\left( \tau \left( \tilde{\nu}\right) \right) $
for all $\tilde{\nu}\in E_{\mathcal{F}}$. From the above equalities in this
step, we also get that, for all $\tilde{\nu}\in E_{\mathcal{F}}$,%
\begin{equation*}
\mathcal{F}\left( \tilde{\nu}\right) =\sup \mathcal{G}_{\tilde{x}}\left(
K\right) +g^{\ast }\left( \tilde{x}\right) =\sup \mathcal{G}_{\tilde{x}%
}\left( K\right) -g\circ \tau \left( \tilde{\nu}\right) +\tilde{x}\circ \tau
\left( \tilde{\nu}\right) \ ,
\end{equation*}%
from which we conclude that%
\begin{equation*}
\sup \mathcal{G}_{\tilde{x}}\left( K\right) =\mathcal{F}\left( \tilde{\nu}%
\right) +g\circ \tau \left( \tilde{\nu}\right) -\tilde{x}\circ \tau \left( 
\tilde{\nu}\right) =f(\tilde{\nu})-\tilde{x}\circ \tau \left( \tilde{\nu}%
\right) \ .
\end{equation*}%
In other words, $E_{\mathcal{F}}\subseteq E_{\mathcal{G}_{\tilde{x}}}$ for
all $\tilde{x}\in \mathrm{M}$.
\end{proof}

The next corollary shows that the elements of $E_{\mathcal{F}}$ are in fact
elements of the sets $E_{\mathcal{G}_{x}}$, $x\in \mathrm{M}$, that satisfy
a self-consistency condition:

\begin{corollary}[Bogoliubov linearizations and self-consistency]
\label{corollary importante bogoluibov01 copy(1)}\mbox{ }\newline
Under Conditions A1--A2, 
\begin{equation*}
E_{\mathcal{F}}=\left\{ \nu \in E_{\mathcal{G}_{x}}:x\in \partial g\left(
\tau \left( \nu \right) \right) \right\} \doteq E_{\mathcal{G}_{x}}^{\mathrm{%
sc}}\ ,\qquad x\in \mathrm{M}\ .
\end{equation*}%
In particular, $E_{\mathcal{G}_{x}}^{\mathrm{sc}}\equiv E_{\mathcal{G}%
_{\cdot }}^{\mathrm{sc}}$ is a nonempty, compact, convex set which does not
depend upon the choice of $x\in \mathrm{M}$. (For this reason the notation $%
E_{\mathcal{G}_{\cdot }}^{\mathrm{sc}}$ is used.)
\end{corollary}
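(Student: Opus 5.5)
The plan is to prove the two inclusions separately for a fixed $x\in \mathrm{M}$, and then read off the remaining properties from the characterization.

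\emph{First inclusion.} $E_{\mathcal{F}}\subseteq E_{\mathcal{G}_{x}}^{\mathrm{sc}}$ is immediate from Proposition \ref{Proposition importante bogoluibov01 copy(1)}. Indeed, it gives $E_{\mathcal{F}}\subseteq E_{\mathcal{G}_{x}}$ for every $x\in \mathrm{M}$. It also gives $x\in \mathrm{M}\subseteq \partial g(\tau(\nu))$ for every $\nu \in E_{\mathcal{F}}$. So every nonlinear equilibrium state is a self-consistent linear equilibrium state for each $x\in \mathrm{M}$.

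\emph{Converse inclusion.} Take $\nu \in E_{\mathcal{G}_{x}}$ with $x\in \partial g(\tau(\nu))$. By \cite[Proposition 4.4.1]{Schirotzek}, the subdifferential condition is equivalent to the Fenchel equality $g(\tau(\nu))+g^{\ast}(x)=x(\tau(\nu))$. Then I would compute
\begin{equation*}
\mathcal{F}(\nu)=f(\nu)-x\circ \tau(\nu)+g^{\ast}(x)=\sup \mathcal{G}_{x}(K)+g^{\ast}(x)=P_{\mathrm{NL}}(x)=\mathrm{P}=\sup \mathcal{F}(K).
\end{equation*}
The second equality uses $\nu\in E_{\mathcal{G}_{x}}$. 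The next two use $x\in \mathrm{M}$ and the definition of $\mathrm{M}$. The last uses Proposition \ref{Proposition importante bogoluibov01}. Hence $\nu\in E_{\mathcal{F}}$. Note that $g^{\ast}(x)<\infty$ holds automatically, since $x\in\mathrm{M}$ and $\mathrm{P}\in\mathbb{R}$, so all quantities above are finite.

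\emph{Remaining properties.} Since $E_{\mathcal{G}_{x}}^{\mathrm{sc}}=E_{\mathcal{F}}$ for every $x\in\mathrm{M}$, the set does not depend on $x$. It is compact and convex because $\mathcal{F}$ is upper semicontinuous and concave on the compact convex set $K$ (Conditions A1--A2), so $E_{\mathcal{F}}$ is compact and convex. It is nonempty because $E_{\mathcal{F}}\neq\emptyset$, by upper semicontinuity on a compact set or directly by Proposition \ref{Proposition importante bogoluibov01}. The set $\mathrm{M}$ is nonempty by Proposition \ref{Proposition importante bogoluibov01 copy(1)}.

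There is essentially no obstacle here. The only delicate point is making sure the Fenchel equality characterization of $\partial g$ applies, which requires $g$ to be convex, lower semicontinuous and finite-valued; this is exactly Condition A2.
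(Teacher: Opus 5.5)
Your proof is correct and follows essentially the same route as the paper: the inclusion $E_{\mathcal{G}_{x}}^{\mathrm{sc}}\subseteq E_{\mathcal{F}}$ comes from combining the Fenchel equality for $x\in\partial g(\tau(\nu))$ with $x\in\mathrm{M}$ and Proposition~\ref{Proposition importante bogoluibov01}, and the reverse inclusion comes from Proposition~\ref{Proposition importante bogoluibov01 copy(1)}. The only difference is cosmetic: you cite that proposition's inclusion $\mathrm{M}\subseteq\partial g(\tau(\nu))$ for $\nu\in E_{\mathcal{F}}$ directly, whereas the paper re-derives it from the Fenchel equality.
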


\begin{proof}
Under Condition A1, recall that the function $\mathcal{G}_{y}$ is upper
semicontinuous for all $y\in \mathcal{X}^{\ast }$. So, for all $x\in \mathrm{%
M}$, there is a solution to the variational problem (\ref{varsfgsgs}) for $%
y=x$. So take an arbitrary solution $\nu _{x}\in E_{\mathcal{G}_{x}}$ to (%
\ref{varsfgsgs}). By Proposition \ref{Proposition importante bogoluibov01}
and Equation (\ref{Biconjugate definition}) it follows that, for all $x\in 
\mathrm{M}$, 
\begin{multline*}
f\left( \nu _{x}\right) -x\circ \tau \left( \nu _{x}\right) +g^{\ast }\left(
x\right) =\sup \mathcal{F}\left( K\right) \\
\geq f\left( \nu _{x}\right) -g\circ \tau \left( \nu _{x}\right) =f\left(
\nu _{x}\right) +\inf_{y\in \mathcal{X}^{\ast }}\left\{ -y\circ \tau \left(
\nu _{x}\right) +g^{\ast }\left( y\right) \right\} \ .
\end{multline*}%
The above inequality is satisfied with equality iff 
\begin{equation*}
x\circ \tau \left( \nu _{x}\right) -g^{\ast }\left( x\right) =\sup_{y\in 
\mathcal{X}^{\ast }}\left\{ y\circ \tau \left( \nu _{x}\right) -g^{\ast
}\left( y\right) \right\} <\infty \ ,
\end{equation*}%
meaning that $\tau (\nu _{x})\in \mathrm{dom}(g)$ and\ $x\in \partial
g\left( \tau \left( \nu _{x}\right) \right) $, thanks to \cite[Proposition
4.4.1]{Schirotzek}. Note that $g^{\ast }\left( x\right) $ is a real number
because $x\in \mathrm{M}$. Thus, for all $x\in \mathrm{M}$, $E_{\mathcal{G}%
_{x}}^{\mathrm{sc}}\subseteq E_{\mathcal{F}}$. Now, by Proposition \ref%
{Proposition importante bogoluibov01 copy(1)}, for any $\nu \in E_{\mathcal{F%
}}$ and $x\in \mathrm{M}$,\ one has that $\nu \in E_{\mathcal{G}_{x}}$ and,
thus, thanks again to Proposition \ref{Proposition importante bogoluibov01}
and Equation (\ref{Biconjugate definition}), 
\begin{equation*}
f\left( \nu \right) -x\circ \tau \left( \nu \right) +g^{\ast }\left(
x\right) =f\left( \nu \right) -g\circ \tau \left( \nu \right) =f\left( \nu
\right) +\inf_{y\in \mathcal{X}^{\ast }}\left\{ -y\circ \tau \left( \nu
\right) +g^{\ast }\left( y\right) \right\} \ .
\end{equation*}%
Hence,%
\begin{equation*}
x\circ \tau \left( \nu \right) -g^{\ast }\left( x\right) =\sup_{y\in 
\mathcal{X}^{\ast }}\left\{ y\circ \tau \left( \nu \right) -g^{\ast }\left(
y\right) \right\} <\infty \ ,
\end{equation*}%
and, by \cite[Proposition 4.4.1]{Schirotzek}, $x\in \partial g\left( \tau
\left( \nu \right) \right) $. Note in particular that, for all $x\in \mathrm{%
M}$, $E_{\mathcal{G}_{x}}^{\mathrm{sc}}$ is nonempty, as $E_{\mathcal{F}}$
is nonempty and $E_{\mathcal{G}_{x}}^{\mathrm{sc}}=E_{\mathcal{F}}$.
\end{proof}

The (nonempty)\ compact convex set $E_{\mathcal{G}_{\cdot }}^{\mathrm{sc}}$
is called here the set of \emph{self-consistent equilibrium states}\ of
Bogoliubov linearizations $\mathcal{G}_{x}$, $x\in \mathrm{M}$. This
terminology refers to the paradigmatic example $K=\mathcal{P}\left( T\right) 
$. See the discussions following Theorem \ref{Theorem-main1}.

\subsection{Bogoliubov linearizations for interactions with concave and
convex components\label{sect conc conv Bogo}}

\noindent \textbf{General assumptions. }In all this subsection, we consider
the following assumptions:

\begin{itemize}
\item[B1] $\mathcal{X}_{\pm }$ are two real normed spaces, $K$ is a compact
convex Hausdorff space, $\tau _{\pm }:K\rightarrow \mathcal{X}_{\pm }$ are
two continuous affine transformations and $f:K\rightarrow \mathbb{\{-\infty
\}\cup R}$ is an upper semicontinuous concave function (which is of course
not identically equal to $\mathbb{-\infty }$).

\item[B2] $g_{-}:\mathcal{X}_{-}\rightarrow \mathbb{R}$ is a lower
semicontinuous and convex function for which there is a positive radius $%
R_{0}\in \mathbb{R}^{+}$ such that $B_{-}(0,R)\cap \mathrm{dom}\left(
g_{-}^{\ast }\right) $ is nonempty and weak$^{\ast }$-closed for all $R\in
\lbrack R_{0},\infty )$. Moreover, $g_{-}^{\ast }$ has minimal\emph{\ }%
linear growth $\Vert \tau _{-}\Vert _{\infty }$ in the sense of Equation (%
\ref{linear grow}).

\item[B3] $g_{+}:\mathcal{X}_{+}\rightarrow \mathbb{R}$ is a lower
semicontinuous, bounded and convex function for which the Legendre-Fenchel
transform $g_{+}^{\ast }$ has full domain, i.e., $\mathrm{dom}(g_{+}^{\ast
})=\mathcal{X}_{+}^{\ast }$, and has minimal\emph{\ }linear growth $\Vert
\tau _{+}\Vert _{\infty }$ in the sense of Equation (\ref{linear grow}).
\end{itemize}

\noindent Above, $B_{\pm }(0,R)\subseteq \mathcal{X}_{\pm }^{\ast }$ are the
weak$^{\ast }$-closed ball of radius $R\in \mathbb{R}^{+}$ and center $0\in 
\mathcal{X}_{\pm }^{\ast }$. Note that the rather technical condition $%
\mathrm{dom}(g_{+}^{\ast })=\mathcal{X}_{+}^{\ast }$ in\ Condition B3 is
taken to ensure that the difference $g_{-}^{\ast }\circ \tau _{-}\left( \mu
\right) -g_{+}^{\ast }\circ \tau _{+}\left( \mu \right) $, $\mu \in K$,
which is an important object here, is always well-defined as an element of $%
\mathbb{R}\cup \{\infty \}$. It essentially says that $g_{+}(x)$ grows
faster than the norm $\Vert x\Vert _{\mathcal{X}}$, as $\Vert x\Vert _{%
\mathcal{X}}\rightarrow \infty $.

Again, we call the elements of $K$ \emph{(generalized) states}. In this
subsection we study the sets 
\begin{equation}
M_{\mathbb{F}}\doteq \left\{ \mu \in K:\mathbb{F}\left( \mu \right) =\sup 
\mathbb{F}\left( K\right) \right\}  \label{MF}
\end{equation}%
and 
\begin{equation}
E_{\mathbb{F}}\doteq \left\{ \mu \in K:\exists (\mu _{j})_{j\in J}\subseteq K%
\mathrm{\ }\text{with }\lim_{J}\mu _{j}=\mu \text{ and\ }\lim_{J}\mathbb{F}%
(\mu _{j})=\sup \mathbb{F}(K)\right\} \ ,  \label{EF}
\end{equation}%
of \emph{nonlinear equilibrium states} of the (Borel-measurable) function $%
\mathbb{F}:K\rightarrow \mathbb{\{-\infty \}\cup R}$ defined by\ 
\begin{equation}
\mathbb{F}\doteq f-g_{-}\circ \tau _{-}+g_{+}\circ \tau _{+}\ .
\label{blacb F}
\end{equation}%
Note from Conditions B1--B3 that $\mathbb{F}$ is bounded from above but 
\textbf{neither upper semicontinuous nor concave}, and so, in contrast to
the (purely) concave case ($E_{\mathcal{F}}$), the set $M_{\mathbb{F}}$ is
not necessarily convex anymore. Furthermore, the set $M_{\mathbb{F}}$ could
a priori be empty. By contrast, the set $E_{\mathbb{F}}\supseteq M_{\mathbb{F%
}}$ is necessarily nonempty.

All elements of $E_{\mathbb{F}}$, including those of $M_{\mathbb{F}}$ if
they exist, are called \emph{nonlinear equilibrium states}. In fact, as we
prove below, the two sets turn out to be equal. This may seem very
surprising, given that we are maximizing a function $\mathbb{F}$ that is not
upper semicontinuous.

\begin{remark}[Decomposition as a difference of semicontinuous functions]
\label{Remark-condition1}\mbox{ }\newline
The decomposition property of a function as the difference of lower
semicontinuous ones refers to the set of point-wise limits of a sequence $%
(f_{n})_{n\in \mathbb{N}}$ of continuous functions $K\rightarrow \mathbb{R}$
such that 
\begin{equation*}
\sum_{n\in \mathbb{N}}\left\vert f_{n+1}\left( \mu \right) -f_{n}\left( \mu
\right) \right\vert <\infty \ ,\qquad \mu \in K\ .
\end{equation*}%
If this inequality is uniform with respect to $\mu \in K$, the resulting
lower semicontinuous functions are bounded. Such classes of functions,
defined as differences of semicontinuous ones, are for example discussed in 
\cite{Haydon} for compact metric spaces $K$. However, this set of functions
does not generally exhaust all point-wise limits of continuous functions $%
K\rightarrow \mathbb{R}$, which, for separable and metrizable $K$, form the
Baire-1 class, by the Lebesgue-Hausdorff-Banach theorem \cite[(24.10)]%
{Kechris}; see more generally \cite[Chapter 24]{Kechris}, in particular \cite%
[(24.3)]{Kechris} for (possibly non separable) metrizable $K$.
\end{remark}

\begin{remark}[Decomposition as a difference of convex functions]
\label{Remark-condition2}\mbox{ }\newline
In various situations, very general functions can be represented as the
difference of convex functions. For example, any Gateaux-differentiable
function with domain in a Hilbert space, whose gradient mapping is
Lipschitz-continuous, can be written\footnote{%
For such a function $f$, write $f(x)=(f(x)+C\Vert x\Vert ^{2})-C\Vert x\Vert
^{2}$ for a sufficiently large constant $C\in \mathbb{R}^{+}$.} as a
difference of two convex Gateaux-differentiable functions. This is in
particular true for $C^{1}$-functions on any compact subset of $\mathbb{R}%
^{N}$ ($N\in \mathbb{N}$). Compare with \cite{TF4} and Section \ref%
{Buzzy-Kloeckner-Leplaideur's Approach}.
\end{remark}

\noindent \textbf{Bogoliubov linearizations. }Similar to the concave case,
we associate to the function (\ref{blacb F}) the family of functions 
\begin{equation}
\mathcal{G}_{y_{+},y_{-}}\doteq f-y_{-}\circ \tau _{-}+y_{+}\circ \tau
_{+},\qquad y_{\pm }\in \mathcal{X}_{\pm }^{\ast }\text{ }.
\label{def G mix}
\end{equation}%
This object generalizes the functions $\mathcal{G}_{y}$ defined by Equation (%
\ref{def G}) and leads to the (nonempty) compact convex set 
\begin{equation*}
E_{\mathcal{G}_{y_{+},y_{-}}}\doteq \left\{ \mu \in K:\mathcal{G}%
_{y_{+},y_{-}}\left( \mu \right) =\sup \mathcal{G}_{y_{+},y_{-}}\left(
K\right) \right\}
\end{equation*}%
of \emph{linear} equilibrium states of $\mathcal{G}_{y_{+},y_{-}}$ for all $%
y_{\pm }\in \mathcal{X}_{\pm }^{\ast }$. The functions $\mathcal{G}%
_{y_{+},y_{-}}$, $y_{\pm }\in \mathcal{X}_{\pm }^{\ast }$, are again called 
\emph{Bogoliubov linearizations} of $\mathbb{F}$. \bigskip

\noindent \textbf{Exactness of Bogoliubov linearizations. }Similar to the
concave case, the maximization of the function $\mathbb{F}$ is related to
Bogoliubov linearizations via the variational problem%
\begin{equation*}
\mathrm{P}^{\flat }\doteq \sup_{y_{+}\in \mathcal{X}_{+}^{\ast
}}\inf_{y_{-}\in \mathcal{X}_{-}^{\ast }}P_{\mathrm{NL}}\left(
y_{+},y_{-}\right) \ ,
\end{equation*}%
where, for any continuous linear functionals $y_{\pm }\in \mathcal{X}_{\pm
}^{\ast }$,%
\begin{eqnarray}
P_{\mathrm{NL}}\left( y_{+},y_{-}\right) &\doteq &P_{\mathrm{L}}\left(
y_{+},y_{-}\right) +g_{-}^{\ast }\left( y_{-}\right) -g_{+}^{\ast }\left(
y_{+}\right) \ ,  \label{pression approxbis} \\
P_{\mathrm{L}}\left( y_{+},y_{-}\right) &\doteq &\sup \mathcal{G}%
_{y_{+},y_{-}}\left( K\right) \ .  \label{pression approxbis nl}
\end{eqnarray}%
As in the previous case, given $y_{\pm }\in \mathcal{X}_{\pm }^{\ast }$ we
call the quantities $P_{\mathrm{L}}(y_{+},y_{-})$ and $P_{\mathrm{NL}%
}(y_{+},y_{-})$ respectively the linear and nonlinear approximating
pressures associated with the Bogoliubov linearization $\mathcal{G}%
_{y_{+},y_{-}}$. Conditions B1--B3 imply that $\mathcal{X}_{+}^{\ast }\times 
\mathrm{dom}(g_{-}^{\ast })$ is a nonempty convex subset of $\mathcal{X}%
_{+}^{\ast }\times \mathcal{X}_{-}^{\ast }$ and 
\begin{equation*}
P_{\mathrm{NL}}\left( \mathcal{X}_{+}^{\ast }\times \mathrm{dom}(g_{-}^{\ast
})\right) \subseteq \mathbb{R}\ ,
\end{equation*}%
while $P_{\mathrm{NL}}\left( y_{+},y_{-}\right) =\infty $ when $y_{-}\notin 
\mathrm{dom}(g_{-}^{\ast })$.

It is convenient to define the function $P^{\flat }:\mathcal{X}_{+}^{\ast
}\rightarrow \{-\infty \}\cup \mathbb{R}$ by 
\begin{equation}
P^{\flat }\left( y_{+}\right) \doteq \inf P_{\mathrm{NL}}\left( y_{+},%
\mathcal{X}_{-}^{\ast }\right) \ ,\qquad y_{+}\in \mathcal{X}_{+}^{\ast }\ ,
\label{P bemol plus}
\end{equation}%
along with the following sets of optimizers: 
\begin{equation}
\mathrm{M}^{\flat }\doteq \left\{ x_{+}\in \mathcal{X}_{+}^{\ast }:P^{\flat
}\left( x_{+}\right) =\sup P^{\flat }\left( \mathcal{X}_{+}^{\ast }\right)
\doteq \mathrm{P}^{\flat }\right\} \subseteq \mathcal{X}_{+}^{\ast }
\label{P bemol plus+0}
\end{equation}%
and, for all fixed continuous linear functionals $y_{+}\in \mathcal{X}%
_{+}^{\ast }$, 
\begin{equation}
M^{\flat }\left( y_{+}\right) \doteq \left\{ x_{-}\in \mathcal{X}_{-}^{\ast
}:P_{\mathrm{NL}}\left( y_{+},x_{-}\right) =P^{\flat }\left( y_{+}\right)
\right\} \subseteq \mathcal{X}_{-}^{\ast }\ ,  \label{P bemol plus+1}
\end{equation}%
which are associated with the solutions to the variational problems $\mathrm{%
P}^{\flat }$ and $P^{\flat }(y_{+})$, respectively. The study of variational
problems $P^{\flat }(y_{+})$, $y_{+}\in \mathcal{X}_{+}^{\ast }$, and $%
\mathrm{P}^{\flat }$, including the sets $M^{\flat }(y_{+})$, $y_{+}\in 
\mathcal{X}_{+}^{\ast }$, and $\mathrm{M}^{\flat }$ of optimizers is
postponed to Section \ref{Nonlinear pressure} in order to reduce the
technical arguments and focus on the main result of this section. We
therefore only give the essential properties of these variational problems
in the next proposition:

\begin{proposition}[Properties of variational problems ($\flat $)]
\label{Proposition importante bogoluibov01 copy(3)}\mbox{ }\newline
Under Conditions B1--B3 the following assertions hold: \newline
\emph{(i)} For all $y_{+}\in \mathcal{X}_{+}^{\ast }$, $P^{\flat }(y_{+})\in 
\mathbb{R}$ and $M^{\flat }(y_{+})$ is nonempty, convex and weak$^{\ast }$%
-compact. There is $R\in \lbrack R_{0},\infty )$ such that 
\begin{equation*}
M^{\flat }(y_{+})\subseteq B_{-}(0,R)\cap \mathrm{dom}(g_{-}^{\ast })\
,\qquad y_{+}\in \mathcal{X}_{+}^{\ast }\ .
\end{equation*}%
\emph{(ii)} $\mathrm{P}^{\flat }\in \mathbb{R}$ and $\mathrm{M}^{\flat
}\subseteq \mathcal{X}_{+}^{\ast }$ is nonempty, norm-bounded and weak$%
^{\ast }$-compact.
\end{proposition}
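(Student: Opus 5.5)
The plan is to reduce everything to the purely concave case of Section \ref{Sect Bogoliubov app tech1}. Fix $y_{+}\in \mathcal{X}_{+}^{\ast }$ and set $f_{y_{+}}\doteq f+y_{+}\circ \tau _{+}$. This is an upper semicontinuous concave function on $K$, because $\tau _{+}$ is continuous and affine; it is not identically $-\infty $. Hence Conditions A1--A2 hold with $\mathcal{X}=\mathcal{X}_{-}$, $\tau =\tau _{-}$, $g=g_{-}$ and $f$ replaced by $f_{y_{+}}$; this is exactly Condition B2 together with B1. Moreover $P_{\mathrm{NL}}(y_{+},\cdot )=P_{\mathrm{L}}^{(y_{+})}(\cdot )+g_{-}^{\ast }(\cdot )-g_{+}^{\ast }(y_{+})$, where $P_{\mathrm{L}}^{(y_{+})}$ is the linear approximating pressure of the concave problem, and $g_{+}^{\ast }(y_{+})\in \mathbb{R}$ by B3. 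So $M^{\flat }(y_{+})$ is the set $\mathrm{M}$ of (\ref{set M}) for that concave problem, and Proposition \ref{Proposition importante bogoluibov01} gives
\begin{equation*}
P^{\flat }(y_{+})=\sup_{\mu \in K}\{f(\mu )+y_{+}\circ \tau _{+}(\mu )-g_{-}\circ \tau _{-}(\mu )\}-g_{+}^{\ast }(y_{+})\in \mathbb{R}
\end{equation*}
with $M^{\flat }(y_{+})\neq \emptyset $. Convexity of $M^{\flat }(y_{+})$ follows because $y_{-}\mapsto P_{\mathrm{NL}}(y_{+},y_{-})$ is convex: it is the sum of a supremum of affine functions and $g_{-}^{\ast }$. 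Weak$^{\ast }$-closedness follows from weak$^{\ast }$-lower semicontinuity of the same map, since $P_{\mathrm{L}}$ is a supremum of weak$^{\ast }$-continuous affine maps in $y_{-}$ and $g_{-}^{\ast }$ is weak$^{\ast }$-lsc.

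For the uniform radius, bound $P_{\mathrm{L}}$ from both sides in $y_{-}$. For any $\mu _{0}$ with $f(\mu _{0})>-\infty $,
\begin{equation*}
P_{\mathrm{L}}(y_{+},y_{-})\geq f(\mu _{0})+y_{+}\circ \tau _{+}(\mu _{0})-\Vert y_{-}\Vert _{\mathrm{op}}\Vert \tau _{-}\Vert _{\infty }.
\end{equation*}
Hence $P_{\mathrm{NL}}(y_{+},y_{-})+g_{+}^{\ast }(y_{+})-y_{+}\circ \tau _{+}(\mu _{0})\geq f(\mu _{0})+g_{-}^{\ast }(y_{-})-\Vert \tau _{-}\Vert _{\infty }\Vert y_{-}\Vert _{\mathrm{op}}$, and this tends to $+\infty $ as $\Vert y_{-}\Vert _{\mathrm{op}}\rightarrow \infty $ by the minimal linear growth in B2. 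On the other side, $P_{\mathrm{NL}}(y_{+},0)\leq \sup f(K)+y_{+}\circ \tau _{+}(\mu _{0})+(\text{const})$ is not directly comparable, since $P_{\mathrm{L}}(y_{+},0)$ involves a supremum of $y_{+}\circ \tau _{+}$ over $K$. The cleaner route is to fix a point $\bar{y}_{-}\in B_{-}(0,R_{0})\cap \mathrm{dom}(g_{-}^{\ast })$ and note
\begin{equation*}
P_{\mathrm{L}}(y_{+},y_{-})-P_{\mathrm{L}}(y_{+},\bar{y}_{-})\geq -\Vert \tau _{-}\Vert _{\infty }\Vert y_{-}-\bar{y}_{-}\Vert _{\mathrm{op}},
\end{equation*}
which is uniform in $y_{+}$, since the $y_{+}$-dependence cancels in differences of suprema at the level of Lipschitz estimates. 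Then
\begin{equation*}
P_{\mathrm{NL}}(y_{+},y_{-})-P_{\mathrm{NL}}(y_{+},\bar{y}_{-})\geq g_{-}^{\ast }(y_{-})-\Vert \tau _{-}\Vert _{\infty }\Vert y_{-}\Vert _{\mathrm{op}}-C,
\end{equation*}
with $C$ independent of $y_{+}$. Growth then yields an $R\geq R_{0}$ such that every minimizer lies in $B_{-}(0,R)\cap \mathrm{dom}(g_{-}^{\ast })$ for all $y_{+}$. Weak$^{\ast }$-compactness of $M^{\flat }(y_{+})$ then follows from Banach--Alaoglu and the closedness above. This presumably is the content of Lemma \ref{Lemma suplin growth} ($\flat $), which I would invoke.

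For (ii), $P^{\flat }(y_{+})+g_{+}^{\ast }(y_{+})=\sup_{K}\{\mathcal{F}+y_{+}\circ \tau _{+}\}$, where $\mathcal{F}=f-g_{-}\circ \tau _{-}$. This is a convex, weak$^{\ast }$-lsc function of $y_{+}$ (a supremum of weak$^{\ast }$-continuous affine maps), so $P^{\flat }$ is weak$^{\ast }$-upper semicontinuous. It is bounded above by $\sup \mathcal{F}(K)+\Vert \tau _{+}\Vert _{\infty }\Vert y_{+}\Vert _{\mathrm{op}}-g_{+}^{\ast }(y_{+})$, and by the minimal linear growth in B3 this tends to $-\infty $ as $\Vert y_{+}\Vert _{\mathrm{op}}\rightarrow \infty $. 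Also $P^{\flat }(0)\in \mathbb{R}$. Hence there is a radius $R^{\prime }$ with $\sup P^{\flat }$ over the complement of $B_{+}(0,R^{\prime })$ strictly below $P^{\flat }(0)$. On the weak$^{\ast }$-compact ball $B_{+}(0,R^{\prime })$, the usc function $P^{\flat }$ attains its supremum. Therefore $\mathrm{P}^{\flat }\in \mathbb{R}$, $\mathrm{M}^{\flat }\subseteq B_{+}(0,R^{\prime })$ is nonempty and norm-bounded, and $\mathrm{M}^{\flat }$ is weak$^{\ast }$-closed as a superlevel set of a usc function, hence weak$^{\ast }$-compact.

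The main obstacle is making the radius in (i) uniform in $y_{+}$, i.e. the Lipschitz comparison of $P_{\mathrm{L}}$ in $y_{-}$ independent of $y_{+}$. It works because $|P_{\mathrm{L}}(y_{+},y_{-})-P_{\mathrm{L}}(y_{+},y_{-}^{\prime })|\leq \Vert \tau _{-}\Vert _{\infty }\Vert y_{-}-y_{-}^{\prime }\Vert _{\mathrm{op}}$, which holds for any $y_{+}$ since both suprema are over the same family shifted by a uniformly bounded perturbation.
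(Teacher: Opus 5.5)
Your part (i) is sound. The reduction to Proposition \ref{Proposition importante bogoluibov01} with $f+y_{+}\circ\tau_{+}$ in place of $f$ is legitimate. Your global weak$^{\ast}$-lower semicontinuity of $y_{-}\mapsto P_{\mathrm{NL}}(y_{+},y_{-})$ is correct. Your uniform radius, obtained from the Lipschitz bound of Lemma \ref{Lemma bogoluibov1} around a fixed $\bar{y}_{-}\in B_{-}(0,R_{0})\cap\mathrm{dom}(g_{-}^{\ast})$, is exactly Lemma \ref{Lemma suplin growth} ($\flat$). The paper gets nonemptiness without the minimax theorem, from lower semicontinuity on the weak$^{\ast}$-compact ball, but your route also works.

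The gap is in (ii), at the sentence ``this is a convex, weak$^{\ast}$-lsc function of $y_{+}$ (a supremum of weak$^{\ast}$-continuous affine maps), so $P^{\flat}$ is weak$^{\ast}$-upper semicontinuous.'' The implication runs the wrong way. Set $\Phi(y_{+})\doteq\sup_{\mu\in K}\{\mathcal{F}(\mu)+y_{+}\circ\tau_{+}(\mu)\}$, so that $P^{\flat}=\Phi-g_{+}^{\ast}$. This is one weak$^{\ast}$-lsc function minus another, and no semicontinuity follows. Being a supremum of continuous maps is what you need for the minimization in (i). For the maximization in (ii) you need $\Phi$ to be weak$^{\ast}$-\emph{upper} semicontinuous, at least on balls; then $\Phi-g_{+}^{\ast}$ is usc because $-g_{+}^{\ast}$ is. Attainment of $\sup P^{\flat}$ on $B_{+}(0,R^{\prime})$ and weak$^{\ast}$-closedness of $\mathrm{M}^{\flat}$ both rest on this, so (ii) is not yet proved.

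The missing ingredient is a compactness argument. Since $\tau_{+}$ is continuous and $K$ is compact, $\tau_{+}(K)$ is norm-compact in $\mathcal{X}_{+}$. Hence every norm-bounded weak$^{\ast}$-convergent net in $\mathcal{X}_{+}^{\ast}$ converges uniformly on $\tau_{+}(K)$. This makes $\Phi$, and likewise $P_{\mathrm{L}}$, weak$^{\ast}$-continuous on balls, which is Lemma \ref{Lemma bogoluibov2}. Equivalently, $(y_{+},\mu)\mapsto\mathcal{F}(\mu)+y_{+}\circ\tau_{+}(\mu)$ is jointly usc on $B_{+}(0,R^{\prime})\times K$, and you maximize over the compact $K$.

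The paper then concludes as in Lemma \ref{Lemma bogoluibov4} ($\flat$). There $P^{\flat}$ is written as the infimum, over $y_{-}$ in a fixed ball, of the maps $y_{+}\mapsto P_{\mathrm{NL}}(y_{+},y_{-})$. Each of these is weak$^{\ast}$-usc on balls by Lemma \ref{Lemma bogoluibov3} ($+$). This yields upper semicontinuity only on bounded sets (globally only if $\mathcal{X}_{+}$ is complete). That is enough, since you have already confined $\mathrm{M}^{\flat}$ to $B_{+}(0,R^{\prime})$.
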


\begin{proof}
Assertion (i) refers to Lemmata \ref{Lemma bogoluibov4} ($\flat $) and \ref%
{Lemma bogoluibov5} ($\flat $). Assertion (ii) corresponds to Lemma \ref%
{Lemma bogoluibov6} ($\flat $).
\end{proof}

We are now in a position to examine the exactness of Bogoliubov
linearizations both at the level of the variational problem and at the level
of the equilibrium states. This brings us to our next theorem, which is the
central result of the present paper and can be applied to very general
situations, such as quantum lattice systems or classical dynamical systems:

\begin{theorem}[Exactness of Bogoliubov linearizations]
\label{Proposition importante bogoluibov01 copy(2)}\mbox{ }\newline
Under Conditions B1--B3 the following assertions hold: \newline
\emph{(i)} Nonlinear pressure:%
\begin{equation*}
\sup \mathbb{F}\left( K\right) =\mathrm{P}^{\flat }\doteq \sup_{y_{+}\in 
\mathcal{X}_{+}^{\ast }}\inf_{y_{-}\in \mathcal{X}_{-}^{\ast }}P_{\mathrm{NL}%
}\left( y_{+},y_{-}\right) \in \mathbb{R}\ .
\end{equation*}%
\emph{(ii)} Self-consistency conditions: For any $x_{+}\in \mathrm{M}^{\flat
}$ and $x_{-}\in M^{\flat }\left( x_{+}\right) $, the set 
\begin{equation*}
E_{\mathcal{G}_{x_{+},x_{-}}}^{\mathrm{sc}}\doteq \left\{ \nu \in E_{%
\mathcal{G}_{x_{+},x_{-}}}:x_{-}\in \partial g_{-}\left( \tau _{-}\left( \nu
\right) \right) \right\} \equiv E_{\mathcal{G}_{x_{+},\cdot }}^{\mathrm{sc}}
\end{equation*}%
of self-consistent equilibrium states of the Bogoliubov linearization $%
\mathcal{G}_{x_{+},x_{-}}$ is nonempty, convex and compact, and does not
depend upon the choice of $x_{-}\in M^{\flat }\left( x_{+}\right) $. (That
is the raison for the notation $E_{\mathcal{G}_{x_{+},\cdot }}^{\mathrm{sc}}$%
.) Furthermore, for all $x_{+}\in \mathrm{M}^{\flat }$,%
\begin{equation*}
E_{\mathcal{G}_{x_{+},\cdot }}^{\mathrm{sc}}\subseteq \left\{ \nu \in
K:x_{+}\in \partial g_{+}\left( \tau _{+}\left( \nu \right) \right) \right\}
\ .
\end{equation*}%
\emph{(iii)} Nonlinear equilibrium states: $E_{\mathbb{F}}=M_{\mathbb{F}}$
is compact and the union of all above sets of self-consistent equilibrium
states, that is, 
\begin{equation*}
E_{\mathbb{F}}=M_{\mathbb{F}}=\bigcup_{x_{+}\in \mathrm{M}^{\flat }}E_{%
\mathcal{G}_{x_{+},\cdot }}^{\mathrm{sc}}\ .
\end{equation*}%
If the function $g_{+}:\mathcal{X}_{+}\rightarrow \mathbb{R}$ is
additionally Gateaux%
%TCIMACRO{\TeXButton{\-}{\-}}%
%BeginExpansion
\-%
%EndExpansion
-differentiable, then the above union is disjoint.\medskip \newline
\emph{(iv)} If the (concave) function $f:K\rightarrow \mathbb{R}$ defining $%
\mathbb{F}$ (see Equation (\ref{blacb F})) is affine and all Bogoliubov
linearizations $\mathcal{G}_{x_{+},x_{-}}$ have one single linear
equilibrium state, then $E_{\mathbb{F}}=M_{\mathbb{F}}\subseteq K$ is a
subset of extreme points of the compact convex Hausdorff space $K$.
\end{theorem}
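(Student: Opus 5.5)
The strategy is to reduce everything to the purely concave case of Section \ref{Sect Bogoliubov app tech1}, applied for each fixed $y_{+}\in \mathcal{X}_{+}^{\ast }$. For such $y_{+}$, set $f_{y_{+}}\doteq f+y_{+}\circ \tau _{+}$. This function is upper semicontinuous and concave because $\tau _{+}$ is continuous and affine. Then $(K,\tau _{-},f_{y_{+}},g_{-})$ satisfies Conditions A1--A2, by B1--B2. Proposition \ref{Proposition importante bogoluibov01} therefore gives
\begin{equation*}
\sup_{\mu \in K}\{f(\mu )+y_{+}\circ \tau _{+}(\mu )-g_{-}\circ \tau _{-}(\mu )\}=\inf_{y_{-}}\{P_{\mathrm{L}}(y_{+},y_{-})+g_{-}^{\ast }(y_{-})\}=P^{\flat }(y_{+})+g_{+}^{\ast }(y_{+}).
\end{equation*}
Corollary \ref{corollary importante bogoluibov01 copy(1)} moreover identifies the maximizer set of $\mathcal{F}_{y_{+}}\doteq f_{y_{+}}-g_{-}\circ \tau _{-}$ with the self-consistent set $\{\nu \in E_{\mathcal{G}_{y_{+},x_{-}}}:x_{-}\in \partial g_{-}(\tau _{-}(\nu ))\}$, for any $x_{-}\in M^{\flat }(y_{+})$. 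That set is nonempty, convex, compact and independent of $x_{-}$.

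For (i), I will use B3 and (\ref{Biconjugate definition}) to write $g_{+}\circ \tau _{+}(\mu )=\sup_{y_{+}}\{y_{+}\circ \tau _{+}(\mu )-g_{+}^{\ast }(y_{+})\}$. Two suprema commute, so
\begin{equation*}
\sup \mathbb{F}(K)=\sup_{y_{+}}\{\sup \mathcal{F}_{y_{+}}(K)-g_{+}^{\ast }(y_{+})\}=\sup_{y_{+}}P^{\flat }(y_{+})=\mathrm{P}^{\flat },
\end{equation*}
which is finite by Proposition \ref{Proposition importante bogoluibov01 copy(3)}. Part (ii) is then the corollary above applied at $y_{+}=x_{+}\in \mathrm{M}^{\flat }$, with $E_{\mathcal{G}_{x_{+},\cdot }}^{\mathrm{sc}}=E_{\mathcal{F}_{x_{+}}}$. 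For the extra inclusion, take $\nu $ in this set. Then
\begin{equation*}
\mathcal{F}_{x_{+}}(\nu )-g_{+}^{\ast }(x_{+})=\mathrm{P}^{\flat }\geq \mathbb{F}(\nu )=\mathcal{F}_{x_{+}}(\nu )-x_{+}\circ \tau _{+}(\nu )+g_{+}\circ \tau _{+}(\nu ).
\end{equation*}
Combined with Fenchel's inequality, this forces $g_{+}(\tau _{+}(\nu ))+g_{+}^{\ast }(x_{+})=x_{+}(\tau _{+}(\nu ))$, hence $x_{+}\in \partial g_{+}(\tau _{+}(\nu ))$ by \cite[Proposition 4.4.1]{Schirotzek}. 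The same computation shows $\mathbb{F}(\nu )=\mathrm{P}^{\flat }$, so the union over $x_{+}\in \mathrm{M}^{\flat }$ is contained in $M_{\mathbb{F}}\subseteq E_{\mathbb{F}}$.

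The main obstacle is the reverse inclusion $E_{\mathbb{F}}\subseteq \bigcup_{x_{+}}E_{\mathcal{G}_{x_{+},\cdot }}^{\mathrm{sc}}$, because $\mathbb{F}$ is not upper semicontinuous. Take a net $\mu _{j}\rightarrow \mu $ with $\mathbb{F}(\mu _{j})\rightarrow \mathrm{P}^{\flat }$, and choose $y_{+}^{(j)}\in \partial g_{+}(\tau _{+}(\mu _{j}))$. This is nonempty because $g_{+}$ is continuous, being convex, lower semicontinuous and bounded. By the minimal linear growth of $g_{+}^{\ast }$ (cf. (\ref{inequality grow})), $y_{+}^{(j)}$ can be taken in a fixed ball $B_{+}(0,R)$. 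Banach--Alaoglu then gives a subnet with $y_{+}^{(j)}\rightarrow x_{+}$ weak$^{\ast }$. Since $g_{+}\circ \tau _{+}(\mu _{j})=y_{+}^{(j)}\circ \tau _{+}(\mu _{j})-g_{+}^{\ast }(y_{+}^{(j)})$, we get
\begin{equation*}
\mathbb{F}(\mu _{j})=\mathcal{F}_{y_{+}^{(j)}}(\mu _{j})-g_{+}^{\ast }(y_{+}^{(j)})\leq P^{\flat }(y_{+}^{(j)})\leq \mathrm{P}^{\flat }.
\end{equation*}
To pass to the limit I use three facts: the joint upper semicontinuity of $(y_{+},\mu )\mapsto \mathcal{F}_{y_{+}}(\mu )$ on $B_{+}(0,R)\times K$, which holds because $y_{+}(\tau _{+}(\mu ))$ is jointly continuous on bounded sets as $\tau _{+}(K)$ is norm-compact; the weak$^{\ast }$ lower semicontinuity of $g_{+}^{\ast }$; and the upper semicontinuity of $P^{\flat }$, which is an infimum of the functions $P_{\mathrm{L}}(\cdot ,y_{-})-g_{+}^{\ast }$. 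These yield $\mathcal{F}_{x_{+}}(\mu )-g_{+}^{\ast }(x_{+})\geq \mathrm{P}^{\flat }$. Hence $x_{+}\in \mathrm{M}^{\flat }$ and $\mu \in E_{\mathcal{F}_{x_{+}}}=E_{\mathcal{G}_{x_{+},\cdot }}^{\mathrm{sc}}$. This proves $E_{\mathbb{F}}=M_{\mathbb{F}}$. Compactness of $E_{\mathbb{F}}$ then comes from the standard fact that the set of limit points of approximating maximizers is closed, by a diagonal net argument. For disjointness under Gateaux-differentiability, note that $\partial g_{+}(\tau _{+}(\nu ))$ is a singleton, so by (ii) each $\nu $ determines $x_{+}$.

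For (iv), let $\nu \in E_{\mathbb{F}}$ with $\nu =\lambda \nu _{1}+(1-\lambda )\nu _{2}$. Then $\nu \in E_{\mathcal{G}_{x_{+},x_{-}}}$ for suitable $x_{\pm }$. That set is a face because $\mathcal{G}_{x_{+},x_{-}}$ is affine, since $f$ is, and it is a singleton by assumption. Hence $\nu _{1}=\nu _{2}=\nu $, so $\nu $ is extreme.
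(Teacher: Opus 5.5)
Your proposal is correct and follows essentially the paper's proof. You biconjugate $g_{+}$, commute the two suprema, and apply Proposition~\ref{Proposition importante bogoluibov01} and Corollary~\ref{corollary importante bogoluibov01 copy(1)} to $f+y_{+}\circ \tau _{+}$. You then use Fenchel's equality for the $\partial g_{+}$ inclusion and pass to the limit along a maximizing net, with the dual variables confined to a weak$^{\ast }$-compact ball.

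The only minor deviation is in how compactness is obtained. You derive compactness of $E_{\mathbb{F}}$ from the closedness of the set of limit points of maximizing nets, and you read the union formula directly off the net argument. The paper instead gets compactness of $M_{\mathbb{F}}$ by projecting the compact set of joint maximizers of the upper semicontinuous function $(y_{+},\mu )\mapsto f(\mu )+y_{+}\circ \tau _{+}(\mu )-g_{-}\circ \tau _{-}(\mu )-g_{+}^{\ast }(y_{+})$ on $B_{+}(0,R)\times K$.
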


\begin{proof}
Many assertions need to be proven. So, we divide our arguments into several
steps:\medskip

\noindent \underline{Step 1:} By Equation (\ref{Biconjugate definition}),
observe that%
\begin{eqnarray}
\sup \mathbb{F}\left( K\right) &=&\sup_{\mu \in K}\left\{ f\left( \mu
\right) -g_{-}\circ \tau _{-}\left( \mu \right) +\sup_{y_{+}\in \mathcal{X}%
_{+}^{\ast }}\left\{ y_{+}\circ \tau _{+}\left( \mu \right) -g_{+}^{\ast
}\left( y_{+}\right) \right\} \right\}  \notag \\
&=&\sup_{y_{+}\in \mathcal{X}_{+}^{\ast }}\left\{ \sup_{\mu \in K}\left\{
f\left( \mu \right) +y_{+}\circ \tau _{+}\left( \mu \right) -g_{-}\circ \tau
_{-}\left( \mu \right) \right\} -g_{+}^{\ast }\left( y_{+}\right) \right\} \
,  \label{fdfgdfgd}
\end{eqnarray}%
and a solution in $\mathcal{X}_{+}^{\ast }\times K$ to the variational
problems (if it exists) does not depend on the order in which the suprema
are taken. Note that $f+y_{+}\circ \tau _{+}$ is a concave and upper
semicontinuous function on $K$ for any $y_{+}\in \mathcal{X}_{+}^{\ast }$.
Hence, we can apply Proposition \ref{Proposition importante bogoluibov01}
for $f+y_{+}\circ \tau _{+}$ in place of $f$ to deduce from (\ref{fdfgdfgd})
Assertion (i). \medskip

\noindent \underline{Step 2:} Similarly, we can apply Corollary \ref%
{corollary importante bogoluibov01 copy(1)} to arrive at Assertion (ii),
except for the property $x_{+}\in \partial g_{+}\left( \tau _{+}\left( \nu
\right) \right) $ for all $x_{+}\in \mathrm{M}^{\flat }$ and $\nu \in E_{%
\mathcal{G}_{x_{+},\cdot }}^{\mathrm{sc}}$, which is proven as follows: Take
any $x_{+}\in \mathrm{M}^{\flat }$ and $\nu \in E_{\mathcal{G}_{x_{+},\cdot
}}^{\mathrm{sc}}$. Then, using Assertion (i), along with Corollary \ref%
{corollary importante bogoluibov01 copy(1)} for $f+x_{+}\circ \tau _{+}$, we
deduce that $\nu \in E_{\mathbb{F}}$ and 
\begin{equation*}
f\left( \nu \right) -g_{-}\circ \tau _{-}\left( \nu \right) +g_{+}\circ \tau
_{+}\left( \nu \right) =f\left( \nu \right) -g_{-}\circ \tau _{-}\left( \nu
\right) +x_{+}\circ \tau _{+}\left( \nu \right) -g_{+}^{\ast }\left(
x_{+}\right) \ .
\end{equation*}%
In other words, by Equation (\ref{Biconjugate definition}), one arrives at
the equalities 
\begin{equation*}
g_{+}\circ \tau _{+}\left( \nu \right) =\sup_{y_{+}\in \mathcal{X}_{+}^{\ast
}}\left\{ y_{+}\circ \tau _{+}\left( \nu \right) -g_{+}^{\ast }\left(
y_{+}\right) \right\} =x_{+}\circ \tau _{+}\left( \nu \right) -g_{+}^{\ast
}\left( x_{+}\right) \ .
\end{equation*}%
Using again \cite[Proposition 4.4.1]{Schirotzek} we conclude that $x_{+}\in
\partial g_{+}\left( \tau _{+}\left( \nu \right) \right) $. \medskip

\noindent \underline{Step 3:} By Proposition \ref{Proposition importante
bogoluibov01 copy(3)} (ii), the supremum in (\ref{fdfgdfgd}) with respect to 
$y_{+}\in \mathcal{X}_{+}^{\ast }$ can be restricted to a closed ball $%
B_{+}(0,R)$, which is always weak$^{\ast }$-compact (cf. the Banach-Alaoglu
theorem \cite[Theorem 3.15]{Rudin}). Then, by Conditions B1--B3, observe
that the mapping%
\begin{equation*}
(y_{+},\mu )\mapsto f\left( \mu \right) +y_{+}\circ \tau _{+}\left( \mu
\right) -g_{-}\circ \tau _{-}\left( \mu \right) -g_{+}^{\ast }\left(
y_{+}\right)
\end{equation*}%
defined on $B_{+}(0,R)\times K$ is upper semicontinuous with respect to the
product topology when $B_{+}(0,R)$ is equipped with the weak$^{\ast }$
topology. Therefore, there are solutions in $B_{+}(0,R)\times K$ to the
variational problem (\ref{fdfgdfgd}) and the set of all such solutions is
compact, because $K$ and $B_{+}(0,R)$ are both compact. In particular, $M_{%
\mathbb{F}}$ is nonempty. Furthermore, for any $\mu \in M_{\mathbb{F}}$,
there is $x_{+}\in B_{+}(0,R)$ such that $(x_{+},\mu )$ is solution to (\ref%
{fdfgdfgd}) and all solutions to (\ref{fdfgdfgd}) have this form. As the
projection $(x_{+},\mu )\mapsto \mu $ is continuous, we conclude that $M_{%
\mathbb{F}}$ is compact. \medskip

\noindent \underline{Step 4:} Take any net $(\mu _{j})_{j\in J}\subseteq K%
\mathrm{\ }$converging to $\mu \in E_{\mathbb{F}}$ such that%
\begin{equation*}
\lim_{J}\mathbb{F}\left( \mu _{j}\right) =\sup \mathbb{F}\left( K\right) \ .
\end{equation*}%
Because of Condition B3, there is some fixed radius $R\in \mathbb{R}^{+}$
such that, for any $j\in J$, we have a solution $x_{j,+}\in B_{+}(0,R)$ in
this ball such that 
\begin{equation*}
\sup_{y_{+}\in \mathcal{X}_{+}^{\ast }}\left\{ y_{+}\circ \tau _{+}\left(
\mu _{j}\right) -g_{+}^{\ast }\left( y_{+}\right) \right\} =x_{j,+}\circ
\tau _{+}\left( \mu _{j}\right) -g_{+}^{\ast }\left( x_{j,+}\right) \ .
\end{equation*}%
By the weak$^{\ast }$ compactness of $B_{+}(0,R)$, we can assume that $%
(x_{j,+})_{j\in J}$ converges to some $x_{+}\in B_{+}(0,R)$. In particular,
via Equation (\ref{fdfgdfgd}), Conditions B1--B2 and the weak$^{\ast }$%
-lower semicontinuity of $g_{+}^{\ast }$, we conclude that 
\begin{equation*}
\sup \mathbb{F}\left( K\right) =\lim_{J}\mathbb{F}\left( \mu _{j}\right)
\leq f\left( \mu \right) -g_{-}\circ \tau _{-}\left( \mu \right) +x_{+}\circ
\tau _{+}\left( \mu \right) -g_{+}^{\ast }\left( x_{+}\right)
\end{equation*}%
and $(x_{+},\mu )$ is a solution to (\ref{fdfgdfgd}), which yields $\mu \in
M_{\mathbb{F}}$. In other words, $E_{\mathbb{F}}\subseteq M_{\mathbb{F}%
}\subseteq E_{\mathbb{F}}$. \medskip

\noindent \underline{Step 5:} We already prove in Steps 3 and 4 that $E_{%
\mathbb{F}}=M_{\mathbb{F}}$ is compact. Therefore, having (\ref{fdfgdfgd})
in mind (cf. Steps 1, 3 and 4), we can again apply Corollary \ref{corollary
importante bogoluibov01 copy(1)} to arrive at Assertion (iii), except for
the disjointness of the sets $E_{\mathcal{G}_{x_{+},\cdot }}^{\mathrm{sc}}$, 
$x_{+}\in \mathrm{M}^{\flat }$, when the convex function $g_{+}$ is
additionally Gateaux%
%TCIMACRO{\TeXButton{\-}{\-}}%
%BeginExpansion
\-%
%EndExpansion
-differentiable. This last property is proven as follows: If $g_{+}$ is
Gateaux%
%TCIMACRO{\TeXButton{\-}{\-}}%
%BeginExpansion
\-%
%EndExpansion
-differentiable then the subdifferential $\partial g_{+}\left( \tau
_{+}\left( \nu \right) \right) $, $\nu \in E_{\mathbb{F}}$, contains at most
one point. Hence, in this case, for $x_{+},x_{+}^{\prime }\in \mathrm{M}%
^{\flat }$, $x_{+}\neq x_{+}^{\prime }$, and $\nu \in E_{\mathcal{G}%
_{x_{+},\cdot }}^{\mathrm{sc}}$, $\nu ^{\prime }\in E_{\mathcal{G}%
_{x_{+}^{\prime },\cdot }}^{\mathrm{sc}}$, one necessarily has that $\nu
\neq \nu ^{\prime }$, that is, $E_{\mathcal{G}_{x_{+},\cdot }}^{\mathrm{sc}}$
and $E_{\mathcal{G}_{x_{+}^{\prime },\cdot }}^{\mathrm{sc}}$ are disjoint.
\medskip

\noindent \underline{Step 6:} If $f$ is affine then all Bogoliubov
linearizations $\mathcal{G}_{x_{+},x_{-}}$ are affine, see Equation (\ref%
{def G mix}). So, if they all have a single linear equilibrium state, then
these states have to be extreme points of $K$. As $E_{\mathcal{G}%
_{x_{+},\cdot }}^{\mathrm{sc}}\subseteq E_{\mathcal{G}_{x_{+},x_{-}}}$, $%
x_{-}\in M^{\flat }\left( x_{+}\right) $, it follows in this case that $E_{%
\mathbb{F}}\subseteq K$ is a subset of extreme points of the convex set $K$.
In other words, we obtain Assertion (iv).
\end{proof}

\noindent The above assumptions on $g_{+}$ and $g_{-}$ in Theorem \ref%
{Proposition importante bogoluibov01 copy(2)} exclude the cases where one of
these functions is zero. However, if one of them is zero, or both, then
Theorem \ref{Proposition importante bogoluibov01 copy(2)} still holds true,
mutatis mutandis:

\begin{itemize}
\item If both $g_{-}$ and $g_{+}$ are zero then $\mathbb{F}=f$, that is, $E_{%
\mathbb{F}}$ is nothing but the set of maximizers of $f$ and Theorem \ref%
{Proposition importante bogoluibov01 copy(2)} is useless in this case.

\item If $g_{+}=0$ and $g_{-}\neq 0$ then one has the (purely) concave case
and, therefore, $E_{\mathbb{F}}=M_{\mathbb{F}}=E_{\mathcal{F}}$ for $g=g_{-}$
and $\tau =\tau _{-}$ in (\ref{cali F}), which defines the function $%
\mathcal{F}$. Thus, in this case, Theorem \ref{Proposition importante
bogoluibov01 copy(2)} (i) and (ii)--(iii) correspond to Proposition \ref%
{Proposition importante bogoluibov01} and Corollary \ref{corollary
importante bogoluibov01 copy(1)}, respectively. If $f:K\rightarrow \mathbb{R}
$ is affine and $E_{\mathcal{G}_{x}}$, $x\in \mathrm{M}$, (see (\ref{def
Gbis}) and (\ref{set M})) are singletons, then $E_{\mathbb{F}}=M_{\mathbb{F}%
}=E_{\mathcal{F}}$ is a set of extreme points of $K$.

\item If $g_{+}\neq 0$ and $g_{-}=0$, there is no infimum, only the two
suprema, in Theorem \ref{Proposition importante bogoluibov01 copy(2)} (i),
that is,%
\begin{equation*}
\sup \mathbb{F}\left( K\right) =\mathrm{P}^{\flat }\doteq \sup_{y_{+}\in 
\mathcal{X}_{+}^{\ast }}\left\{ P_{\mathrm{L}}\left( y_{+},0\right)
-g_{+}^{\ast }\left( y_{+}\right) \right\} \in \mathbb{R}\ .
\end{equation*}%
In this case, Condition B2 is irrelevant. As for (ii) one defines the
function $P^{\flat }:\mathcal{X}_{+}^{\ast }\rightarrow \{-\infty \}\cup 
\mathbb{R}$ now by 
\begin{equation*}
P^{\flat }\left( y_{+}\right) \doteq \sup \mathcal{G}_{y_{+},0}\left(
K\right) -g_{+}^{\ast }\left( y_{+}\right) \ ,\qquad y_{+}\in \mathcal{X}%
_{+}^{\ast }\ ,
\end{equation*}%
along with the corresponding set of maximizers: 
\begin{equation*}
\mathrm{M}^{\flat }\doteq \left\{ x\in \mathcal{X}_{+}^{\ast }:P^{\flat
}\left( x\right) =\sup P^{\flat }\left( \mathcal{X}_{+}^{\ast }\right)
\doteq \mathrm{Q}^{\flat }\right\} \subseteq \mathcal{X}_{+}^{\ast }\ .
\end{equation*}%
Then, for all $x_{+}\in \mathrm{M}^{\flat }$ and $\nu \in E_{\mathcal{G}%
_{x_{+},0}}$, $x_{+}\in \partial g_{+}\left( \tau _{+}\left( \nu \right)
\right) $. In other words, \emph{all} linear equilibrium states of the
Bogoliubov linearizations $\mathcal{G}_{x_{+},0}$, $x_{+}\in \mathrm{M}%
^{\flat }$, are self-consistent in this case. Again, $E_{\mathbb{F}}=M_{%
\mathbb{F}}$ is compact and 
\begin{equation*}
E_{\mathbb{F}}=M_{\mathbb{F}}=\bigcup_{x_{+}\in \mathrm{M}^{\flat }}E_{%
\mathcal{G}_{x_{+},0}}\ .
\end{equation*}%
For Gateaux%
%TCIMACRO{\TeXButton{\-}{\-}}%
%BeginExpansion
\-%
%EndExpansion
-differentiable $g_{+}$, the above union is disjoint. If $f:K\rightarrow 
\mathbb{R}$ is affine and $E_{\mathcal{G}_{x_{+},0}}$, $x_{+}\in \mathrm{M}%
^{\flat }$, are singletons, then $E_{\mathbb{F}}=M_{\mathbb{F}}$ is a set of
extreme points of $K$.
\end{itemize}

\subsection{Decision rules of the thermodynamic game\label{Decision}}

The nonlinear approximating pressure $P_{\mathrm{NL}}$ of Equation (\ref%
{pression approxbis}) can be seen as the payoff function of a two-person
zero-sum game. The conservative values of such a game are then the real
quantities%
\begin{equation}
\mathrm{P}^{\flat }\doteq \sup_{y_{+}\in \mathcal{X}_{+}^{\ast
}}\inf_{y_{-}\in \mathcal{X}_{-}^{\ast }}P_{\mathrm{NL}}\left(
y_{+},y_{-}\right) \quad \text{and}\quad \mathrm{P}^{\sharp }\doteq
\inf_{y_{-}\in \mathcal{X}_{-}^{\ast }}\sup_{y_{+}\in \mathcal{X}_{+}^{\ast
}}P_{\mathrm{NL}}\left( y_{+},y_{-}\right) \ .  \label{conservative values}
\end{equation}%
This game is named here the \emph{thermodynamic game}\ associated with $f$, $%
g_{\pm }$ and $\tau _{\pm }$. In all this subsection, we only consider
non-zero functions $g_{\pm }:\mathcal{X}_{\pm }\rightarrow \mathbb{R}$. If
one of them is zero, we have essentially the same results (mutatis
mutandis). See, e.g., the discussions at the end of Section \ref{sect conc
conv Bogo}.

Having in mind the thermodynamic game, in particular the conservative values
of Equation (\ref{conservative values}), it is natural to consider the
function 
\begin{equation}
P^{\sharp }\left( y_{-}\right) \doteq \sup P_{\mathrm{NL}}\left( \mathcal{X}%
_{+}^{\ast },y_{-}\right) \ ,\qquad y_{-}\in \mathcal{X}_{-}^{\ast }\ ,
\label{def diese bemol1}
\end{equation}%
along with $P^{\flat }:\mathcal{X}_{+}^{\ast }\rightarrow \{-\infty \}\cup 
\mathbb{R}$ already defined by (\ref{P bemol plus}), that is,%
\begin{equation}
P^{\flat }\left( y_{+}\right) \doteq \inf P_{\mathrm{NL}}\left( y_{+},%
\mathcal{X}_{-}^{\ast }\right) \ ,\qquad y_{+}\in \mathcal{X}_{+}^{\ast }\ .
\label{def diese bemol2}
\end{equation}
The subsets of solutions to the variational problems $P^{\flat }(y_{+})$ and 
$P^{\sharp }(y_{-})$ for all $y_{\pm }\in \mathcal{X}_{\pm }^{\ast }$, i.e.,%
\begin{eqnarray}
M^{\flat }\left( y_{+}\right) &\doteq &\left\{ x_{-}\in \mathcal{X}%
_{-}^{\ast }:P^{\flat }\left( y_{+}\right) =P_{\mathrm{NL}}\left(
y_{+},x_{-}\right) \right\} \subseteq \mathcal{X}_{-}^{\ast }\ ,
\label{M-y1} \\
M^{\sharp }\left( y_{-}\right) &\doteq &\left\{ x_{+}\in \mathcal{X}%
_{+}^{\ast }:P^{\sharp }\left( y_{-}\right) =P_{\mathrm{NL}}\left(
x_{+},y_{-}\right) \right\} \subseteq \mathcal{X}_{+}^{\ast }\ ,
\label{M-y2}
\end{eqnarray}%
are important, along with the sets 
\begin{eqnarray}
\mathrm{M}^{\flat } &\doteq &\left\{ x_{+}\in \mathcal{X}_{+}^{\ast }:%
\mathrm{P}^{\flat }=P^{\flat }\left( x_{+}\right) \right\} \subseteq 
\mathcal{X}_{+}^{\ast }\ ,  \label{M1} \\
\mathrm{M}^{\sharp } &\doteq &\left\{ x_{-}\in \mathcal{X}_{-}^{\ast }:%
\mathrm{P}^{\sharp }\doteq P^{\sharp }\left( x_{-}\right) \right\} \subseteq 
\mathcal{X}_{-}^{\ast }\ ,  \label{M2}
\end{eqnarray}%
which refer to optimal strategies for the thermodynamic game. The sets $%
\mathrm{M}^{\flat }$ and $M^{\flat }\left( y_{+}\right) $, $y_{+}\in 
\mathcal{X}_{+}^{\ast }$, are already defined in (\ref{P bemol plus+0})--(%
\ref{P bemol plus+1}) and their precise definition are recalled here for
convenience.

The study of the variational problems $P^{\flat }(y_{+})$ and $P^{\sharp
}(y_{-})$ ($y_{\pm }\in \mathcal{X}_{\pm }^{\ast }$), as well as $\mathrm{P}%
^{\flat }$ and $\mathrm{P}^{\sharp }$, including also the sets $M^{\flat
}(y_{+})$, $M^{\sharp }(y_{-})$, $\mathrm{M}^{\flat }$ and $\mathrm{M}%
^{\sharp }$ of optimizers, is carried out in Section \ref{Nonlinear pressure}%
. The essential properties of case ($\flat $) are summarized in Proposition %
\ref{Proposition importante bogoluibov01 copy(3)} above: $P^{\flat
}(y_{+})\in \mathbb{R}$ and $M^{\flat }(y_{+})$ is nonempty, convex and weak$%
^{\ast }$-compact and uniformly norm bounded for all $y_{+}\in \mathcal{X}%
_{+}^{\ast }$; $\mathrm{P}^{\flat }\in \mathbb{R}$ and $\mathrm{M}^{\flat
}\subseteq \mathcal{X}_{+}^{\ast }$ is also nonempty, norm-bounded and weak$%
^{\ast }$-compact. We now give a similar statement for the case ($\sharp $):

\begin{proposition}[Properties of variational problems ($\sharp $)]
\label{Proposition importante bogoluibov01 copy(4)}\mbox{ }\newline
Under Conditions B1--B3 the following assertions hold: \newline
\emph{(i)} For all $y_{-}\in \mathrm{dom}(g_{-}^{\ast })\subseteq \mathcal{X}%
_{-}^{\ast }$, $P^{\sharp }(y_{-})\in \mathbb{R}$ and the set $M^{\sharp
}(y_{-})$ is nonempty and weak$^{\ast }$-compact. There is $R\in \mathbb{R}%
^{+}$ such that 
\begin{equation*}
M^{\sharp }(y_{-})\subseteq B_{+}(0,R)\ ,\qquad y_{-}\in \mathrm{dom}%
(g_{-}^{\ast })\in \mathcal{X}_{-}^{\ast }\ .
\end{equation*}%
\emph{(ii)} $\mathrm{P}^{\sharp }\in \mathbb{R}$ and the set $\mathrm{M}%
^{\sharp }\subseteq \mathcal{X}_{-}^{\ast }$ is a nonempty, convex,
norm-bounded and weak$^{\ast }$-compact subset of $\mathrm{dom}(g_{-}^{\ast
})$.
\end{proposition}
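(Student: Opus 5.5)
The plan is to study $P^{\sharp}(y_-)$ for fixed $y_-\in\mathrm{dom}(g_-^{\ast})$ as a supremum over $y_+$, and then $\mathrm{P}^{\sharp}$ as an infimum of a convex, weak$^{\ast}$-lower semicontinuous, coercive function of $y_-$.

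For (i), fix $y_-\in\mathrm{dom}(g_-^{\ast})$. Using $\mathcal{G}_{y_+,y_-}(\mu)\le \sup f(K)+\|y_-\|_{\mathrm{op}}\|\tau_-\|_\infty+\|y_+\|_{\mathrm{op}}\|\tau_+\|_\infty$, we get
\[
P_{\mathrm{NL}}(y_+,y_-)\le C(y_-)+\|\tau_+\|_\infty\|y_+\|_{\mathrm{op}}-g_+^{\ast}(y_+).
\]
Here $\sup f(K)<\infty$ because $f$ is upper semicontinuous on the compact set $K$. Since $g_+^{\ast}$ has minimal linear growth $\|\tau_+\|_\infty$ (Condition B3), the right-hand side tends to $-\infty$ outside large balls. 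For the lower bound, $P_{\mathrm{NL}}(0,y_-)\ge f(\mu_0)-y_-\circ\tau_-(\mu_0)+g_-^{\ast}(y_-)-g_+^{\ast}(0)>-\infty$ for any $\mu_0$ with $f(\mu_0)>-\infty$. Here $g_+^{\ast}(0)=-\inf g_+<\infty$ because $g_+$ is bounded. Hence $P^{\sharp}(y_-)\in\mathbb{R}$, and the supremum can be restricted to some ball $B_+(0,R)$.

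The key point for uniformity in $y_-$ is to separate the $y_+$-dependence. I will use the estimate
\[
P_{\mathrm{L}}(y_+,y_-)\le P_{\mathrm{L}}(0,y_-)+\|y_+\|_{\mathrm{op}}\|\tau_+\|_\infty,
\]
together with $P_{\mathrm{L}}(0,y_-)\le P_{\mathrm{L}}(y_+,y_-)+\|y_+\|_{\mathrm{op}}\|\tau_+\|_\infty$. These give
\[
P_{\mathrm{NL}}(y_+,y_-)-P_{\mathrm{NL}}(0,y_-)\le \|\tau_+\|_\infty\|y_+\|_{\mathrm{op}}-g_+^{\ast}(y_+)+g_+^{\ast}(0).
\]
The right-hand side is independent of $y_-$ and is negative once $\|y_+\|_{\mathrm{op}}>R$ for a suitable $R$. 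So every maximizer, and the supremum itself, lies in $B_+(0,R)$ uniformly in $y_-$. On this weak$^{\ast}$-compact ball (Banach–Alaoglu), $y_+\mapsto P_{\mathrm{L}}(y_+,y_-)$ is convex and weak$^{\ast}$-lower semicontinuous, being a supremum of weak$^{\ast}$-continuous affine maps. Meanwhile $-g_+^{\ast}$ is only upper semicontinuous, so upper semicontinuity of $P_{\mathrm{NL}}(\cdot,y_-)$ is not automatic. This is the expected obstacle. I would resolve it as in Step 3 of the proof of Theorem \ref{Proposition importante bogoluibov01 copy(2)}: write
\[
P^{\sharp}(y_-)=\sup_{(y_+,\mu)\in B_+(0,R)\times K}\{f(\mu)-y_-\circ\tau_-(\mu)+y_+\circ\tau_+(\mu)-g_+^{\ast}(y_+)\}+g_-^{\ast}(y_-).
\]
The bracket is jointly upper semicontinuous, since $(y_+,\mu)\mapsto y_+(\tau_+(\mu))$ is jointly continuous on norm-bounded sets with $\tau_+$ continuous, and $-g_+^{\ast}$ is upper semicontinuous. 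So the supremum is attained, and the set of optimal pairs is compact. Its projection onto $y_+$, which is exactly $M^{\sharp}(y_-)$, is therefore nonempty and weak$^{\ast}$-compact.

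For (ii), $P^{\sharp}=\sup_{y_+}P_{\mathrm{NL}}(y_+,\cdot)$ is a supremum of convex, weak$^{\ast}$-lower semicontinuous functions of $y_-$. Each is of the form $P_{\mathrm{L}}(y_+,\cdot)+g_-^{\ast}$ plus a constant, and $P_{\mathrm{L}}(y_+,\cdot)$ is a supremum of affine continuous maps. Hence $P^{\sharp}$ is convex and weak$^{\ast}$-lower semicontinuous, equal to $+\infty$ off $\mathrm{dom}(g_-^{\ast})$. Coercivity follows from
\[
P^{\sharp}(y_-)\ge P_{\mathrm{NL}}(0,y_-)\ge \mathrm{const}-\|\tau_-\|_\infty\|y_-\|_{\mathrm{op}}+g_-^{\ast}(y_-),
\]
which tends to $+\infty$ outside large balls by the minimal linear growth in Condition B2. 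So $\mathrm{P}^{\sharp}<\infty$, and it can be computed on $B_-(0,R')\cap\mathrm{dom}(g_-^{\ast})$, which is weak$^{\ast}$-compact by Condition B2 and Banach–Alaoglu. A lower semicontinuous function on a compact set attains its infimum, so $\mathrm{P}^{\sharp}\in\mathbb{R}$ and $\mathrm{M}^{\sharp}\neq\emptyset$. Finally, $\mathrm{M}^{\sharp}$ is a sublevel set of a convex lower semicontinuous function intersected with this compact ball. It is therefore convex, norm-bounded and weak$^{\ast}$-compact, and it lies in $\mathrm{dom}(g_-^{\ast})$.
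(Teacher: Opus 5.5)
Your argument is correct and shares the paper's skeleton (Lemmata \ref{Lemma suplin growth}--\ref{Lemma bogoluibov6}). The $y_-$-independent bound on $P_{\mathrm{NL}}(y_+,y_-)-P_{\mathrm{NL}}(0,y_-)$, obtained from the Lipschitz estimate of Lemma \ref{Lemma bogoluibov1}, confines $M^\sharp(y_-)$ to a fixed ball. Coercivity of $P^\sharp$ follows from $P^\sharp\ge P_{\mathrm{NL}}(0,\cdot)$ and the growth of $g_-^\ast$.

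The difference lies in how you obtain semicontinuity. The paper proves in Lemma \ref{Lemma bogoluibov2} that $P_{\mathrm{L}}$ is jointly weak$^\ast$-continuous on norm-bounded sets, by covering the norm-compact sets $\tau_\pm(K)$ with finite $\varepsilon$-nets. Hence $P_{\mathrm{NL}}(\cdot,y_-)$ is weak$^\ast$-upper semicontinuous on balls (Lemma \ref{Lemma bogoluibov3} ($+$)), and the obstacle you flag never appears. You instead lift to $B_+(0,R)\times K$, use joint upper semicontinuity of the unintegrated expression, and project. This is self-contained and amounts to reproving the upper-semicontinuous half of Lemma \ref{Lemma bogoluibov2} at fixed $y_-$. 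It does not, however, deliver the joint continuity of $P_{\mathrm{L}}$ that the paper reuses in Proposition \ref{decision rules prop} and Theorem \ref{prop order copy(1)}.

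On the $y_-$ side your route is cleaner. Because $P_{\mathrm{L}}(y_+,\cdot)$ is a supremum of weak$^\ast$-continuous affine maps, $P^\sharp$ is weak$^\ast$-lower semicontinuous on all of $\mathcal{X}_-^\ast$. This avoids the restriction to balls and the Banach-space caveat of Lemmata \ref{Lemma bogoluibov3} ($-$) and \ref{Lemma bogoluibov4} ($\sharp$). For the same reason, your appeal to the weak$^\ast$-closedness of $B_-(0,R')\cap\mathrm{dom}(g_-^\ast)$ from Condition B2 is unnecessary: a $(-\infty,\infty]$-valued lower semicontinuous function attains its infimum on the compact ball $B_-(0,R')$. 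The paper's Lemma \ref{Lemma bogoluibov6} does not use that part of B2 either; if you keep it, choose $R'\ge R_0$.
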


\begin{proof}
Assertion (i) refers to Lemmata \ref{Lemma bogoluibov4} ($\sharp $) and \ref%
{Lemma bogoluibov5} ($\sharp $). Assertion (ii) corresponds to Lemma \ref%
{Lemma bogoluibov6} ($\sharp $).
\end{proof}

We are now able to define the decision rules for the thermodynamic game,
which is the main topic of this subsection.

\begin{definition}[Decision rules of the thermodynamic game]
\label{decision rules}\mbox{ }\newline
We call all weak$^{\ast }$%
%TCIMACRO{\TeXButton{\-}{\-}}%
%BeginExpansion
\-%
%EndExpansion
-to-weak$^{\ast }$\footnote{%
That is, continuity refers to the weak$^{\ast }$ topology for both the
domain and codomain of the mapping.} continuous mappings 
\begin{equation*}
\mathfrak{d}^{\flat }:\mathrm{M}^{\flat }\rightarrow \mathcal{X}_{-}^{\ast
}\qquad \text{and}\qquad \mathfrak{d}^{\sharp }:\mathrm{M}^{\sharp
}\rightarrow \mathcal{X}_{+}^{\ast }
\end{equation*}%
such that $\mathfrak{d}^{\flat }(x_{+})\in M^{\flat }(x_{+})$ and $\mathfrak{%
d}^{\sharp }(x_{-})\in M^{\sharp }(x_{-})$ $\flat $-decision rules\ and $%
\sharp $-decision rules, respectively. The two sets of $\flat $-decision and 
$\sharp $-decision rules are respectively denoted by $\mathfrak{D}^{\flat }$
and $\mathfrak{D}^{\sharp }$.
\end{definition}

\noindent As $M^{\flat }(x_{+})$ is convex for all $x_{+}\in \mathrm{M}%
^{\flat }$ (see Proposition \ref{Proposition importante bogoluibov01 copy(3)}%
), $\mathfrak{D}^{\flat }$ is a convex subset the space $C_{\mathrm{w}^{\ast
}}(\mathrm{M}^{\flat };\mathcal{X}_{-}^{\ast })$ of all weak$^{\ast }$%
-to-weak$^{\ast }$ continuous functions $\mathrm{M}^{\flat }\rightarrow 
\mathcal{X}_{-}^{\ast }$, with the usual point-wise vector space operations.

Observe that 
\begin{equation*}
\inf_{\mathfrak{f}\in C_{\mathrm{w}^{\ast }}(\mathrm{M}^{\flat };\mathcal{X}%
_{-}^{\ast })}\sup_{y_{+}\in \mathrm{M}^{\flat }}P_{\mathrm{NL}}\left( y_{+},%
\mathfrak{f}\left( y_{+}\right) \right) \geq \sup_{y_{+}\in \mathrm{M}%
^{\flat }}\inf_{\mathfrak{f}\in C_{\mathrm{w}^{\ast }}(\mathrm{M}^{\flat };%
\mathcal{X}_{-}^{\ast })}P_{\mathrm{NL}}\left( y_{+},\mathfrak{f}\left(
y_{+}\right) \right) \geq \mathrm{P}^{\flat }\ .
\end{equation*}%
Therefore, if the set $\mathfrak{D}^{\flat }$ of $\flat $-decision rules is 
\textbf{nonempty} then, for all $\mathfrak{d}^{\flat }\in \mathfrak{D}%
^{\flat }$ and $x_{+}\in \mathrm{M}^{\flat }$, one clearly has the following
equalities:%
\begin{eqnarray*}
\mathrm{P}^{\flat } &=&\inf_{\mathfrak{f}\in C_{\mathrm{w}^{\ast }}(\mathrm{M%
}^{\flat };\mathcal{X}_{-}^{\ast })}\sup_{y_{+}\in \mathrm{M}^{\flat }}P_{%
\mathrm{NL}}\left( y_{+},\mathfrak{f}\left( y_{+}\right) \right)
=\sup_{y_{+}\in \mathrm{M}^{\flat }}\inf_{\mathfrak{f}\in C_{\mathrm{w}%
^{\ast }}(\mathrm{M}^{\flat };\mathcal{X}_{-}^{\ast })}P_{\mathrm{NL}}\left(
y_{+},\mathfrak{f}\left( y_{+}\right) \right) \\
&=&\inf_{\mathfrak{f}\in C_{\mathrm{w}^{\ast }}(\mathrm{M}^{\flat };\mathcal{%
X}_{-}^{\ast })}P_{\mathrm{NL}}\left( x_{+},\mathfrak{f}\left( x_{+}\right)
\right) =\sup_{y_{+}\in \mathrm{M}^{\flat }}P_{\mathrm{NL}}\left( y_{+},%
\mathfrak{d}^{\flat }\left( y_{+}\right) \right) =P_{\mathrm{NL}}\left(
x_{+},\mathfrak{d}^{\flat }\left( x_{+}\right) \right) \ .
\end{eqnarray*}%
In particular the two-person zero-sum game on $\mathrm{M}^{\flat }\times C_{%
\mathrm{w}^{\ast }}(\mathrm{M}^{\flat };\mathcal{X}_{-}^{\ast })$ whose
payoff function is%
\begin{equation*}
\left( y_{+},\mathfrak{f}\right) \mapsto P_{\mathrm{NL}}\left( y_{+},%
\mathfrak{f}\left( y_{+}\right) \right)
\end{equation*}%
has a non-cooperative equilibrium (i.e., a saddle-point) and its unique
conservative value is nothing but $\mathrm{P}^{\flat }$, which is equal to $%
\sup \mathbb{F}(K)$, thanks to Theorem \ref{Proposition importante
bogoluibov01 copy(2)} (i). This new two-person zero-sum game is known as
\textquotedblleft extended game with information transfer\textquotedblright
. Such a game is defined for instance in \cite[Ch. 7, Section 7.2]{Aubinbis}%
. This is related to a very general result of game theory: Lasry's theorem 
\cite[Theorem 8.4]{Aubin}.

The same is true for $\mathrm{P}^{\sharp }$ and $\mathfrak{D}^{\sharp }$,
mutatis mutandis. Note however that $\mathfrak{D}^{\sharp }$ is not studied
in further detail, as it is a subject of lesser interest, even though this
study would be no more difficult than the one of $\mathfrak{D}^{\flat }$. By
contrast, as discussed above, $\mathfrak{D}^{\flat }$ is related to the
variational problem $\sup \mathbb{F}(K)$ on the state space, which is our
main object of study.

We give in the sequel some general condition ensuring that $\mathfrak{D}%
^{\flat }$ is nonempty. We start with additional conditions on the
Legendre-Fenchel transforms $g_{-}^{\ast }$ and $g_{+}^{\ast }$. It turns
out that the strict convexity and the finiteness of $g_{-}^{\ast }$ together
with the continuity of $g_{+}^{\ast }$ imply that $\mathfrak{D}^{\flat }$ is
a singleton. In particular, it is nonempty. We then show that the uniqueness
of the linear equilibrium states of the Bogoliubov linearization also
implies the existence of a $\flat $-decision rule (but not necessarily its
uniqueness). This last condition is very useful in our paradigmatic example,
because it is fulfilled when the functions $(\theta _{-},\theta _{+})$ are
of H\"{o}lder type (Definition \ref{sect Holder type theta}).

\begin{proposition}[Existence of $\flat $-decision rules for strictly convex 
$g_{-}^{\ast }$ and continuous $g_{+}^{\ast }$]
\label{decision rules prop}\mbox{ }\newline
Assume Conditions B1--B3. If, additionally, $g_{+}^{\ast }$ is weak$^{\ast }$%
-continuous, $\mathrm{dom}(g_{-}^{\ast })=\mathcal{X}_{-}^{\ast }$ and $%
g_{-}^{\ast }:\mathcal{X}_{-}^{\ast }\rightarrow \mathbb{R}$\ is strictly
convex, then $\mathfrak{D}^{\flat }$\ is a singleton.
\end{proposition}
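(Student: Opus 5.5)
The plan is to show two things: for each $x_{+}\in \mathrm{M}^{\flat }$ the set $M^{\flat }(x_{+})$ is a singleton $\{x_{-}(x_{+})\}$, and the resulting map $x_{+}\mapsto x_{-}(x_{+})$ is weak$^{\ast }$-to-weak$^{\ast }$ continuous on $\mathrm{M}^{\flat }$. Once both hold, $\mathfrak{D}^{\flat }$ consists of exactly this map, since any $\flat $-decision rule must select from singletons.

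\textbf{Uniqueness.} Fix $y_{+}\in \mathcal{X}_{+}^{\ast }$. The map $y_{-}\mapsto P_{\mathrm{L}}(y_{+},y_{-})=\sup_{\mu \in K}\mathcal{G}_{y_{+},y_{-}}(\mu )$ is a supremum of affine functions of $y_{-}$, hence convex. Adding the strictly convex, finite $g_{-}^{\ast }$ and subtracting the constant $g_{+}^{\ast }(y_{+})$ leaves $y_{-}\mapsto P_{\mathrm{NL}}(y_{+},y_{-})$ strictly convex and real-valued on $\mathcal{X}_{-}^{\ast }$. By Proposition \ref{Proposition importante bogoluibov01 copy(3)}~(i), $M^{\flat }(y_{+})$ is nonempty. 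A strictly convex function has at most one minimizer, so $M^{\flat }(y_{+})=\{x_{-}(y_{+})\}$. This already holds for all $y_{+}$, not only for those in $\mathrm{M}^{\flat }$.

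\textbf{Continuity.} Here I will use a closed-graph argument on a compact set. By Proposition \ref{Proposition importante bogoluibov01 copy(3)}~(i), every $x_{-}(y_{+})$ lies in the ball $B_{-}(0,R)$, which is weak$^{\ast }$-compact and Hausdorff. So it suffices to show the graph of $x_{-}$ on $\mathrm{M}^{\flat }$ is closed in $\mathrm{M}^{\flat }\times B_{-}(0,R)$; a function into a compact Hausdorff space with closed graph is continuous.

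Take a net $y_{+,j}\to y_{+}$ in $\mathrm{M}^{\flat }$ and pass to a subnet with $x_{-}(y_{+,j})\to z$ weak$^{\ast }$. I need $P_{\mathrm{NL}}(y_{+},z)\leq P_{\mathrm{NL}}(y_{+},w)$ for every $w\in \mathcal{X}_{-}^{\ast }$.
\begin{itemize}
\item $P_{\mathrm{L}}$ is jointly weak$^{\ast }$-lower semicontinuous, since each $(y_{+},y_{-})\mapsto \mathcal{G}_{y_{+},y_{-}}(\mu )$ is jointly weak$^{\ast }$-continuous ($\tau _{\pm }(\mu )$ being fixed vectors).
\item $g_{-}^{\ast }$ is weak$^{\ast }$-lower semicontinuous.
\item $-g_{+}^{\ast }$ is weak$^{\ast }$-continuous by hypothesis.
\end{itemize}
Together these give $P_{\mathrm{NL}}(y_{+},z)\leq \liminf_{j}P_{\mathrm{NL}}(y_{+,j},x_{-}(y_{+,j}))=\liminf_{j}P^{\flat }(y_{+,j})$.

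The main obstacle is controlling $P^{\flat }$ along the net, because $P_{\mathrm{L}}$ need not be upper semicontinuous. I avoid this by restricting to $\mathrm{M}^{\flat }$: there $P^{\flat }(y_{+,j})=\mathrm{P}^{\flat }=P^{\flat }(y_{+})$, so the $\liminf$ equals $P^{\flat }(y_{+})\leq P_{\mathrm{NL}}(y_{+},w)$ for all $w$. Hence $z$ minimizes $P_{\mathrm{NL}}(y_{+},\cdot )$, and by uniqueness $z=x_{-}(y_{+})$.

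Every subnet therefore has a further subnet converging to $x_{-}(y_{+})$, which gives convergence of the whole net and hence continuity. I expect the key point to be exactly this reliance on the constancy of $P^{\flat }$ on $\mathrm{M}^{\flat }$; off $\mathrm{M}^{\flat }$ one would need upper semicontinuity of $P^{\flat }$, which is not available.
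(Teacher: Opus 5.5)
Your proof is correct, and its skeleton matches the paper's. You get uniqueness of the minimizer of $P_{\mathrm{NL}}(y_{+},\cdot)$ from strict convexity of $g_{-}^{\ast}$. You then use weak$^{\ast}$ compactness of a ball $B_{-}(0,R)$ containing all sets $M^{\flat}(y_{+})$, extract a convergent subnet, and identify its limit via uniqueness.

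The difference lies in how you identify the limit. The paper passes to the limit on both sides of $P_{\mathrm{NL}}(x_{+}^{(j)},x_{-}(x_{+}^{(j)}))\leq P_{\mathrm{NL}}(x_{+}^{(j)},y_{-})$. This uses Lemma \ref{Lemma bogoluibov2}, namely joint weak$^{\ast}$ continuity of $P_{\mathrm{L}}$ on bounded sets, which rests on the norm compactness of $\tau_{\pm}(K)$. It also uses the norm-boundedness of $\mathrm{M}^{\flat}$.

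You need only the trivial joint weak$^{\ast}$ lower semicontinuity of $P_{\mathrm{L}}$, a supremum of jointly continuous affine maps. You replace any control of the right-hand side by the observation that $P^{\flat}\equiv\mathrm{P}^{\flat}$ on $\mathrm{M}^{\flat}$. This is slightly more elementary, but as written it is tied to $\mathrm{M}^{\flat}$. The paper's argument, with $\mathrm{M}^{\flat}$ replaced by any ball $B_{+}(0,R)$, shows that $y_{+}\mapsto x_{-}(y_{+})$ is weak$^{\ast}$ continuous on every bounded subset of $\mathcal{X}_{+}^{\ast}$.

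Your closing remarks about the obstacles are inaccurate, though they do not affect validity. All nets involved are norm-bounded, and on bounded sets neither obstruction exists. $P_{\mathrm{L}}$ is weak$^{\ast}$ continuous on $B_{+}(0,R)\times B_{-}(0,R)$ by Lemma \ref{Lemma bogoluibov2}. $P^{\flat}$ is weak$^{\ast}$ upper semicontinuous on $B_{+}(0,R)$ by Lemma \ref{Lemma bogoluibov4}~($\flat$). So your own $\liminf$ argument would also extend off $\mathrm{M}^{\flat}$ by invoking that lemma.
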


\begin{proof}
If $g_{-}^{\ast }$ is strictly convex then, for any fixed $y_{+}\in \mathcal{%
X}_{+}^{\ast }$, the mapping $y_{-}\mapsto P_{\mathrm{NL}}(y_{+},y_{-})$,
defined by (\ref{pression approxbis})--(\ref{pression approxbis nl}), is
strictly convex (as $y_{-}\mapsto P_{\mathrm{L}}(y_{+},y_{-})$ is convex, by
Lemma \ref{Lemma bogoluibov1}) and thus, $M^{\flat }(y_{+})=\{y_{-}(y_{+})\}$
is a singleton. Therefore, $\mathfrak{D}^{\flat }$ has at most one element.
We now show that the mapping $x_{+}\mapsto x_{-}(x_{+})$ from $\mathrm{M}%
^{\flat }$ to $\mathcal{X}_{-}^{\ast }$ is weak$^{\ast }$-to-weak$^{\ast }$
continuous. Take any net $(x_{+}^{(j)})_{j\in J}$ in $\mathrm{M}^{\flat }$
converging in the weak$^{\ast }$ topology to some $x_{+}\in \mathrm{M}%
^{\flat }$. By Proposition \ref{Proposition importante bogoluibov01 copy(3)}
(i) or Lemma \ref{Lemma bogoluibov5} ($\flat $), for some $R\in \mathbb{R}%
^{+}$, $M^{\flat }(x_{+})\subseteq B_{-}(0,R)$ for all $x_{+}\in \mathrm{M}%
^{\flat }$, where $B_{-}(0,R)\subseteq \mathcal{X}_{-}^{\ast }$ is the
closed ball of radius $R\in \mathbb{R}^{+}$ and center $0\in \mathcal{X}%
_{-}^{\ast }$. In particular, $x_{-}(x_{+}^{(j)})\in B_{-}(0,R)$ for all $%
j\in J$. Thus, as $B_{-}(0,R)$\ is weak$^{\ast }$%
%TCIMACRO{\TeXButton{\-}{\-}}%
%BeginExpansion
\-%
%EndExpansion
-compact (cf. the Banach-Alaoglu theorem \cite[Theorem 3.15]{Rudin}), we can
assume without loss of generality that $(x_{-}(x_{+}^{(j)}))_{j\in J}$
converges to some $x_{-}\in B_{-}(0,R)$. For all $j\in J$ and $y_{-}\in 
\mathcal{X}_{-}^{\ast }$,%
\begin{equation*}
P_{\mathrm{NL}}(x_{+}^{(j)},x_{-}(x_{+}^{(j)}))=P_{\mathrm{L}%
}(x_{+}^{(j)},x_{-}(x_{+}^{(j)}))+g_{-}^{\ast
}(x_{-}(x_{+}^{(j)}))-g_{+}^{\ast }(x_{+}^{(j)})\leq P_{\mathrm{NL}%
}(x_{+}^{(j)},y_{-})\ .
\end{equation*}%
As $\mathrm{M}^{\flat }$ is norm bounded (Proposition \ref{Proposition
importante bogoluibov01 copy(3)} (ii) or Lemma \ref{Lemma bogoluibov6} ($%
\flat $)), $g_{-}^{\ast }$ is always weak$^{\ast }$-lower semicontinuous and 
$g_{+}^{\ast }$ is by assumption weak$^{\ast }$-continuous, by taking the $j$%
-limit and using Lemma \ref{Lemma bogoluibov2}, we conclude that 
\begin{equation*}
P_{\mathrm{NL}}\left( x_{+},x_{-}\right) =P_{\mathrm{L}%
}(x_{+},x_{-})+g_{-}^{\ast }(x_{-})-g_{+}^{\ast }(x_{+})\leq P_{\mathrm{NL}%
}\left( x_{+},y_{-}\right) \ ,\qquad y_{-}\in \mathcal{X}_{-}^{\ast }\ .
\end{equation*}%
In other words, $x_{-}=x_{-}(x_{+})$ and so, the net $%
(x_{-}(x_{+}^{(j)}))_{j\in J}$ has to converge to $x_{-}(x_{+})$ in the weak$%
^{\ast }$ topology, from which it follows that the mapping $x_{+}\mapsto
x_{-}(x_{+})$ is weak$^{\ast }$%
%TCIMACRO{\TeXButton{\-}{\-}}%
%BeginExpansion
\-%
%EndExpansion
-to-weak$^{\ast }$ continuous.
\end{proof}

We now provide a method for constructing decision rules, based on
equilibrium states. To this end, we recall some general notions of convex
analysis, more specifically the notion of \textquotedblleft
selection\textquotedblright\ \cite[Definition 2.7]{Phe2}: Given a real
normed space $\mathcal{X}$, a subset $\Omega \subseteq \mathcal{X}$ and a
continuous convex function $g:\mathcal{X}\rightarrow \mathbb{R}$, a function 
$s:\Omega \rightarrow \mathcal{X}^{\ast }$ is a \emph{selection}\ for $g$ if 
$s(x)\in \partial g(x)$ for all $x\in \Omega $. Interestingly, for Banach
spaces $\mathcal{X}$, $g$ is Gateaux%
%TCIMACRO{\TeXButton{\-}{\-}}%
%BeginExpansion
\-%
%EndExpansion
-differentiable (Fr\'{e}chet%
%TCIMACRO{\TeXButton{\-}{\-}}%
%BeginExpansion
\-%
%EndExpansion
-differentiable) at $x\in \mathcal{X}$ iff there is a selection $\xi
_{x}:\Omega _{x}\rightarrow \mathcal{X}^{\ast }$ for $g$ that is norm-to-weak%
$^{\ast }$ continuous\footnote{%
That is, the continuity is considered with respect to the norm topology in
the domain and the weak$^{\ast }$ topology in the codomain of the mapping.}
(norm-to-norm continuous\footnote{%
That is, the continuity is considered with respect to the norm topology for
both the domain and codomain of the mapping.}) at $x$, where $\Omega _{x}$
is some neighborhood of $x$. See \cite[Proposition 2.8]{Phe2}. In
particular, if $g$ is Gateaux%
%TCIMACRO{\TeXButton{\-}{\-}}%
%BeginExpansion
\-%
%EndExpansion
-differentiable (Fr\'{e}chet%
%TCIMACRO{\TeXButton{\-}{\-}}%
%BeginExpansion
\-%
%EndExpansion
-differentiable) on the whole space $\mathcal{X}$ then the unique selection $%
s_{g}:\mathcal{X}\rightarrow \mathcal{X}^{\ast }$ for $g$ is norm-to-weak$%
^{\ast }$ continuous (norm-to-norm continuous).

Inspired by the general concept of \emph{selection}, we introduce a similar
notion in relation to equilibrium states:

\begin{definition}[Equilibrium state selection]
\mbox{ }\newline
A mapping $\xi :\mathrm{M}^{\flat }\rightarrow E_{\mathbb{F}}\subseteq K$ is
a equilibrium state selection\ if it is weak$^{\ast }$ continuous and $\xi
(x_{+})\in E_{\mathcal{G}_{x_{+},\cdot }}^{\mathrm{sc}}$ for all $x_{+}\in 
\mathrm{M}^{\flat }$. See Theorem \ref{Proposition importante bogoluibov01
copy(2)} for the definition of $E_{\mathcal{G}_{x_{+},\cdot }}^{\mathrm{sc}%
}\subseteq E_{\mathbb{F}}$.
\end{definition}

\noindent Because of \cite[Proposition 2.8]{Phe2} and the self-consistency
of equilibrium states, if the function $g_{-}:\mathcal{X}_{-}\rightarrow 
\mathbb{R}$ is Gateaux%
%TCIMACRO{\TeXButton{\-}{\-}}%
%BeginExpansion
\-%
%EndExpansion
-differentiable then any equilibrium state selection naturally induces a $%
\flat $-decision rule $\mathfrak{d}_{\xi }^{\flat }:\mathrm{M}^{\flat
}\rightarrow \mathcal{X}_{-}^{\ast }$:

\begin{proposition}[Construction of decision rules from equilibrium state
selections]
\mbox{ }\newline
Assume Conditions B1--B3. If the function $g_{-}:\mathcal{X}_{-}\rightarrow 
\mathbb{R}$ is Gateaux%
%TCIMACRO{\TeXButton{\-}{\-}}%
%BeginExpansion
\-%
%EndExpansion
-differentiable and $\xi :\mathrm{M}^{\flat }\rightarrow E_{\mathbb{F}}$ is
an equilibrium state selection, then the mapping 
\begin{equation*}
\mathfrak{d}_{\xi }^{\flat }\doteq s_{g_{-}}\circ \tau _{-}\circ \xi :%
\mathrm{M}^{\flat }\rightarrow \mathcal{X}_{-}^{\ast }
\end{equation*}%
is a decision rule, where $s_{g_{-}}:\mathcal{X}_{-}\rightarrow \mathcal{X}%
_{-}^{\ast }$ is the unique selection for $g_{-}$. It is weak$^{\ast }$%
-to-norm\footnote{%
That is, the continuity is considered with respect to the weak$^{\ast }$
topology in the domain and the norm topology in the codomain of the mapping.}
continuous when $g_{-}$\ is Fr\'{e}chet%
%TCIMACRO{\TeXButton{\-}{\-}}%
%BeginExpansion
\-%
%EndExpansion
-differentiable.
\end{proposition}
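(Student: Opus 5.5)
We must show two things: first, that $\mathfrak{d}_{\xi }^{\flat }(x_{+})\in M^{\flat }(x_{+})$ for every $x_{+}\in \mathrm{M}^{\flat }$; second, that $x_{+}\mapsto \mathfrak{d}_{\xi }^{\flat }(x_{+})$ is weak$^{\ast }$-to-weak$^{\ast }$ continuous, and weak$^{\ast }$-to-norm continuous when $g_{-}$ is Fr\'{e}chet-differentiable.

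\emph{Membership.} Fix $x_{+}\in \mathrm{M}^{\flat }$ and put $\nu \doteq \xi (x_{+})$. By definition of an equilibrium state selection, $\nu \in E_{\mathcal{G}_{x_{+},\cdot }}^{\mathrm{sc}}$. Theorem \ref{Proposition importante bogoluibov01 copy(2)} (ii) says this set does not depend on the choice of $x_{-}\in M^{\flat }(x_{+})$. So for any such $x_{-}$, which exists by Proposition \ref{Proposition importante bogoluibov01 copy(3)} (i), we have $x_{-}\in \partial g_{-}(\tau _{-}(\nu ))$. Since $g_{-}$ is Gateaux-differentiable, this subdifferential is the singleton $\{s_{g_{-}}(\tau _{-}(\nu ))\}$. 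Hence $x_{-}=\mathfrak{d}_{\xi }^{\flat }(x_{+})$, so $\mathfrak{d}_{\xi }^{\flat }(x_{+})\in M^{\flat }(x_{+})$. As a by-product, $M^{\flat }(x_{+})$ is itself this singleton.

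\emph{Continuity.} The map is a composition of three maps. First, $\xi :\mathrm{M}^{\flat }\rightarrow K$ is continuous by assumption, with the weak$^{\ast }$ topology on $\mathrm{M}^{\flat }$ and the given compact topology on $K$. Second, $\tau _{-}:K\rightarrow \mathcal{X}_{-}$ is continuous into the norm topology by Condition B1. Third, $s_{g_{-}}$ is norm-to-weak$^{\ast }$ continuous when $g_{-}$ is Gateaux-differentiable, and norm-to-norm continuous when $g_{-}$ is Fr\'{e}chet-differentiable; this is \cite[Proposition 2.8]{Phe2}. Composing gives weak$^{\ast }$-to-weak$^{\ast }$ continuity in the first case and weak$^{\ast }$-to-norm continuity in the second.

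\emph{Main obstacle.} The delicate point is applying \cite[Proposition 2.8]{Phe2}, which is stated for Banach spaces, whereas $\mathcal{X}_{-}$ is only assumed normed. A Gateaux-differentiable convex function is continuous on the norm-interior of its domain once it is finite and lower semicontinuous, but that implication uses a Baire argument. The plan is either to check that the proof of the norm-to-weak$^{\ast }$ continuity direction in \cite{Phe2} uses only local boundedness of $\partial g_{-}$, or to pass to the completion of $\mathcal{X}_{-}$. If neither works, we add completeness of $\mathcal{X}_{-}$ (or continuity of $g_{-}$) as an explicit hypothesis. Note that only the easy direction of \cite[Proposition 2.8]{Phe2} is needed: differentiability implies continuity of the selection.
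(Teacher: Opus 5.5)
Your proof is the paper's own: membership in $M^{\flat}(x_{+})$ comes from the self-consistency in Theorem \ref{Proposition importante bogoluibov01 copy(2)} (ii) together with $\partial g_{-}(\tau_{-}(\xi(x_{+})))$ being a singleton, and continuity comes from composing $\xi$, $\tau_{-}$ and $s_{g_{-}}$ via \cite[Proposition 2.8]{Phe2}. The paper applies that proposition to the merely normed $\mathcal{X}_{-}$ without comment, so your caveat is fair, and Condition B2 closes it without any completeness: by (\ref{inequality grow}), $g_{-}$ is Lipschitz on $\{x:\Vert x\Vert_{\mathcal{X}_{-}}\leq\Vert\tau_{-}\Vert_{\infty}\}\supseteq\tau_{-}(K)$, hence bounded below there, and any $y\in\partial g_{-}(x)$ at such $x$ satisfies $\Vert\tau_{-}\Vert_{\infty}\Vert y\Vert_{\mathrm{op}}-g_{-}^{\ast}(y)\geq g_{-}(x)$, so minimal linear growth bounds $\Vert y\Vert_{\mathrm{op}}$ uniformly; these two facts, plus differentiability at the limit point, are all that the easy direction of Phelps' argument uses.
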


\begin{proof}
By \cite[Proposition 2.8]{Phe2}, $\mathfrak{d}_{\xi }^{\flat }$ is weak$%
^{\ast }$-to-weak$^{\ast }$ continuous and it is weak$^{\ast }$-to-norm
continuous when $g_{-}$\ is Fr\'{e}chet%
%TCIMACRO{\TeXButton{\-}{\-}}%
%BeginExpansion
\-%
%EndExpansion
-differentiable. Take now any optimizer $x_{+}\in \mathrm{M}^{\flat }$.
Then, as $\xi $ is an equilibrium state selection, $\xi (x_{+})\in E_{%
\mathcal{G}_{x_{+},\cdot }}^{\mathrm{sc}}$. By Theorem \ref{Proposition
importante bogoluibov01 copy(2)} (ii), for any $x_{-}\in M^{\flat }(x_{+})$, 
$x_{-}\in \partial g_{-}(\tau _{-}\circ \xi (x_{+}))$. Since%
\begin{equation*}
\partial g_{-}\left( \tau _{-}\circ \xi \left( x_{+}\right) \right)
=\{s_{g_{-}}\circ \tau _{-}\circ \xi \left( x_{+}\right) \}
\end{equation*}%
is a singleton ($g_{-}$ being Gateaux-differentiable), it follows that $%
\mathfrak{d}_{\xi }^{\flat }(x_{+})\in M^{\flat }(x_{+})$ for all $x_{+}\in 
\mathrm{M}^{\flat }$. In other words, $\mathfrak{d}_{\xi }^{\flat }$ is a $%
\flat $-decision rule.
\end{proof}

We now conclude our study of decision rules by giving sufficient conditions
for the existence of an equilibrium state selection as well as its
properties. All of this is summarized in the following proposition, which
concludes our subsection:

\begin{proposition}[Properties of equilibrium state selections]
\label{equilibrium state selections prop}\mbox{ }\newline
Under Conditions B1--B3 the following assertions hold: \newline
\emph{(i)} If the function $g_{+}:\mathcal{X}_{+}\rightarrow \mathbb{R}$ is
Gateaux%
%TCIMACRO{\TeXButton{\-}{\-}}%
%BeginExpansion
\-%
%EndExpansion
-differentiable then any equilibrium state selection is injective. \newline
\emph{(ii)} If, for all $x_{+}\in \mathrm{M}^{\flat }$, the set $E_{\mathcal{%
G}_{x_{+},\cdot }}^{\mathrm{sc}}$ of self-consistent equilibrium states is a
singleton, then there is a unique equilibrium state selection $\mathrm{M}%
^{\flat }\rightarrow E_{\mathbb{F}}$. Additionally, in this case the unique
equilibrium state selection is surjective. This occurs in particular when $%
g_{-}\circ \tau _{-}:K\rightarrow \mathbb{R}$ is strictly convex, or if the
Bogoliubov linearization $\mathcal{G}_{y_{+},y_{-}}$ has a unique linear
equilibrium state for all $y_{\pm }\in \mathcal{X}_{\pm }^{\ast }$. \newline
\emph{(iii)} If $g_{+}:\mathcal{X}_{+}^{\ast }\rightarrow \mathbb{R}$ is
Gateaux%
%TCIMACRO{\TeXButton{\-}{\-}}%
%BeginExpansion
\-%
%EndExpansion
-differentiable and, for all $x_{+}\in \mathrm{M}^{\flat }$, the set $E_{%
\mathcal{G}_{x_{+},\cdot }}^{\mathrm{sc}}$ is a singleton, then the unique
equilibrium state selection $\mathrm{M}^{\flat }\rightarrow E_{\mathbb{F}}$
is a homeomorphism with respect to the weak$^{\ast }$ topology of $\mathrm{M}%
^{\flat }\subseteq \mathcal{X}_{+}^{\ast }$.
\end{proposition}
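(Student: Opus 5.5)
We take the three assertions in order; each rests on Theorem \ref{Proposition importante bogoluibov01 copy(2)}, in particular the self-consistency inclusion in (ii) and the disjoint union in (iii).

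\emph{Assertion (i).} Let $\xi$ be an equilibrium state selection and suppose $\xi(x_{+})=\xi(x_{+}^{\prime })=\nu $ for some $x_{+},x_{+}^{\prime }\in \mathrm{M}^{\flat }$. By Theorem \ref{Proposition importante bogoluibov01 copy(2)} (ii), both $x_{+}$ and $x_{+}^{\prime }$ belong to $\partial g_{+}(\tau _{+}(\nu ))$. When $g_{+}$ is Gateaux-differentiable this subdifferential is a singleton, so $x_{+}=x_{+}^{\prime }$. Equivalently, this is the disjointness statement of Theorem \ref{Proposition importante bogoluibov01 copy(2)} (iii).

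\emph{Assertion (ii), existence and surjectivity.} If every $E_{\mathcal{G}_{x_{+},\cdot }}^{\mathrm{sc}}=\{\xi (x_{+})\}$ is a singleton, the only candidate map is $x_{+}\mapsto \xi (x_{+})$, so uniqueness is automatic. Surjectivity onto $E_{\mathbb{F}}$ follows from the union formula in Theorem \ref{Proposition importante bogoluibov01 copy(2)} (iii).

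\emph{Assertion (ii), continuity.} This is the main point. Take a net $x_{+}^{(j)}\rightarrow x_{+}$ in $\mathrm{M}^{\flat }$. By compactness of $K$, pass to a subnet along which $\xi (x_{+}^{(j)})\rightarrow \nu $. Also pick $x_{-}^{(j)}\in M^{\flat }(x_{+}^{(j)})$; these lie in the weak$^{\ast }$-compact set $B_{-}(0,R)\cap \mathrm{dom}(g_{-}^{\ast })$ by Proposition \ref{Proposition importante bogoluibov01 copy(3)} (i), so along a further subnet $x_{-}^{(j)}\rightarrow x_{-}$. We then show $\nu \in E_{\mathcal{G}_{x_{+},\cdot }}^{\mathrm{sc}}$.

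The cleanest route avoids tracking $x_{-}$. Since $E_{\mathbb{F}}=M_{\mathbb{F}}$ is compact, $\nu \in M_{\mathbb{F}}$. Because $\xi (x_{+}^{(j)})\in E_{\mathcal{G}_{x_{+}^{(j)},\cdot }}^{\mathrm{sc}}$, Step 2 of the proof of Theorem \ref{Proposition importante bogoluibov01 copy(2)} gives
\begin{equation*}
g_{+}\circ \tau _{+}(\xi (x_{+}^{(j)}))=x_{+}^{(j)}\circ \tau _{+}(\xi (x_{+}^{(j)}))-g_{+}^{\ast }(x_{+}^{(j)}).
\end{equation*}
In the limit, the bilinear term converges because $\mathrm{M}^{\flat }$ is norm-bounded and $\tau _{+}$ is norm continuous. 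Together with the weak$^{\ast }$-lower semicontinuity of $g_{+}^{\ast }$ and the fact that $\nu $ is a maximizer, this shows that $(x_{+},\nu )$ solves the variational problem (\ref{fdfgdfgd}), as in Step 4 of that proof.

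From here, Corollary \ref{corollary importante bogoluibov01 copy(1)} applied to $f+x_{+}\circ \tau _{+}$ gives $\nu \in E_{\mathcal{G}_{x_{+},\cdot }}^{\mathrm{sc}}=\{\xi (x_{+})\}$. Since every subnet has a further subnet converging to $\xi (x_{+})$, the map $\xi $ is continuous.

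\emph{Assertion (ii), the two sufficient conditions.} If $\mathcal{G}_{y_{+},y_{-}}$ always has a unique maximizer, then $E_{\mathcal{G}_{x_{+},\cdot }}^{\mathrm{sc}}\subseteq E_{\mathcal{G}_{x_{+},x_{-}}}$ is at most a singleton; it is nonempty by (ii) of the theorem. If instead $g_{-}\circ \tau _{-}$ is strictly convex, use the concave case. The set $E_{\mathcal{G}_{x_{+},\cdot }}^{\mathrm{sc}}$ equals the maximizer set of $f+x_{+}\circ \tau _{+}-g_{-}\circ \tau _{-}$, which is strictly concave because $f$ is concave and $x_{+}\circ\tau_{+}$ is affine. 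A strictly concave function has at most one maximizer.

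\emph{Assertion (iii).} Under these hypotheses $\xi $ is a continuous bijection by (i) and (ii). Its domain $\mathrm{M}^{\flat }$ is weak$^{\ast }$-compact by Proposition \ref{Proposition importante bogoluibov01 copy(3)} (ii), and its codomain $E_{\mathbb{F}}\subseteq K$ is Hausdorff. A continuous bijection from a compact space to a Hausdorff space is a homeomorphism.

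The expected obstacle is the limit step in the continuity proof. It needs lower semicontinuity of $g_{+}^{\ast }$ combined with the bilinear convergence $x_{+}^{(j)}(\tau _{+}(\nu _{j}))\rightarrow x_{+}(\tau _{+}(\nu ))$, which uses norm-boundedness of $\mathrm{M}^{\flat }$ and norm continuity of $\tau _{+}$.
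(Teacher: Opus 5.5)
Your proposal is correct and follows essentially the same route as the paper. Injectivity comes from the singleton subdifferential $\partial g_{+}(\tau_{+}(\nu))$, surjectivity from the union formula, continuity from a compactness and subnet limit argument, and (iii) from the compact-to-Hausdorff homeomorphism; the one small variation is in the continuity step, where you pass to the limit in the joint problem over $(y_{+},\mu)$ using the subgradient identity and the weak$^{\ast}$-lower semicontinuity of $g_{+}^{\ast}$, whereas the paper, with $\mathcal{F}_{x_{+}}\doteq f-g_{-}\circ\tau_{-}+x_{+}\circ\tau_{+}$, passes the inequality $\mathcal{F}_{x_{+}^{(j)}}(\rho)\leq\mathcal{F}_{x_{+}^{(j)}}(\xi(x_{+}^{(j)}))$ (for fixed $\rho\in K$) directly to the limit and then uses $E_{\mathcal{G}_{x_{+},\cdot}}^{\mathrm{sc}}=E_{\mathcal{F}_{x_{+}}}$, and you also justify the two sufficient conditions in (ii), which the paper only asserts.
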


\begin{proof}
If the function $g_{+}:\mathcal{X}_{+}^{\ast }\rightarrow \mathbb{R}$ is
Gateaux%
%TCIMACRO{\TeXButton{\-}{\-}}%
%BeginExpansion
\-%
%EndExpansion
-differentiable then, for any $x_{+},x_{+}^{\prime }\in \mathrm{M}^{\flat }$
such that $x_{+}\neq x_{+}^{\prime }$, we know from Theorem \ref{Proposition
importante bogoluibov01 copy(2)} (iii) that $E_{\mathcal{G}_{x_{+},\cdot }^{%
\mathrm{sc}}}\cap E_{\mathcal{G}_{x_{+}^{\prime },\cdot }^{\mathrm{sc}%
}}=\emptyset $. Thus, any equilibrium state selection is necessarily
injective in this case. By the proof of Theorem \ref{Proposition importante
bogoluibov01 copy(2)}, for any $x_{+}\in \mathrm{M}^{\flat }$,%
\begin{equation}
E_{\mathcal{G}_{x_{+},\cdot }^{\mathrm{sc}}}=E_{\mathcal{F}%
_{x_{+}}}\subseteq E_{\mathcal{G}_{x_{+},x_{-}}}\ ,  \label{sdsdsds}
\end{equation}%
where $\mathcal{F}_{x_{+}}:K\rightarrow \mathbb{\{-\infty \}\cup R}$ is the
concave and upper semicontinuous function defined by%
\begin{equation*}
\mathcal{F}_{x_{+}}(\mu )\doteq f(\mu )-g_{-}\circ \tau _{-}(\mu
)+x_{+}\circ \tau _{+}(\mu )\ ,\qquad \mu \in K\ .
\end{equation*}%
Clearly, 
\begin{equation}
E_{\mathcal{G}_{x_{+},\cdot }^{\mathrm{sc}}}=\{\mu (x_{+})\}=E_{\mathcal{F}%
_{x_{+}}}\ ,\qquad x_{+}\in \mathrm{M}^{\flat }\ ,  \label{sdsdsds22}
\end{equation}%
is a singleton under the assumptions of Assertion (ii). Thus, the mapping $%
\xi :\mathrm{M}^{\flat }\rightarrow E_{\mathbb{F}}$ defined by $\xi
(x_{+})\doteq \mu (x_{+})$ is the unique possible equilibrium state
selection in this case. It remains to prove that it is weak$^{\ast }$
continuous. With this aim, take any net $(x_{+}^{(j)})_{j\in J}$ in $\mathrm{%
M}^{\flat }$ that converges in the weak$^{\ast }$ topology to some $x_{+}\in 
\mathrm{M}^{\flat }$. By the weak$^{\ast }$-compactness of $E_{\mathbb{F}}$
and Theorem \ref{Proposition importante bogoluibov01 copy(2)} (iii), we can
assume without loss of generality that $\mu (x_{+}^{(j)})\subseteq E_{%
\mathbb{F}}$ converges to some $\mu \in E_{\mathbb{F}}$ in the weak$^{\ast }$
topology. By Equation (\ref{sdsdsds22}), for all $j\in J$ and $\nu \in K$, 
\begin{equation*}
f(\nu )-g_{-}\circ \tau _{-}(\nu )+x_{+}^{(j)}\circ \tau _{+}(\nu )\leq
f(\mu (x_{+}^{(j)}))-g_{-}\circ \tau _{-}(\mu
(x_{+}^{(j)}))+x_{+}^{(j)}\circ \tau _{+}(\mu (x_{+}^{(j)}))\ .
\end{equation*}%
Taking the $j$-limit at fixed $\nu \in K$, we arrive at%
\begin{equation*}
f(\nu )-g_{-}\circ \tau _{-}(\nu )+x_{+}\circ \tau _{+}(\nu )\leq f(\mu
)-g_{-}\circ \tau _{-}(\mu )+x_{+}\circ \tau _{+}(\mu )\ ,\qquad \nu \in K\ ,
\end{equation*}%
using that $\mathrm{M}^{\flat }$ is norm bounded (Proposition \ref%
{Proposition importante bogoluibov01 copy(3)} (ii) or Lemma \ref{Lemma
bogoluibov6} ($\flat $)). In other words, $\mu \in E_{\mathcal{F}%
_{x_{+}}}=E_{\mathcal{G}_{x_{+},\cdot }^{\mathrm{sc}}}$. As $E_{\mathcal{G}%
_{x_{+},\cdot }^{\mathrm{sc}}}=\{\mu (x_{+})\}$, it follows that $\mu
(x_{+}^{(j)})$ converges to $\mu (x_{+})$ in the weak$^{\ast }$ topology and 
$\xi $ is thus weak$^{\ast }$ continuous. If $E_{\mathcal{G}_{\cdot ,x_{+}}^{%
\mathrm{sc}}}$ is a singleton for all $x_{+}\in \mathrm{M}^{\flat }$ then $%
\xi $ is surjective, because of Theorem \ref{Proposition importante
bogoluibov01 copy(2)} (iii). Finally, under the assumption of Assertion
(iii), the unique equilibrium state selection $\xi :\mathrm{M}^{\flat
}\rightarrow E_{\mathbb{F}}$ is weak$^{\ast }$ continuous and bijective,
thanks to Assertions (i)--(ii). Since $\mathrm{M}^{\flat }$ is weak$^{\ast }$%
-compact (Proposition \ref{Proposition importante bogoluibov01 copy(3)} (ii)
or Lemma \ref{Lemma bogoluibov6} ($\flat $)) and $E_{\mathbb{F}}\subseteq K$
with $K$ being a Hausdorff space, $\xi $ is in this case a homeomorphism.
\end{proof}

\subsection{Generalized equilibria and optimal transport\label{transport}}

\noindent \textbf{Generalized equilibria and order parameters. }Physically,
the set of all equilibrium states of a system is expected to be a convex
set, in order to allow for phase mixtures. By contrast, the set $E_{\mathbb{F%
}}=M_{\mathbb{F}}$ of nonlinear equilibrium states defined in Section \ref%
{sect conc conv Bogo} (Equations (\ref{MF})--(\ref{EF})) is not necessarily
convex, because the function $\mathbb{F}$ is generally not concave. That is
why, in the scope of our paradigmatic example, we introduced the notion of
generalized nonlinear equilibrium measures in Section \ref{Generalized
Equilibrium Measures} (Definition \ref{Gneralized equilibrium measures}) by
considering the closed convex hull $G_{P}\doteq \overline{\mathrm{co}}%
(E_{P}) $ of the set $E_{P}$\ of nonlinear equilibrium measures. Exactly the
same can be done in our abstract version of the theory of nonlinear
equilibria.

We thus define the compact convex set%
\begin{equation}
G_{\mathbb{F}}\doteq \overline{\mathrm{co}}(E_{\mathbb{F}})\subseteq K\text{ 
}.  \label{eq gen eq states}
\end{equation}%
We call its elements \emph{generalized nonlinear equilibrium states}.
Physical equilibria should be seen as elements of this set. If $K$ is a
compact convex set in some topological vector space (which is the case in
virtually all important cases, like our paradigmatic example, the nonlinear
thermodynamic formalism of classical dynamical systems, or quantum lattice
systems), note from the Milman theorem \cite[Proposition 1.5]{Phe} that the
extreme points of the convex set $G_{\mathbb{F}}$ are all in $E_{\mathbb{F}}$%
. Thus, the previous set $E_{\mathbb{F}}$ of nonlinear equilibrium states is
rather related to what are called the \emph{pure equilibrium states}, or 
\emph{pure phases}, in Physics.

One also expects that equilibrium states of a physical system are exactly
those that maximize a pressure function associated with the system under
consideration. It turns out that $G_{\mathbb{F}}$ can also be characterized
in this way. It is the set of maximizers of the \emph{upper}\footnote{%
It corresponds to the function $-\Gamma (-\mathbb{F})$ with $\Gamma (f)$
defined by Equation (\ref{gamm regu general}).} $\Gamma $-regularization\ of 
$\mathbb{F}$, which is the smallest concave and upper semicontinuous
function above $\mathbb{F}$. For more details, see Section \ref{Minimization}%
, in particular Theorem \ref{theorem sympa}.

However, an upper $\Gamma $-regularization is usually only concave, which is
not as good as being affine, a more desirable property for a pressure
function from a physical point of view. In fact, in Section \ref{Section
game + measusre} we show for our paradigmatic example that generalized
equilibrium states (measures in this case) are indeed the limits of (weak$%
^{\ast }$-)convergent sequences of approximating maximizers of an affine
pressure function. See Equation (\ref{equilibriun statebis}). This function
turns out to be not semicontinuous in general (it is only Borel-measurable),
but it has the same upper $\Gamma $-regularization as the original nonlinear
pressure. See Theorem \ref{theorem sympa} in this context. Moreover, it is
maximized by any equilibrium state $\rho \in E_{\mathbb{F}}\cap \mathcal{E}%
(K)$ that is extreme in the convex set $K$. As the function is affine, it is
also maximized by any convex combination $\rho \in \mathrm{co}(E_{\mathbb{F}%
}\cap \mathcal{E}(K))\subseteq \overline{\mathrm{co}}(E_{\mathbb{F}})\doteq
G_{\mathbb{F}}$ of such equilibrium states, but not necessarily by any
generalized equilibrium state $\rho \in G_{\mathbb{F}}$, as it is not
necessarily upper semicontinuous. However, in the H\"{o}lder case (which is
the relevant one in this section) the generalized equilibrium states turn
out to be precisely the strict maximizers of the affine pressure function.
See Corollary \ref{Theorem-main1 copy(3)}. In fact, this is not a particular
property of our paradigmatic example, i.e., the thermodynamic formalism of
(classical) dynamical systems. In \cite{BruPedra2} we made the same
observation in the case of quantum lattice systems (quantum spins or
fermionic systems on lattices) when the functions $g_{\pm }$ defining the
nonlinear pressure function $\mathbb{F}$\ are quadratic.

Remark from \cite[Proposition 1.2]{Phe} that if $A$ is a compact subset of a
locally convex (Hausdorff) space $\mathcal{Y}$, then $y\in \overline{\mathrm{%
co}}(A)\subseteq \mathcal{Y}$ iff $y$ is the barycenter of a probability
measure $\mathfrak{m}_{y}$ supported in $A$ (see Definition \ref{Barycenters
of a measure}). If $\overline{\mathrm{co}}(A)$ is a compact Choquet simplex
and $A$ only contains extreme points of $\overline{\mathrm{co}}(A)$ then the
probability measure $\mathfrak{m}_{y}$\ is always unique and the mapping $%
y\mapsto \mathfrak{m}_{y}$\ from $\overline{\mathrm{co}}(A)$ to the set of
probability measures on $A$ is an affine one-to-one correspondence. For more
details see \cite{Phe}.

By Theorem \ref{Proposition importante bogoluibov01 copy(2)} (iii)--(iv), $%
E_{\mathbb{F}}$ is compact and, if the equilibrium states of the associated
Bogoliubov linearizations are unique and $f$ is affine, then $E_{\mathbb{F}}$
is even a compact set of extreme points of $K$ and $G_{\mathbb{F}}\doteq 
\overline{\mathrm{co}}(E_{\mathbb{F}})\subseteq K$ is, in particular, a face
of the convex set $K$. So, in this case, provided $K$ is a compact convex
subset of a locally convex space, $\mu \in G_{\mathbb{F}}$ iff $\mu $ is the
barycenter of a probability measure $\mathfrak{m}_{\mu }$ supported in $E_{%
\mathbb{F}}$ (see Definition \ref{Barycenters of a measure}). As illustrated
in the nonlinear thermodynamic formalism (see Proposition \ref{prop Choquet
inv meas}) or for quantum lattice systems \cite[Theorem 1.9]{BruPedra2}, $G_{%
\mathbb{F}}$ is usually even a compact Choquet simplex and $\mathfrak{m}%
_{\mu }$ is uniquely defined.

If the functions $g_{\pm }:\mathcal{X}_{\pm }\rightarrow \mathbb{R}$ are
Gateaux%
%TCIMACRO{\TeXButton{\-}{\-}}%
%BeginExpansion
\-%
%EndExpansion
-differentiable then the corresponding gradient mappings 
\begin{equation*}
\nabla g_{\pm }:\mathcal{X}_{\pm }\rightarrow \mathcal{X}_{\pm }^{\ast }
\end{equation*}%
are norm-to-weak$^{\ast }$ continuous. They are even norm-to-norm continuous
when the functions $g_{\pm }$ are Fr\'{e}chet%
%TCIMACRO{\TeXButton{\-}{\-}}%
%BeginExpansion
\-%
%EndExpansion
-differentiable. See \cite[Proposition 2.8]{Phe2}. Recall that the
subdifferentials $\partial g_{\pm }(x_{\pm })=\{\nabla g_{\pm }(x_{\pm })\}$
are singletons for any $x_{\pm }\in \mathcal{X}_{\pm }$ , $g_{\pm }$ being
Gateaux-differentiable. In particular, in this case, to any $\mu \in G_{%
\mathbb{F}}$ we naturally associate measures $\mathfrak{y}_{\pm }(\mu )$ on $%
\mathcal{X}_{\pm }^{\ast }$, by considering the pushforward of a probability
measure $\mathfrak{m}_{\mu }$ representing $\mu \in G_{\mathbb{F}}$ (in $G_{%
\mathbb{F}}$) through the mapping 
\begin{equation*}
\nabla g_{\pm }\circ \tau _{\pm }:K\rightarrow \mathcal{X}_{\pm }^{\ast }\ .
\end{equation*}%
As $\nabla g_{\pm }$ is always norm-to-weak$^{\ast }$ continuous and $\tau
_{\pm }:K\rightarrow \mathcal{X}_{\pm }$ are by assumption continuous
(Condition B2), the measures $\mathfrak{y}_{\pm }(\mu )$ are supported on
the weak$^{\ast }$-compact sets $\nabla g_{\pm }\circ \tau _{\pm }(E_{%
\mathbb{F}})\subseteq \mathcal{X}_{\pm }^{\ast }$. In fact, by the
self-consistency of nonlinear equilibrium states (Theorem \ref{Proposition
importante bogoluibov01 copy(2)} (ii)--(iii)), the probability measure $%
\mathfrak{y}_{+}(\mu )$ is supported in the weak$^{\ast }$-compact set $%
\mathrm{M}^{\flat }\subseteq \mathcal{X}_{+}^{\ast }$, while $\mathfrak{y}%
_{-}(\mu )$ is supported in the union%
\begin{equation}
\mathrm{M}_{-}^{\flat }\doteq \bigcup_{x_{+}\in \mathrm{M}^{\flat }}M^{\flat
}\left( x_{+}\right) \subseteq \mathcal{X}_{-}^{\ast }\ .  \label{M_-}
\end{equation}%
If $g_{\pm }$ are Fr\'{e}chet%
%TCIMACRO{\TeXButton{\-}{\-}}%
%BeginExpansion
\-%
%EndExpansion
-differentiable then the sets $\nabla g_{\pm }\circ \tau _{\pm }(E_{\mathbb{F%
}})$\ are even norm-compact, as the gradient mapping is norm-to-norm
continuous in this case, thanks again to \cite[Proposition 2.8]{Phe2}.

Physically, the elements of the dual spaces $\mathcal{X}_{\pm }^{\ast }$
refer to so-called \emph{order parameters} of the system under
consideration. For example, if we consider a spin system, one typical order
parameter of interest is the macroscopic magnetization density. Similarly,
if one considers models for superconductors, then one typical order
parameter would be the macroscopic density of Cooper pairs. Thus, given a
generalized nonlinear equilibrium state $\mu \in G_{\mathbb{F}}$, the
measures $\mathfrak{y}_{\pm }(\mu )$ give the corresponding distributions of
order parameters. The order parameter distribution is well-defined in
virtually all cases of interest, because $G_{\mathbb{F}}$ is usually a
Choquet simplex in these cases. In a so-called pure phase ($\mu \in E_{%
\mathbb{F}}$) such distributions are delta distributions, that is, they are
concentrated in single points and thus have zero variance. If these
distributions have non-vanishing variance then we have a so-called phase
mixture, which is the general situation at a (first order) phase transition.
Of course, the order parameter distributions $\mathfrak{y}_{\pm }(\mu )$
also make sense for states $\mu \in K$ that are more general, that is, not
necessarily elements of $G_{\mathbb{F}}$, as far as the compact convex space 
$K$ is a Choquet simplex (and $g_{\pm }:\mathcal{X}_{\pm }\rightarrow 
\mathbb{R}$ stay Gateaux%
%TCIMACRO{\TeXButton{\-}{\-}}%
%BeginExpansion
\-%
%EndExpansion
-differentiable of course). For simplicity we restrict ourselves to the
equilibrium case.

One question that arises is how to calculate, from possible experiments,
including numerical ones, the order parameter distributions $\mathfrak{y}%
_{\pm }(\mu )$ at least for generalized nonlinear equilibrium measures $\mu
\in G_{\mathbb{F}}$. In our paradigmatic example (Section \ref{thermo form})
we can give a more precise empirical interpretation to these order parameter
distributions, which can thus be recovered from possible experiments:\medskip

\noindent \textbf{Paradigmatic example -- Nonlinear Thermodynamic Formalism
of Dynamical Systems.} In this example, $K=\mathcal{P}(T)$ is the set (\ref%
{set T-inv}) of $T$-invariant probability measures on the Banach space $%
C(\Sigma )^{\ast }$ of continuous functions $\Sigma \rightarrow \mathbb{R}$, 
$f=h$ is the entropy (Definition \ref{uod}) and $\tau _{\pm }:\mathcal{P}%
(T)\rightarrow \mathcal{X}_{\pm }$ are defined by $\tau _{\pm }(\mu )\doteq
\theta _{\pm }(\mu _{S})$ for any $\mu \in \mathcal{P}(T)$. In this
particular case $\mathbb{F}=P$ is defined by Equation (\ref{eq 21}) and we
assume in\ Section \ref{Bogoliubov linearizations TF} Conditions TF1--TF3,
which are nothing more than Conditions B1--B3 of Section \ref{sect conc conv
Bogo} applied to our paradigmatic example, as explained in the proof of
Proposition \ref{Proposition importante bogoluibov01 copy(5)}.

Observe first that $K=\mathcal{P}(T)$ is a weak$^{\ast }$-compact Choquet
simplex (Proposition \ref{prop Choquet inv meas}). It is a subset of the
dual space $C(\Sigma )^{\ast }$, which is a locally convex (Hausdorff) space
when endowed with the weak$^{\ast }$ topology. Since the entropy $f=h$ is
affine (Proposition \ref{Affinity of the entropy copy(1)}), $G_{P}$ is also
a Choquet simplex and one can naturally identify generalized nonlinear
equilibrium states $\mu \in G_{P}$ with probability measures $\mathfrak{m}%
_{\mu }$ on the weak$^{\ast }$-compact set $E_{P}$ of (simple) nonlinear
equilibrium states.

Recall that $S$ denotes the unit closed ball in $C(\Sigma )$ and $\mathcal{M}%
(S)$ is the Banach space of bounded, real-valued Borel-measurable functions $%
S\rightarrow \mathbb{R}$ with the supremum norm. See Section \ref{math
framework}, in particular Equation (\ref{norm1bis}). For any $\sigma \in
\Sigma $ and $n\in \mathbb{N}$, let $\mathbb{E}_{n,\mathbb{\sigma }%
}:S\rightarrow \mathbb{R}$ be the continuous function defined by Equation (%
\ref{eq def Ensigbis}), that is,%
\begin{equation*}
\mathbb{E}_{n,\mathbb{\sigma }}\left[ \varphi \right] \doteq \mathbb{E}_{n}%
\left[ \varphi \right] (\mathbb{\sigma })\doteq n^{-1}\left( \varphi (%
\mathbb{\sigma })+\varphi \circ T(\mathbb{\sigma })+\cdots +\varphi \circ
T^{n-1}(\mathbb{\sigma })\right) \ ,\qquad \varphi \in S\ .
\end{equation*}%
Observe that, at any fixed $n\in \mathbb{N}$, $\mathbb{E}_{n,\mathbb{\sigma }%
^{\prime }}$ converges point-wise to $\mathbb{E}_{n,\mathbb{\sigma }}$, as $%
\sigma ^{\prime }\rightarrow \sigma $. Thus, if the linear mappings $\theta
_{\pm }:\mathcal{M}(S)\rightarrow \mathcal{X}_{\pm }$ are $\mathbb{\sigma }$%
-normal (see Condition TF1 of Section \ref{Bogoliubov linearizations TF})
then $\theta _{\pm }(\mathbb{E}_{n,\mathbb{\sigma }^{\prime }})$ converges
in norm to $\theta _{\pm }(\mathbb{E}_{n,\mathbb{\sigma }^{\prime }})$, as $%
\sigma ^{\prime }\rightarrow \sigma $. That is, for any $n\in \mathbb{N}$,
the mappings $\sigma \mapsto \theta _{\pm }(\mathbb{E}_{n,\mathbb{\sigma }})$
from $\Sigma $ to $\mathcal{X}_{\pm }$ are continuous. Recall that $g_{\pm }:%
\mathcal{X}_{\pm }\rightarrow \mathbb{R}$ are assumed here to be Gateaux%
%TCIMACRO{\TeXButton{\-}{\-}}%
%BeginExpansion
\-%
%EndExpansion
-differentiable and by continuity of the gradient mappings $\nabla g_{\pm }$
the functions $Y_{\pm ,n}:\Sigma \rightarrow \mathcal{X}_{\pm }^{\ast }$
defined by 
\begin{equation}
\sigma \mapsto \nabla g_{\pm }\circ \theta _{\pm }(\mathbb{E}_{n,\mathbb{%
\sigma }})\text{ },  \label{func}
\end{equation}%
are also continuous. In particular, they are Borel-measurable and thus
define real-valued random variables 
\begin{equation*}
\begin{array}{lllll}
Y_{\pm ,n}\left( x_{\pm }\right) & : & \Sigma & \rightarrow & \mathbb{R} \\ 
&  & \sigma & \mapsto & \nabla g_{\pm }\circ \theta _{\pm }\left( \mathbb{E}%
_{n,\mathbb{\sigma }}\right) \left( x_{\pm }\right)%
\end{array}%
\end{equation*}%
for any $x_{\pm }\in \mathcal{X}_{\pm }$, the distribution law of which is
thus the pushforward of some generalized nonlinear equilibrium measure $\mu
\in G_{P}$ through the functions (\ref{func}), whose values are linear
functionals, applied to vectors $x_{\pm }\in \mathcal{X}_{\pm }$. ($Y_{\pm
,n}$ are also random variables taking values in $\mathcal{X}_{\pm }^{\ast }$%
.) The distribution law of $Y_{\pm ,n}\left( x_{\pm }\right) $ is denoted
below by $\mathfrak{y}_{\pm }^{(n)}(\mu )$.

If the pair $(\theta _{-},\theta _{+})$ is a H\"{o}lder-type linear
functions (Definition \ref{sect Holder type theta}) then all the Bogoliubov
linearizations have \emph{unique} equilibrium measures, which are all
ergodic, thanks to Corollary \ref{Theorem-main1 copy(2)}. Then, $E_{P}$ is
in this case a compact set of extreme points of $\mathcal{P}(T)$ and $%
G_{P}\doteq \overline{\mathrm{co}}(E_{P})\subseteq \mathcal{P}(T)$ is, in
particular, a face of the convex set $\mathcal{P}(T)$ and a weak$^{\ast }$%
-compact Choquet simplex. Therefore, as already explained, $\mu \in G_{P}$
iff $\mu $ is the barycenter of a \emph{unique} probability measure $%
\mathfrak{m}_{\mu }$ supported in $E_{P}\subseteq \mathcal{P}_{\mathrm{erg}%
}(T)$ (see Definition \ref{Barycenters of a measure}). Furthermore, in the H%
\"{o}lder case, the mapping $\mu \mapsto \mathfrak{m}_{\mu }$\ from $G_{P}$
to the set of probability measures on $E_{P}$ is an affine one-to-one
correspondence. For more details see again \cite{Phe}. In this situation, we
can prove that, given a (generalized nonlinear equilibrium) $T$-invariant
probability measure $\mu \in G_{P}$, the characteristic functions of $%
\mathfrak{y}_{\pm }^{(n)}(\mu )$ point-wise converge to the characteristic
functions of the order parameter distributions $\mathfrak{y}_{\pm }(\mu )$,
as $n\rightarrow \infty $:

\begin{proposition}[Order parameter distributions from Birkhoff sums -- H%
\"{o}lder case]
\label{prop order}\mbox{ }\newline
Assume Conditions TF1--TF3 of Section \ref{Bogoliubov linearizations TF} and
that the pair $(\theta _{-},\theta _{+})$ of linear functions is H\"{o}%
lder-type and $g_{\pm }:\mathcal{X}_{\pm }\rightarrow \mathbb{R}$ are Gateaux%
%TCIMACRO{\TeXButton{\-}{\-}}%
%BeginExpansion
\-%
%EndExpansion
-differentiable. Let $\mathfrak{y}_{\pm }\equiv \mathfrak{y}_{\pm }(\mu )$
for some $\mu \in G_{P}$. Then, for all $x_{\pm }\in \mathcal{X}_{\pm }$,%
\begin{equation*}
\lim_{n\rightarrow \infty }\int_{\mathcal{X}_{\pm }^{\ast }}\exp \left(
iy_{\pm }\left( x_{\pm }\right) \right) \mathfrak{y}_{\pm }^{(n)}\left( 
\mathrm{d}y_{\pm }\right) =\int_{\mathcal{X}_{\pm }^{\ast }}\exp \left(
iy_{\pm }\left( x_{\pm }\right) \right) \mathfrak{y}_{\pm }\left( \mathrm{d}%
y_{\pm }\right) \text{ }.
\end{equation*}
\end{proposition}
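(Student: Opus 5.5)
The plan is to reduce the statement to two dominated-convergence arguments: first over the ergodic decomposition of $\mu$, then over $\sigma$. Fix $x_{\pm}\in\mathcal{X}_{\pm}$ and write $\Phi(y)\doteq \exp(iy(x_{\pm}))$, a bounded function that is continuous for the weak$^{\ast}$ topology on $\mathcal{X}_{\pm}^{\ast}$.

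By definition of $\mathfrak{y}_{\pm}^{(n)}(\mu)$ as a pushforward of $\mu$ through $Y_{\pm,n}$,
\[
\int\Phi\,\mathrm{d}\mathfrak{y}_{\pm}^{(n)}=\int_{\Sigma}\Phi\bigl(\nabla g_{\pm}\circ\theta_{\pm}(\mathbb{E}_{n,\sigma})\bigr)\mu(\mathrm{d}\sigma).
\]
In the H\"{o}lder case, Corollary \ref{Theorem-main1 copy(2)} and the discussion preceding the statement give that $\mu\in G_P$ is the barycenter of a unique probability measure $\mathfrak{m}_{\mu}$ supported in $E_P\subseteq\mathcal{P}_{\mathrm{erg}}(T)$. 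Since $\mathcal{P}(T)$ is a Choquet simplex (Proposition \ref{prop Choquet inv meas}), $\mathfrak{m}_{\mu}$ coincides with the Choquet measure $\xi_{\mu}$. The integrand $\sigma\mapsto\Phi(Y_{\pm,n}(\sigma))$ is continuous and bounded by $1$, so the barycentric formula lets me write
\[
\int\Phi\,\mathrm{d}\mathfrak{y}_{\pm}^{(n)}=\int_{E_P}\nu\bigl(\Phi\circ Y_{\pm,n}\bigr)\,\mathfrak{m}_{\mu}(\mathrm{d}\nu).
\]
Similarly, by definition of $\mathfrak{y}_{\pm}(\mu)$ as the pushforward of $\mathfrak{m}_{\mu}$ through $\nabla g_{\pm}\circ\tau_{\pm}$, the limit expression equals $\int_{E_P}\Phi(\nabla g_{\pm}\circ\theta_{\pm}(\nu_S))\,\mathfrak{m}_{\mu}(\mathrm{d}\nu)$. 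So it suffices to prove, for each ergodic $\nu\in E_P$, that
\[
\nu(\Phi\circ Y_{\pm,n})\to\Phi\bigl(\nabla g_{\pm}\circ\theta_{\pm}(\nu_S)\bigr),
\]
and then apply dominated convergence in $\nu$, with bound $1$.

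For fixed ergodic $\nu$, Lemma \ref{lemma In conv pw} gives, for $\nu$-a.e.\ $\sigma$, that $\mathbb{E}_{n,\sigma}\to\nu_S$ pointwise on $S$. The sequence $(\mathbb{E}_{n,\sigma})_n$ is bounded in $\mathcal{M}(S)$ by $1$. Hence $\sigma$-normality (TF1) yields $\theta_{\pm}(\mathbb{E}_{n,\sigma})\to\theta_{\pm}(\nu_S)$ in norm. Norm-to-weak$^{\ast}$ continuity of $\nabla g_{\pm}$ for Gateaux-differentiable convex continuous $g_{\pm}$ (\cite[Proposition 2.8]{Phe2}) then gives weak$^{\ast}$ convergence of $Y_{\pm,n}(\sigma)$. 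Evaluating at $x_{\pm}$ and composing with the exponential shows $\Phi(Y_{\pm,n}(\sigma))\to\Phi(\nabla g_{\pm}\circ\theta_{\pm}(\nu_S))$ for $\nu$-a.e.\ $\sigma$. Dominated convergence in $\sigma$ finishes this step.

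The main obstacle is justifying that $\mathfrak{m}_{\mu}$ equals the ergodic (Choquet) decomposition, so that the barycentric identity can be applied to the continuous functions $\Phi\circ Y_{\pm,n}$ on $\Sigma$. I would do this via uniqueness in Proposition \ref{prop Choquet inv meas}, since $\mathfrak{m}_{\mu}$ is supported on ergodic measures. Two smaller points also need checking: measurability of $\nu\mapsto\nu(\Phi\circ Y_{\pm,n})$, which is in fact continuous, and whether $g_{\pm}$ is continuous so that \cite[Proposition 2.8]{Phe2} applies. For the latter I would note that a lower semicontinuous convex real-valued function on a Banach space is continuous, or otherwise assume continuity as in Section \ref{Section game + measusre}.
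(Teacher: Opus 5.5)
Your proposal is correct and is essentially the paper's proof. You decompose $\mu$ over $E_P\subseteq\mathcal{P}_{\mathrm{erg}}(T)$ via $\mathfrak{m}_{\mu}$, apply Lemma \ref{lemma In conv pw} with dominated convergence in $\sigma$ for each ergodic $\nu$, and then apply dominated convergence in $\nu$. The step you call the main obstacle is not needed: $\mu(\varphi)=\int\nu(\varphi)\,\mathfrak{m}_{\mu}(\mathrm{d}\nu)$ for continuous $\varphi$ is just the barycenter property of any representing measure, so only the support of $\mathfrak{m}_{\mu}$ in ergodic measures matters. Your other justifications ($\sigma$-normality of $\theta_{\pm}$ and norm-to-weak$^{\ast}$ continuity of $\nabla g_{\pm}$) fill in details the paper leaves implicit.
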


\begin{proof}
Fix $\mu \in G_{P}$. For all $n\in \mathbb{N}$ and $x_{\pm }\in \mathcal{X}%
_{\pm }$,%
\begin{equation*}
\int_{\mathcal{X}_{\pm }^{\ast }}\exp \left( iy_{\pm }\left( x_{\pm }\right)
\right) \mathfrak{y}_{\pm }^{(n)}\left( \mathrm{d}y_{\pm }\right)
=\int_{\Sigma }\exp \left( i\left[ \nabla g_{\pm }\circ \theta _{\pm }\left( 
\mathbb{E}_{n,\mathbb{\sigma }}\right) \right] \left( x_{\pm }\right)
\right) \mu \left( \mathrm{d}\mathbb{\sigma }\right) \text{ }.
\end{equation*}%
Note that the mappings 
\begin{equation*}
\sigma \mapsto \left[ \nabla g_{\pm }\circ \theta _{\pm }\left( \mathbb{E}%
_{n,\mathbb{\sigma }}\right) \right] \left( x_{\pm }\right)
\end{equation*}%
from $\Sigma $ to $\mathbb{R}$ are continuous. Further, since $\mathcal{P}%
(T) $ is a weak$^{\ast }$-compact Choquet simplex, $\mu $ is the barycenter
of a (unique) probability measure $\mathfrak{m}_{\mu }$ on $E_{P}$ and
therefore, 
\begin{equation*}
\int_{\Sigma }\exp \left( i\left[ \nabla g_{\pm }\circ \theta _{\pm }\left( 
\mathbb{E}_{n,\mathbb{\sigma }}\right) \right] \left( x_{\pm }\right)
\right) \mu \left( \mathrm{d}\mathbb{\sigma }\right) =\int_{K}\left\{
\int_{\Sigma }\exp \left( i\left[ \nabla g_{\pm }\circ \theta _{\pm }\left( 
\mathbb{E}_{n,\mathbb{\sigma }}\right) \right] \left( x_{\pm }\right)
\right) \nu \left( \mathrm{d}\mathbb{\sigma }\right) \right\} \mathfrak{m}%
_{\mu }\left( \mathrm{d}\nu \right) \text{ }.
\end{equation*}%
By Corollary \ref{Theorem-main1 copy(2)} (i), all $\nu \in E_{P}$ are
ergodic measures and we can infer from Lemma \ref{lemma In conv pw}\ and
Lebesgue's dominated convergence theorem that, for all $\nu \in E_{P}$, 
\begin{equation*}
\lim_{n\rightarrow \infty }\int_{\Sigma }\exp \left( i\left[ \nabla g_{\pm
}\circ \theta _{\pm }\left( \mathbb{E}_{n,\mathbb{\sigma }}\right) \right]
\left( x_{\pm }\right) \right) \nu \left( \mathrm{d}\mathbb{\sigma }\right)
=\exp \left( i\left[ \nabla g_{\pm }\circ \theta _{\pm }\left( \nu
_{S}\right) \right] \left( x_{\pm }\right) \right) \text{ }.
\end{equation*}%
Hence, again by Lebesgue's dominated convergence theorem,%
\begin{eqnarray*}
\lim_{n\rightarrow \infty }\int_{\Sigma }\exp \left( i\left[ \nabla g_{\pm
}\circ \theta _{\pm }\left( \mathbb{E}_{n,\mathbb{\sigma }}\right) \right]
\left( x_{\pm }\right) \right) \mu \left( \mathrm{d}\mathbb{\sigma }\right)
&=&\int_{K}\exp \left( i\left[ \nabla g_{\pm }\circ \theta _{\pm }\left( \nu
_{S}\right) \right] \left( x_{\pm }\right) \right) \mathfrak{m}_{\mu }\left( 
\mathrm{d}\nu \right) \\
&=&\int_{\mathcal{X}_{\pm }^{\ast }}\exp \left( iy_{\pm }\left( x_{\pm
}\right) \right) \mathfrak{y}_{\pm }\left( \mathrm{d}y_{\pm }\right) \text{ }%
.
\end{eqnarray*}%
Indeed, recall that the measures $\mathfrak{y}_{\pm }(\mu )$ on $\mathcal{X}%
_{\pm }^{\ast }$ are the pushforward of $\mathfrak{m}_{\mu }$ through the
mappings $\nabla g_{\pm }\circ \tau _{\pm }$, where $\tau _{\pm }(\nu
)\doteq \theta _{\pm }(\nu _{S})$ for any $\nu \in \mathcal{P}(T)$. Using
well-known properties of the characteristic functions of real-valued random
variables (cf. L\'{e}vy's continuity theorem \cite[Theorem 5.3]{Levy}), we
conclude from the above proposition that, for all $x_{\pm }\in \mathcal{X}%
_{\pm }$, the distribution laws $\mathfrak{y}_{\pm }^{(n)}(\mu )$ of the
real-valued random variables $Y_{\pm ,n}(x_{\pm })$ weak$^{\ast }$ converge,
as $n\rightarrow \infty $, to the order parameter distributions $\mathfrak{y}%
_{\pm }(\mu )$.
\end{proof}

It means that the order parameter distributions can be recovered from
experiments, possibly numerical ones. The order parameter distributions are
also related to optimal transport \cite{Vilani}, for which various
high-performance numerical tools are available. This link is explained below
for the general (abstract) case. \medskip

\noindent \textbf{Optimal transport and distributions of order parameters.}
We show here that, for any given order parameter distribution at
equilibrium, that is, $\mathfrak{y}_{\pm }\equiv \mathfrak{y}_{\pm }(\mu )$
for some $\mu \in G_{\mathbb{F}}$, which is not necessarily explicitly
known, the nonlinear pressure $\sup \mathbb{F}(K)$ can be exactly recovered
from an optimal-transport problem associated with these order parameter
distributions, the cost function of which is nothing but the nonlinear
approximating pressure $P_{\mathrm{NL}}$ (\ref{pression approxbis}) (similar
to the thermodynamic game).

Recall from\ Theorem \ref{Proposition importante bogoluibov01 copy(2)} (i)
that the nonlinear pressure $\sup \mathbb{F}(K)$ satisfies%
\begin{equation*}
\sup \mathbb{F}\left( K\right) =\mathrm{P}^{\flat }\doteq \sup_{y_{+}\in 
\mathcal{X}_{+}^{\ast }}\inf_{y_{-}\in \mathcal{X}_{-}^{\ast }}P_{\mathrm{NL}%
}\left( y_{+},y_{-}\right)
\end{equation*}%
and consider the continuous mapping 
\begin{equation*}
\begin{array}{lll}
E_{\mathbb{F}} & \rightarrow & \mathcal{X}_{+}^{\ast }\times \mathcal{X}%
_{-}^{\ast } \\ 
\nu & \mapsto & \left( \nabla g_{+}\circ \tau _{+}\left( \nu \right) ,\nabla
g_{-}\circ \tau _{-}\left( \nu \right) \right)%
\end{array}%
\end{equation*}%
for Gateaux%
%TCIMACRO{\TeXButton{\-}{\-}}%
%BeginExpansion
\-%
%EndExpansion
-differentiable functions $g_{\pm }:\mathcal{X}_{\pm }\rightarrow \mathbb{R}$%
. Fix now any $\mu \in G_{\mathbb{F}}$ and let $\mathfrak{n}_{\mu }$ be the
pushforward of a probability measure $\mathfrak{m}_{\mu }$ representing $\mu 
$ in $G_{\mathbb{F}}$ through the above continuous mapping. By construction,
the marginal distributions on $\mathcal{X}_{+}^{\ast }\ $and $\mathcal{X}%
_{-}^{\ast }$ are $\mathfrak{y}_{+}(\mu )$ and $\mathfrak{y}_{-}(\mu )$,
respectively. Moreover, by Theorem \ref{Proposition importante bogoluibov01
copy(2)},%
\begin{eqnarray}
\int_{\mathcal{X}_{+}^{\ast }\times \mathcal{X}_{-}^{\ast }}P_{\mathrm{NL}%
}\left( y_{+},y_{-}\right) \mathfrak{n}_{\mu }\left( \mathrm{d}\left(
y_{+},y_{-}\right) \right) &=&\int_{K}P_{\mathrm{NL}}\left( \nabla
g_{+}\circ \tau _{+}\left( \nu \right) ,\nabla g_{-}\circ \tau _{-}\left(
\nu \right) \right) \mathfrak{m}_{\mu }\left( \mathrm{d}\nu \right)  \notag
\\
&=&\int_{K}\sup \mathbb{F}\left( K\right) \mathfrak{m}_{\mu }\left( \mathrm{d%
}\nu \right) =\sup \mathbb{F}\left( K\right) \ .  \label{sdsssds1}
\end{eqnarray}%
On the other hand, for any probability measure $\mathfrak{n}$ on $\mathcal{X}%
_{+}^{\ast }\times \mathcal{X}_{-}^{\ast }$ whose marginal distributions are 
$\mathfrak{y}_{+}\equiv \mathfrak{y}_{+}(\mu )$ and $\mathfrak{y}_{-}\equiv 
\mathfrak{y}_{-}(\mu )$, one has%
\begin{equation}
\int_{\mathcal{X}_{+}^{\ast }\times \mathcal{X}_{-}^{\ast }}P_{\mathrm{NL}%
}\left( y_{+},y_{-}\right) \mathfrak{n}\left( \mathrm{d}\left(
y_{+},y_{-}\right) \right) \geq \int_{\mathcal{X}_{+}^{\ast }\times \mathcal{%
X}_{-}^{\ast }}P^{\flat }\left( y_{+}\right) \mathfrak{n}\left( \mathrm{d}%
\left( y_{+},y_{-}\right) \right) =\int_{\mathcal{X}_{+}^{\ast }}P^{\flat
}\left( y_{+}\right) \mathfrak{y}_{+}\left( \mathrm{d}y_{+}\right) \ ,
\label{sdsssds20}
\end{equation}%
where we recall that $P^{\flat }(y_{+})$ is defined by (\ref{P bemol plus}),
i.e.,%
\begin{equation*}
P^{\flat }\left( y_{+}\right) \doteq \inf P_{\mathrm{NL}}\left( y_{+},%
\mathcal{X}_{-}^{\ast }\right) \ ,\qquad y_{+}\in \mathcal{X}_{+}^{\ast }\ .
\end{equation*}%
Note that the function $y_{+}\mapsto P^{\flat }\left( y_{+}\right) $ is
Borel-measurable, because it is upper semicontinuous, by Lemma \ref{Lemma
bogoluibov4} ($\flat $). \ As $\mathfrak{y}_{+}$ is supported in $\mathrm{M}%
^{\flat }$ (see Theorem \ref{Proposition importante bogoluibov01 copy(2)}),
one conclude that 
\begin{equation}
\int_{\mathcal{X}_{+}^{\ast }\times \mathcal{X}_{-}^{\ast }}P_{\mathrm{NL}%
}\left( y_{+},y_{-}\right) \mathfrak{n}\left( \mathrm{d}\left(
y_{+},y_{-}\right) \right) \geq \int_{\mathcal{X}_{+}^{\ast }}\sup \mathbb{F}%
\left( K\right) \mathfrak{y}_{+}\left( \mathrm{d}y_{+}\right) =\sup \mathbb{F%
}\left( K\right) \text{ }.  \label{sdsssds2}
\end{equation}%
In other words, by (\ref{sdsssds1}) and (\ref{sdsssds2}), the nonlinear
pressure can be recovered from the following \emph{Monge-Kantorovich
(transportation) problem} \cite[page 10]{Vilani}:%
\begin{equation}
\sup \mathbb{F}\left( K\right) =\inf_{\mathfrak{n}\in \mathfrak{\Gamma }%
\left( \mathfrak{\mathfrak{y}_{+},\mathfrak{y}_{-}}\right) }\int_{\mathcal{X}%
_{+}^{\ast }\times \mathcal{X}_{-}^{\ast }}P_{\mathrm{NL}}\left(
y_{+},y_{-}\right) \mathfrak{n}\left( \mathrm{d}\left( y_{+},y_{-}\right)
\right) \text{ },  \label{Monge-Kantorovich}
\end{equation}%
where $\mathfrak{\Gamma }(\mathfrak{\mathfrak{y}_{+},\mathfrak{y}_{-}})$ is
the set of all probability measures on $\mathcal{X}_{+}^{\ast }\times 
\mathcal{X}_{-}^{\ast }$ whose marginal distributions are respectively $%
\mathfrak{y}_{+}\equiv \mathfrak{y}_{+}(\mu )$ and $\mathfrak{y}_{-}\equiv 
\mathfrak{y}_{-}(\mu )$ for arbitrary fixed equilibrium order parameter
distributions associated with some $\mu \in G_{\mathbb{F}}$. As a
consequence, the dual Kantorovich problem \cite[Equation (5.3)]{Vilani},
along with appropriate assumptions, leads to the following representation of
the nonlinear pressure:

\begin{theorem}[Nonlinear pressure and the dual Kantorovich problem]
\label{prop order copy(1)}\mbox{ }\newline
Assume Conditions B1--B3. Suppose additionally that $\mathcal{X}_{\pm }$ are
separable normed spaces, $g_{\pm }:\mathcal{X}_{\pm }\rightarrow \mathbb{R}$
are Gateaux%
%TCIMACRO{\TeXButton{\-}{\-}}%
%BeginExpansion
\-%
%EndExpansion
-differentiable, $g_{\pm }^{\ast }:\mathcal{X}_{\pm }^{\ast }\rightarrow 
\mathbb{R}$ are continuous with $g_{-}^{\ast }$\ being strictly convex. Let $%
\mathfrak{y}_{\pm }\equiv \mathfrak{y}_{\pm }(\mu )$ for some $\mu \in G_{%
\mathbb{F}}$. Then, 
\begin{equation*}
\sup \mathbb{F}\left( K\right) =\sup \left\{ \int_{\mathcal{X}_{+}^{\ast
}}P_{\mathrm{NL}}^{+}\left( y_{+}\right) \mathfrak{y}_{+}\left( \mathrm{d}%
y_{+}\right) -\int_{\mathcal{X}_{-}^{\ast }}P_{\mathrm{NL}}^{-}\left(
y_{-}\right) \mathfrak{y}_{-}\left( \mathrm{d}y_{-}\right) \right\} \text{ },
\end{equation*}%
where the supremum is taken with respect to all possible choices of
continuous functions $P_{\mathrm{NL}}^{+}:\mathcal{X}_{+}^{\ast }\rightarrow 
\mathbb{R}$ and $P_{\mathrm{NL}}^{-}:\mathcal{X}_{-}^{\ast }\rightarrow 
\mathbb{R}$ such that 
\begin{equation*}
P_{\mathrm{NL}}^{+}\left( y_{+}\right) -P_{\mathrm{NL}}^{-}\left(
y_{-}\right) \leq P_{\mathrm{NL}}\left( y_{+},y_{-}\right) \text{ },\qquad
y_{\pm }\in \mathcal{X}_{\pm }^{\ast }\text{ }.
\end{equation*}
\end{theorem}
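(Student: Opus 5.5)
The plan is to combine the Monge--Kantorovich identity (\ref{Monge-Kantorovich}), already established just before the statement, with the Kantorovich duality theorem \cite[Theorem 5.10]{Vilani}. The cost function is $c=P_{\mathrm{NL}}$ on $\mathcal{Y}_{+}\times\mathcal{Y}_{-}$, where $\mathcal{Y}_{\pm}$ are suitable Polish subspaces of $\mathcal{X}_{\pm}^{\ast}$ carrying $\mathfrak{y}_{\pm}$. Duality then gives
\[
\inf_{\mathfrak{n}\in\mathfrak{\Gamma}(\mathfrak{y}_{+},\mathfrak{y}_{-})}\int c\,\mathrm{d}\mathfrak{n}=\sup\Big\{\int \phi\,\mathrm{d}\mathfrak{y}_{+}-\int\psi\,\mathrm{d}\mathfrak{y}_{-}\Big\},
\]
with the supremum over bounded continuous $\phi,\psi$ satisfying $\phi(y_{+})-\psi(y_{-})\leq c(y_{+},y_{-})$. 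Setting $P_{\mathrm{NL}}^{+}=\phi$ and $P_{\mathrm{NL}}^{-}=\psi$ yields the claimed formula. Note that the inequality ``$\geq$'' between the pressure and the dual supremum is elementary and holds for any admissible pair: integrate the constraint against the optimal coupling $\mathfrak{n}_{\mu}$ of (\ref{sdsssds1}). Hence the real content is the reverse inequality, and that is exactly what duality provides.

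The steps, in order, are as follows. First, I fix the underlying spaces. The measures $\mathfrak{y}_{\pm}$ are supported on the weak$^{\ast}$-compact sets $\mathrm{M}^{\flat}$ and $\mathrm{M}_{-}^{\flat}$ of (\ref{M_-}); both are norm-bounded, by Proposition \ref{Proposition importante bogoluibov01 copy(3)} and Lemma \ref{Lemma bogoluibov5}. Since $\mathcal{X}_{\pm}$ is separable, the weak$^{\ast}$ topology on norm-bounded balls $B_{\pm}(0,R)$ is compact and metrizable. So I take $\mathcal{Y}_{\pm}=B_{\pm}(0,R)$ with this topology, which are compact metric, hence Polish, spaces; $\mathrm{M}_{-}^{\flat}$ needs a uniform bound, which Proposition \ref{Proposition importante bogoluibov01 copy(3)} (i) supplies. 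Second, I check the regularity of the cost. By Lemma \ref{Lemma bogoluibov2} (joint continuity of $P_{\mathrm{L}}$ on bounded sets in the weak$^{\ast}$ topology, which I expect it to give, since $\tau_{\pm}$ is continuous and $K$ is compact) together with the assumed continuity of $g_{\pm}^{\ast}$, the function $P_{\mathrm{NL}}$ is real-valued and continuous, hence bounded, on the compact set $\mathcal{Y}_{+}\times\mathcal{Y}_{-}$. Lower semicontinuity and boundedness from below are exactly what Villani's theorem requires. Third, I apply the duality and rewrite the infimum using (\ref{Monge-Kantorovich}). One point needs care: couplings supported in $\mathcal{Y}_{+}\times\mathcal{Y}_{-}$ are exactly the couplings of the given marginals, since the marginals are supported there.

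The main obstacle is the mismatch of topologies and domains. Kantorovich duality produces potentials that are continuous on $\mathcal{Y}_{\pm}$ with the weak$^{\ast}$ topology and defined only on balls, whereas the statement asks for continuous functions on all of $\mathcal{X}_{\pm}^{\ast}$ satisfying the constraint everywhere. To handle this, I would first pass to $c$-concave potentials:
\[
\psi(y_{-})=\sup_{y_{+}\in\mathcal{Y}_{+}}\{\phi(y_{+})-P_{\mathrm{NL}}(y_{+},y_{-})\},\qquad \phi(y_{+})=\inf_{y_{-}}\{\psi(y_{-})+P_{\mathrm{NL}}(y_{+},y_{-})\}.
\]
These formulas define the functions on the whole dual spaces, and the constraint then holds globally by construction. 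Compactness of $\mathcal{Y}_{+}$ together with continuity of $P_{\mathrm{NL}}$ makes $\psi$ continuous. For $\phi$, the infimum should be taken over a ball, where it is attained; this requires the coercivity from minimal linear growth (Lemma \ref{Lemma suplin growth}) to show that the infimum over all of $\mathcal{X}_{-}^{\ast}$ is effectively attained in a fixed ball. Strict convexity of $g_{-}^{\ast}$ gives uniqueness of minimizers, and I expect this to be used to secure continuity of $\phi$ via a decision-rule-type argument as in Proposition \ref{decision rules prop}. Norm continuity then follows from weak$^{\ast}$ continuity, since the norm topology is finer.
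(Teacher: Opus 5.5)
Your core argument is the paper's. You rewrite $\sup\mathbb{F}(K)$ as the transport problem (\ref{Monge-Kantorovich}), restrict to weak$^{\ast}$-compact metrizable sets carrying $\mathfrak{y}_{\pm}$, get a continuous cost from Lemma \ref{Lemma bogoluibov2} and the continuity of $g_{\pm}^{\ast}$, and apply \cite[Theorem 5.10 (i)]{Vilani}. The paper works on $\mathrm{M}^{\flat}\times\mathrm{M}_{-}^{\flat}$, and it uses strict convexity of $g_{-}^{\ast}$ (via Proposition \ref{decision rules prop}) only to make $\mathrm{M}_{-}^{\flat}$ weak$^{\ast}$-compact. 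Your balls $B_{\pm}(0,R)$, bounded via Proposition \ref{Proposition importante bogoluibov01 copy(3)} (i), avoid this. The price is that $g_{+}^{\ast}$ must be weak$^{\ast}$ upper semicontinuous on all of $B_{+}(0,R)$, so that the cost is lower semicontinuous there; the hypothesis provides this under the weak$^{\ast}$ reading the paper itself uses. You are also right that duality only produces potentials on $\mathcal{Y}_{\pm}$, with the constraint only on $\mathcal{Y}_{+}\times\mathcal{Y}_{-}$, while the statement asks for globally defined potentials with a global constraint. The paper passes over this point. The double $c$-transform is the right remedy: it enforces the constraint everywhere and can only increase the dual objective on the supports.

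The gap is your justification of the continuity of the second transform $\phi$. Write the first transform as $\psi(y_{-})=s(y_{-})-g_{-}^{\ast}(y_{-})$, with $s(y_{-})\doteq\sup_{z_{+}\in\mathcal{Y}_{+}}\{\phi(z_{+})+g_{+}^{\ast}(z_{+})-P_{\mathrm{L}}(z_{+},y_{-})\}$. Then $\psi(y_{-})+P_{\mathrm{NL}}(y_{+},y_{-})=s(y_{-})+P_{\mathrm{L}}(y_{+},y_{-})-g_{+}^{\ast}(y_{+})$, so the term $g_{-}^{\ast}(y_{-})$ cancels. The function you minimize over $y_{-}$ is therefore bounded in $y_{-}$ (by Lemma \ref{Lemma bogoluibov1}) and has no growth and, in general, no convexity. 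Consequently:
\begin{itemize}
\item Lemma \ref{Lemma suplin growth} does not confine the infimum to a ball.
\item Nothing guarantees that the infimum is attained, and truncating it to a ball would destroy the global constraint.
\item Neither strict convexity of $g_{-}^{\ast}$ nor a decision-rule argument applies.
\end{itemize}
None of this is needed. By Lemma \ref{Lemma bogoluibov1}, the maps $y_{+}\mapsto s(y_{-})+P_{\mathrm{L}}(y_{+},y_{-})$ are $\Vert\tau_{+}\Vert_{\infty}$-Lipschitz uniformly in $y_{-}$. Their infimum is finite, being bounded below by $\phi(z_{+})+g_{+}^{\ast}(z_{+})-\Vert\tau_{+}\Vert_{\infty}\Vert y_{+}-z_{+}\Vert_{\mathrm{op}}$ for any fixed $z_{+}\in\mathcal{Y}_{+}$, hence it is norm-Lipschitz. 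Likewise $s$ is $\Vert\tau_{-}\Vert_{\infty}$-Lipschitz. Both potentials are thus a Lipschitz function minus $g_{\pm}^{\ast}$, and they inherit the continuity of $g_{\pm}^{\ast}$. The proof of Lemma \ref{Lemma bogoluibov2} even gives weak$^{\ast}$ continuity on bounded sets, via the bound $|P_{\mathrm{L}}(y_{+},y_{-})-P_{\mathrm{L}}(y_{+}^{\prime},y_{-})|\leq\sup_{\mu\in K}|(y_{+}-y_{+}^{\prime})(\tau_{+}(\mu))|$, which is uniform in $y_{-}$. With this repair, strict convexity of $g_{-}^{\ast}$ plays no role in your route.
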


\begin{proof}
On the one hand, observe from Theorem \ref{Proposition importante
bogoluibov01 copy(2)} that $\mathfrak{y}_{+}$ is always supported in $%
\mathrm{M}^{\flat }\equiv \mathrm{M}_{+}^{\flat }$, while $\mathfrak{y}_{-}$
is always supported on the set $\mathrm{M}_{-}^{\flat }$ defined\ by (\ref%
{M_-}). As a consequence, Equation (\ref{Monge-Kantorovich}) can be
rewritten as 
\begin{equation}
\sup \mathbb{F}\left( K\right) =\inf_{\mathfrak{n}\in \mathfrak{\Gamma }%
\left( \mathfrak{\mathfrak{y}_{+},\mathfrak{y}_{-}}\right) }\int_{\mathrm{M}%
_{+}^{\flat }\times \mathrm{M}_{-}^{\flat }}P_{\mathrm{NL}}\left(
y_{+},y_{-}\right) \mathfrak{n}\left( \mathrm{d}\left( y_{+},y_{-}\right)
\right) \text{ }.  \label{Monge-Kantorovich2}
\end{equation}%
By Proposition \ref{Proposition importante bogoluibov01 copy(3)} (ii) (or
Lemma \ref{Lemma bogoluibov6} ($\flat $)), $\mathrm{M}^{\flat }\equiv 
\mathrm{M}_{+}^{\flat }$ is a (nonempty) weak$^{\ast }$-compact subset of $%
\mathcal{X}_{+}^{\ast }$. Since, by assumption, $g_{+}^{\ast }$ is
continuous, $\mathrm{dom}(g_{-}^{\ast })=\mathcal{X}_{-}^{\ast }$ and $%
g_{-}^{\ast }:\mathcal{X}_{-}^{\ast }\rightarrow \mathbb{R}$\ is strictly
convex, we can deduce from Propositions \ref{Proposition importante
bogoluibov01 copy(3)} and \ref{decision rules prop} that the nonempty set $%
\mathrm{M}_{-}^{\flat }$ is also a norm-bounded weak$^{\ast }$-compact
subset of $\mathcal{X}_{-}^{\ast }$. Since $\mathcal{X}_{\pm }$ are assumed
to be separable normed spaces, the weak$^{\ast }$ topology is metrizable on
the weak$^{\ast }$-compacts sets $\mathrm{M}_{\pm }^{\flat }\subseteq 
\mathcal{X}_{\pm }^{\ast }$, which can thus be considered as metric spaces.
Because compact metric spaces are separable and complete, it follows that $(%
\mathrm{M}_{\pm }^{\flat },\mathfrak{y}_{\pm })$ are Polish probability
spaces. As $\mathrm{M}_{\pm }^{\flat }$ are norm-bounded, by Lemma \ref%
{Lemma bogoluibov2} and continuity of $g_{\pm }^{\ast }:\mathcal{X}_{\pm
}^{\ast }\rightarrow \mathbb{R}$, the nonlinear approximating pressure 
\begin{equation*}
P_{\mathrm{NL}}:\mathrm{M}_{+}^{\flat }\times \mathrm{M}_{-}^{\flat
}\rightarrow \mathbb{R}
\end{equation*}%
is weak$^{\ast }$ continuous. Moreover, for any $y_{\pm }\in \mathcal{X}%
_{\pm }^{\ast }$, 
\begin{equation*}
P_{\mathrm{NL}}\left( y_{+},y_{-}\right) \doteq P_{\mathrm{L}}\left(
y_{+},y_{-}\right) +g_{-}^{\ast }\left( y_{-}\right) -g_{+}^{\ast }\left(
y_{+}\right) \geq P^{\flat }\left( y_{+}\right) +g_{-}^{\ast }\left(
y_{-}\right) -g_{+}^{\ast }\left( y_{+}\right)
\end{equation*}%
with $P^{\flat }(y_{+})$ defined by (\ref{P bemol plus}). Since $g_{\pm
}^{\ast }:\mathcal{X}_{\pm }^{\ast }\rightarrow \mathbb{R}$ are continuous
and $\mathrm{M}_{\pm }^{\flat }$ are weak$^{\ast }$-compact, 
\begin{equation*}
\int_{\mathrm{M}_{\pm }^{\flat }}g_{\pm }^{\ast }\left( y_{\pm }\right) 
\mathfrak{y}_{\pm }\left( \mathrm{d}y_{\pm }\right) <\infty \text{\qquad
and\qquad }\int_{\mathrm{M}_{+}^{\flat }}P^{\flat }\left( y_{+}\right) 
\mathfrak{y}_{+}\left( \mathrm{d}y_{+}\right) =\sup \mathbb{F}\left(
K\right) <\infty \ ,
\end{equation*}%
as already explained in (\ref{sdsssds20})--(\ref{sdsssds2}). Observe again
that the function $y_{+}\mapsto P^{\flat }\left( y_{+}\right) $ is upper
semicontinuous, by Lemma \ref{Lemma bogoluibov4} ($\flat $). All the
conditions of the Kantorovich duality theorem \cite[Theorem 5.10]{Vilani}
are thus satisfied. So, the assertion follows from Equation (\ref%
{Monge-Kantorovich2}) combined with \cite[Theorem 5.10 (i)]{Vilani}.
\end{proof}

Solutions (or at least approximate solutions $\pm P_{\mathrm{NL}}^{\pm }$)
to the dual Kantorovich problem expressed in Theorem \ref{prop order copy(1)}
should be physically interpreted as being effective pressure functions on
the spaces of order parameters.

Note that the assumptions of Theorem \ref{prop order copy(1)} are trivially
satisfied for physically relevant choices of convex functions $g_{\pm }$.
Take for example $\beta _{\pm }\in \mathbb{R}^{+}$ and 
\begin{equation*}
g_{\pm }(x)=\beta _{\pm }x^{2}/2\qquad \text{or}\qquad g_{\pm }(x)=2\cosh x
\end{equation*}%
with $\mathcal{X}_{\pm }=\mathbb{R}$ and\ $\mathcal{X}_{\pm }^{\ast }\equiv 
\mathbb{R}$, and remark that the corresponding Legendre-Fenchel transforms, 
\begin{equation*}
g_{\pm }^{\ast }(s)=s^{2}/(2\beta _{\pm })\qquad \text{or}\qquad g_{\pm
}^{\ast }(s)=s\text{ }\mathrm{\sinh }^{-1}(s/2)-\sqrt{4+s^{4}}\text{ },
\end{equation*}%
are smooth and strictly convex. See also Remarks \ref{Remark-condition1} and %
\ref{Remark-condition2}. Observe additionally that the strict convexity of
the function $g_{-}^{\ast }$ assumed in Theorem \ref{prop order copy(1)} is
a sufficient condition only used for simplicity in order to ensure the weak$%
^{\ast }$ compactness of the set $\mathrm{M}_{-}^{\flat }$ (\ref{M_-}), but
this should not, of course, be essential.

In fact, in the proof of Theorem \ref{prop order copy(1)}, we only use the
first assertion (i) of the Kantorovich duality theorem \cite[Theorem 5.10]%
{Vilani}. There are many other results in \cite[Theorem 5.10]{Vilani} that
could have been invoked, but we refrain from mentioning them here, as our
goal is not to provide a detailed analysis of the corresponding optimal
transport under the more general conditions, but to establish a fruitful
bridge between the maximization of real-valued functions $\mathbb{F}$ on
compact spaces and the dual Kantorovich problem, thus paving the way for
entirely new mathematical and numerical developments.

\subsection{Technical results on the thermodynamic game\label{Nonlinear
pressure}}

Within the general setting of the previous sections we study here the linear
and nonlinear approximating pressures associated with Bogoliubov
linearizations, that is, the quantities 
\begin{equation}
P_{\mathrm{L}}\left( y_{+},y_{-}\right) \doteq \sup \mathcal{G}%
_{y_{+},y_{-}}\left( K\right) \quad \text{and}\quad P_{\mathrm{NL}}\left(
y_{+},y_{-}\right) \doteq P_{\mathrm{L}}\left( y_{+},y_{-}\right)
+g_{-}^{\ast }\left( y_{-}\right) -g_{+}^{\ast }\left( y_{+}\right)
\label{pression approxbis nlbis}
\end{equation}%
for any $y_{\pm }\in \mathcal{X}_{\pm }^{\ast }$, previously defined by (\ref%
{pression approxbis})--(\ref{pression approxbis nl}). We also analyze in
this context the variational problems associated with the two-person
zero-sum game (Section \ref{Decision}) whose payoff function is the
nonlinear approximating pressure $P_{\mathrm{NL}}$. Note that we always
consider here \emph{non-zero} functions $g_{\pm }$. If one of them is zero,
we have essentially the same results (mutatis mutandis). See, e.g., the
discussions at the end of Section \ref{sect conc conv Bogo}.

Before we start our technical study, we go over a few of the notation: $%
B_{\pm }(0,R)\subseteq \mathcal{X}_{\pm }^{\ast }$ are closed balls (\ref%
{ball1}) of radius $R\in \mathbb{R}^{+}$ and center $0\in \mathcal{X}_{\pm
}^{\ast }$ and $\Vert \cdot \Vert _{\mathrm{op}}$ is the usual operator
norm, defined by (\ref{ball2}) for any continuous linear functional, while $%
\Vert \cdot \Vert _{\infty }$ is the uniform norm (or sup norm), defined by (%
\ref{sup norm}). Recall that any continuous function defined on a compact is
uniformly bounded and the corresponding uniform norm is thus finite.

We start with two simple continuity properties of the linear approximating
pressure $P_{\mathrm{L}}$, seen as a function 
\begin{equation*}
\left( y_{+},y_{-}\right) \mapsto P_{\mathrm{L}}\left( y_{+},y_{-}\right)
\end{equation*}%
from $\mathcal{X}_{+}^{\ast }\times \mathcal{X}_{-}^{\ast }$ to $\mathbb{R}$
and defined by Equation (\ref{pression approxbis nlbis}) (see also (\ref%
{pression approxbis nl})). Some arguments are elementary and are only given
below for completeness. Recall that Condition B1 is stated in Section \ref%
{sect conc conv Bogo}.

\begin{lemma}[Norm continuity of linear approximating pressures]
\label{Lemma bogoluibov1}\mbox{ }\newline
Assume Condition B1. Then, $P_{\mathrm{L}}:\mathcal{X}_{+}^{\ast }\times 
\mathcal{X}_{-}^{\ast }\rightarrow \mathbb{R}$ is convex and Lipschitz-norm
continuous: 
\begin{equation*}
\left\vert P_{\mathrm{L}}\left( x_{+},x_{-}\right) -P_{\mathrm{L}}\left(
y_{+},y_{-}\right) \right\vert \leq \left\Vert \tau _{+}\right\Vert _{\infty
}\left\Vert x_{+}-y_{+}\right\Vert _{\mathrm{op}}+\left\Vert \tau
_{-}\right\Vert _{\infty }\left\Vert x_{-}-y_{-}\right\Vert _{\mathrm{op}}\
,\qquad x_{\pm },y_{\pm }\in \mathcal{X}_{\pm }^{\ast }\ .
\end{equation*}
\end{lemma}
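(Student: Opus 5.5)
The plan is to view $P_{\mathrm{L}}$ as a pointwise supremum of affine functions of $(y_{+},y_{-})$. For each fixed $\mu\in K$, the map
\begin{equation*}
(y_{+},y_{-})\mapsto \mathcal{G}_{y_{+},y_{-}}(\mu)=f(\mu)-y_{-}(\tau_{-}(\mu))+y_{+}(\tau_{+}(\mu))
\end{equation*}
is affine on $\mathcal{X}_{+}^{\ast}\times\mathcal{X}_{-}^{\ast}$. This holds whenever $f(\mu)>-\infty$; states with $f(\mu)=-\infty$ contribute nothing to the supremum. By (\ref{pression approxbis nlbis}), $P_{\mathrm{L}}(y_{+},y_{-})=\sup_{\mu\in K}\mathcal{G}_{y_{+},y_{-}}(\mu)$, and a supremum of affine functions is convex. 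Convexity therefore follows as soon as we know $P_{\mathrm{L}}$ is real-valued.

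To see that $P_{\mathrm{L}}$ is finite, note that it is not $-\infty$, because $f$ is not identically $-\infty$ by Condition B1. It is not $+\infty$ either. The function $f$ is upper semicontinuous on the compact space $K$ and so is bounded above. Moreover, $|y_{\pm}(\tau_{\pm}(\mu))|\leq\Vert y_{\pm}\Vert_{\mathrm{op}}\Vert\tau_{\pm}\Vert_{\infty}$, where $\Vert\tau_{\pm}\Vert_{\infty}<\infty$ because $\tau_{\pm}$ is continuous on a compact set; see (\ref{sup norm}). Hence $P_{\mathrm{L}}$ takes values in $\mathbb{R}$.

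For the Lipschitz bound, fix $x_{\pm},y_{\pm}\in\mathcal{X}_{\pm}^{\ast}$ and any $\mu\in K$ with $f(\mu)>-\infty$. Then
\begin{equation*}
\mathcal{G}_{x_{+},x_{-}}(\mu)-\mathcal{G}_{y_{+},y_{-}}(\mu)=(x_{+}-y_{+})(\tau_{+}(\mu))-(x_{-}-y_{-})(\tau_{-}(\mu)),
\end{equation*}
and its absolute value is at most $\Vert\tau_{+}\Vert_{\infty}\Vert x_{+}-y_{+}\Vert_{\mathrm{op}}+\Vert\tau_{-}\Vert_{\infty}\Vert x_{-}-y_{-}\Vert_{\mathrm{op}}$, which we call $C$. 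So $\mathcal{G}_{x_{+},x_{-}}(\mu)\leq\mathcal{G}_{y_{+},y_{-}}(\mu)+C\leq P_{\mathrm{L}}(y_{+},y_{-})+C$. Taking the supremum over $\mu$ gives $P_{\mathrm{L}}(x_{+},x_{-})-P_{\mathrm{L}}(y_{+},y_{-})\leq C$. Exchanging the roles of $(x_{+},x_{-})$ and $(y_{+},y_{-})$ gives the reverse inequality, which is the stated estimate.

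No step is a real obstacle. The only point needing care is the possible value $-\infty$ of $f$: we restrict to states with $f(\mu)>-\infty$, so that every difference above is a difference of real numbers.
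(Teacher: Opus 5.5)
Your proof is correct and follows the paper's argument. Convexity comes from $P_{\mathrm{L}}$ being a supremum of the affine maps $(y_{+},y_{-})\mapsto\mathcal{G}_{y_{+},y_{-}}(\mu)$, and the Lipschitz bound from the pointwise estimate $|x_{\pm}\circ\tau_{\pm}(\mu)-y_{\pm}\circ\tau_{\pm}(\mu)|\leq\Vert x_{\pm}-y_{\pm}\Vert_{\mathrm{op}}\Vert\tau_{\pm}\Vert_{\infty}$ carried through the supremum; your explicit check that $P_{\mathrm{L}}$ is real-valued and your treatment of states with $f(\mu)=-\infty$ are correct and only make explicit what the paper leaves implicit.
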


\begin{proof}
Observe that, for any $x_{\pm },y_{\pm }\in \mathcal{X}_{\pm }^{\ast }$ and $%
\nu \in K$, 
\begin{equation*}
\left\vert x_{\pm }\circ \tau _{\pm }\left( \nu \right) -y_{\pm }\circ \tau
_{\pm }\left( \nu \right) \right\vert \leq \left\Vert x_{\pm }-y_{\pm
}\right\Vert _{\mathrm{op}}\left\Vert \tau _{\pm }\right\Vert _{\infty }
\end{equation*}%
and the inequality of the lemma directly follows from the definition 
\begin{equation}
\mathcal{G}_{y_{+},y_{-}}\doteq f-y_{-}\circ \tau _{-}+y_{+}\circ \tau _{+}\
,\qquad y_{\pm }\in \mathcal{X}_{\pm }^{\ast }\ ,  \label{def 121}
\end{equation}
of Bogoliubov linearizations, see Equation (\ref{def G mix}). $P_{\mathrm{L}%
} $ is convex because it is the supremum of affine (real-valued) mappings $%
(y_{+},y_{-})\mapsto \mathcal{G}_{y_{+},y_{-}}(\mu )$, $\mu \in K$.
\end{proof}

\begin{lemma}[Weak$^{\ast }$ continuity of linear approximating pressures]
\label{Lemma bogoluibov2}\mbox{ }\newline
Assume Condition B1. Then, $P_{\mathrm{L}}$ is weak$^{\ast }$ continuous on $%
B_{+}(0,R)\times B_{-}(0,R)$ for any fixed $R\in \mathbb{R}^{+}$. If $%
\mathcal{X}_{\pm }$ are Banach spaces then $P_{\mathrm{L}}$ is weak$^{\ast }$
continuous on the whole space $\mathcal{X}_{+}^{\ast }\times \mathcal{X}%
_{-}^{\ast }$.
\end{lemma}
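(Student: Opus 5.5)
The plan is to split the claim into an upper and a lower semicontinuity statement and use the Lipschitz bound of Lemma \ref{Lemma bogoluibov1} only for the Banach-space extension. Since $P_{\mathrm{L}}(y_{+},y_{-})=\sup_{\mu\in K}\mathcal{G}_{y_{+},y_{-}}(\mu)$ is a supremum of functions that are affine and weak$^{\ast}$ continuous in $(y_{+},y_{-})$ (for fixed $\mu$, $y_{\pm}\mapsto y_{\pm}(\tau_{\pm}(\mu))$ is weak$^{\ast}$ continuous by the very definition of the weak$^{\ast}$ topology), $P_{\mathrm{L}}$ is automatically weak$^{\ast}$ lower semicontinuous on all of $\mathcal{X}_{+}^{\ast}\times\mathcal{X}_{-}^{\ast}$. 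It therefore remains to show weak$^{\ast}$ upper semicontinuity on $B_{+}(0,R)\times B_{-}(0,R)$.

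For the upper semicontinuity I would take a net $(y_{+}^{(j)},y_{-}^{(j)})_{j\in J}$ in the product of balls converging weak$^{\ast}$ to $(y_{+},y_{-})$. For each $j$ I pick a maximizer $\mu_{j}\in E_{\mathcal{G}_{y_{+}^{(j)},y_{-}^{(j)}}}$, which exists because $\mathcal{G}$ is upper semicontinuous on the compact set $K$. Then I pass to a subnet, using compactness of $K$ and a subnet realizing $\limsup_J P_{\mathrm{L}}(y^{(j)})$, so that $\mu_{j}\to\mu\in K$.

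The key estimate is the following. On the norm-compact set $\tau_{\pm}(K)$ we have $y_{\pm}^{(j)}(\tau_{\pm}(\mu_{j}))-y_{\pm}(\tau_{\pm}(\mu))\to 0$. To see this, split the difference as
\begin{equation*}
\bigl[y^{(j)}_{\pm}(\tau_{\pm}(\mu_{j}))-y^{(j)}_{\pm}(\tau_{\pm}(\mu))\bigr]+\bigl[y^{(j)}_{\pm}(\tau_{\pm}(\mu))-y_{\pm}(\tau_{\pm}(\mu))\bigr].
\end{equation*}
The first bracket is bounded by $R\Vert\tau_{\pm}(\mu_{j})-\tau_{\pm}(\mu)\Vert_{\mathcal{X}_{\pm}}\to0$. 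This uses the continuity of $\tau_{\pm}:K\to\mathcal{X}_{\pm}$ assumed in Condition B1 (the norm topology on $\mathcal{X}_{\pm}$) together with the uniform bound $\Vert y^{(j)}_{\pm}\Vert_{\mathrm{op}}\leq R$. The second bracket tends to zero by weak$^{\ast}$ convergence. Combining this with the upper semicontinuity of $f$ gives
\begin{equation*}
\limsup_{J}P_{\mathrm{L}}(y_{+}^{(j)},y_{-}^{(j)})=\limsup_J \mathcal{G}_{y^{(j)}_{+},y^{(j)}_{-}}(\mu_{j})\leq \mathcal{G}_{y_{+},y_{-}}(\mu)\leq P_{\mathrm{L}}(y_{+},y_{-}).
\end{equation*}
The main point to handle carefully is exactly this joint-limit step, where boundedness of the net is essential; that is why the statement is restricted to balls.

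For the Banach-space case, a weak$^{\ast}$ convergent net need not be bounded, so the argument above does not apply directly, and this is the main obstacle. My first attempt would use that $P_{\mathrm{L}}$ is convex and norm-Lipschitz (Lemma \ref{Lemma bogoluibov1}) together with the Banach–Dieudonné / Krein–Šmulian theorem: a convex set in $\mathcal{X}^{\ast}$ is weak$^{\ast}$ closed iff its intersections with all balls are weak$^{\ast}$ closed. Applied to the sublevel sets $\{P_{\mathrm{L}}\leq c\}$ (which are already weak$^{\ast}$ closed by lower semicontinuity) this gives nothing new. The real content is upper semicontinuity, and superlevel sets are not convex, so this route fails.

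I would therefore instead try to show that weak$^{\ast}$ continuity on bounded sets extends, using the fact that the unbounded direction only enters through $\tau_{\pm}(K)$. Concretely, $P_{\mathrm{L}}$ depends on $y_{\pm}$ only through the restrictions $y_{\pm}|_{\tau_{\pm}(K)}$. On the compact set $\tau_{\pm}(K)$, I would aim to show that weak$^{\ast}$ convergence induces convergence in the compact-open topology after translating by the limit. If this fails for unbounded nets, I would settle for continuity along bounded nets, which by the uniform boundedness principle covers all weak$^{\ast}$ convergent sequences in the Banach case.
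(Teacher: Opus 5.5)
Your proof of the first assertion is correct, but it is organized differently from the paper's. You get weak$^{\ast}$ lower semicontinuity for free, since $P_{\mathrm{L}}$ is a supremum of weak$^{\ast}$-continuous affine maps; this holds on all of $\mathcal{X}_{+}^{\ast}\times\mathcal{X}_{-}^{\ast}$. You then prove upper semicontinuity by extracting maximizers $\mu_{j}\to\mu$, using the joint continuity of $(y_{\pm},\mu)\mapsto y_{\pm}(\tau_{\pm}(\mu))$ on $B_{\pm}(0,R)\times K$ and the upper semicontinuity of $f$.

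The paper instead proves uniform convergence directly. By norm-compactness of $\tau_{\pm}(K)$ it picks finite sets $M_{\pm,\varepsilon}\subseteq K$ such that $\tau_{\pm}(M_{\pm,\varepsilon})$ is an $\varepsilon/(4R)$-net of $\tau_{\pm}(K)$. Weak$^{\ast}$ convergence at these finitely many points, together with $\Vert y_{\pm}^{(j)}\Vert_{\mathrm{op}}\leq R$, gives $y_{\pm}^{(j)}\circ\tau_{\pm}\to y_{\pm}\circ\tau_{\pm}$ uniformly on $K$. Hence $\sup_{K}\vert\mathcal{G}_{y_{+}^{(j)},y_{-}^{(j)}}-\mathcal{G}_{y_{+},y_{-}}\vert\to 0$, and the suprema converge.

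That route needs neither maximizers nor semicontinuity of $f$, only that $\sup f(K)$ is finite. It is also exactly the ``compact-open'' fact you were reaching for at the end: on norm-bounded subsets of $\mathcal{X}_{\pm}^{\ast}$, weak$^{\ast}$ convergence is uniform convergence on norm-compact subsets of $\mathcal{X}_{\pm}$. Your version uses more of Condition B1, but it makes clear that only the upper half needs boundedness.

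For the Banach-space assertion your proposal does not reach a proof, and your suspicion is justified: the assertion is false for nets, so nothing can close this gap. The paper's one-line justification is that weak$^{\ast}$ convergent nets in the dual of a Banach space are norm bounded by the uniform boundedness principle. That is valid only for sequences.

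For a counterexample, take $\mathcal{X}_{+}=\ell^{2}$ and $K=\overline{\mathrm{co}}(\{0\}\cup\{e_{n}/n:n\in\mathbb{N}\})$, which is norm compact by Mazur's theorem. Let $\tau_{+}$ be the inclusion, $f=0$, $\mathcal{X}_{-}=\mathbb{R}$ and $\tau_{-}=0$. Then
\begin{equation*}
P_{\mathrm{L}}(y_{+},y_{-})=\max\Bigl\{0,\ \sup_{n\in\mathbb{N}}\frac{y_{+}(e_{n})}{n}\Bigr\}\ ,\qquad P_{\mathrm{L}}(0,0)=0\ .
\end{equation*}
Every weak$^{\ast}$ neighbourhood of $0$ in $(\ell^{2})^{\ast}$ contains a subspace $\{y:y(a_{1})=\cdots=y(a_{m})=0\}$. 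This subspace meets the span of the coordinate functionals $e_{1}^{\ast},\ldots,e_{m+1}^{\ast}$ nontrivially. A nonzero $v$ in the intersection has $v(e_{i})\neq 0$ for some $i\leq m+1$. Rescaling so that $v(e_{i})=i$ gives a point of the neighbourhood with $P_{\mathrm{L}}(v,0)\geq 1$. Hence $P_{\mathrm{L}}$ is not weak$^{\ast}$ upper semicontinuous at the origin.

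More generally, with $f=0$ and $\tau_{-}=0$, positive homogeneity shows that weak$^{\ast}$ upper semicontinuity of $P_{\mathrm{L}}$ at $0$ forces $\tau_{+}(K)$ into a finite-dimensional subspace.

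So the correct second statement is sequential weak$^{\ast}$ continuity on $\mathcal{X}_{+}^{\ast}\times\mathcal{X}_{-}^{\ast}$. That is exactly your fallback, and exactly what the paper's argument actually proves; neither the Krein--\v{S}mulian detour nor compact-open convergence can give more. The main results do not depend on the net version, since the paper's later uses of this lemma are all on norm-bounded sets. However, the Banach-space add-ons in Lemmata \ref{Lemma bogoluibov3} ($+$) and \ref{Lemma bogoluibov4} ($\flat$) rest on the same faulty step.
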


\begin{proof}
Fix $R\in \mathbb{R}^{+}$. Take any nets $(y_{\pm }^{(j)})_{j\in J}\subseteq
B_{\pm }(0,R)$ weak$^{\ast }$ converging to arbitrary points $y_{\pm }\in
B_{\pm }(0,R)$, respectively. By Condition B1, $K$ is (Hausdorff)\ compact
and $\tau _{\pm }$ are continuous. As a consequence, $\tau _{\pm
}(K)\subseteq \mathcal{X}_{\pm }$ are compact (in the norm topology of $%
\mathcal{X}_{\pm }$). Thus, for any $\varepsilon \in \mathbb{R}^{+}$, there
are finite sets $M_{\pm ,\varepsilon }\subseteq K$ such that%
\begin{equation}
\mathrm{dist}\left( \tau _{\pm }\left( M_{\pm ,\varepsilon }\right) ,\tau
_{\pm }\left( K\right) \right) =\sup_{\mu \in K}\inf_{\nu \in M_{\pm
,\varepsilon }}\left\Vert \tau _{\pm }\left( \mu \right) -\tau _{\pm }\left(
\nu \right) \right\Vert _{\mathcal{X}_{\pm }}\leq \frac{\varepsilon }{4R}\ .
\label{dfgsdfgdfgdf}
\end{equation}%
Remark that the above quantity is the so-called Hausdorff distance between
the compact sets $\tau _{\pm }\left( M_{\pm ,\varepsilon }\right) $ and $%
\tau _{\pm }\left( K\right) $ in $\mathcal{X}_{\pm }$. Then, by the triangle
inequality and Inequality (\ref{dfgsdfgdfgdf}), for any $\mu \in K$ and $\nu
\in M_{\pm ,\varepsilon }$,%
\begin{equation*}
\left\vert y_{\pm }\circ \tau _{\pm }\left( \mu \right) -y_{\pm }^{(j)}\circ
\tau _{\pm }\left( \mu \right) \right\vert \leq \inf_{\nu \in M_{\pm
,\varepsilon }}\left\vert y_{\pm }\circ \tau _{\pm }\left( \nu \right)
-y_{\pm }^{(j)}\circ \tau _{\pm }\left( \nu \right) \right\vert +\frac{%
\varepsilon }{2}\ .
\end{equation*}%
Hence, there is $j_{\varepsilon }\in J$ such that, for all $\mu \in K$ and $%
j\in J$ satisfying $j\geq j_{\varepsilon }$,%
\begin{equation*}
\left\vert y_{\pm }\circ \tau _{\pm }\left( \mu \right) -y_{\pm }^{(j)}\circ
\tau _{\pm }\left( \mu \right) \right\vert \leq \varepsilon \ ,
\end{equation*}%
because $M_{\pm ,\varepsilon }$ is finite and $(y_{\pm }^{(j)})_{j\in J}$
weak$^{\ast }$ converges to $y_{\pm }$. Now, as the last bound is uniform
with respect to $\mu \in K$, from the definition (\ref{def 121}) of $%
\mathcal{G}_{y_{+},y_{-}}$, it follows that the mapping $\left(
y_{+},y_{-}\right) \mapsto P_{\mathrm{L}}\left( y_{+},y_{-}\right) $ from $%
B_{+}(0,R)\times B_{-}(0,R)$ to $\mathbb{R}$ is weak$^{\ast }$ continuous.
If the normed spaces $\mathcal{X}_{\pm }$ are complete, then any weak$^{\ast
}$ convergent net in the dual spaces $\mathcal{X}_{\pm }^{\ast }$ is norm
bounded, thanks to the Uniform Boundedness Principle (the Banach-Steinhaus
theorem \cite[Theorem 2.5]{Rudin}). Consequently, in the Banach case, $P_{%
\mathrm{L}}$ is weak$^{\ast }$ continuous on $\mathcal{X}_{+}^{\ast }\times 
\mathcal{X}_{-}^{\ast }$.
\end{proof}

After briefly studying linear pressures, we now provide a series of lemmata
regarding important properties of the nonlinear approximating pressure $P_{%
\mathrm{NL}}$, seen as a mapping 
\begin{equation*}
\left( y_{+},y_{-}\right) \mapsto P_{\mathrm{NL}}\left( y_{+},y_{-}\right)
\end{equation*}%
from $\mathcal{X}_{+}^{\ast }\times \mathcal{X}_{-}^{\ast }$ to $\mathbb{%
R\cup \{\infty \}}$ and defined by Equation (\ref{pression approxbis nlbis})
(see also (\ref{pression approxbis})).

\begin{lemma}[Properties of the nonlinear approximating pressure]
\label{Lemma bogoluibov3}\mbox{ }\newline
Assume Condition B1. Let $g_{\pm }:\mathcal{X}_{\pm }\rightarrow \mathbb{R}$
be non-zero lower semicontinuous and convex functions with $\mathrm{dom}%
(g_{+}^{\ast })=\mathcal{X}_{+}^{\ast }$. \newline
\emph{(}$+$\emph{)} For $R\in \mathbb{R}^{+}$ and $y_{-}\in \mathrm{dom}%
(g_{-}^{\ast })$, the mapping $y_{+}\mapsto P_{\mathrm{NL}}(y_{+},y_{-})$
from $B_{+}(0,R)$ to $\mathbb{R}$ is weak$^{\ast }$-upper semicontinuous. If 
$\mathcal{X}_{+}$ is a Banach space then it is weak$^{\ast }$-upper
semicontinuous on $\mathcal{X}_{+}^{\ast }$.\smallskip \newline
\emph{(}$-$\emph{)} For $R\in \mathbb{R}^{+}$ and $y_{+}\in \mathcal{X}%
_{+}^{\ast }$, the mapping $y_{-}\mapsto P_{\mathrm{NL}}(y_{+},y_{-})$ from $%
B_{-}(0,R)$ to $\mathbb{R\cup \{\infty \}}$ is convex and weak$^{\ast }$%
%TCIMACRO{\TeXButton{\-}{\-}}%
%BeginExpansion
\-%
%EndExpansion
-lower semicontinuous. If $\mathcal{X}_{-}$ is a Banach space then it is weak%
$^{\ast }$-lower semicontinuous on $\mathcal{X}_{-}^{\ast }$.
\end{lemma}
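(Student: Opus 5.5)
The plan is to write $P_{\mathrm{NL}}$ as a sum of three pieces,
\begin{equation*}
P_{\mathrm{NL}}(y_{+},y_{-})=P_{\mathrm{L}}(y_{+},y_{-})+g_{-}^{\ast }(y_{-})-g_{+}^{\ast }(y_{+})\ ,
\end{equation*}
and to treat each piece separately, using only Lemmata \ref{Lemma bogoluibov1}--\ref{Lemma bogoluibov2} together with the general properties of Legendre--Fenchel transforms recalled in Section \ref{Legendre-Fenchel transform section}. We first record a preliminary fact. Since $g_{\pm }$ are real-valued, they are proper. Hence $g_{\pm }^{\ast }$ never takes the value $-\infty $, is convex, and is weak$^{\ast }$-lower semicontinuous, being a supremum of the weak$^{\ast }$-continuous affine maps $y\mapsto y(x)-g_{\pm }(x)$. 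Because $\mathrm{dom}(g_{+}^{\ast })=\mathcal{X}_{+}^{\ast }$, the function $g_{+}^{\ast }$ is real-valued. Consequently, for $y_{-}\in \mathrm{dom}(g_{-}^{\ast })$ the value $P_{\mathrm{NL}}(y_{+},y_{-})$ is real, while for $y_{-}\notin \mathrm{dom}(g_{-}^{\ast })$ it equals $+\infty $.

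For part ($+$), fix $y_{-}\in \mathrm{dom}(g_{-}^{\ast })$, so that $g_{-}^{\ast }(y_{-})$ is just a real constant. By Lemma \ref{Lemma bogoluibov2}, the map $y_{+}\mapsto P_{\mathrm{L}}(y_{+},y_{-})$ is weak$^{\ast }$-continuous on $B_{+}(0,R)$. For this we apply the lemma on $B_{+}(0,R')\times B_{-}(0,R')$ with $R'\geq \max \{R,\Vert y_{-}\Vert _{\mathrm{op}}\}$ and restrict to the slice $\{y_{-}\}$. The term $-g_{+}^{\ast }$ is weak$^{\ast }$-upper semicontinuous, being minus a weak$^{\ast }$-lsc function. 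A sum of a continuous map, a constant and an upper semicontinuous map is upper semicontinuous, which gives the claim on balls. In the Banach case, the second assertion of Lemma \ref{Lemma bogoluibov2} gives weak$^{\ast }$-continuity of $P_{\mathrm{L}}(\cdot ,y_{-})$ on all of $\mathcal{X}_{+}^{\ast }$, and the same argument applies verbatim.

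For part ($-$), fix $y_{+}\in \mathcal{X}_{+}^{\ast }$, so that $g_{+}^{\ast }(y_{+})$ is a real constant. Convexity follows because $y_{-}\mapsto P_{\mathrm{L}}(y_{+},y_{-})$ is convex by Lemma \ref{Lemma bogoluibov1}, $g_{-}^{\ast }$ is convex with values in $\mathbb{R}\cup \{\infty \}$, and there is no $\infty -\infty $ ambiguity. For weak$^{\ast }$-lower semicontinuity on $B_{-}(0,R)$, note that $P_{\mathrm{L}}(y_{+},\cdot )$ is weak$^{\ast }$-continuous there by Lemma \ref{Lemma bogoluibov2}, again taking the radius large enough to contain $y_{+}$. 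The term $g_{-}^{\ast }$ is weak$^{\ast }$-lsc as an extended-valued function. A continuous real function plus an lsc $(-\infty ,\infty ]$-valued function is lsc. The Banach-space extension again follows from the global continuity in Lemma \ref{Lemma bogoluibov2}.

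The only point requiring some care is the handling of the value $+\infty $ in ($-$). One must make sure that lower semicontinuity is understood for extended-valued functions, and that adding a finite continuous function preserves it. This is immediate from the characterization $\liminf_{j}(a_{j}+b_{j})\geq \lim_{j}a_{j}+\liminf_{j}b_{j}$, valid when $a_{j}$ converges in $\mathbb{R}$. Otherwise the lemma is a direct combination of earlier results.
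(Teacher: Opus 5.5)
Your argument is the paper's proof almost verbatim. It uses the same splitting $P_{\mathrm{NL}}=P_{\mathrm{L}}+g_{-}^{\ast}-g_{+}^{\ast}$, Lemma~\ref{Lemma bogoluibov2} on balls, weak$^{\ast}$ lower semicontinuity of conjugates, and Lemma~\ref{Lemma bogoluibov1} for convexity. Your enlargement of the radius so that the ball also contains the frozen variable makes explicit a point the paper skips, and the statements on balls are fully justified.

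The Banach-space extensions, however, rest on the second assertion of Lemma~\ref{Lemma bogoluibov2}, and that assertion is false as stated. The uniform boundedness principle bounds weak$^{\ast}$ convergent \emph{sequences}, not nets. Concretely, take $\mathcal{X}_{\pm}=\ell^{2}$, $f=0$, and $\tau_{+}$ the inclusion of the closed convex hull $K$ (norm compact) of $\{e_{n}/n\}\cup\{0\}$. Then $P_{\mathrm{L}}(\cdot,0)$ is the support function of $K$. It vanishes at $0$ but is unbounded above on the annihilator of every finite subset of $\ell^{2}$, so it is not weak$^{\ast}$ upper semicontinuous at $0$. This gap is shared with the paper.

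The conclusions of the lemma nevertheless survive with different arguments.

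For ($-$), no completeness is needed: $P_{\mathrm{L}}(y_{+},\cdot)$ is a supremum, over $\mu\in K$ with $f(\mu)>-\infty$, of weak$^{\ast}$-continuous affine maps of $y_{-}$. Hence it is weak$^{\ast}$ lower semicontinuous on all of $\mathcal{X}_{-}^{\ast}$.

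For ($+$), completeness makes the real-valued lsc convex function $g_{+}$ continuous, so $g_{+}\leq C$ on some ball of radius $\delta>0$ and $g_{+}^{\ast}(y)\geq\delta\Vert y\Vert_{\mathrm{op}}-C$. Let $F$ be a finite $\tfrac{\delta}{2}$-net of the norm-compact set $\tau_{+}(K)$. Then $P_{\mathrm{L}}(y_{j},y_{-})\leq c+\max_{z\in F}y_{j}(z)+\tfrac{\delta}{2}\Vert y_{j}\Vert_{\mathrm{op}}$. Hence along any net $y_{j}\to y_{0}$ weak$^{\ast}$ one eventually has $P_{\mathrm{NL}}(y_{j},y_{-})\leq c'-\tfrac{\delta}{2}\Vert y_{j}\Vert_{\mathrm{op}}$. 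Any subnet along which $P_{\mathrm{NL}}(y_{j},y_{-})$ stays above $P_{\mathrm{NL}}(y_{0},y_{-})+\eta$ is therefore eventually norm bounded, and the ball case you proved applies to it.
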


\begin{proof}
As Legendre-Fenchel transforms, the functions $g_{+}^{\ast }:\mathcal{X}%
_{+}^{\ast }\rightarrow \mathbb{R}$ and $g_{-}^{\ast }:\mathcal{X}_{-}^{\ast
}\rightarrow \mathbb{R\cup \{\infty \}}$ are always weak$^{\ast }$-lower
semicontinuous. Note also that the convex set $\mathrm{dom}(g_{-}^{\ast })$
is nonempty, thanks to (\ref{sdfsdfsdfsdfsdfs}). Therefore, by\ Lemma \ref%
{Lemma bogoluibov2}, for any fixed $R\in \mathbb{R}^{+}$ and $y_{-}\in 
\mathrm{dom}(g_{-}^{\ast })$, the mapping $y_{+}\mapsto P_{\mathrm{NL}%
}(y_{+},y_{-})$ from $B_{+}(0,R)$ to $\mathbb{R}$ is weak$^{\ast }$%
%TCIMACRO{\TeXButton{\-}{\-}}%
%BeginExpansion
\-%
%EndExpansion
-upper semicontinuous. The Banach case is also a direct consequence of Lemma %
\ref{Lemma bogoluibov2}. Mutatis mutandis for ($-$). Notice that 
\begin{equation*}
y_{-}\mapsto P_{\mathrm{NL}}(y_{+},y_{-})\doteq P_{\mathrm{L}%
}(y_{+},y_{-})+g_{-}^{\ast }\left( y_{-}\right) -g_{+}^{\ast }\left(
y_{+}\right)
\end{equation*}%
is convex, because Legendre-Fenchel transforms are always convex and $%
y_{-}\mapsto P_{\mathrm{L}}(y_{+},y_{-})$ is also convex, thanks to Lemma %
\ref{Lemma bogoluibov1}.
\end{proof}

We next undertake the study of the two functions%
\begin{equation*}
P^{\flat }:\mathcal{X}_{+}^{\ast }\rightarrow \{-\infty \}\cup \mathbb{R}%
\qquad \text{and}\qquad P^{\sharp }:\mathcal{X}_{-}^{\ast }\rightarrow 
\mathbb{R}\cup \{\infty \}
\end{equation*}%
defined by (\ref{def diese bemol1})--(\ref{def diese bemol2}), that is, 
\begin{equation}
P^{\flat }\left( y_{+}\right) \doteq \inf P_{\mathrm{NL}}\left( y_{+},%
\mathcal{X}_{-}^{\ast }\right) \qquad \text{and}\qquad P^{\sharp }\left(
y_{-}\right) \doteq \sup P_{\mathrm{NL}}\left( \mathcal{X}_{+}^{\ast
},y_{-}\right)  \label{Pbemol-diese}
\end{equation}%
for any $y_{\pm }\in \mathcal{X}_{\pm }^{\ast }$. We assume that
Legendre-Fenchel transforms $g^{\ast }$ have minimal linear growth $\lambda
\in \mathbb{R}^{+}$ in the sense of (\ref{linear grow}), that is,%
\begin{equation}
\lim_{R\rightarrow \infty }\sup_{y\in \mathcal{X}^{\ast }\backslash
B(0,R)}\left\{ \lambda \left\Vert y\right\Vert _{\mathrm{op}}-g^{\ast
}\left( y\right) \right\} =-\infty \ .  \label{condition1}
\end{equation}%
This allows us to show that these variational problems can be restricted to
closed balls.

\begin{lemma}[Consequences of the minimal\emph{\ }linear growth of $g_{\pm
}^{\ast }$]
\label{Lemma suplin growth}\mbox{ }\newline
Assume Condition B1. Let $g_{\pm }:\mathcal{X}_{\pm }\rightarrow \mathbb{R}$
be non-zero lower semicontinuous and convex functions with $\mathrm{dom}%
(g_{+}^{\ast })=\mathcal{X}_{+}^{\ast }$. \newline
\emph{(}$\flat $\emph{)} If (\ref{condition1}) holds for $g=g_{-}$ and $%
\lambda =\Vert \tau _{-}\Vert _{\infty }$, then there is $R\in \mathbb{R}%
^{+} $ such that, for all $y_{+}\in \mathcal{X}_{+}^{\ast }$,%
\begin{equation*}
P^{\flat }\left( y_{+}\right) =\inf_{y_{-}\in B_{-}(0,R)}P_{\mathrm{NL}%
}\left( y_{+},y_{-}\right) <\inf_{y_{-}\in \mathcal{X}_{-}^{\ast }\backslash
B_{-}(0,R)}P_{\mathrm{NL}}\left( y_{+},y_{-}\right) \ .
\end{equation*}%
\emph{(}$\sharp $\emph{) }If (\ref{condition1}) holds for $g=g_{+}$ and $%
\lambda =\Vert \tau _{+}\Vert _{\infty }$, then there is $R\in \mathbb{R}%
^{+} $ such that, for all $y_{-}\in \mathrm{dom}(g_{-}^{\ast })$,%
\begin{equation*}
P^{\sharp }\left( y_{-}\right) =\sup_{y_{+}\in B_{+}(0,R)}P_{\mathrm{NL}%
}\left( y_{+},y_{-}\right) >\sup_{y_{+}\in \mathcal{X}_{+}^{\ast }\backslash
B_{+}(0,R)}P_{\mathrm{NL}}\left( y_{+},y_{-}\right) \ .
\end{equation*}%
\emph{(}$\sharp \flat $\emph{) }If (\ref{condition1}) holds for $g=g_{\pm }$
and $\lambda =\Vert \tau _{\pm }\Vert _{\infty }$, then there is $R\in 
\mathbb{R}^{+}$ such that%
\begin{equation*}
\sup_{y_{+}\in B_{+}(0,R)}P^{\flat }\left( y_{+}\right) >\sup_{y_{+}\in 
\mathcal{X}_{+}^{\ast }\backslash B_{+}(0,R)}P^{\flat }\left( y_{+}\right) \
.
\end{equation*}%
\newline
\emph{(}$\flat \sharp $\emph{) }If (\ref{condition1}) holds for $g=g_{\pm }$
and $\lambda =\Vert \tau _{\pm }\Vert _{\infty }$, then there is $R\in 
\mathbb{R}^{+}$ such that%
\begin{equation*}
\inf_{y_{-}\in B_{-}(0,R)}P^{\sharp }\left( y_{-}\right) <\inf_{y_{-}\in 
\mathcal{X}_{-}^{\ast }\backslash B_{-}(0,R)}P^{\sharp }\left( y_{-}\right)
\ .
\end{equation*}
\end{lemma}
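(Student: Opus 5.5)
The four assertions all reduce to one elementary two-sided estimate on the nonlinear approximating pressure. By Lemma \ref{Lemma bogoluibov1}, for all $y_{\pm }\in \mathcal{X}_{\pm }^{\ast }$,
\begin{equation*}
\left\vert P_{\mathrm{L}}\left( y_{+},y_{-}\right) -P_{\mathrm{L}}\left( 0,0\right) \right\vert \leq \left\Vert \tau _{+}\right\Vert _{\infty }\left\Vert y_{+}\right\Vert _{\mathrm{op}}+\left\Vert \tau _{-}\right\Vert _{\infty }\left\Vert y_{-}\right\Vert _{\mathrm{op}}\ ,
\end{equation*}
with $P_{\mathrm{L}}(0,0)=\sup f(K)\in \mathbb{R}$. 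Inserting this into (\ref{pression approxbis nlbis}) yields the lower bound
\begin{equation*}
P_{\mathrm{NL}}(y_{+},y_{-})\geq c-\Vert \tau _{+}\Vert _{\infty }\Vert y_{+}\Vert _{\mathrm{op}}-g_{+}^{\ast }(y_{+})+\left( g_{-}^{\ast }(y_{-})-\Vert \tau _{-}\Vert _{\infty }\Vert y_{-}\Vert _{\mathrm{op}}\right)
\end{equation*}
and the symmetric upper bound
\begin{equation*}
P_{\mathrm{NL}}(y_{+},y_{-})\leq c+\Vert \tau _{-}\Vert _{\infty }\Vert y_{-}\Vert _{\mathrm{op}}+g_{-}^{\ast }(y_{-})+\left( \Vert \tau _{+}\Vert _{\infty }\Vert y_{+}\Vert _{\mathrm{op}}-g_{+}^{\ast }(y_{+})\right) .
\end{equation*}
Condition (\ref{condition1}) states that the bracketed terms tend to $+\infty$ (respectively $-\infty$) uniformly outside large balls.

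For ($\flat $), I fix $y_{+}$ and compare with the candidate $y_{-}=z$, a fixed point of $\mathrm{dom}(g_{-}^{\ast })$ (nonempty by (\ref{sdfsdfsdfsdfsdfs})). The upper bound gives $P^{\flat }(y_{+})\leq P_{\mathrm{NL}}(y_{+},z)\leq A(y_{+})+C_{z}$, where $A(y_{+})$ is the $y_{+}$-only part. The lower bound gives $P_{\mathrm{NL}}(y_{+},y_{-})\geq A^{\prime }(y_{+})+\phi _{-}(y_{-})$, where $\phi _{-}\rightarrow +\infty$ outside balls. The delicate point is that the $y_{+}$-dependent terms in the two bounds must cancel, uniformly in $y_{+}$. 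This works because $y_{+}$ enters $P_{\mathrm{NL}}$ additively except through $P_{\mathrm{L}}$, and the Lipschitz estimate controls $P_{\mathrm{L}}(y_{+},y_{-})-P_{\mathrm{L}}(y_{+},z)$ by $\Vert \tau _{-}\Vert _{\infty }\Vert y_{-}-z\Vert _{\mathrm{op}}$ uniformly in $y_{+}$. So I will compare directly:
\begin{equation*}
P_{\mathrm{NL}}(y_{+},y_{-})-P_{\mathrm{NL}}(y_{+},z)\geq g_{-}^{\ast }(y_{-})-\Vert \tau _{-}\Vert _{\infty }\Vert y_{-}\Vert _{\mathrm{op}}-\Vert \tau _{-}\Vert _{\infty }\Vert z\Vert _{\mathrm{op}}-g_{-}^{\ast }(z)\ ,
\end{equation*}
which is strictly positive, with a fixed margin, outside $B_{-}(0,R)$ for $R$ large and independent of $y_{+}$. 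Hence the infimum over the complement exceeds $P_{\mathrm{NL}}(y_{+},z)\geq P^{\flat }(y_{+})$. Strictness of the inequality for the infimum over the complement comes from that margin, and the restricted infimum equals $P^{\flat }(y_{+})$ once $R\geq \Vert z\Vert _{\mathrm{op}}$. Case ($\sharp $) is identical, comparing with $y_{+}=0$ and using that $g_{+}^{\ast }$ is finite everywhere.

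For ($\sharp \flat $), I bound $P^{\flat }(y_{+})\leq P_{\mathrm{NL}}(y_{+},z)$, which by the same difference estimate (now in $y_{+}$ against $0$) is at most
\begin{equation*}
P_{\mathrm{NL}}(0,z)+\Vert \tau _{+}\Vert _{\infty }\Vert y_{+}\Vert _{\mathrm{op}}-g_{+}^{\ast }(y_{+})+g_{+}^{\ast }(0)\ .
\end{equation*}
This tends to $-\infty$ outside large balls. It therefore falls below $P^{\flat }(0)$, which is a real number by ($\flat $): the infimum of $P_{\mathrm{NL}}(0,\cdot )$ is taken over a ball where $P_{\mathrm{NL}}(0,\cdot )\geq c+\inf \phi _{-}$. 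The case ($\flat \sharp $) is symmetric: I bound $P^{\sharp }(y_{-})\geq P_{\mathrm{NL}}(0,y_{-})$, which tends to $+\infty$, and compare with $P^{\sharp }(z)<\infty$ from ($\sharp $).

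The main obstacle is obtaining strict inequalities with radii independent of the other variable. The comparison-point trick handles this, since the Lipschitz bound is uniform; I expect no further difficulty.
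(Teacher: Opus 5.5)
Your proposal is correct and follows essentially the same route as the paper: the Lipschitz bound of Lemma \ref{Lemma bogoluibov1} combined with the growth condition (\ref{condition1}), comparing against a fixed point of $\mathrm{dom}(g_{-}^{\ast})$ (resp.\ against $y_{+}=0$). Your explicit unit margin makes the uniformity in the other variable and the strictness of the inequalities transparent. The only small difference is in ($\sharp\flat$) and ($\flat\sharp$). There you obtain finiteness of $P^{\flat}(0)$ and $P^{\sharp}(z)$ from a uniform bound on balls, whereas the paper uses a weak$^{\ast}$-compactness and semicontinuity argument via Lemma \ref{Lemma bogoluibov3}. Your bound is valid because $g_{\pm}^{\ast}(y)\geq -g_{\pm}(0)$, a one-line fact you should state explicitly.
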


\begin{proof}
Assertions ($\flat $)\emph{\ }and ($\sharp $) are direct consequences of
Lemma \ref{Lemma bogoluibov1}. Note that the strict inequality of ($\sharp $%
) cannot be satisfied when $y_{-}\notin \mathrm{dom}(g_{-}^{\ast })$,
because, in this case, $P_{\mathrm{NL}}\left( y_{+},y_{-}\right) =\infty $
for all $y_{+}\in \mathcal{X}_{+}^{\ast }$. To prove ($\sharp \flat $) take
any $x_{-}\in \mathrm{dom}(g_{-}^{\ast })\neq \emptyset $ (see (\ref%
{sdfsdfsdfsdfsdfs}))\ and note from Lemma \ref{Lemma bogoluibov1} and
Equation (\ref{condition1}) for $g=g_{+}$ and $\lambda =\Vert \tau _{+}\Vert
_{\infty }$ that%
\begin{multline*}
\lim_{R\rightarrow \infty }\sup_{y_{+}\in \mathcal{X}_{+}^{\ast }\backslash
B_{+}(0,R)}P^{\flat }\left( y_{+}\right) =\lim_{R\rightarrow \infty
}\sup_{y_{+}\in \mathcal{X}_{+}^{\ast }\backslash B_{+}(0,R)}\inf_{y_{-}\in 
\mathcal{X}_{-}^{\ast }}P_{\mathrm{NL}}\left( y_{+},y_{-}\right) \\
\leq \lim_{R\rightarrow \infty }\sup_{y_{+}\in \mathcal{X}_{+}^{\ast
}\backslash B_{+}(0,R)}\left\{ P_{\mathrm{L}}\left( y_{+},x_{-}\right)
+g_{-}^{\ast }\left( x_{-}\right) -g_{+}^{\ast }\left( y_{+}\right) \right\}
=-\infty \ .
\end{multline*}%
We meanwhile infer from ($\flat $) the existence of some strictly positive
radius $\tilde{R}\in \mathbb{R}^{+}$ such that 
\begin{equation*}
P^{\flat }\left( y_{+}\right) =\inf_{y_{-}\in B_{-}(0,\tilde{R})}P_{\mathrm{%
NL}}\left( y_{+},y_{-}\right) \ ,\qquad y_{+}\in \mathcal{X}_{+}^{\ast }\ .
\end{equation*}%
Thus, by the weak$^{\ast }$ compactness of the closed ball $B_{-}(0,\tilde{R}%
)$ (cf. the Banach-Alaoglu theorem \cite[Theorem 3.15]{Rudin}) and Lemma \ref%
{Lemma bogoluibov3} ($-$), 
\begin{equation}
P^{\flat }\left( y_{+}\right) =\min_{y_{-}\in B_{-}(0,\tilde{R})}P_{\mathrm{%
NL}}\left( y_{+},y_{-}\right) =\min_{y_{-}\in B_{-}(0,\tilde{R})\cap \mathrm{%
dom}(g_{-}^{\ast })}P_{\mathrm{NL}}\left( y_{+},y_{-}\right) \in \mathbb{R}
\label{P bemol extra}
\end{equation}%
\emph{\ }for all $y_{+}\in \mathcal{X}_{+}^{\ast }$, keeping in mind that $%
\mathrm{dom}(g_{-}^{\ast })\neq \emptyset $ (see (\ref{sdfsdfsdfsdfsdfs})).
In particular, $P^{\flat }(y_{+})>-\infty $ for all $y_{+}\in \mathcal{X}%
_{+}^{\ast }$ and, as a consequence, for some $R<\infty $, 
\begin{equation*}
\sup_{y_{+}\in B_{+}(0,R)}P^{\flat }\left( y_{+}\right) >\sup_{y_{+}\in 
\mathcal{X}_{+}^{\ast }\backslash B_{+}(0,R)}P^{\flat }\left( y_{+}\right) \
.
\end{equation*}%
The proof of ($\flat \sharp $) is similar: Note from Lemma \ref{Lemma
bogoluibov1} and Equation (\ref{condition1}) for $g=g_{-}$ and $\lambda
=\Vert \tau _{-}\Vert _{\infty }$ that, for any $x_{+}\in \mathcal{X}%
_{+}^{\ast }$, 
\begin{multline*}
\lim_{R\rightarrow \infty }\inf_{y_{-}\in \mathcal{X}_{-}^{\ast }\backslash
B_{-}(0,R)}P^{\sharp }\left( y_{-}\right) =\lim_{R\rightarrow \infty
}\inf_{y_{-}\in \mathcal{X}_{-}^{\ast }\backslash B_{-}(0,R)}\sup_{y_{+}\in 
\mathcal{X}_{+}^{\ast }}P_{\mathrm{NL}}\left( y_{+},y_{-}\right) \\
\geq \lim_{R\rightarrow \infty }\inf_{y_{-}\in \mathcal{X}_{-}^{\ast
}\backslash B_{-}(0,R)}\left\{ P_{\mathrm{L}}\left( y_{+},x_{-}\right)
+g_{-}^{\ast }\left( y_{-}\right) -g_{+}^{\ast }\left( x_{+}\right) \right\}
=\infty \ .
\end{multline*}%
By Assertion ($\sharp $), there is $\tilde{R}\in \mathbb{R}^{+}$ such that%
\begin{equation*}
P^{\sharp }\left( y_{-}\right) =\sup_{y_{+}\in B_{+}(0,\tilde{R})}P_{\mathrm{%
NL}}\left( y_{+},y_{-}\right) \ ,\qquad y_{-}\in \mathrm{dom}(g_{-}^{\ast
})\ ,
\end{equation*}%
which, combined with the weak$^{\ast }$ compactness of the closed ball $%
B_{+}(0,\tilde{R})$ and Lemma \ref{Lemma bogoluibov3} ($+$), yields\emph{\ }%
\begin{equation}
P^{\sharp }\left( y_{-}\right) =\max_{y_{+}\in B_{+}(0,\tilde{R})}P_{\mathrm{%
NL}}\left( y_{+},y_{-}\right) \in \mathbb{R}\ ,\qquad y_{-}\in \mathrm{dom}%
(g_{-}^{\ast })\ .  \label{P diese extra}
\end{equation}%
Since $\mathrm{dom}(g_{-}^{\ast })\neq \emptyset $ (see (\ref%
{sdfsdfsdfsdfsdfs})) and $P^{\sharp }\left( y_{-}\right) =\infty $ when $%
y_{-}\notin \mathrm{dom}(g_{-}^{\ast })$, we conclude that, for some $%
R<\infty $, 
\begin{equation*}
\inf_{y_{-}\in B_{-}(0,R)}P^{\sharp }\left( y_{-}\right) =\inf_{y_{-}\in
B_{-}(0,R)\cap \mathrm{dom}(g_{-}^{\ast })}P^{\sharp }\left( y_{-}\right)
<\inf_{y_{-}\in \mathcal{X}_{-}^{\ast }\backslash B_{-}(0,R)}P^{\sharp
}\left( y_{-}\right) \ .
\end{equation*}
\end{proof}

If the Legendre-Fenchel transforms $g_{\pm }^{\ast }$ have minimal linear
growth $\Vert \tau _{\pm }\Vert _{\infty }$, one infers from Equations (\ref%
{P bemol extra})--(\ref{P diese extra}) that $P^{\flat }$ and $P^{\sharp }$
define real-valued functions on $\mathcal{X}_{+}^{\ast }$ and $\mathrm{dom}%
(g_{-}^{\ast })\subseteq \mathcal{X}_{-}^{\ast }$, respectively.
Additionally, these two functions have the following properties:

\begin{lemma}[Properties of the functions $P^{\sharp }$ and $P^{\flat }$]
\label{Lemma bogoluibov4}\mbox{ }\newline
Assume Condition B1. Let $g_{\pm }:\mathcal{X}_{\pm }\rightarrow \mathbb{R}$
be non-zero lower semicontinuous and convex functions with $\mathrm{dom}%
(g_{+}^{\ast })=\mathcal{X}_{+}^{\ast }$.\newline
\emph{(}$\flat $\emph{)} If (\ref{condition1}) holds for $g=g_{-}$ and $%
\lambda =\Vert \tau _{-}\Vert _{\infty }$, then $P^{\flat }:\mathcal{X}%
_{+}^{\ast }\rightarrow \mathbb{R}$ is weak$^{\ast }$-upper semicontinuous
on $B_{+}(0,R)$ for any $R\in \mathbb{R}^{+}$. If $\mathcal{X}_{+}$ is a
Banach space then $P^{\flat }$ is weak$^{\ast }$-upper semicontinuous on $%
\mathcal{X}_{+}^{\ast }$.\smallskip \newline
\emph{(}$\sharp $\emph{)} If (\ref{condition1}) holds for $g=g_{+}$ and $%
\lambda =\Vert \tau _{+}\Vert _{\infty }$, then $P^{\sharp }:\mathcal{X}%
_{-}^{\ast }\rightarrow \mathbb{R\cup \{\infty \}}$ is convex and weak$%
^{\ast }$-lower semicontinuous on $B_{-}(0,R)$ for any $R\in \mathbb{R}^{+}$%
. Its restriction to (the nonempty convex set) $\mathrm{dom}(g_{-}^{\ast })$
is convex and real-valued. If $\mathcal{X}_{-}$ is a Banach space then $%
P^{\sharp }$ is weak$^{\ast }$-lower semicontinuous on $\mathcal{X}%
_{-}^{\ast }$.
\end{lemma}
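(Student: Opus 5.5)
The plan is to read both assertions off the representations (\ref{P bemol extra}) and (\ref{P diese extra}) from the proof of Lemma \ref{Lemma suplin growth}, so that $P^{\flat}$ becomes an infimum over a weak$^{\ast}$-compact ball and $P^{\sharp}$ a supremum over one.

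For ($\flat$), Lemma \ref{Lemma suplin growth} ($\flat$) and (\ref{P bemol extra}) give a fixed radius $\tilde R$ with
\begin{equation*}
P^{\flat}(y_{+})=\min_{y_{-}\in B_{-}(0,\tilde R)}P_{\mathrm{NL}}(y_{+},y_{-})\in\mathbb{R},\qquad y_{+}\in\mathcal{X}_{+}^{\ast}.
\end{equation*}
An infimum of upper semicontinuous functions is upper semicontinuous. So I would show that, for fixed $R$ and each $y_{-}\in B_{-}(0,\tilde R)\cap\mathrm{dom}(g_{-}^{\ast})$, the map $y_{+}\mapsto P_{\mathrm{NL}}(y_{+},y_{-})$ is weak$^{\ast}$-upper semicontinuous on $B_{+}(0,R)$. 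This is Lemma \ref{Lemma bogoluibov3} ($+$): $P_{\mathrm{L}}$ is weak$^{\ast}$ continuous on $B_{+}(0,R)\times B_{-}(0,\tilde R)$ by Lemma \ref{Lemma bogoluibov2}, and $-g_{+}^{\ast}$ is weak$^{\ast}$-upper semicontinuous. Points $y_{-}\notin\mathrm{dom}(g_{-}^{\ast})$ give the value $+\infty$ and can be discarded, and the minimum is attained inside the domain. Hence $P^{\flat}$ is the pointwise infimum of a family of weak$^{\ast}$-upper semicontinuous functions on $B_{+}(0,R)$, which gives the claim.

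In the Banach case, a weak$^{\ast}$-convergent net in $\mathcal{X}_{+}^{\ast}$ need not be bounded. However, the uniform boundedness principle shows that every weak$^{\ast}$-convergent sequence is bounded. For nets the cleaner route is Lemma \ref{Lemma bogoluibov3} ($+$) in its Banach form, which already gives weak$^{\ast}$-upper semicontinuity on all of $\mathcal{X}_{+}^{\ast}$; I would simply invoke it. This is the point I expect to need the most care, since it relies on that lemma's Banach statement, which in turn comes from Lemma \ref{Lemma bogoluibov2}.

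For ($\sharp$), convexity of $P^{\sharp}$ follows because it is a supremum over $y_{+}$ of the maps $y_{-}\mapsto P_{\mathrm{NL}}(y_{+},y_{-})$, which are convex with values in $\mathbb{R}\cup\{\infty\}$ by Lemma \ref{Lemma bogoluibov3} ($-$). Weak$^{\ast}$-lower semicontinuity on $B_{-}(0,R)$, and on $\mathcal{X}_{-}^{\ast}$ in the Banach case, follows the same way, since a supremum of lower semicontinuous functions is lower semicontinuous; here we do not even need to restrict $y_{+}$ to a ball. Real-valuedness on $\mathrm{dom}(g_{-}^{\ast})$ is exactly (\ref{P diese extra}), obtained from Lemma \ref{Lemma suplin growth} ($\sharp$). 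Its restriction to the convex set $\mathrm{dom}(g_{-}^{\ast})$ is therefore convex and real-valued.
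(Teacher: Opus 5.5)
For the statements on balls and for ($\sharp$), your argument is the paper's. $P^{\flat}$ is, by (\ref{P bemol extra}), an infimum of the weak$^{\ast}$-upper semicontinuous maps of Lemma~\ref{Lemma bogoluibov3}~($+$). $P^{\sharp}$ is a supremum of the convex weak$^{\ast}$-lower semicontinuous maps of Lemma~\ref{Lemma bogoluibov3}~($-$), and finiteness on $\mathrm{dom}(g_{-}^{\ast})$ comes from (\ref{P diese extra}). You are right that ($\sharp$) needs no restriction of $y_{+}$ to a ball. It needs no completeness either: $y_{-}\mapsto P_{\mathrm{L}}(y_{+},y_{-})$ is a supremum of weak$^{\ast}$-continuous affine maps, so $P_{\mathrm{NL}}(y_{+},\cdot)$, and hence $P^{\sharp}$, is weak$^{\ast}$-lower semicontinuous on all of $\mathcal{X}_{-}^{\ast}$.

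The real gap is the Banach case of ($\flat$), exactly where you hesitated. Invoking the Banach form of Lemma~\ref{Lemma bogoluibov3}~($+$), as the paper also does, does not settle it. That form is taken from Lemma~\ref{Lemma bogoluibov2}, whose proof asserts that weak$^{\ast}$-convergent nets in the dual of a Banach space are norm bounded. That is the very claim you rightly reject, and the conclusion of Lemma~\ref{Lemma bogoluibov2} actually fails. Take $\mathcal{X}_{+}=\ell^{2}$, $K=\overline{\mathrm{co}}\{0,e_{n}/\sqrt{n}:n\in\mathbb{N}\}$, $f=0$, $\tau_{+}$ the inclusion and $\tau_{-}=0$. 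Then $P_{\mathrm{L}}(y_{+},0)=\max\{0,\sup_{n}y_{+}(e_{n})/\sqrt{n}\}$ equals $1$ at every $\sqrt{n}\,e_{n}$ and $0$ at $0$, although $0$ lies in the weak$^{\ast}$ closure of $\{\sqrt{n}\,e_{n}\}$ (a classical example).

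The assertion you need is still true, but the missing ingredient is coercivity of $g_{+}^{\ast}$, not boundedness of nets. A finite, convex, lower semicontinuous function on a Banach space is continuous (Baire), so $g_{+}\le M$ on some ball of radius $r>0$ around $0$, whence $g_{+}^{\ast}(z)\ge r\Vert z\Vert_{\mathrm{op}}-M$. Fix $y_{-}\in\mathrm{dom}(g_{-}^{\ast})$ and a net $z_{j}\to z$ weak$^{\ast}$ along which $P_{\mathrm{NL}}(z_{j},y_{-})>P_{\mathrm{NL}}(z,y_{-})+\varepsilon$. There are two cases:
\begin{itemize}
\item For some $R\ge\Vert z\Vert_{\mathrm{op}}$ the set $\{j:\Vert z_{j}\Vert_{\mathrm{op}}\le R\}$ is cofinal. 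Then the ball case, applied along this cofinal part, gives a contradiction.
\item Otherwise $\Vert z_{j}\Vert_{\mathrm{op}}\to\infty$. Then $z_{j}/\Vert z_{j}\Vert_{\mathrm{op}}\to0$ weak$^{\ast}$ while staying bounded, hence uniformly on the compact set $\tau_{+}(K)$. So $P_{\mathrm{L}}(z_{j},y_{-})\le A+o(\Vert z_{j}\Vert_{\mathrm{op}})$ with $A=\sup_{\mu\in K}\{f(\mu)-y_{-}(\tau_{-}(\mu))\}<\infty$. Therefore $P_{\mathrm{NL}}(z_{j},y_{-})\to-\infty$, again a contradiction.
\end{itemize}
Thus each $P_{\mathrm{NL}}(\cdot,y_{-})$ is weak$^{\ast}$-upper semicontinuous on $\mathcal{X}_{+}^{\ast}$, and your infimum argument then completes ($\flat$).
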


\begin{proof}
Fix $R\in \mathbb{R}^{+}$. If (\ref{condition1}) holds for $g=g_{-}$ and $%
\lambda =\Vert \tau _{-}\Vert _{\infty }$, then we deduce from Equation (\ref%
{P bemol extra}) that there is $\tilde{R}\in \mathbb{R}^{+}$ such that the
function $P^{\flat }:B_{+}(0,R)\rightarrow \mathbb{R}$ is the infimum of the
family%
\begin{equation*}
\{y_{+}\mapsto P_{\mathrm{NL}}\left( y_{+},y_{-}\right) \}_{y_{-}\text{$\in $%
}B_{-}(0,\tilde{R})\cap \mathrm{dom}(g_{-}^{\ast })}
\end{equation*}%
of weak$^{\ast }$-upper semicontinuous real-valued functions from $%
B_{+}(0,R) $ to $\mathbb{R}$, thanks to Lemma \ref{Lemma bogoluibov3} ($+$).
The real-valued function $P^{\flat }:B_{+}(0,R)\rightarrow \mathbb{R}$ is
therefore weak$^{\ast }$-upper semicontinuous. The Banach case is proven in
the same way. Mutatis mutandis for ($\sharp $), thanks to Lemma \ref{Lemma
bogoluibov3} ($-$) and Equation (\ref{P diese extra}). To prove the
convexity of $P^{\sharp }$, observe that this mapping is the supremum over $%
y_{+}\in \mathcal{X}_{+}^{\ast }$ of the convex mappings 
\begin{equation*}
y_{-}\mapsto P_{\mathrm{NL}}\left( y_{+},y_{-}\right) \doteq P_{\mathrm{L}%
}\left( y_{+},y_{-}\right) +g_{-}^{\ast }(y_{-})-g_{+}^{\ast }(y_{+})
\end{equation*}%
all defined on the same (nonempty) convex domain $\mathrm{dom}(g_{-}^{\ast
}) $.
\end{proof}

For all $y_{\pm }\in \mathcal{X}_{\pm }^{\ast }$, we study now the subsets (%
\ref{M-y1}) and (\ref{M-y2}) of solutions to the variational problems $%
P^{\flat }(y_{+})$ and $P^{\sharp }(y_{-})$, i.e., 
\begin{eqnarray*}
M^{\flat }\left( y_{+}\right) &\doteq &\left\{ x_{-}\in \mathcal{X}%
_{-}^{\ast }:P^{\flat }\left( y_{+}\right) =P_{\mathrm{NL}}\left(
y_{+},x_{-}\right) \right\} \subseteq \mathcal{X}_{-}^{\ast }\ , \\
M^{\sharp }\left( y_{-}\right) &\doteq &\left\{ x_{+}\in \mathcal{X}%
_{+}^{\ast }:P^{\sharp }\left( y_{-}\right) =P_{\mathrm{NL}}\left(
x_{+},y_{-}\right) \right\} \subseteq \mathcal{X}_{+}^{\ast }\ .
\end{eqnarray*}

\begin{lemma}[Solutions to the variational problems $P^{\flat }(y_{+})$ and $%
P^{\sharp }(y_{-})$]
\label{Lemma bogoluibov5}\mbox{ }\newline
Assume Condition B1. Let $g_{\pm }:\mathcal{X}_{\pm }\rightarrow \mathbb{R}$
be non-zero lower semicontinuous and convex functions with $\mathrm{dom}%
(g_{+}^{\ast })=\mathcal{X}_{+}^{\ast }$. \newline
\emph{(}$\flat $\emph{)} If (\ref{condition1}) holds for $g=g_{-}$ and $%
\lambda =\Vert \tau _{-}\Vert _{\infty }$, then, for all $y_{+}\in \mathcal{X%
}_{+}^{\ast }$, $M^{\flat }(y_{+})$ is nonempty, convex and weak$^{\ast }$%
-compact. There is $R\in \mathbb{R}^{+}$ such that $M^{\flat
}(y_{+})\subseteq B_{-}(0,R)\cap \mathrm{dom}(g_{-}^{\ast })$ for all $%
y_{+}\in \mathcal{X}_{+}^{\ast }$.\smallskip \newline
\emph{(}$\sharp $\emph{)} If (\ref{condition1}) holds for $g=g_{+}$ and $%
\lambda =\Vert \tau _{+}\Vert _{\infty }$, then, for all $y_{-}\in \mathrm{%
dom}(g_{-}^{\ast })$, $M^{\sharp }(y_{-})$ is nonempty and weak$^{\ast }$%
-compact. There is $R\in \mathbb{R}^{+}$ such that $M^{\sharp
}(y_{-})\subseteq B_{+}(0,R)$ for all $y_{-}\in \mathrm{dom}(g_{-}^{\ast })$.
\end{lemma}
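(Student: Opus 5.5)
For ($\flat$) I would fix $y_{+}\in \mathcal{X}_{+}^{\ast }$ and work entirely with the convex function $y_{-}\mapsto P_{\mathrm{NL}}(y_{+},y_{-})$, with values in $\mathbb{R}\cup \{\infty \}$. The key input is Lemma \ref{Lemma suplin growth} ($\flat $): it gives a radius $R\in \mathbb{R}^{+}$, independent of $y_{+}$, such that the infimum defining $P^{\flat }(y_{+})$ is attained only inside $B_{-}(0,R)$. The strict inequality there says that no point outside $B_{-}(0,R)$ can achieve the infimum. Hence $M^{\flat }(y_{+})\subseteq B_{-}(0,R)$ for every $y_{+}$.

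Next, Equation (\ref{P bemol extra}) from the proof of that lemma shows that $P^{\flat }(y_{+})$ is actually a minimum over $B_{-}(0,R)$ and is a real number. This uses Banach--Alaoglu compactness of the ball together with the weak$^{\ast }$-lower semicontinuity and convexity of $y_{-}\mapsto P_{\mathrm{NL}}(y_{+},y_{-})$ on $B_{-}(0,R)$, given by Lemma \ref{Lemma bogoluibov3} ($-$). So $M^{\flat }(y_{+})\neq \emptyset $. Since the minimum value is finite, every minimizer has $P_{\mathrm{NL}}(y_{+},x_{-})<\infty $. Because $P_{\mathrm{L}}$ and $g_{+}^{\ast }$ are real-valued, this forces $g_{-}^{\ast }(x_{-})<\infty $, i.e.\ $M^{\flat }(y_{+})\subseteq \mathrm{dom}(g_{-}^{\ast })$.

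To get convexity and compactness, I would write $M^{\flat }(y_{+})=\{x_{-}\in B_{-}(0,R):P_{\mathrm{NL}}(y_{+},x_{-})\leq P^{\flat }(y_{+})\}$. This is a sublevel set of a convex, weak$^{\ast }$-lsc function on the weak$^{\ast }$-compact convex ball, so it is convex and weak$^{\ast }$-closed, hence weak$^{\ast }$-compact. The one point needing care is convexity of $P_{\mathrm{NL}}(y_{+},\cdot )$ on the whole space, not just on the ball. This holds because $P_{\mathrm{L}}$ is convex by Lemma \ref{Lemma bogoluibov1}, $g_{-}^{\ast }$ is convex, and $g_{+}^{\ast }(y_{+})$ is a constant.

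For ($\sharp $) I would argue symmetrically for $y_{-}\in \mathrm{dom}(g_{-}^{\ast })$, now with a supremum. Lemma \ref{Lemma suplin growth} ($\sharp $) gives a uniform radius $R$ with $M^{\sharp }(y_{-})\subseteq B_{+}(0,R)$. Equation (\ref{P diese extra}) shows the supremum is a maximum, with $P^{\sharp }(y_{-})\in \mathbb{R}$, so $M^{\sharp }(y_{-})$ is nonempty. The set $M^{\sharp }(y_{-})$ is the superlevel set $\{x_{+}\in B_{+}(0,R):P_{\mathrm{NL}}(x_{+},y_{-})\geq P^{\sharp }(y_{-})\}$ of a weak$^{\ast }$-usc function, by Lemma \ref{Lemma bogoluibov3} ($+$). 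It is therefore weak$^{\ast }$-closed in the compact ball, hence compact. No convexity is claimed here, consistently with the fact that $x_{+}\mapsto P_{\mathrm{NL}}$ is a difference of convex functions.

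The only genuine obstacle is ensuring the radius is uniform in $y_{\pm }$. This is exactly what Lemma \ref{Lemma suplin growth} provides: the Lipschitz bound of Lemma \ref{Lemma bogoluibov1} combined with the linear growth (\ref{condition1}) gives a radius that does not depend on the fixed variable. I would check this dependence in that lemma before relying on it.
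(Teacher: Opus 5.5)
Your proposal is correct and follows essentially the same route as the paper's proof. The uniform radius comes from Lemma \ref{Lemma suplin growth} ($\flat$)/($\sharp$); nonemptiness and weak$^{\ast}$-compactness come from Banach--Alaoglu together with the weak$^{\ast}$ semicontinuity in Lemma \ref{Lemma bogoluibov3}; the inclusion in $\mathrm{dom}(g_{-}^{\ast})$ follows because $P_{\mathrm{NL}}(y_{+},y_{-})=\infty$ off that domain; and convexity of $M^{\flat}(y_{+})$ follows from convexity of $P_{\mathrm{NL}}(y_{+},\cdot)$.

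Your planned check that the radius does not depend on the fixed variable also works out. In ($\flat$), the Lipschitz bound of Lemma \ref{Lemma bogoluibov1} removes all dependence on $y_{+}$. In ($\sharp$), the term $g_{-}^{\ast}(y_{-})$ cancels when two values of $y_{+}$ are compared.
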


\begin{proof}
Recall that $\mathrm{dom}(g_{-}^{\ast })\neq \emptyset $, thanks to (\ref%
{sdfsdfsdfsdfsdfs}). For all $y_{+}\in \mathcal{X}_{+}^{\ast }$, $M^{\flat
}(y_{+})\subseteq \mathrm{dom}(g_{-}^{\ast })$ because $P_{\mathrm{NL}%
}\left( y_{+},y_{-}\right) =\infty $ when $y_{-}\notin \mathrm{dom}%
(g_{-}^{\ast })$.\ If (\ref{condition1}) holds for $g=g_{-}$ and $\lambda
=\Vert \tau _{-}\Vert _{\infty }$ then, by Lemma \ref{Lemma suplin growth} ($%
\flat $)\emph{,} $M^{\flat }(y_{+})\subseteq B_{-}(0,R)\cap \mathrm{dom}%
(g_{-}^{\ast })$ for all $y_{+}\in \mathcal{X}_{+}^{\ast }$. Additionally,
in this case, the mapping $y_{-}\mapsto P_{\mathrm{NL}}\left(
y_{+},y_{-}\right) $ is weak$^{\ast }$-lower semicontinuous, thanks to Lemma %
\ref{Lemma bogoluibov3} ($-$). Bearing in mind again that closed\ balls of
the dual space of a normed space are weak$^{\ast }$-compact (cf. the
Banach-Alaoglu theorem \cite[Theorem 3.15]{Rudin}), we deduce that the sets $%
M^{\flat }(y_{+})\subseteq B_{-}(0,R)$, $y_{+}\in \mathcal{X}_{+}^{\ast }$,
are nonempty and weak$^{\ast }$-compact, as they are always weak$^{\ast }$%
-closed. Their convexity\ is a direct consequence of the convexity of the
function $P_{\mathrm{NL}}(y_{+},\cdot )$ (Lemma \ref{Lemma bogoluibov3} ($-$%
)). The case ($\sharp $) is proven in a similar way, from the weak$^{\ast }$%
-upper semicontinuity of $P_{\mathrm{NL}}(\cdot ,y_{-})$. See Lemma \ref%
{Lemma bogoluibov3} ($+$).
\end{proof}

Now we consider the conservative values%
\begin{eqnarray}
\mathrm{P}^{\flat } &\doteq &\sup_{y_{+}\in \mathcal{X}_{+}^{\ast }}P^{\flat
}\left( y_{+}\right) \doteq \sup_{y_{+}\in \mathcal{X}_{+}^{\ast
}}\inf_{y_{-}\in \mathcal{X}_{-}^{\ast }}P_{\mathrm{NL}}\left(
y_{+},y_{-}\right) \ ,  \label{Q diese bemol2} \\
\mathrm{P}^{\sharp } &\doteq &\inf_{y_{-}\in \mathcal{X}_{-}^{\ast
}}P^{\sharp }\left( y_{-}\right) \doteq \inf_{y_{-}\in \mathcal{X}_{-}^{\ast
}}\sup_{y_{+}\in \mathcal{X}_{+}^{\ast }}P_{\mathrm{NL}}\left(
y_{+},y_{-}\right) \ ,  \label{Q diese bemol1}
\end{eqnarray}%
of the thermodynamic game. See Section \ref{Decision}, in particular
Equation (\ref{conservative values}). We also study the corresponding sets (%
\ref{M1})--(\ref{M2}) of optimizers, i.e., 
\begin{eqnarray*}
\mathrm{M}^{\flat } &\doteq &\left\{ x_{+}\in \mathcal{X}_{+}^{\ast }:%
\mathrm{P}^{\flat }=P^{\flat }\left( x_{+}\right) \right\} \subseteq 
\mathcal{X}_{+}^{\ast }\ , \\
\mathrm{M}^{\sharp } &\doteq &\left\{ x_{-}\in \mathcal{X}_{-}^{\ast }:%
\mathrm{P}^{\sharp }\doteq P^{\sharp }\left( x_{-}\right) \right\} \subseteq 
\mathcal{X}_{-}^{\ast }\ ,
\end{eqnarray*}%
which are nothing but the sets of conservative strategies of the
thermodynamic game.

\begin{lemma}[Sets $\mathrm{M}^{\sharp }$ and $\mathrm{M}^{\flat }$ of
optimizers]
\label{Lemma bogoluibov6}\mbox{ }\newline
Assume Condition B1. Let $g_{\pm }:\mathcal{X}_{\pm }\rightarrow \mathbb{R}$
be non-zero lower semicontinuous and convex functions for which $\mathrm{dom}%
(g_{+}^{\ast })=\mathcal{X}_{+}^{\ast }$ and* (\ref{condition1}) holds for $%
g=g_{\pm }$ and $\lambda =\Vert \tau _{\pm }\Vert _{\infty }$.\newline
\emph{(}$\flat $\emph{)} $\mathrm{P}^{\flat }\in \mathbb{R}$ and the set $%
\mathrm{M}^{\flat }\subseteq \mathcal{X}_{+}^{\ast }$ is nonempty,
norm-bounded and weak$^{\ast }$-compact.\smallskip \newline
\emph{(}$\sharp $\emph{)} $\mathrm{P}^{\sharp }\in \mathbb{R}$ and the set $%
\mathrm{M}^{\sharp }\subseteq \mathcal{X}_{-}^{\ast }$ is a nonempty,
convex, norm-bounded and weak$^{\ast }$-compact subset of $\mathrm{dom}%
(g_{-}^{\ast })$.
\end{lemma}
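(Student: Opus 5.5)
The plan is to reduce both assertions to compactness of closed balls (Banach--Alaoglu) combined with the semicontinuity properties in Lemma \ref{Lemma bogoluibov4}, after using Lemma \ref{Lemma suplin growth} to localize the outer optimization to a ball.

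For ($\flat$), first note that $P^{\flat}$ is real-valued on $\mathcal{X}_{+}^{\ast}$ by Equation (\ref{P bemol extra}), so $\mathrm{P}^{\flat}>-\infty$. By Lemma \ref{Lemma suplin growth} ($\sharp\flat$) there is $R\in\mathbb{R}^{+}$ with
\begin{equation*}
\sup_{y_{+}\in B_{+}(0,R)}P^{\flat}(y_{+})>\sup_{y_{+}\in\mathcal{X}_{+}^{\ast}\backslash B_{+}(0,R)}P^{\flat}(y_{+})\ ,
\end{equation*}
hence $\mathrm{P}^{\flat}=\sup P^{\flat}(B_{+}(0,R))$. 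Moreover, every maximizer lies in $B_{+}(0,R)$, so $\mathrm{M}^{\flat}\subseteq B_{+}(0,R)$ is norm-bounded. By Lemma \ref{Lemma bogoluibov4} ($\flat$), $P^{\flat}$ is weak$^{\ast}$-upper semicontinuous on the weak$^{\ast}$-compact ball $B_{+}(0,R)$. It therefore attains its supremum there; this supremum is finite since an upper semicontinuous real function on a compact set is bounded above. Thus $\mathrm{P}^{\flat}\in\mathbb{R}$ and $\mathrm{M}^{\flat}\neq\emptyset$. Finally, $\mathrm{M}^{\flat}=\{y_{+}\in B_{+}(0,R):P^{\flat}(y_{+})\geq\mathrm{P}^{\flat}\}$ is a superlevel set of an upper semicontinuous function on a compact set. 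It is therefore weak$^{\ast}$-closed in $B_{+}(0,R)$, hence weak$^{\ast}$-compact.

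For ($\sharp$) the argument is symmetric. Lemma \ref{Lemma suplin growth} ($\flat\sharp$) gives a ball $B_{-}(0,R)$ outside which $P^{\sharp}$ is strictly larger than its infimum over the ball. Consequently $\mathrm{P}^{\sharp}=\inf P^{\sharp}(B_{-}(0,R))$ and $\mathrm{M}^{\sharp}\subseteq B_{-}(0,R)$. By Lemma \ref{Lemma bogoluibov4} ($\sharp$), $P^{\sharp}$ is convex and weak$^{\ast}$-lower semicontinuous on this ball, so its infimum is attained and is $>-\infty$. The infimum is also $<\infty$: $\mathrm{dom}(g_{-}^{\ast})\neq\emptyset$, and $P^{\sharp}$ is real-valued there by (\ref{P diese extra}). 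Hence the minimum is attained at some point where $P^{\sharp}<\infty$, that is, in $\mathrm{dom}(g_{-}^{\ast})$. Since $P^{\sharp}=\infty$ off $\mathrm{dom}(g_{-}^{\ast})$, all of $\mathrm{M}^{\sharp}$ is contained in $\mathrm{dom}(g_{-}^{\ast})$. Convexity of $\mathrm{M}^{\sharp}$ follows because it is a sublevel set $\{P^{\sharp}\leq\mathrm{P}^{\sharp}\}$ of the convex function $P^{\sharp}$. Weak$^{\ast}$-compactness follows as before, from lower semicontinuity on the compact ball.

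The only delicate point is ensuring that the semicontinuity statements of Lemma \ref{Lemma bogoluibov4} apply on a \emph{fixed} ball, since $\mathcal{X}_{\pm}$ are merely normed spaces and we have no global weak$^{\ast}$ semicontinuity. This is resolved by restricting to balls first via Lemma \ref{Lemma suplin growth}. The order matters: localize first, then invoke compactness.
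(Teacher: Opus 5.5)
Your proposal is correct and follows essentially the same route as the paper: localize to a closed ball via Lemma \ref{Lemma suplin growth} ($\sharp\flat$)/($\flat\sharp$), then combine the semicontinuity from Lemma \ref{Lemma bogoluibov4} with Banach--Alaoglu. As in the paper, convexity of $\mathrm{M}^{\sharp}$ comes from convexity of $P^{\sharp}$, and the inclusion $\mathrm{M}^{\sharp}\subseteq\mathrm{dom}(g_{-}^{\ast})$ from $\mathrm{P}^{\sharp}<\infty$; you simply spell out the attainment and closed level-set details that the paper leaves implicit.
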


\begin{proof}
By Lemma \ref{Lemma suplin growth} ($\sharp \flat $)\emph{, }$\mathrm{M}%
^{\flat }\subseteq B_{+}(0,R)\subseteq \mathcal{X}_{+}^{\ast }$ is
norm-bounded. From Lemma \ref{Lemma bogoluibov4} ($\flat $) and the weak$%
^{\ast }$ compactness of $B_{+}(0,R)$ one concludes that $\mathrm{P}^{\flat
}\in \mathbb{R}$ and that $\mathrm{M}^{\flat }$ is nonempty and weak$^{\ast
} $-compact. Mutatis mutandis for that case ($\sharp $), thanks to Lemmata %
\ref{Lemma suplin growth}\ ($\flat \sharp $) and \ref{Lemma bogoluibov4} ($%
\sharp $). $\mathrm{M}^{\sharp }$ is convex, because $P^{\sharp }$ is
convex, thanks to Lemma \ref{Lemma bogoluibov4} ($\sharp $). Note that $%
\mathrm{M}^{\sharp }\subseteq \mathrm{dom}(g_{-}^{\ast })$, because $\mathrm{%
P}^{\sharp }<\infty $ and $P^{\sharp }(y_{-})=\infty $ when $y_{-}\notin 
\mathrm{dom}(g_{-}^{\ast })\neq \emptyset $.
\end{proof}

\subsection{Appendix}

This appendix contains some useful standard results. These could help
non-specialists to understand the present paper more easily. We present the
von Neumann minimax theorem (Section \ref{The von Neumann min-max theorem})
and the Choquet theorem (Section \ref{The Choquet theorem}).

\subsubsection{The von Neumann minimax theorem\label{The von Neumann min-max
theorem}}

This theorem was first proven by John von Neumann in 1928 \cite{minimax thm}%
. He proved it in relation to two-person zero-sum games. This laid the
foundations of game theory, even if the theorem has applications that go far
beyond this field of mathematics. It gives a general criterion for the
existence of \emph{saddle points}.

Recall first that saddle points are defined as follows: Let $M$ and $N$ be
two sets. The element $(x_{0},y_{0})\in M\times N$ is, by definition, a
saddle point of the real-valued function $f:M\times N\rightarrow \mathbb{R}$
if 
\begin{equation*}
\underset{y\in N}{\sup }f\left( x_{0},y\right) =\underset{x\in M}{\inf }%
\underset{y\in N}{\sup }f\left( x,y\right) =\underset{y\in N}{\sup }\underset%
{x\in M}{\inf }f\left( x,y\right) =\underset{x\in M}{\inf }f\left(
x,y_{0}\right) \ .
\end{equation*}%
A saddle point $(x_{0},y_{0})\in M\times N$ in particular satisfies the
equalities%
\begin{equation*}
f(x_{0},y_{0})=\underset{x\in M}{\inf }\underset{y\in N}{\sup }f\left(
x,y\right) =\underset{y\in N}{\sup }\underset{x\in M}{\inf }f\left(
x,y\right) \ .
\end{equation*}%
The von Neumann minimax theorem, which is used here in an essential way,
refers to the following statement about the existence of saddle points of
functions on topological vector spaces:

\begin{theorem}[von Neumann]
\label{theorem minmax von Neumann}\mbox{ }\newline
Let $M$ and $N$ be two (nonempty) compact convex space. Assume that $%
f:M\times N\rightarrow \mathbb{R}$ is a real-valued function such that, for
all $y\in N$, the mapping $x\mapsto f(x,y)$ is convex and lower
semicontinuous, whereas, for all $x\in M$, the mapping $y\mapsto f(x,y)$ is
concave and upper semicontinuous. Then there exists a saddle point $%
(x_{0},y_{0})\in M\times N$ of $f$.
\end{theorem}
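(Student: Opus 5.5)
The approach is to split the existence of a saddle point into two parts: the attainment of the two conservative values, and their equality. Attainment comes from semicontinuity and compactness. Equality is reduced, by compactness of $M$, to a finite-dimensional separation argument combined with the concavity of $f$ in the second variable. Throughout, $M$ and $N$ are taken to be compact convex subsets of (Hausdorff) topological vector spaces, so that convex combinations make sense.

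\emph{Attainment.} Define
\begin{equation*}
\overline{f}(x)\doteq \sup_{y\in N}f(x,y)\ ,\qquad \underline{f}(y)\doteq \inf_{x\in M}f(x,y)\ .
\end{equation*}
As a supremum of lower semicontinuous functions, $\overline{f}$ is lower semicontinuous with values in $\mathbb{R}\cup\{\infty\}$. Since $M$ is compact, it attains its infimum at some $x_{0}\in M$. Symmetrically, $\underline{f}$ is upper semicontinuous, with values in $\{-\infty\}\cup\mathbb{R}$, and attains its supremum at some $y_{0}\in N$. The inequality
\begin{equation*}
\sup_{y}\inf_{x}f\leq \inf_{x}\sup_{y}f
\end{equation*}
is trivial. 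Once the reverse inequality is established, the chain of equalities in the definition of a saddle point holds at $(x_{0},y_{0})$, so it only remains to prove $\inf_{x}\sup_{y}f\leq \sup_{y}\inf_{x}f$.

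\emph{Reduction to finitely many $y$'s.} Argue by contradiction: suppose there is $c\in \mathbb{R}$ with $\sup_{y}\inf_{x}f<c<\inf_{x}\sup_{y}f$. For each $y\in N$, put $A_{y}\doteq \{x\in M:f(x,y)\leq c\}$, which is closed by lower semicontinuity in $x$. Then $\bigcap_{y}A_{y}=\emptyset$, since any $x$ in the intersection would satisfy $\sup_{y}f(x,y)\leq c$. By compactness of $M$ (the finite intersection property), there are finitely many $y_{1},\ldots ,y_{n}\in N$ with $A_{y_{1}}\cap \cdots \cap A_{y_{n}}=\emptyset$. This means that the function $x\mapsto \max_{i}f(x,y_{i})$ is strictly larger than $c$ everywhere on $M$. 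This function is lower semicontinuous on the compact set $M$, so its minimum is attained, and hence there is $\delta>0$ with $\max_{i}f(x,y_{i})\geq c+\delta$ for all $x\in M$.

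\emph{Separation.} This is the step needing the most care. Let
\begin{equation*}
C\doteq \{v\in \mathbb{R}^{n}:\exists x\in M,\ v_{i}\geq f(x,y_{i})\ \forall i\}\ .
\end{equation*}
It is convex by the convexity of $x\mapsto f(x,y_{i})$. It is disjoint from the open convex set $D\doteq \{v:v_{i}<c+\delta\ \forall i\}$, because of the bound from the previous step. The finite-dimensional separation theorem gives a nonzero $\lambda \in \mathbb{R}^{n}$ with $\lambda\cdot v\geq \lambda\cdot w$ for all $v\in C$ and $w\in D$.
\begin{itemize}
\item Because $C$ is upward closed and $D$ is downward unbounded, one checks that $\lambda_{i}\geq 0$ for every $i$. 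We then normalize so that $\sum_{i}\lambda_{i}=1$.
\item Letting $w\to (c+\delta)\mathbf{1}$ inside $D$ yields $\sum_{i}\lambda_{i}f(x,y_{i})\geq c+\delta$ for every $x\in M$.
\end{itemize}
Finally, set $\bar{y}\doteq \sum_{i}\lambda_{i}y_{i}\in N$, which lies in $N$ by convexity. Concavity of $y\mapsto f(x,y)$ gives $f(x,\bar{y})\geq \sum_{i}\lambda_{i}f(x,y_{i})\geq c+\delta$ for all $x$. Hence $\underline{f}(\bar{y})\geq c+\delta>c$, contradicting the choice of $c$. This closes the argument and yields the saddle point $(x_{0},y_{0})$.
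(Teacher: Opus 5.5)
Your proof is correct and complete. The paper itself gives no proof of this theorem; it only refers to \cite[Theorem 8.2]{Aubin}. Your argument is the classical one for this statement:
\begin{itemize}
\item compactness of $M$ and lower semicontinuity in $x$ reduce the inequality $\inf_{x}\sup_{y}f\leq \sup_{y}\inf_{x}f$ to finitely many points $y_{1},\ldots ,y_{n}$;
\item separation in $\mathbb{R}^{n}$, using convexity in $x$, produces weights $\lambda$ in the simplex;
\item concavity in $y$ collapses the mixed strategy $\lambda$ into the pure strategy $\bar{y}=\sum_{i}\lambda_{i}y_{i}\in N$.
\end{itemize}

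Your write-up also makes visible which hypotheses are used where. The equality of the two conservative values uses only compactness of $M$, convexity and lower semicontinuity in $x$, convexity of $N$, and concavity in $y$. Compactness of $N$ and upper semicontinuity in $y$ enter only in the attainment step that produces $y_{0}$. So you have in fact proved the asymmetric, Kneser--Fan type, form of the minimax equality, and then the existence of a saddle point under the full symmetric hypotheses of the statement.
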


\noindent There are many different proofs of this assertion available in the
literature and we recommend, for example, \cite[Chapter 8]{Aubin} for a
concise review on two-person zero-sum games, including a proof of the von
Neumann minimax theorem \cite[Theorem 8.2]{Aubin}.

\subsubsection{The Choquet theorem\label{The Choquet theorem}}

A classical result of Minkowski's states that, in finite dimensions, any
element $x\in K$ in a (nonempty) compact convex subset $K\subseteq \mathcal{X%
}$ can be decomposed into a convex combination of a finite number of extreme
points $\hat{x}_{1},\ldots ,\hat{x}_{k}\in \mathcal{E}(K)$, that is, 
\begin{equation}
x=\overset{k}{\sum\limits_{j=1}}\lambda _{j}\hat{x}_{j}\ ,
\label{barycenter1}
\end{equation}%
where $\lambda _{1},\ldots ,\lambda _{k}\geq 0$ are positive numbers
satisfying $\Sigma _{j=1}^{k}\lambda _{j}=1$. Recall that the extreme points
of a convex set are the elements that cannot be written as (non-trivial)
convex combinations of other elements in $K$. Their existence is ensured for
all compact sets in a locally convex spaces $\mathcal{X}$. In fact, it is
well-known that in this case, any compact convex set $K\subseteq \mathcal{X}$
is the closure of the convex hull of the (nonempty) set $\mathcal{E}(K)$ of
its extreme points, according to the Krein-Milman theorem \cite[Theorems 3.4
(b) and 3.21]{Rudin}.

To the\ decomposition (\ref{barycenter1}) we can naturally associate a
probability measure, i.e., a normalized positive Borel regular measure, $\xi 
$\ on $K$: Take the probability measure $\xi _{x}$\ on $K$ defined by 
\begin{equation*}
\xi _{x}\doteq \overset{k}{\sum\limits_{j=1}}\lambda _{j}\delta _{\hat{x}%
_{j}}
\end{equation*}%
with $\delta _{y}$ being the Dirac (or point) measure\footnote{$\delta _{y}$
is the Borel measure such that for any Borel subset $B\in \mathfrak{B}$ of $%
K $, $\delta _{y}(B)=1$ if $y\in B$ and $\delta _{y}(B)=0$ if $y\notin B$.}
at $y$ and rewrite (\ref{barycenter1}) as the following (weak) integral%
\begin{equation}
x=\int_{K}\hat{x}\;\xi _{x}\left( \mathrm{d}\hat{x}\right) \ .
\label{barycenter2}
\end{equation}%
That is, the point $x$ is the so-called \emph{barycenter} of the probability
measure $\xi _{x}$. This notion is defined in the general case as follows
(cf. \cite[p. 1]{Phe}):

\begin{definition}[Barycenters of a measure]
\label{Barycenters of a measure}\mbox{ }\newline
Let $\mathcal{X}$ be any real topological vector space, $K\subseteq \mathcal{%
X}$ a nonempty compact subset and $\xi $ a (Borel) probability measure on $K$%
. An element $x\in \mathcal{X}$ is a barycenter of $\xi $ if, for any
continuous linear functional $x^{\ast }\in \mathcal{X}^{\ast }$,%
\begin{equation*}
x^{\ast }\left( x\right) =\int_{K}x^{\ast }\left( \hat{x}\right) \xi \left( 
\mathrm{d}\hat{x}\right) \ .
\end{equation*}
\end{definition}

Note that if $\mathcal{X}$ if locally convex then $\xi $ has at most one
barycenter, for $\mathcal{X}^{\ast }$ separates the points of $\mathcal{X}$,
and in this case we refer to \emph{the} barycenter of $\xi $. Its existence
is ensured by the following result \cite[Propositions 1.1--1.2]{Phe}:

\begin{proposition}[Existence of barycenters]
\label{Existence of barycenters}\mbox{ }\newline
Let $\mathcal{X}$ be any locally convex vector space and $K\subseteq 
\mathcal{X}$ a nonempty compact subset. If the closed convex hull $\overline{%
\mathrm{co}}K\subseteq \mathcal{X}$ is also compact, then every normalized
Borel measure on $K$ has a (unique) barycenter. Additionally, for all $x\in 
\mathcal{X}$, $x\in \overline{\mathrm{co}}K$ iff $x$ is the barycenter of
some (not necessarily unique) probability measure on $K$.
\end{proposition}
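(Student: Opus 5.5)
The approach is to deduce this from the finite-dimensional Carathéodory/Minkowski picture together with standard duality. For the existence of a barycenter, I would fix a normalized Borel measure $\xi$ on $K$ and set $Q\doteq \overline{\mathrm{co}}K$, which is compact by hypothesis. For each finite family $x_1^\ast,\ldots,x_n^\ast\in\mathcal{X}^\ast$, consider the continuous linear map $L:\mathcal{X}\to\mathbb{R}^n$, $L(x)=(x_1^\ast(x),\ldots,x_n^\ast(x))$, and the vector $v\doteq\int_K L(\hat x)\,\xi(\mathrm{d}\hat x)\in\mathbb{R}^n$. Define
\[
H(x_1^\ast,\ldots,x_n^\ast)\doteq\{x\in Q: L(x)=v\}.
\]
This set is closed in $Q$. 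Once each such set is shown to be nonempty, the family of all of them has the finite intersection property, because one can concatenate the functional lists. Compactness of $Q$ then gives a point in the total intersection, and that point is a barycenter. Uniqueness follows because $\mathcal{X}^\ast$ separates points, by Hahn–Banach in the locally convex space $\mathcal{X}$.

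The main step is proving $H\neq\emptyset$, that is, $v\in L(Q)$. Since $L(Q)$ is a compact convex subset of $\mathbb{R}^n$, I would argue by contradiction using strict separation in $\mathbb{R}^n$. If $v\notin L(Q)$, there is $a\in\mathbb{R}^n$ with $\langle a,v\rangle>\sup_{q\in Q}\langle a,L(q)\rangle$. But $\langle a,L(\cdot)\rangle$ is a continuous linear functional, and $\xi$ is a probability measure on $K\subseteq Q$. Hence $\langle a,v\rangle=\int_K\langle a,L(\hat x)\rangle\,\xi(\mathrm{d}\hat x)\le\sup_{Q}\langle a,L\rangle$, which is a contradiction. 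Integrability is not an issue, since $L$ is continuous on the compact set $K$.

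For the characterization, the direction ``barycenter implies $x\in Q$'' follows the same way. If $x\notin Q$, Hahn–Banach strict separation in the locally convex space $\mathcal{X}$ gives $x^\ast$ with $x^\ast(x)>\sup_Q x^\ast\ge\int_K x^\ast\,\mathrm{d}\xi$, contradicting the barycenter identity. Note that the barycenter already lies in $Q$ by construction in the existence part.

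For the converse, let $x\in Q$ and take a net $x_j$ of finite convex combinations of points of $K$ converging to $x$. Each $x_j$ is the barycenter of a discrete probability measure $\xi_j=\sum\lambda_i\delta_{\hat x_i}$ on $K$. The space of Radon probability measures on the compact set $K$ is weak$^\ast$ compact by Riesz–Markov and Banach–Alaoglu, so a subnet converges to some probability measure $\xi$. For each $x^\ast$, the restriction $x^\ast|_K$ is continuous, so $\int x^\ast\,\mathrm{d}\xi_j\to\int x^\ast\,\mathrm{d}\xi$, while $x^\ast(x_j)\to x^\ast(x)$. Hence $x$ is the barycenter of $\xi$.

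I expect the only delicate points to be these: the measures should be taken regular (Radon), so that the Riesz identification applies on a possibly non-metrizable $K$; and the existence proof needs $Q$ compact to pass from finite-dimensional projections to an actual point.
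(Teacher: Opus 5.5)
Your proof is correct. The paper gives no argument of its own and simply cites Phelps, \emph{Lectures on Choquet's Theorem}, Propositions 1.1--1.2, and your argument is the proof given there: the sets $\{x\in\overline{\mathrm{co}}K: L(x)=v\}$ are nonempty by separation in $\mathbb{R}^n$ and have the finite intersection property, and the characterization follows from a weak$^\ast$ cluster point of discrete measures together with Hahn--Banach separation.
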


The Krein-Milman theorem says that any nonempty compact \emph{convex} $%
K\subseteq X$ of a locally convex vector space has extreme points forming a
set $\mathcal{E}(K)$ which satisfies $K=\overline{\mathrm{co}}$ $\mathcal{E}%
(K)$, see \cite[Theorems 3.4 (b) and 3.21]{Rudin}. In particular, by
Proposition \ref{Existence of barycenters}, any element $x\in K$ is the
barycenter of some normalized Borel measure on the closure $\overline{%
\mathcal{E}}(K)$ of the set $\mathcal{E}(K)$ of all extreme points of $K$.
In infinite dimension one generically \cite{article-hypertopology} has that $%
\overline{\mathcal{E}}(K)=K$ and this property is thus useless in such a
situation. If the relative topology of the given compact subset $K\subseteq 
\mathcal{X}$ is metrizable, the Choquet theorem strengthens the Krein-Milman
theorem by stating that the measure representing an arbitrary element $x\in
K $ as its barycenter can always be chosen such that it is supported in $%
\mathcal{E}(K)$ (and not just in the closure $\overline{\mathcal{E}}(K)$).
Indeed, in this case, $\mathcal{E}(K)$ is a Borel set \cite[Proposition 1.3]%
{Phe}:

\begin{lemma}[Extreme points form a $G_{\protect\delta }$-set]
\label{lemma extr gd}\mbox{ }\newline
Let $\mathcal{X}$ be any topological vector space and $K\subseteq \mathcal{X}
$ a compact convex subset. If the relative topology of $K$ is metrizable,
then $\mathcal{E}(K)$ is a $G_{\delta }$-set with respect to the relative
topology of $K$. In particular, it is a Borel set.
\end{lemma}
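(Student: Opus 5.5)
The plan is the standard metrization argument for extreme points. Let $d$ be a metric on $K$ compatible with its relative topology; since $K$ is a compact subset of a topological vector space, $d$ is bounded and the vector operations are continuous on the relevant compact sets. For each $n\in\mathbb{N}$ I define
\begin{equation*}
F_{n}\doteq \left\{ x\in K:\exists\, y,z\in K\ \text{with}\ x=\tfrac{1}{2}(y+z)\ \text{and}\ d(y,z)\geq 1/n\right\} \ .
\end{equation*}
The first step is to check that $K\backslash \mathcal{E}(K)=\bigcup_{n\in\mathbb{N}}F_{n}$. If $x\in F_n$, then $y\neq z$ and $x$ is a nontrivial midpoint, so it is not extreme. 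Conversely, if $x=\lambda y+(1-\lambda)z$ with $y\neq z$ in $K$ and $\lambda\in(0,1)$, I replace $y,z$ by points on the segment $[y,z]\subseteq K$ that are symmetric about $x$. For example, with $t=\min\{\lambda,1-\lambda\}$, the points $x\pm t(y-z)$ both lie in $K$ by convexity, they are distinct, and $x$ is their midpoint. Hence $x\in F_n$ for $n$ large.

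The second step is to show that each $F_n$ is closed. The set $C_n\doteq\{(y,z)\in K\times K: d(y,z)\geq 1/n\}$ is closed in the compact space $K\times K$, hence compact. The map $(y,z)\mapsto \tfrac12(y+z)$ is continuous from $\mathcal{X}\times\mathcal{X}$ to $\mathcal{X}$ and takes $C_n$ into $K$ by convexity. So $F_n$, being the image of $C_n$, is compact in $\mathcal{X}$. It is therefore closed in the Hausdorff space $K$; the ambient topological vector space is Hausdorff by the paper's convention.

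Consequently $\mathcal{E}(K)=\bigcap_n (K\backslash F_n)$ is a countable intersection of relatively open sets, i.e.\ a $G_\delta$, and in particular Borel. The only delicate point I expect is the symmetrization in the first step. There one needs that the symmetric points stay in $K$, which follows from convexity of the segment, and that the distance $d(y',z')$ is strictly positive, which holds because $y'\neq z'$. The remaining steps are routine compactness arguments.
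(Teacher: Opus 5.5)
Your proof is correct and is essentially the standard argument behind this lemma. The paper does not prove the lemma itself but quotes it from \cite[Proposition 1.3]{Phe}, where one likewise writes $K\backslash \mathcal{E}(K)=\bigcup_{n}F_{n}$ with $F_{n}$ the set of midpoints of pairs at $d$-distance at least $1/n$, and uses compactness to show that each $F_{n}$ is closed. The details you supply, namely closedness of $F_{n}$ as the continuous image of the compact set $C_{n}$ and the explicit symmetrization $x\pm t(y-z)$, are exactly what that argument needs, and they work in any Hausdorff topological vector space, as the statement requires.
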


We are now able to state the Choquet theorem, first proven in 1956 by
Gustave Choquet \cite{Choquet}:

\begin{theorem}[Choquet]
\label{th Choquet}\mbox{ }\newline
Let $\mathcal{X}$ be any locally convex vector space and $K\subseteq 
\mathcal{X}$ a nonempty, compact and convex\ subset, whose relative topology
is metrizable. For any $x\in K$, there is a (not necessarily unique)
probability measure $\xi _{x}$ on $K$, which is supported on $\mathcal{E}%
(K)\subseteq K$ (i.e., $\xi _{x}(K\backslash \mathcal{E}(K))=0$) and whose
barycenter is $x$.
\end{theorem}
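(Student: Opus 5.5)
This is the Choquet theorem, a classical result. I will give the standard argument via a strictly convex continuous function and the Choquet ordering of measures, and I will use only the tools already recalled in the appendix: the Krein--Milman theorem, barycenters (Proposition \ref{Existence of barycenters}) and Lemma \ref{lemma extr gd}.

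\textbf{Step 1: a strictly convex test function.} Since $K$ is metrizable and compact, $C(K)$ is separable. The restrictions to $K$ of continuous affine functions on $\mathcal{X}$ separate the points of $K$, because $\mathcal{X}^{\ast}$ does so by local convexity. Hence I can choose a sequence $(a_n)$ of such affine functions with $\|a_n\|_\infty\le 1$ that still separates points; this uses that the affine functions restricted to $K$ form a subspace of the separable space $C(K)$. I then set $\phi=\sum_n 2^{-n}a_n^2$. This $\phi$ is continuous and strictly convex on $K$: for $x\neq y$, some $a_n$ takes different values at $x$ and $y$, and $t\mapsto t^2$ is strictly convex.

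\textbf{Step 2: the upper envelope and a maximal measure.} Let $\bar\phi$ be the upper envelope of $\phi$, i.e.\ the infimum of the continuous concave functions above $\phi$. It is concave and upper semicontinuous, hence Borel, and $\bar\phi\ge\phi$.

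Fix $x\in K$. On the subspace $C(K)$ I consider the sublinear functional $p(f)=\bar f(x)$, and the linear functional $\ell(f)=a(x)$ defined on the subspace spanned by the continuous affine functions $a$ together with $\phi$. On that subspace I set $\ell(\phi)=\bar\phi(x)$ and check that $\ell\le p$ there. By Hahn--Banach, $\ell$ extends to some $\ell\le p$ on $C(K)$. Since $p(f)\le 0$ whenever $f\le0$, the extension is positive, and since $\ell(1)=1$ it defines a probability measure $\xi_x$ on $K$.

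This measure has barycenter $x$, because it agrees with evaluation at $x$ on affine functions. It also satisfies $\xi_x(\phi)=\bar\phi(x)\ge\xi_x(\bar\phi)$. The last inequality holds because $\bar\phi$ is concave and upper semicontinuous, so it is dominated by evaluation at the barycenter; this is a Jensen-type inequality, obtained by approximating $\bar\phi$ from above by continuous concave functions and using monotone convergence. Combined with $\bar\phi\ge\phi$, this yields $\xi_x(\bar\phi-\phi)=0$. Therefore $\xi_x$ is concentrated on $B=\{\bar\phi=\phi\}$.

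\textbf{Step 3 (main obstacle): $B\subseteq\mathcal{E}(K)$.} Suppose $y=\frac12(y_1+y_2)$ with $y_1\neq y_2$ in $K$. For any continuous concave $g\ge\phi$, concavity gives $g(y)\ge\frac12(g(y_1)+g(y_2))\ge\frac12(\phi(y_1)+\phi(y_2))$, and the last quantity is strictly greater than $\phi(y)$ by strict convexity. Taking the infimum over such $g$ gives $\bar\phi(y)>\phi(y)$, so $y\notin B$.

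By Lemma \ref{lemma extr gd}, $\mathcal{E}(K)$ is a Borel set. Since $\xi_x(B)=1$ and $B\subseteq\mathcal{E}(K)$, it follows that $\xi_x(K\backslash\mathcal{E}(K))=0$.

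The delicate points are the Hahn--Banach consistency check ($\ell\le p$ on the span of the affine functions and $\phi$) and the Jensen-type inequality for the upper semicontinuous concave function $\bar\phi$. For both, I would follow Phelps' lectures on Choquet's theorem.
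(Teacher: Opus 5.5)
Your proposal is correct and is essentially the argument the paper relies on: the paper gives no proof of its own but cites Phelps' lecture notes (page 14), whose proof of the metrizable Choquet theorem is exactly this construction of a strictly convex $\phi=\sum_n 2^{-n}a_n^2$, its upper envelope $\bar\phi$, and a Hahn--Banach extension. One small simplification is that the Jensen-type bound needs no monotone convergence: for every continuous concave $g\ge\phi$ you have $\xi_x(\bar\phi)\le\xi_x(g)\le g(x)$, where the second inequality holds because $g$ is an infimum of continuous affine functions (Hahn--Banach separation), and taking the infimum over $g$ gives $\xi_x(\bar\phi)\le\bar\phi(x)$.
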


\noindent We recommend \cite{Phe} for a concise review on the Choquet
theorem and its generalization to non-metrizable cases. The Choquet theorem
stated above is proven on page 14 of these lecture notes \cite{Phe}.

Such a measure $\xi _{x}$, as given by Theorem \ref{th Choquet}, is called
here a \emph{Choquet measure}\ associated with the element $x\in K$. If any
point $x\in K$ has exactly one associated Choquet measure then the convex
subset $K\subseteq X$ is a so-called \emph{Choquet simplex}.\bigskip

\noindent \textit{Acknowledgments:} This work is supported by FAPESP (grant
2025/12824-5), CNPq (grant 303682/2025-6), the Basque Government through the
BERC 2022-2025 program, as well as the following grant: Grant
PID2024-156184NB-I00 funded by MICIU/AEI/10.13039/501100011033 and cofunded
by the European Union.


\begin{thebibliography}{99}
\bibitem{ACR} D. Aguiar, L. Cioletti and R. Ruviaro, A Variational Principle
for the Specific Entropy for Symbolic Systems with Uncountable Alphabets, 
\textit{Math. Nachr.} \textbf{291}(17-18) (2018) 2506-2515.

\bibitem{Alfsen} E. M. Alfsen, \textit{Compact convex sets and boundary
integrals}. Ergebnisse der Mathematik und ihrer Grenzgebiete -- Band 57.
Springer-Verlag, 1971.

\bibitem{AliBor} Ch. D. Aliprantis, K. C. Border, \textit{Infinite
Dimensional Analysis. A Hitchhiker's Guide}. \textit{3rd Edition}.
Springer-Verlag, 2006.

\bibitem{Aubin} J.-P. Aubin, \textit{Optima and Equilibria: An Introduction
to Nonlinear Analysis, }Berlin-Heidelberg, Springer-Verlag, 1998.

\bibitem{Aubinbis} J.-P. Aubin, \textit{Mathematical Methods of Game and
Economic Theory.} Dover Publications Inc., 2009.

\bibitem{TF-Linear4} V. Baladi, \textit{Positive Transfer Operators and
decay of correlations}, World Scientific, 2000.

\bibitem{TF3} L. Barreira and C. Holanda, Higher-dimensional nonlinear
thermodynamical formalism, \textit{J. Stat. Phys}. \textbf{187} (2022) 18
(article number).

\bibitem{TF-Linear8} A. Bi\'{s}, M. Carvalho, M. Mendes and P. Varandas, A
Convex Analysis Approach to Entropy Functions, Variational Principles and
Equilibrium States, \textit{Commun. Math. Phys.} \textbf{394}(1) (2022)
215-256. Correction: \textit{Commun. Math. Phys.} \textbf{401} (2023)
3335--3342.

\bibitem{Bogoliubov1} N.N. Bogoliubov, On the theory of superfluidity, 
\textit{J. Phys. (USSR)} \textbf{11} (1947) 23-32.

\bibitem{Bogjunior} N.N. Bogoliubov Jr., On model dynamical systems in
statistical mechanics, \textit{Physica} \textbf{32}(5) (1966) 933-944.

\bibitem{approx-hamil-method0} N.N. Bogoliubov Jr., J.G. Brankov, V.A.
Zagrebnov, A.M. Kurbatov and N.S. Tonchev, \textit{Metod
approksimiruyushchego gamil'toniana v statisticheskoi fizike\footnote{%
The Approximating Hamiltonian Method in\ Statistical Physics.}.} Sofia:
Izdat. Bulgar. Akad. Nauk\footnote{%
Publ. House Bulg. Acad. Sci.}, 1981.

\bibitem{approx-hamil-method} N.N. Bogoliubov Jr., J.G. Brankov, V.A.
Zagrebnov, A.M. Kurbatov and N.S. Tonchev, Some classes of exactly soluble
models of problems in Quantum Statistical Mechanics: the method of the
approximating Hamiltonian, \textit{Russ. Math. Surv.} \textbf{39 }(1984)
1-50.

\bibitem{TF-Linear0} R. Bowen, Equilibrium states and the ergodic theory of
Anosov diffeomorphisms, \textit{Lect. Notes Math.} \textbf{470} (1975).

\bibitem{AHM-non-poly1} J.G. Brankov, N.S. Tonchev and V.A. Zagrebnov, A
nonpolynomial generalization of exactly soluble models in statistical
mechanics, \textit{Ann. Phys. (N. Y.)} \textbf{107}(1-2) (1977) 82-94.

\bibitem{AHM-non-poly2} J.G. Brankov, N.S. Tonchev and V.A. Zagrebnov, On a
class of exactly soluble statistical mechanical models with nonpolynomial
interactions, \textit{J. Stat. Phys.} \textbf{20}(3) (1979) 317-330.

\bibitem{approx-hamil-method2} J.G. Brankov, D.M. Danchev and N.S. Tonchev, 
\textit{Theory of Critical Phenomena in Finite--size Systems: Scaling and
Quantum Effects,} Singapore-New Jersey-London-Hong Kong, World Scientific,
2000.

\bibitem{BruPedraconvex} J.-B. Bru, W. de Siqueira Pedra, Remarks on the $%
\Gamma $--regularization of Non-convex and Non-Semi-Continuous Functionals
on Topological Vector Spaces, \textit{J. Convex Anal.} \textbf{19}(3) (2012)
467-483.

\bibitem{BruPedra2} J.-B. Bru, W. de Siqueira Pedra, Non-cooperative
Equilibria of Fermi Systems With Long Range Interactions, \textit{Memoirs of
the AMS} \textbf{224}(1052) (2013).

\bibitem{Bru-pedra-MF-I} J.-B. Bru, W. de Siqueira Pedra, Classical Dynamics
from Self-Consistency Equations in Quantum Mechanics, \textit{J. Math. Phys.}
\textbf{63} (2022) 052101 (33 pages). See also arXiv:2009.04969 [math-ph]
which is an extended version (72 pages).

\bibitem{Bru-Pedra-Lopes1} J.-B. Bru, W. de Siqueira Pedra and A.O. Lopes,
Nonlinear Thermodynamic Formalism: Mean-field Phase Transitions, Large
Deviations and Bogoliubov's Variational Principle, arXiv:2511.06975
[math.DS] (2025).

\bibitem{Kac} J.-B. Bru, W. de Siqueira Pedra and K. Rodrigues Alves, From
Short-Range to Mean-Field Models in Quantum Lattices, \textit{Adv. Theor.
Math. Phys.} \textbf{28}(1) (2024) 69-159.

\bibitem{article-hypertopology} J.-B. Bru and W. de Siqueira Pedra, Weak$%
^{\ast }$ Hypertopologies with Application to Genericity of Convex Sets, 
\textit{J. Convex Anal.} \textbf{29}(1)\ (2021) 13-60.

\bibitem{BruPedra-textbook} J.-B. Bru and de Siqueira Pedra, $C^{\ast }$ 
\textit{-Algebra and Mathematical Foundations of Quantum Statistical
Mechanics}, Latin American Mathematics Series - UFSCar subseries, Springer
Nature Switzerland AG, 2023.

\bibitem{TF-Linear6} H. Bruin, \textit{Topological and Ergodic Theory of
Symbolic Dynamics}, Graduate Studies in Mathematics, Vol. 228, AMS, 2022.

\bibitem{TF4} J. Buzzi, B. Kloeckner and R. Leplaideur, Nonlinear
thermodynamical formalism, \textit{Annales Henri Lebesgue} \textbf{6} (2023)
1429-1477.

\bibitem{Choquet} G. Choquet, Existence et unicit\'{e} des repr\'{e}%
sentations int\'{e}grales au moyen des points extr\'{e}maux dans les c\^{o}%
nes convexes, \textit{Seminaire Bourbaki} \textbf{139} (Dec. 1956) 15 pp.

\bibitem{CLS} L. Cioletti, A. O. Lopes and M. Stadlbauert, Ruelle Operator
for Continuous Potentials and DLR-Gibbs Measures, \textit{Disc and Cont.
Dyn. Syst. A} \textbf{40}(8) (2020) 4625-4652.

\bibitem{Climenhaga} V. Climenhaga, The thermodynamic approach to
multifractal analysis, \textit{Ergodic Theory Dyn. Syst.} \textbf{34}:5
(2014) 1409-1450.

\bibitem{Clime} V. Climenhaga, D. J. Thompson and K. Yamamoto, Large
deviations for systems with non-uniform structure, TAMS, 369 (2017), no. 6,
4167-4192.

\bibitem{Colbrook} M.J. Colbrook, Z. Drma\v{c}, A. Horningar, An
Introductory Guide to Koopman Learning, ArXiv:2510.22002v1 [math.NA] (2025)
(33pp).

\bibitem{Georgii} H.-O. Georgii, \textit{Gibbs Measures and Phase
Transitions }, Berlin-New York, De Gruyter, 2011.

\bibitem{Ginibre} J. Ginibre, On the Asymptotic Exactness of the Bogoliubov
Approximation for many Bosons Systems, \textit{Commun. Math. Phys.} \textbf{%
8 } (1968) 26-51

\bibitem{GKLM} P. Giulietti, B. Kloeckner, A. O. Lopes and D. Marcon, The
calculus of thermodynamical formalism, \textit{J. Eur. Math. Soc.} \textbf{%
20 }(10) (2018) 2357-2412.

\bibitem{Sanz} J. Gonzalez-Conde, D. Lewis, S.S. Bharadwaj, and M. Sanz,
Quantum Carleman linearization efficiency in nonlinear fluid dynamics, 
\textit{Physical Review Research} \textbf{7} (2025) 023254 (16pp).

\bibitem{Haydon} R. Haydon, E. Odell, H. Rosenthal, \textit{On certain
classes of Baire-1 functions with applications to Banach space theory}. In:
Odwell, E.E., Rosenthal, H.P. (eds) Functional Analysis. Lecture Notes in
Mathematics, vol 1470. Springer, Berlin, Heidelberg, 1991.

\bibitem{Levy} O. Kallenberg, \textit{Foundations of Modern Probability},
2nd Edition, New York-Berlin-Heidelberg, Springer-Verlag, 2001.

\bibitem{TF-Linear2} A. Katok and B. Hasselblatt, \textit{Introduction to
the modern theory of dynamical systems}, Cambridge University Press,
Cambridge, 1995.

\bibitem{Katz} J. Katz, G. Muraleedharan and A. Alase, Efficient quantum
algorithm for solving differential equations with Fourier nonlinearity via
Koopman linearization, arXiv:2512.06488 [quant-ph] (2025) (46pp).

\bibitem{Kechris} A.S. Kechris, \textit{Classical Descriptive Set Theory},
Springer-Verlag New York. Inc., 1995.

\bibitem{TF-Linear3} G. Keller, \textit{Equilibrium states in Ergodic Theory}%
, Cambridge Univ. Press, 1998.

\bibitem{HIDETOSHI KOMIYA} H. Komiya, Elementary Proof For Sion's minimax
theorem, \textit{Kodai Math. J.} \textbf{11}(1) (1988) 5-7.

\bibitem{TF5} T. Kucherenko, Nonlinear thermodynamic formalism through the
lens of rotation theory, \textit{Disc. and Cont. Dyn. Systems} \textbf{44}%
(12) (2024) 3760-3773.

\bibitem{LiebSeiringerYngvason3} E.H. Lieb, R. Seiringer and J. Yngvason,
Justification of $c$--Number Substitutions in Bosonic Hamiltonians, \textit{%
\ Phys. Rev. Lett.} \textbf{94} (2005) 080401-1-4.

\bibitem{Lindenstrauss-etal} J. Lindenstrauss, G.H. Olsen and Y. Sternfeld,
The Poulsen simplex, \textit{Ann. Inst. Fourier (Grenoble)} \textbf{28}
(1978) 91-114.

\bibitem{TF1} R. Leplaideur and F. Watbled, Generalized Curie-Weiss model
and quadratic pressure in ergodic theory, \textit{Bull. Soc. Math. France} 
\textbf{147}(2) (2019) 197-219.

\bibitem{TF2} R. Leplaideur and F. Watbled, Curie-Weiss Type Models for
General Spin Spaces and Quadratic Pressure in Ergodic Theory, \textit{J.
Stat. Phys}. \textbf{181} (2020) 263-292.

\bibitem{Lopez-fractal} A.O. Lopes, The Dimension Spectrum of the Maximal
Measure, \textit{SIAM Journal on Mathematical Analysis} \textbf{20}(5)
(1989)\ 1243-1254.

\bibitem{L3} A. O. Lopes, Entropy and Large Deviation, \textit{NonLinearity} 
\textbf{3}(2) (1990) 527-546.

\bibitem{TF-Linear9} A.O. Lopes, Thermodynamic Formalism, Maximizing
Probabilities and Large Deviations, Text under construction, (2024), see
http://mat.ufrgs.br/\symbol{126}alopes/pub3/notesformteherm.pdf.

\bibitem{LMMS} A. O. Lopes, J. K. Mengue, J. Mohr and R. R. Souza, Entropy
and Variational Principle for one-dimensional Lattice Systems with a general
a-priori probability: positive and zero temperature, \textit{Erg. Theory and
Dyn Systems} \textbf{35}(6) (2015) 1925-1961.

\bibitem{LMO} A.O. Lopes, J. K. Mengue and E.R. Oliveira, Idempotent
approach to level-2 variational principles in Thermodynamical Formalism, 
\textit{J. Math. Phys.} \textbf{67} (2026) 042701.

\bibitem{LO} A.O. Lopes and E.R. Oliveira, Level-2 IFS Thermodynamic
Formalism: Gibbs probabilities in the space of probabilities and the
push-forward map, \textit{Dyn. Systems} \textbf{39}, Issue 4 (2024) 823--847.

\bibitem{Parthasarathy} K.R. Parthasarathy, \textit{Probability Measures on
Metric Spaces}, New York and London, Academic Press, 1967.

\bibitem{TF-Linear1} W. Parry and M. Pollicott, Zeta functions and the
periodic orbit structure of hyperbolic dynamics, \textit{Ast\'{e}risque} 
\textbf{187-188} (1990), Chapters 1-4.

\bibitem{Phe} R.R. Phelps, \textit{Lectures on Choquet's Theorem}, 2nd
Edition, Lecture Notes in Mathematics, Vol. 1757, Berlin-Heidelberg,
Springer-Verlag, 2001.

\bibitem{Phe2} R.R. Phelps, \textit{Convex Functions, Monotone Operators and
Differentiability}, 2nd Edition, Lecture Notes in Mathematics, Vol. 1364,
Berlin-Heidelberg, Springer-Verlag 1993.

\bibitem{Rudin} W. Rudin, \textit{Functional Analysis}, McGraw-Hill Science,
1991.

\bibitem{TF-Linear5} D. Ruelle, \textit{Thermodynamic Formalism: The
Mathematical Structures of Classical Equilibrium Statistical Mechanics},
Volumen 5 de Encyclopedia of mathematics and its applications,
Addison-Wesley Publishing Company, Advanced Book Program, 1978; 2nd Edition,
Cambridge University Press, 2004.

\bibitem{Schirotzek} W. Schirotzek, \textit{Nonsmooth Analysis},
Universitext, Berlin-Heidelberg, Springer-Verlag, 2007.

\bibitem{reference} D. Shi and X. Yang, Koopman Spectral Linearization vs.
Carleman Linearization: A Computational Comparison Study, Mathematics 
\textbf{12}(14) (2024) 2156 (16pp).

\bibitem{SouVA} R. R. Souza and V. Vargas, Existence of Gibbs States and
Maximizing Measures on a General One-Dimensional Lattice System with
Markovian Structure, \textit{Qual. Theory Dyn. Syst.} \textbf{21} (2022)
article 5.

\bibitem{Tennie} F. Tennie, S. Laizet, S. Lloyd and Luca Magri, Quantum
computing for nonlinear differential equations and turbulence, \textit{%
Nature Reviews Physics} \textbf{7} (2025) 220-230.

\bibitem{minimax thm} J. Von Neumann, Zur Theorie der Gesellschaftsspiele, 
\textit{Math. Ann.} \textbf{100} (1928) 295-320.

\bibitem{Viana} M. Viana and K. Oliveira, \textit{Foundations of Ergodic
Theory}, Cambridge University Press, 2016.

\bibitem{Vilani} C. Villani, \textit{Optimal Transport, Old and New},
Berlin-Heidelberg, Springer-Verlag, 2009.

\bibitem{Walters} P. Walters, \textit{Introduction to Ergodic Theory},
Graduate Texts in Mathematics, Springer Verlag, 2000.

\bibitem{Wang} T. Wang and W. Wu, Thermodynamic Formalism for Non-uniform
Systems with Controlled Specification and Entropy Expansiveness, \textit{%
Commun. Math. Phys.} \textbf{32} (2026) 407.

\bibitem{Xu} Y. Xu, Z. Kuang, Q. Huang, J. Yang, H. Zahrouni, M.
Potier-Ferry, K. Huang, J.-C. Zhang, H. Fan, H. Hu, A robust quantum
nonlinear solver based on the asymptotic numerical method, arXiv:2412.03939
[quant-ph] (2024) (35pp).

\bibitem{BruZagrebnov8} V.A. Zagrebnov and J.-B. Bru, The Bogoliubov Model
of Weakly Imperfect Bose Gas, \textit{Phys. Rep.} \textbf{350} (2001)
291-434.

\bibitem{CIRM1} \textit{Thermodynamic Formalism, CIRM Jean-Morlet Chair,
Fall 2019}, Lecture Notes in Mathematics, Eds: M. Pollicott, S. Vaienti,
Springer Cham, 2021.
\end{thebibliography}
\end{document}